\DeclareMathAlphabet{\mathbfsf}{\encodingdefault}{\sfdefault}{bx}{sl}
\renewcommand{\catname}[1]{\textup{\textmd{\textsf{#1}}}}
\renewcommand{\Cat}{\catname{Cat}}
\renewcommand{\Mon}{\catname{Mon}}
\renewcommand{\Seg}{\catname{Seg}}
\renewcommand{\Span}{\catname{Span}}
\renewcommand{\Fun}{\catname{Fun}}
\renewcommand{\Map}{\catname{Map}}
\newcommand{\pr}{\txt{pr}}
\newcommand{\ev}{\txt{ev}}
\newcommand{\cocabb}{cocart}
\newcommand{\coc}{\txt{\cocabb}}
\newcommand{\dcoc}{\txt{-\cocabb}}
\newcommand{\CatIsl}[1]{\Cat_{\infty/#1}}
\newcommand{\CatIzcoc}[1]{\CatIsl{#1}^{#1_{0}\dcoc}}
\newcommand{\CatILcoc}[1]{\CatIsl{#1}^{L\dcoc}}
\newcommand{\CatIBL}{\CatILcoc{\mathcal{B}}}
\newcommand{\Funzcoc}[1]{\Fun_{/#1}^{#1_{0}\dcoc}}
\newcommand{\CatIcoc}[1]{\CatIsl{#1}^{\coc}}
\renewcommand{\itcat}{($\infty$,2)-category}
\renewcommand{\itcats}{($\infty$,2)-categories}
\newcommand{\xF}{\mathbb{F}}
\newcommand{\Lfbrs}{L_{\txt{fbrs}}}
\newcommand{\Lrseg}{L_{\txt{rseg}}}
\theoremstyle{plain}
\newtheorem{thmA}{Theorem}
\newtheorem{corA}[thmA]{Corollary}
\renewcommand{\to}{%
  \if@display\longrightarrow%
  \else\rightarrow%
  \fi%
}
\renewcommand{\xto}[1]{%
  \if@display\xlongrightarrow{#1}%
  \else\xrightarrow{#1}%
  \fi%
}
\renewcommand{\from}{%
  \if@display\longleftarrow%
  \else\leftarrow%
  \fi%
}
\renewcommand{\xfrom}[1]{%
  \if@display\xlongleftarrow{#1}%
  \else\xleftarrow{#1}%
  \fi%
}
\renewcommand{\isoto}{\xto{\sim}}
\newcommand{\frXb}{\frX^{b}}
\newcommand{\frXf}{\frX^{f}}
\newcommand{\Spanbf}{\Span_{b,f}}
\theoremstyle{definition}
\newcommand{\ostar}{\circledast}
\newcommand{\act}{\txt{act}}
\newcommand{\el}{\txt{el}}
\let\lim\relax \DeclareMathOperator{\lim}{lim}
\let\colim\relax \DeclareMathOperator{\colim}{colim}
\DeclareSymbolFont{bbold}{U}{bbold}{m}{n}
\DeclareSymbolFontAlphabet{\mathbbold}{bbold}
\renewcommand{\Map}{\textup{\textsf{Map}}}
\renewcommand{\Fun}{\textup{\textsf{Fun}}}
\renewcommand{\Cat}{\textup{\textsf{Cat}}}
\renewcommand{\CatI}{\Cat_{\infty}}
\newcommand{\SpF}{\Span(\xF)}
\newcommand{\AlgPatt}{\catname{AlgPatt}}
\newcommand{\bfone}{\textbf{\textup{\textlf{1}}}}
\newcommand{\fset}[1]{\textbf{\textup{\textlf{#1}}}}
\newcommand{\Env}{\catname{Env}}
\newcommand{\actto}{\rightsquigarrow}
\newcommand{\intto}{\rightarrowtail}
\newcommand{\xint}{\txt{int}}
\newcommand{\xactto}[1]{\overset{#1}{\actto}}
\newcommand{\xintto}[1]{\overset{#1}{\intto}}
\DeclareMathOperator{\Ar}{Ar}
\newcommand{\ArB}{\Ar(\mathcal{B})}
\newcommand{\ArR}{\Ar_{R}}
\newcommand{\ArL}{\Ar_{L}}
\newcommand{\ArRB}{\ArR(\mathcal{B})}
\newcommand{\ArLB}{\ArL(\mathcal{B})}
\newcommand{\Aract}{\Ar_{\mathrm{act}}}
\newcommand{\AractO}{\Aract(\mathcal{O})}
\newcommand{\Fbrs}{\catname{Fbrs}}
\newcommand{\intcoc}{\txt{int-cocart}}
\newcommand{\adj}{\rightleftarrows}
\newcommand{\ns}{\mathrm{ns}}
\newcommand{\gen}{\mathrm{gen}}
\newcommand{\Dbl}{\catname{DblCat}_{\infty}}
\newcommand{\Orb}{\mathrm{Orb}}
\newcommand{\CMon}{\catname{CMon}}
\newcommand{\pt}{\ast}
\newcommand{\frX}{\mathfrak{X}}
\newcommand{\frY}{\mathfrak{Y}}
\newcommand{\calA}{\mathcal{A}}
\newcommand{\calB}{\mathcal{B}}
\newcommand{\calC}{\mathcal{C}}
\newcommand{\calD}{\mathcal{D}}
\newcommand{\calE}{\mathcal{E}}
\newcommand{\calF}{\mathcal{F}}
\newcommand{\calP}{\mathcal{P}}
\newcommand{\calO}{\mathcal{O}}
\newcommand{\calR}{\mathcal{R}}
\newcommand{\calK}{\mathcal{K}}
\newcommand{\calS}{\mathcal{S}}
\newcommand{\calQ}{\mathcal{Q}}
\newcommand{\rmE}{\mathrm{E}}
\newcommand{\rmQ}{\mathrm{Q}}
\newcommand{\calV}{\mathcal{V}}
\newcommand{\Sub}{\mathrm{Sub}}
\newcommand{\ulF}{\underline{\xF}}
\newcommand{\OpdG}{\Opd_{G,\infty}}
\newcommand{\CatG}{\Cat_{G,\infty}}
\newcommand{\sliceEnv}[1]{\Env_{#1}^{/\calA_{#1}}}
\newcommand{\St}[2]{\mathrm{St}_{#1}^{#2}}
\newcommand{\StBL}{\St{\calB}{L}}
\newcommand{\StOint}{\St{\calO}{\xint}}
\newcommand{\Un}[2]{\mathrm{Un}_{#1}^{#2}}
\newcommand{\UnBL}{\Un{\calB}{L}}
\newcommand{\UnOint}{\Un{\calO}{\xint}}
\newcommand{\slSeg}[1]{\Seg_{#1}^{/\Afun{#1}}}
\newcommand{\slSegC}[1]{\slSeg{#1}(\CatI)}
\newcommand{\slSegCO}{\slSegC{\mathcal{O}}}
\newcommand{\CatIOintcoc}{\Cat_{\infty/\mathcal{O}}^{\intcoc}}
\newcommand{\CatIOcoc}{\Cat_{\infty/\mathcal{O}}^{\txt{cocart}}}
\newcommand{\Afun}[1]{\mathcal{A}_{#1}}
\newcommand{\Agpd}[1]{\mathcal{A}_{#1}^{\simeq}}
\newcommand{\Spansitdeg}{\Span_{\txt{si},\txt{tdeg}}}
\renewcommand{\phi}{\varphi}
\title{Envelopes for Algebraic Patterns}
\author{Shaul Barkan}
\address{Hebrew University of Jerusalem, Israel}
\author{Rune Haugseng}
\address{Norwegian University of Science and Technology (NTNU),
  Trondheim, Norway}
\urladdr{http://folk.ntnu.no/runegha}
\author{Jan Steinebrunner}
\address{Gonville \& Caius College, Cambridge, UK}
\urladdr{http://jan-steinebrunner.com}
\date{\today}
\begin{document}

\begin{abstract}
  We generalize Lurie's construction of the symmetric monoidal
  envelope of an $\infty$-operad to the setting of algebraic
  patterns. 
  This envelope becomes fully faithful when sliced over the envelope of the terminal object, and we characterize its essential image.
  Using this, we prove a comparison result
  that allows us to compare analogues of $\infty$-operads over various algebraic patterns.
  In particular, 
  we show that the $G$-$\infty$-operads of Nardin-Shah 
  are equivalent to ``fibrous patterns'' over the $(2,1)$-category $\Span(\xF_G)$ of spans of finite $G$-sets.
  When $G$ is trivial this means that Lurie's $\infty$-operads can equivalently be defined over $\Span(\xF)$ instead of $\xF_*$.
\end{abstract}

\maketitle

\tableofcontents

\section{Introduction}
\label{sec:introduction}

In Lurie's seminal work on homotopy-coherent algebra \cite{HA}, the
main objects used to encode algebraic structures are (symmetric)
\emph{\iopds{}}, which are defined as a certain type of functor of
\icats{} $\mathcal{O} \to \xF_{*}$, where $\xF_{*}$ is the
category of finite pointed sets. However, as illustrated already in
\cite{HA}, it can sometimes be useful to consider variants of this
notion, for instance because they give a combinatorially simpler
description of some structure. For example, Lurie also considers
\emph{planar} (or non-symmetric) \iopds{}, where the category
$\xF_{*}$ is replaced by the simplex category $\simp^{\op}$. As a
special case of a general comparison theorem \cite{HA}*{Theorem
  2.3.3.26} using the theory of \emph{approximations} to \iopds{},
Lurie proves that there is an equivalence of \icats{} between planar
\iopds{} and \iopds{} over the (symmetric) associative operad
$\catname{Ass}$, given by pulling back along an explicit map
$\simp^{\op} \to \catname{Ass}$.

Our main goal in this paper is to prove a more general version of such
comparisons. Before we explain this result in more detail, let us motivate
it by (informally) stating the two main new comparisons we will apply it to:
\begin{itemize}
\item In the definition of symmetric \iopds{}, we can equivalently
  replace the category $\xF_{*}$ of finite pointed sets by the
  (2,1)-category $\Span(\xF)$ of \emph{spans} of finite sets.
\item For $G$ a finite group, the $G$-equivariant \iopds{} of Nardin
  and Shah~\cite{NS} can equivalently be described as \iopds{} over
  the (2,1)-category $\Span(\xF_{G})$ of spans of finite $G$-sets.
\end{itemize}

\subsubsection*{Fibrous patterns}
The general version of our main result is in the setting of
\emph{algebraic patterns} in the sense of Chu and
Haugseng~\cite{patterns1}, which is a general framework for algebraic
structures described by ``Segal conditions''. More precisely, an
algebraic pattern is an \icat{} $\calO$ equipped with a factorization
system $(\calO^\xint, \calO^\act)$ of ``inert'' and ``active''
morphisms and a full subcategory $\calO^\el \subset \calO^{\xint}$ of
``elementary'' objects. 
This data lets one define \emph{Segal $\mathcal{O}$-objects} in a complete \icat{}
$\mathcal{C}$ as functors $F \colon \mathcal{O} \to \mathcal{C}$ such
that for any object $O \in \mathcal{O}$ the natural map
\[ 
    F(O) \to \lim_{E \in \mathcal{O}^{\el}_{O/}} F(E) 
\] 
is an equivalence, where
$\mathcal{O}^{\el}_{O/} := \mathcal{O}^{\el}
\times_{\mathcal{O}^{\xint}} \mathcal{O}^{\xint}_{O/}$ consists of
inert morphisms from $O$ to elementary objects.
We can then consider a version of \iopds{} where the category $\xF_{*}$
is replaced by an arbitrary algebraic pattern $\mathcal{O}$;
we will refer to them as \emph{fibrous
  $\mathcal{O}$-patterns}\footnote{Under the mild technical assumption
that $\mathcal{O}$ is \emph{sound}, our definition of fibrous
$\mathcal{O}$-patterns agrees with the definition of \emph{weak Segal
  $\mathcal{O}$-fibrations} studied in \cite{patterns1}; see
\cref{propn:fibrous=WSF}.
However, we prove some results beyond this case, and
   here the notion of fibrous $\mathcal{O}$-pattern we introduce is
   better behaved for our purposes.
}.
Such a fibrous $\calO$-pattern can be defined as a functor $\pi \colon
  \mathcal{P} \to \mathcal{O}$ such that:
  \begin{enumerate}
  \item
    $\mathcal{P}$ has all $\pi$-cocartesian lifts of inert morphisms in $\mathcal{O}$.
  \item 
  For all $O \in \mathcal{O}$, the commutative square of \icats{} 
  \[
    \begin{tikzcd}
      \mathcal{P} \times_{\mathcal{O}} \mathcal{O}^{\act}_{/O}
      \arrow{r} \arrow{d} & \lim_{E \in \mathcal{O}^{\el}_{O/}}
      \mathcal{P} \times_{\mathcal{O}} \mathcal{O}^{\act}_{/E}
      \arrow{d} \\
      \mathcal{O}^{\act}_{/O}
      \arrow{r}  & \lim_{E \in \mathcal{O}^{\el}_{O/}}
      \mathcal{O}^{\act}_{/E}
    \end{tikzcd}
  \]
  is cartesian.
  This square is constructed in \cref{defn:fibrous} using the factorization system and the cocartesian lifts from (1).%
  \footnote{
    The bottom horizontal functor is induced by the functors 
    $\alpha_!: \calO^\act_{/O} \to \calO^\act_{/E}$ 
    that are defined for an inert map $\alpha:O \intto{} E$ by sending 
    $\omega:X \to O$ to the active part of the factorization 
    $\alpha \circ \omega: X \intto \alpha_!X \actto E$.
    The top horizontal functor is defined similarly,
    by using the cocartesian lifts for inerts.
  }
\end{enumerate}
The \icat{} $\Fbrs(\mathcal{O})$ of fibrous $\mathcal{O}$-patterns is
then defined as the subcategory of $\CatIsl{\mathcal{O}}$ whose
objects are the fibrous $\mathcal{O}$-patterns and whose morphisms are
required to preserve cocartesian morphisms over inert maps in
$\mathcal{O}$. 

Let us mention a few examples of algebraic patterns where the
corresponding notion of fibrous pattern has already been studied:
\begin{itemize}
    \item If we take $\xF_{*}$ with the classes of inert and active
      maps defined as in \cite{HA} (see \cref{ex:xF*}) and $\angled{1}
      := (\{0,1\},0)$ as the only elementary object, then
      a fibrous $\xF_*$-pattern is 
      a functor $\pi:\calP \to \xF_*$ that has cocartesian lifts for inerts and for which the functor
    \[
      \calP^\act \times_{\xF} \xF_{/\angled{n}} \simeq \calP \times_{\xF_*} (\xF_*)_{/\angled{n}}^\act 
      \longrightarrow \prod_{\langle n \rangle \intto{} \angled{1}} \calP \times_{\xF_*} \xF
      \simeq (\calP^{\rm act})^n,
    \]
    is an equivalence. 
    We will show in \cref{propn:fibrous=WSF} that this is precisely equivalent to $\calP$ being a (symmetric) \iopd{} in the sense of Lurie. 
\item If $\calO \to \xF_*$ is an $\infty$-operad in the sense of Lurie, 
    then it has a canonical pattern structure for which
    a fibrous $\calO$-pattern $\pi\colon \calP \to \calO$
    is simply an $\infty$-operad over $\calO$:
    \[
        \Fbrs(\calO) \simeq \Fbrs(\xF_*)_{/\calO} = (\OpdI)_{/\calO}.
    \]
  \item Let $\xF_*^\natural$ denote the algebraic pattern with
    underlying category $\xF_*$ and the same factorization system as before, but
    with both $\angled{0}$ and $\angled{1}$ as elementary objects.  Then a
    fibrous $\xF_*^\natural$-pattern is a \emph{generalized
      $\infty$-operad} in the sense of \cite{HA}.
  \item If we equip $\Dop$ with the usual inert--active factorization
    system (see \cref{ex:Dop}) and take $[1]$ as the only elementary
    object, then a fibrous $\Dop$-pattern is precisely a planar
    or non-symmetric \iopd{} as in \cite{HA}. If we instead take both
    $[0]$ and $[1]$ as elementary we get \emph{generalized
      non-symmetric \iopds{}} as in \cite{enr}.
\item
    For a finite group $G$, the $G$-$\infty$-operads of \cite{NS} are
    precisely fibrous $\ulF_{G,*}$-patterns for a certain pattern
    $\ulF_{G,*}$ (see \S\ref{subsec:G-operads}).
\end{itemize}

\subsubsection*{Comparing fibrous patterns}
Our first main theorem allows us to compare fibrous patterns over various bases:
\begin{thmA}\label{introthm:comp}
  Let $f \colon \mathcal{O} \to \mathcal{P}$ be a
  morphism of algebraic patterns (\ie{} a functor that preserves
  active and inert morphisms and elementary objects).
  Suppose furthermore that:
  \begin{enumerate}[(i)]
  \item The induced functors $\mathcal{O}^{\el}_{O/} \to
    \mathcal{P}^{\el}_{f(O)/}$ are coinitial for all $O \in
    \mathcal{O}$.
  \item The pattern $\mathcal{P}$ is \emph{sound} in
    the sense of \cref{defn:sound}.
  \item The pattern $\calP$ is \emph{extendable}:
  for all $P \in \mathcal{P}$ the canonical functor
    \[ 
      \mathcal{P}^{\act}_{/P} \to \lim_{E \in \mathcal{P}^{\el}_{P/}}
      \mathcal{P}^{\act}_{/E},
    \]
    is an equivalence.
  \item The restriction $f^{\el}\colon \mathcal{O}^{\el} \to
    \mathcal{P}^{\el}$ of $f$ is an
    equivalence of \icats{},
  \item The functor $(\mathcal{O}^{\act}_{/O})^{\simeq}\to
    (\mathcal{P}^{\act}_{/f(O)})^{\simeq}$ induced by $f$ is an
    equivalence for all $O \in \mathcal{O}$.
  \end{enumerate}
  Then pullback along $f$ gives an equivalence
  \[ f^{*} \colon \Fbrs(\mathcal{P}) \isoto \Fbrs(\mathcal{O}).\]
\end{thmA}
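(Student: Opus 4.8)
The plan is to deduce the equivalence from the envelope, reducing the comparison of fibrous patterns to a comparison of sliced Segal objects. By the envelope theorem and the characterization of its essential image proved earlier, for each pattern $\calQ$ under consideration the envelope
\[
  \Env_{\calQ}\colon \Fbrs(\calQ)\hookrightarrow\slSegC{\calQ}
\]
is fully faithful with explicitly described essential image; for $\calQ=\calP$ this is exactly where soundness~(ii) and extendability~(iii) are used. First I would assemble the square
\[
  \begin{tikzcd}
    \Fbrs(\calP)\arrow[r,"f^{*}"]\arrow[d,"\Env_{\calP}"'] & \Fbrs(\calO)\arrow[d,"\Env_{\calO}"]\\
    \slSegC{\calP}\arrow[r,"f^{*}_{\Seg}"'] & \slSegC{\calO}
  \end{tikzcd}
\]
where $f^{*}_{\Seg}$ is restriction along $f$, and then show that $f^{*}_{\Seg}$ carries the essential image of $\Env_{\calP}$ equivalently onto that of $\Env_{\calO}$; combined with full faithfulness of the two envelopes, this yields the theorem.

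Constructing the square amounts to a natural comparison of envelope values. For a fibrous $\calP$-pattern $\calR$ one computes $\Env_{\calO}(f^{*}\calR)(O)\simeq\calR\times_{\calP}\calO^{\act}_{/O}$ and $(f^{*}_{\Seg}\Env_{\calP}\calR)(O)\simeq\calR\times_{\calP}\calP^{\act}_{/f(O)}$, the comparison being induced by the functor $\calO^{\act}_{/O}\to\calP^{\act}_{/f(O)}$ coming from $f$. Taking $\calR=\mathrm{id}_{\calP}$ specializes this to the comparison $\Afun{\calO}\to f^{*}\Afun{\calP}$ of slicing bases, which sends $\calO^{\act}_{/O}$ to $\calP^{\act}_{/f(O)}$; by hypothesis~(v) this is an equivalence on groupoid cores. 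This core-level identification is what allows $f^{*}_{\Seg}$ to be defined as a functor into the slice $\slSegC{\calO}$, and what makes the square commute once the envelope values are compared. In particular, since $\Env_{\calO}(f^{*}\calR)$ is thereby seen to be Segal, $f^{*}$ indeed sends fibrous $\calP$-patterns to fibrous $\calO$-patterns, so the top horizontal functor is well defined.

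Granting the square, full faithfulness of both envelopes reduces the theorem to showing that $f^{*}_{\Seg}$ restricts to an equivalence between the two essential images. The input controlling the fibrewise data is that restriction along $f$ preserves the Segal condition: by coinitiality~(i) one has, for any Segal $\calP$-object $F$,
\[
  F(f(O))\;\simeq\;\lim_{E\in\calP^{\el}_{f(O)/}}F(E)\;\simeq\;\lim_{E'\in\calO^{\el}_{O/}}(f^{*}F)(E'),
\]
so that $f^{*}F$ is again Segal, while the equivalence~(iv) identifies the elementary data over $\calO$ with that over $\calP$. Essential surjectivity onto the image of $\Env_{\calO}$ is then obtained by reconstructing, from a given $\calS\in\Fbrs(\calO)$, the required object of $\slSegC{\calP}$ \emph{relative to} the base $\Afun{\calP}$: the active (transfer) structure is already recorded in $\Afun{\calP}$, which~(v) matches to $\Afun{\calO}$ on cores, whereas the remaining fibrewise data over $\calP^{\el}\simeq\calO^{\el}$ together with the inert cocartesian structure is transported directly from $\calS$. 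Soundness~(ii) guarantees that this assembles into an honest Segal object, and the essential-image characterization guarantees that it lies in the image of $\Env_{\calP}$, hence is the envelope of a fibrous $\calP$-pattern $\calR$ with $f^{*}\calR\simeq\calS$.

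The main obstacle is the interface between the $\CatI$-valued envelope and the purely core-level hypothesis~(v). A priori the functors $\calO^{\act}_{/O}\to\calP^{\act}_{/f(O)}$ are equivalences only after passing to groupoid cores, so neither the bases $\Afun{\calO}$ and $f^{*}\Afun{\calP}$ nor the envelope values $\calR\times_{\calP}\calO^{\act}_{/O}$ and $\calR\times_{\calP}\calP^{\act}_{/f(O)}$ agree as $\CatI$-valued functors on the nose. The resolution, and the step demanding the most care, is that the characterization of the envelope's essential image (equivalently, the cartesianness of the square in \cref{defn:fibrous}) forces the objects in play to depend on the active slices only through their cores and their restriction to elementary objects; granting this reduction, hypothesis~(v) supplies exactly the missing identification, uniformly in $O$ and compatibly with the Segal structure, so that the category-level comparison follows from the core-level one.
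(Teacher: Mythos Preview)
Your overall strategy matches the paper's: embed both sides via the sliced envelope and compare at the level of (relative) Segal objects over $\Afun{\calO}$ and $\Afun{\calP}$. But you have misdiagnosed the ``main obstacle'' and your proposed resolution is not a proof.

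The point you are missing is \cref{lem:actcatcomp}: hypothesis~(v) in fact implies that the natural transformation $\Afun{\calO}\to f^{*}\Afun{\calP}$ is an equivalence of $\CatI$-valued functors, not merely on cores. The argument is short: for active maps $\phi\colon O\actto X$ and $\phi'\colon O'\actto X$, the mapping space $\Map_{\calO^{\act}_{/X}}(\phi',\phi)$ is the fiber of $(\phi\circ-)\colon\Agpd{\calO}(O)\to\Agpd{\calO}(X)$ over $\phi'$, and this fits into a square whose horizontal maps are the core-level equivalences from~(v); hence the induced map on fibers is an equivalence, giving full faithfulness of $\calO^{\act}_{/X}\to\calP^{\act}_{/f(X)}$. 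Once you have this, $f^{\ostar}$ is an equivalence on the \emph{entire} slice categories $\Seg_{\calP}(\CatI)_{/\Afun{\calP}}\simeq\Seg_{\calO}(\CatI)_{/\Afun{\calO}}$ (combining with \cref{propn:Segmndcomp}), and the core-vs-category tension you describe simply disappears.

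Your claim that ``the characterization of the envelope's essential image forces the objects in play to depend on the active slices only through their cores'' is not correct as stated and is doing no work: the equifibered condition is a cartesian square of \icats{}, and the envelope values $\calR\times_{\calP}\calO^{\act}_{/O}$ genuinely depend on the category $\calO^{\act}_{/O}$. Likewise, your essential surjectivity sketch (``transport the fibrewise data, soundness assembles it'') is too vague to be checkable; the paper instead uses \cref{lem:equifibered-checked-on-elementary} to show that, since $\calP$ is sound, equifiberedness over $\Afun{\calP}$ can be tested on actives with elementary target, where~(iv) and the category-level equivalence of active slices reduce the check to the equifiberedness over $\Afun{\calO}$ that you already have. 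Filling in \cref{lem:actcatcomp} and this last reduction would make your argument complete.
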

Here the condition of \emph{soundness} is a mild but rather technical assumption, which is satisfied in almost all
examples of algebraic patterns we are aware of.
We can now state the applications of \cref{introthm:comp} that we mentioned above more precisely:

\begin{corA}\label{introcor:Gopd}
  Let  $G$ be a finite group and $\Span(\xF_{G})$ the $(2,1)$-category
  of spans of finite $G$-sets; we regard this as an algebraic pattern where the inert and active
  maps are the backwards and forwards maps, respectively, and the
  elementary objects are the orbits $G/H$ for $H$ a subgroup of $G$.
  There is a functor $\ulF_{G,*} \to \Span(\xF_G)$ such that pullback
  along it gives an equivalence
    \[
        \Fbrs(\Span(\xF_G)) \xrightarrow{\simeq}
        \Fbrs(\ulF_{G*}) = \OpdG. 
    \]
\end{corA}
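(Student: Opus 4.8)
The plan is to deduce the corollary from \cref{introthm:comp} applied to an explicit comparison functor $f \colon \ulF_{G,*} \to \Span(\xF_G)$. First I would construct $f$ as the equivariant analogue of the classical functor $\xF_* \to \Span(\xF)$ recording the canonical factorization of a pointed map: a pointed $G$-map $\phi$ is sent to the span whose backward (inert) leg is the inclusion of the preimage of the unpointed part and whose forward (active) leg is the restriction of $\phi$. Working fiberwise over $\Orb_G^{\op}$, as in the Nardin--Shah description recalled in \S\ref{subsec:G-operads}, I would check that $f$ sends inert maps to backward maps, active maps to forward maps, and the pointed orbits $(G/H)_{+}$ to the orbits $G/H$, so that $f$ is a morphism of algebraic patterns. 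Since the identification $\Fbrs(\ulF_{G,*}) = \OpdG$ has already been established, it then suffices to verify hypotheses (i)--(v) of \cref{introthm:comp}.

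Next I would dispatch the conditions that compare the two patterns through $f$. For (iv), both $(\ulF_{G,*})^{\el}$ and $\Span(\xF_G)^{\el}$ are identified with $\Orb_G^{\op}$ --- the elementary objects are orbits and the inert maps between them are backward $G$-maps --- and $f^{\el}$ is precisely this identification, hence an equivalence. For (v), the active maps in $\Span(\xF_G)$ into a finite $G$-set $P$ are exactly the forward maps, so the core of $\Span(\xF_G)^{\act}_{/P}$ is the groupoid $(\xF_G)^{\simeq}_{/P}$ of $G$-sets over $P$; the active slice of $\ulF_{G,*}$ records the same data of genuine maps of $G$-sets, and $f$ matches the two. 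For (i), the functor $(\ulF_{G,*})^{\el}_{O/} \to \Span(\xF_G)^{\el}_{f(O)/}$ has target the category of $G$-maps from orbits into $P := f(O)$, and coinitiality follows since every such map factors through a unique orbit summand of the decomposition $P \simeq \coprod_i G/K_i$, exhibiting the orbit inclusions as coinitial.

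For the two conditions intrinsic to $\Span(\xF_G)$ I would argue from the combinatorics of spans. Extendability (iii) asserts that a forward map $Q \to P$ amounts to a compatible family of forward maps over the orbits mapping into $P$; this is the equivalence $(\xF_G)_{/P} \simeq \lim_{E \in \Span(\xF_G)^{\el}_{P/}} (\xF_G)_{/E}$, which holds because every finite $G$-set is canonically the colimit $P \simeq \colim_{E \to P} E$ of the orbits over it. Soundness (ii) I would check directly against \cref{defn:sound}; for $\Span(\xF_G)$ this reduces to an explicit finite statement about factorizations of spans through orbits. With (i)--(v) in hand, \cref{introthm:comp} yields the equivalence $f^{*} \colon \Fbrs(\Span(\xF_G)) \isoto \Fbrs(\ulF_{G,*}) = \OpdG$.

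The main obstacle I anticipate is not any single condition but the bookkeeping needed to pin down $f$ precisely enough to read off (i), (iv), and (v): because $\ulF_{G,*}$ is defined fiberwise over $\Orb_G^{\op}$ in terms of pointed finite $H$-sets, the comparison functor and its action on inert, active, and elementary morphisms must be tracked carefully through that fibered description before the combinatorial verifications above become routine. Specializing the entire argument to $G = e$ recovers the promised non-equivariant comparison $\Fbrs(\Span(\xF)) \simeq \OpdI$.
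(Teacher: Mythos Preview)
Your overall strategy is the same as the paper's: construct an explicit morphism of patterns $\ulF_{G,*} \to \Span(\xF_G)$ and verify the hypotheses of the comparison theorem. The conclusions you draw for conditions (iii)--(v) are correct, and the identification $\Fbrs(\ulF_{G,*}) = \OpdG$ is indeed already established (via soundness of $\ulF_{G,*}$).

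Where your proposal diverges from the paper, and where it becomes imprecise, is in the description of the functor and of $\ulF_{G,*}$ itself. You work through the cocartesian fibration $\ulF_{G,*} \to \Orb_G^{\op}$ and try to build $f$ ``fiberwise'' by sending pointed $H$-maps to spans. The paper instead uses that $\ulF_{G,*}$ is \emph{itself} a span pattern, namely $\Span_{\txt{si},\txt{tdeg}}(\ulF_G^v;\Orb_G)$ where $\ulF_G^v \subset \Ar(\xF_G)$ consists of arrows with orbit target, and defines the comparison functor as the one induced on span categories by $\ev_0 \colon \ulF_G^v \to \xF_G$. This is a morphism of adequate triples, so the induced functor is automatically a morphism of patterns and the paper's general criterion for comparisons between span patterns (\cref{obs:comparison-for-span}) applies directly. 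In that framework conditions (i)--(v) become the four transparent checks in the proof of \cref{prop:GOpd=Span}: in particular, the strong Segal condition (i) is immediate because the map $(\ulF_{G,*})^{\el}_{(U\to V)/} \to \Span(\xF_G)^{\el}_{U/}$ is an \emph{equivalence} (both sides are $(\Orb_G \times_{\xF_G} (\xF_G)_{/U})^{\op}$, see \cref{obs:elemenatry-slice-of-ulF}), not merely coinitial. Your coinitiality argument via ``factoring through a unique orbit summand'' is really showing that the discrete set $U/G$ is coinitial in the target, which is true but is a statement about the target alone; you never identify the source, so as written the argument for (i) has a gap. Similarly, soundness of $\Span(\xF_G)$ is not something you need to check by hand: it is an instance of \cref{cor:b=all-sound}, since all backwards maps are allowed.

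In short: your plan works, but the bookkeeping you flag as the main obstacle is genuinely avoided by recognising both patterns as span patterns and taking $f$ to be induced by evaluation at the source. This is what the paper does, and it turns each verification into a one-line observation.
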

If we restrict to those fibrous patterns that are also cocartesian fibrations (these are Segal fibrations, or equivalently Segal objects in $\CatI$) then we recover \cite[Theorem 2.3.9]{NS} of Nardin--Shah, which says that the $\infty$-category of $G$-symmetric monoidal $\infty$-categories is equivalent to the $\infty$-category of product-preserving functors $\Span(\xF_G) \to \CatI$.

In the case of the trivial group $G = \{e\}$, \cref{introcor:Gopd} yields an equivalence
   \[ \Fbrs(\Span(\xF)) \isoto \Fbrs(\xF_{*}) = \OpdI\]
between fibrous $\Span(\xF)$-patterns and $\infty$-operads in the sense of Lurie, given by pulling back along the inclusion of $\xF_{*}$ in $\Span(\xF)$ as the wide subcategory containing the spans whose backwards map is injective.

\subsubsection*{Segal envelopes}
The crux of our strategy for proving \cref{introthm:comp} is a reduction to a
comparison between Segal objects in $\CatI$ for the two
patterns. 
For this purpose we need to develop an analogue of
Lurie's \emph{symmetric monoidal envelope} for \iopds{} over a general
algebraic pattern.

A symmetric monoidal \icat{} can be viewed both as a commutative
monoid in $\CatI$ (\ie{} a Segal object for $\xF_{*}$) and as an
\iopd{} that is a cocartesian fibration; we thus have a (non-full)
subcategory inclusion $\CMon(\CatI) \to \OpdI$. In
\cite{HA}*{\S 2.2.4}, Lurie shows that this functor has a left
adjoint, the symmetric monoidal envelope, which admits a very explicit
description as a cocartesian fibration: the envelope of an \iopd{} $\mathcal{O}$ is simply the
fiber product $\mathcal{O} \times_{\xF_{*}} \Aract(\xF_{*})$ where
$\Aract(\xF_{*})$ is the full subcategory of the arrow category of
$\xF_{*}$ on the active morphisms and the fiber product is over the
source functor $\xF^\amalg :=\Aract(\xF_{*}) \to \xF_{*}$, while the projection to
$\xF_{*}$ giving the symmetric monoidal \icat{} is by the target
functor. 
Moreover, it was observed in \cite{iopdprop} that if we
instead regard the envelope as a functor to symmetric monoidal
\icats{} over $(\xF, \amalg)$ 
(that is, finite sets with the disjoint union as symmetric monoidal structure) 
then it is fully faithful.
We want to generalize these results to fibrous $\mathcal{O}$-patterns
for a general algebraic pattern $\mathcal{O}$. 
To simplify exposition we assume here that $\calO$ is both sound and extendable. 
For such $\calO$, unstraightening restricts to give a functor $\Seg_{\calO}(\CatI) \to \Fbrs(\calO)$ 
analogous to the inclusion $\CMon(\CatI) \to \OpdI$.
Our second main result is a description of the left adjoint of this functor.
\begin{thmA}\label{thm:introenv}
  Let $\mathcal{O}$ be a sound and extendable pattern. Then:
  \begin{enumerate}
  \item The unstraightening functor
    $\catname{Seg}_{\calO}(\CatI) \to \Fbrs(\mathcal{O})$ 
    has a left
    adjoint $\Env_{\mathcal{O}}$ whose value on a fibrous $\calO$-pattern $\calP$ 
    is given by the functor $O \longmapsto \calP \times_\calO \calO^\act_{/O}$.
  \item 
  Slicing $\Env_{\calO}$ over $\calA_\calO  := \Env_{\calO}(\calO)$ 
  yields a fully faithful embedding
    \[ \sliceEnv{\mathcal{O}} \colon \Fbrs(\mathcal{O}) \hookrightarrow 
    \Seg_\calO(\CatI)_{/\calA_\calO}\]
    which admits both a left and a right adjoint.
  \item An object $\calC \to \calA_\calO$ in $\Seg_\calO(\CatI)_{/\calA_\calO}$ 
    lies in the essential image of $\sliceEnv{\calO}$ \IFF{} 
    it is \emph{$\Aract(\calO)$-equifibered}, \ie{}
    for every active map $O \actto O'$ in  $\mathcal{O}$, 
    the square
  \begin{equation*}\label{eq:segequifib}
    \begin{tikzcd}
     \calC(O) \arrow{r}{\calC(\omega)} \arrow{d} &
     \calC(O') \arrow{d} \\
      \mathcal{O}^{\act}_{/O} \arrow{r}{\omega_{*}} & \mathcal{O}^{\act}_{/O'}
    \end{tikzcd}
    \end{equation*}
    is cartesian.
  \end{enumerate}
\end{thmA}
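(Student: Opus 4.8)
The plan is to work throughout with the fibrational model, unstraightening a Segal $\calO$-object to a cocartesian fibration over $\calO$, so that $\rmU \colon \Seg_\calO(\CatI) \to \Fbrs(\calO)$ becomes the functor that forgets all but the inert-cocartesian lifts. In these terms the envelope should be the cocartesian fibration
\[ \Env_\calO(\calP) \;=\; \calP \times_{\calO} \Aract(\calO) \longrightarrow \calO, \]
where the pullback is formed along the source functor $\Aract(\calO)\to\calO$ and the projection giving the Segal structure is the target functor; the fiber over $O$ is then $\calP\times_\calO\calO^\act_{/O}$, as required, and the case $\calP=\calO$ recovers $\calA_\calO$, whose unstraightening $\Aract(\calO)\to\calO$ carries the canonical section $s\colon\calO\to\Aract(\calO)$, $O\mapsto\mathrm{id}_O$, picking out identity (active) arrows. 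This section is the geometric heart of the argument.

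For part~(1) there are two things to check: that the construction lands in Segal objects and that it is left adjoint to $\rmU$. The Segal condition is where extendability enters; for each $O$ the defining cartesian square of a fibrous pattern exhibits $\calP\times_\calO\calO^\act_{/O}$ as the pullback of $\lim_{E\in\calO^\el_{O/}}\calP\times_\calO\calO^\act_{/E}$ along $\calO^\act_{/O}\to\lim_{E}\calO^\act_{/E}$, and extendability of $\calO$ forces the latter map to be an equivalence, whence the Segal map of $\Env_\calO(\calP)$ is one too. The genuinely technical point is to assemble $O\mapsto\calP\times_\calO\calO^\act_{/O}$ into a functor naturally in $\calP$: I would do this on fibrations, combining the cocartesian fibration $\Aract(\calO)\to\calO$ (target) with the inert-cocartesian lifts of $\calP$ and the pushforwards $\alpha_!$ and $\omega_*$ of the footnote, with soundness ensuring these cohere. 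For the adjunction I would exhibit the unit $\calP\to\rmU\,\Env_\calO(\calP)$, given fiberwise by $p\mapsto(p,\mathrm{id}_{\pi(p)})$ through $s$, and verify the universal property by reducing a map of Segal objects $\Env_\calO(\calP)\to F$ to its restriction along $s$. I expect this naturality-and-universality bookkeeping to be the main obstacle and to absorb most of the effort.

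Part~(2) I would deduce from the section $s$. Writing $\widetilde{\calC}\to\Aract(\calO)$ for the unstraightening of a map $\calC\to\calA_\calO$, pullback along $s$ defines $R\colon\Seg_\calO(\CatI)_{/\calA_\calO}\to\CatIsl{\calO}$, $\calC\mapsto s^*\widetilde{\calC}$; one checks $R$ takes values in $\Fbrs(\calO)$ and is right adjoint to $\sliceEnv{\calO}$. Full faithfulness then amounts to the unit $\mathrm{id}\to R\,\sliceEnv{\calO}$ being an equivalence, which follows from
\[ s^*\bigl(\calP\times_\calO\Aract(\calO)\bigr)\;\simeq\;\calO\times_{\Aract(\calO)}\bigl(\calP\times_\calO\Aract(\calO)\bigr)\;\simeq\;\calP, \]
because constraining the active-arrow coordinate to be an identity returns the original fiber of $\calP$. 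The remaining left adjoint I would produce either from the adjoint functor theorem, both sides being presentable, or more explicitly from the localization supplied by part~(3).

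For part~(3) one inclusion is formal: in the square attached to an active $\omega\colon O\actto O'$ the vertical maps are the projections and $\omega_*$ preserves sources, so $\calP\times_\calO\calO^\act_{/O}\simeq\calO^\act_{/O}\times_{\calO^\act_{/O'}}(\calP\times_\calO\calO^\act_{/O'})$ holds by construction and $\Env_\calO(\calP)$ is $\Aract(\calO)$-equifibered. Conversely, given an equifibered $\calC\to\calA_\calO$, I would set $\calP:=R(\calC)$ and show the counit $\Env_\calO(R(\calC))\to\calC$ is an equivalence: applying equifiberedness to an active $\omega\colon X\actto O'$ and using $\omega_*(\mathrm{id}_X)=\omega$ identifies the fiber of $\calC(O')$ over the point $\omega\in\calO^\act_{/O'}$ with the fiber of $\calC(X)$ over $\mathrm{id}_X$, namely $\calP_X=\calP_{\mathrm{src}(\omega)}$, which is exactly the fiberwise description of $\Env_\calO(\calP)(O')=\calP\times_\calO\calO^\act_{/O'}$. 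Upgrading this fiberwise match to an equivalence over $\calA_\calO$, and confirming that $R(\calC)$ is fibrous (its inert-cocartesian lifts and defining square coming from the Segal condition on $\calC$ and the pattern structure of $\calO$), then pins down the essential image and simultaneously realizes it as the localization needed above.
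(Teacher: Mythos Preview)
Your overall strategy matches the paper's: both work fibrationally, build the envelope as $\calP \times_\calO \Aract(\calO)$, obtain full faithfulness from the section $s\colon\calO\to\Aract(\calO)$ picking out identity arrows, and characterize the image by analyzing the counit fiberwise. The paper organizes this by first proving everything at the level of an arbitrary factorization system (the adjunction $\rmE\colon\CatIBL \rightleftarrows (\CatIcoc{\calB})_{/\ArRB}$ with right adjoint pullback along $s$, its full faithfulness, and the equifibered image description) and only afterwards specializing to patterns, but the underlying arguments are the ones you sketch.

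There is, however, one genuine misattribution you should correct: soundness is \emph{not} what makes the envelope functorial. The cocartesian structure on $\calP \times_\calO \Aract(\calO) \to \calO$ and its naturality in $\calP$ come purely from the inert--active factorization system (see \cref{propn:free-fibration-is-cocartesian} and \cref{cor:factadjn}, which need no soundness at all). Where soundness actually enters is the step you gloss as ``one checks $R$ takes values in $\Fbrs(\calO)$'' in part~(2), and equally the statement in part~(1) that unstraightening lands in $\Fbrs(\calO)$. The fibrous condition requires the defining square to be cartesian for \emph{every} active $\omega\colon O'\actto O$, whereas the Segal condition on $\calC$ only directly controls the case $\omega=\id$. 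Soundness is precisely the hypothesis that bootstraps from identities to all actives (this is the content of \cref{lem:sound-stronger-Segal} and \cref{lem:relative-verySegal-iff-unstraighting-is-relative-fibrous}). So when you write at the end that the fibrous structure of $R(\calC)$ comes ``from the Segal condition on $\calC$ and the pattern structure of $\calO$'', that is exactly where the real work hides, and without invoking soundness there your argument will not close.
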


In \S\ref{sec:segenv} we actually prove more general (but weaker)
versions of this statement that do not require $\calO$ to be sound or
extendable. 
The comparison of \cref{introthm:comp} can now be shown
by recalling a (simpler) comparison theorem for Segal objects from \cite{ShaulThesis},
passing to slices and then showing that the equivalence 
restricts to the essential image of the envelope.

In \S\ref{subsec:ex-of-envelopes} we spell out \cref{thm:introenv} in several examples. 
In particular, for $\calO = \xF_*$, \cref{thm:introenv} recovers a result of \cite{iopdprop},
though with an alternative characterization of the image:%
\footnote{See \cref{obs:HKcomp} for a comparison.}
\begin{corA}
    The left adjoint to the forgetful functor $\CMon(\CatI) \to \OpdI$
    lifts to a fully faithful functor:
    \[
        \Env\colon \OpdI \hookrightarrow \CMon(\CatI)_{/(\xF, \amalg)}
    \]
    This functor has adjoints on both sides.
    A symmetric monoidal functor $\pi\colon (\calC, \otimes) \to (\xF, \amalg)$ is in the essential image of $\Env$ \IFF{} the square
    \[
    \begin{tikzcd}
      \calC \times \calC \ar[r, "\otimes"] \ar[d, "\pi \times \pi"'] & 
      \calC \ar[d, "\pi"] \\
      \xF \times \xF \ar[r, "\amalg"] &
      \xF
    \end{tikzcd}
    \]
    is a pullback square in $\CatI$.
\end{corA}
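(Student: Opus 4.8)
The plan is to deduce this corollary from \cref{thm:introenv} by specializing to the pattern $\calO = \xF_*$ (as in \cref{ex:xF*}), which is sound and extendable, and then unwinding the general statement into the classical language of symmetric monoidal \icats{}. First I would record the three standard identifications. Fibrous $\xF_*$-patterns are exactly \iopds{}, giving $\Fbrs(\xF_*) \simeq \OpdI$ by \cref{propn:fibrous=WSF}. Segal $\xF_*$-objects in $\CatI$ are commutative monoids, $\Seg_{\xF_*}(\CatI) \simeq \CMon(\CatI)$: since $\angled 1$ is the only elementary object and the inert projections $\angled n \to \angled 1$ give $\calC(\angled n) \simeq \calC(\angled 1)^{\times n}$, a Segal $\xF_*$-object is precisely a special $\Gamma$-object, i.e.\ a symmetric monoidal \icat{}. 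Under these identifications the unstraightening functor of \cref{thm:introenv}(1) becomes the forgetful functor $\CMon(\CatI) \to \OpdI$ that views a symmetric monoidal \icat{} as its underlying cocartesian \iopd{}.

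Next I would identify the slicing object $\calA_{\xF_*} = \Env_{\xF_*}(\xF_*)$. By \cref{thm:introenv}(1) it is the Segal object $\angled n \mapsto \xF_* \times_{\xF_*} (\xF_*)^{\act}_{/\angled n} \simeq (\xF_*)^{\act}_{/\angled n}$. Its underlying \icat{} is $(\xF_*)^{\act}_{/\angled 1} \simeq \xF$, since an active map to $\angled 1$ is the same datum as a finite set and morphisms over $\angled 1$ are forced to be active, hence arbitrary maps of finite sets; its multiplication, computed as pushforward along the fold $\angled 2 \actto \angled 1$, is disjoint union. Thus $\calA_{\xF_*} \simeq (\xF, \amalg)$. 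With this identification, parts (1) and (2) of \cref{thm:introenv} immediately yield the left adjoint $\Env$, its fully faithful lift $\sliceEnv{\xF_*}\colon \OpdI \hookrightarrow \CMon(\CatI)_{/(\xF,\amalg)}$, and the existence of both adjoints.

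The substantive remaining step is to translate the essential-image criterion of \cref{thm:introenv}(3) — that $\calC \to \calA_{\xF_*}$ be $\Aract(\xF_*)$-equifibered, i.e.\ that the square there be cartesian for \emph{every} active map — into the single pullback square of the corollary. For the fold $\angled 2 \actto \angled 1$, after the identifications $\calC(\angled 2) \simeq \calC \times \calC$ and $(\xF_*)^{\act}_{/\angled 2} \simeq \xF \times \xF$, the map $\calC(\nabla)$ is $\otimes$ and the bottom map is $\amalg$, so this instance of equifiberedness is exactly the pullback square
\[
  \begin{tikzcd}
    \calC \times \calC \ar[r, "\otimes"] \ar[d, "\pi \times \pi"'] & \calC \ar[d, "\pi"] \\
    \xF \times \xF \ar[r, "\amalg"] & \xF.
  \end{tikzcd}
\]
This gives one direction at once. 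For the converse I would propagate cartesianness from the fold to all active maps: every active map $\angled m \actto \angled n$ factors, via the Segal decomposition, as a product over $i \le n$ of iterated folds built from $\nabla\colon \angled 2 \actto \angled 1$, the unit $\angled 0 \actto \angled 1$, identities, and permutations, and cartesian squares are stable under horizontal and vertical pasting and under products. The permutation and binary cases reduce to the fold square, so the only genuinely separate case is the nullary one.

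The hard part will be exactly this nullary case: deducing that the unit square (the fiber over $\emptyset$) is cartesian from the binary square alone. I would argue by restricting the fold square to the fibers over $(\emptyset, \emptyset) \in \xF \times \xF$, which yields an equivalence $F \times F \xto{\otimes} F$ for $F := \pi^{-1}(\emptyset)$. Since the monoidal unit $\bbone$ lies in $F$ and is a unit for this restricted tensor, I would conclude $F \simeq \pt$ — for instance by noting that $(-) \otimes \bbone \simeq \id_F$ together with $\otimes$ being an equivalence forces the second projection $F \times F \to F$ to be essentially constant, or directly by comparing mapping spaces. This is precisely the assertion that the unit square is cartesian, and assembling the binary, nullary, and permutation cases by pasting gives full $\Aract(\xF_*)$-equifiberedness, completing the description of the essential image.
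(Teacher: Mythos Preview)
Your proposal is correct and follows essentially the same approach as the paper: specialize \cref{thm:introenv} to $\calO = \xF_*$, make the identifications $\Fbrs(\xF_*) \simeq \OpdI$, $\Seg_{\xF_*}(\CatI) \simeq \CMon(\CatI)$, and $\calA_{\xF_*} \simeq (\xF,\amalg)$, and then reduce the $\Aract(\xF_*)$-equifiberedness condition to the single fold square (the paper packages this reduction as the closure argument of \cref{lem:G-eqf}, using that the class of active maps with cartesian naturality square is stable under composition, right-cancellation, and coproduct). Your explicit handling of the nullary unit square $\angled 0 \actto \angled 1$ via the fiber argument $F \times F \simeq F \Rightarrow F \simeq \ast$ is a careful elaboration that the paper leaves implicit.
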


In \S\ref{subsec:G-operads} we also give a similar characterisation of the essential image of the
envelope for $G$-$\infty$-operads, though in that case one has to
require additional pullback squares involving the norm maps
$\mathrm{Nm}_K^H\colon \calC^K \to \calC^H$.

\subsubsection*{Organization}
In \S\ref{sec:factenv} we prove a key part of \cref{thm:introenv},
which only depends on the factorization system on an algebraic
pattern:
\begin{thmA}\label{thm:partial-SU-introduction}
    Let $\calB$ be an \icat{} with a factorization system $(\calB_L, \calB_R)$.
    \begin{enumerate}
        \item The forgetful functor $\CatIcoc{\mathcal{B}} \to
          \CatIBL$ has a left adjoint, which takes $\mathcal{E} \to
          \mathcal{B}$ to $\mathcal{E} \times_{\mathcal{B}}
          \ArRB$, where $\ArRB$ is the full subcategory of
          $\Ar(\mathcal{B}) := \Fun([1],\mathcal{B})$ spanned by the
          morphisms in $\mathcal{B}_{R}$, the fiber product is over
          evaluation at $0 \in [1]$, and the projection to
          $\mathcal{B}$ uses evaluation at $1$.
    \item 
          The induced functor $\CatIBL \to
          (\CatIcoc{\mathcal{B}})_{/\ArRB}$ is fully
          faithful, and a morphism $\mathcal{E} \to \ArRB$ in
          $\CatIcoc{\mathcal{B}}$ lies in the image of $\CatIBL$
          \IFF{} it is $\ArRB$-\emph{equifibered}, meaning that
          for every $\phi \colon a \to b$ in $\ArRB$ the commutative square
          \[\begin{tikzcd}
              \mathcal{E}_{a} \arrow{d} \arrow{r}{\phi_{!}} &
              \mathcal{E}_{b} \arrow{d} \\
              (\mathcal{B}_{R})_{/a} \arrow{r}{\phi_{!}} &
              (\mathcal{B}_{R})_{/b}
            \end{tikzcd}\]
          is cartesian.
    \end{enumerate}
\end{thmA}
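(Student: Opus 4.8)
The plan is to prove (1) and (2) by exhibiting the adjunction through an explicit formula and then identifying the essential image fibrewise over $\calB$. First I would record the two structural inputs coming purely from the factorization system. The first is that $\ev_{1}\colon \ArRB \to \calB$ is a cocartesian fibration, in which a morphism is cocartesian precisely when its underlying commutative square has top edge in $\calB_{L}$; concretely the pushforward of $f\colon x\to a$ along $g\colon a\to b$ is the $\calB_{R}$-part of the $(\calB_{L},\calB_{R})$-factorization of $g\circ f$. The second is that the identity section $s\colon \calB \to \ArRB$, $x\mapsto \mathrm{id}_{x}$, sends every morphism in $\calB_{L}$ to a cocartesian morphism (factoring $\ell = \ell\circ\mathrm{id}$ leaves $\calB_{R}$-part $\mathrm{id}$). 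Granting these, for $\calE\to\calB$ in $\CatIBL$ the map $\ev_{1}\circ\pr_{2}\colon \calE\times_{\calB}\ArRB \to \calB$ is a cocartesian fibration: a lift of $g\colon a\to b$ at $(e, f\colon x\to a)$ is obtained by factoring $g\circ f = f'\circ\ell$ with $\ell\in\calB_{L}$, $f'\in\calB_{R}$, and pairing the cocartesian lift $e\to\ell_{!}e$ in $\calE$ with the cocartesian square $f\to f'$ in $\ArRB$. This is exactly where the hypothesis that $\calE$ has cocartesian lifts of $\calB_{L}$-morphisms is used.

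To prove that $\calE\mapsto\calE\times_{\calB}\ArRB$ is left adjoint to the forgetful functor, I would take as unit $\eta_{\calE}\colon \calE\to\calE\times_{\calB}\ArRB$, $e\mapsto(e,\mathrm{id}_{p(e)})$, which lands along $s$ and preserves $\calB_{L}$-cocartesian morphisms, and then show that precomposition with $\eta_{\calE}$ is an equivalence $\Map_{\CatIcoc{\calB}}(\calE\times_{\calB}\ArRB,\calF)\xrightarrow{\sim}\Map_{\CatIBL}(\calE,\calF)$ for every cocartesian fibration $\calF$. The inverse sends $h\colon\calE\to\calF$ (over $\calB$, preserving $\calB_{L}$-cocartesian morphisms) to $(e,f\colon x\to a)\mapsto f_{!}h(e)$, using that $\calF$ has all cocartesian pushforwards. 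The crucial verification is that this extension preserves cocartesian morphisms: on the lift above it carries the source $f_{!}h(e)$ to $f'_{!}h(\ell_{!}e)=f'_{!}\ell_{!}h(e)=(f'\ell)_{!}h(e)=(gf)_{!}h(e)=g_{!}f_{!}h(e)$, where the second equality is precisely the hypothesis that $h$ preserves $\calB_{L}$-cocartesian morphisms. Conversely any cocartesian-preserving functor out of $\calE\times_{\calB}\ArRB$ is determined by its restriction along $\eta_{\calE}$, since each $(e,f)$ receives a cocartesian morphism from $(e,\mathrm{id})$ over $f$. Upgrading this object-level bijection to a coherent adjunction I would handle either via the mapping-space criterion or, more robustly, by modelling $\CatIBL$ and $\CatIcoc{\calB}$ through $\calB_{L}$-marked simplicial sets over $\calB$, where the construction is a Quillen adjunction.

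For (2), write $F$ for the left adjoint and note $\ArRB = F(\mathrm{id}_{\calB})$ with $\mathrm{id}_{\calB}$ terminal in $\CatIBL$, so the sliced functor is $\calE\mapsto(F\calE\to\ArRB)$. Full faithfulness reduces, by the usual computation of slice mapping spaces through the adjunction, to the claim that the unit square
\[
\begin{tikzcd}
\calE \arrow{r}{\eta_{\calE}} \arrow{d} & \calE\times_{\calB}\ArRB \arrow{d}{\pr_{2}} \\
\calB \arrow{r}{s} & \ArRB
\end{tikzcd}
\]
is cartesian in $\CatIBL$, which holds because pulling $\calE\times_{\calB}\ArRB$ back along $s$ returns exactly the identity-section copy of $\calE$. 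For the essential image, the "only if" direction is a base-change computation: over an object $\phi\colon a\to b$ of $\ArRB$ (a $\calB_{R}$-morphism) the bottom pushforward $(\calB_{R})_{/a}\to(\calB_{R})_{/b}$ is postcomposition with $\phi$, its $\calB_{L}$-part being an identity, so the equifibered square for $F\calE$ is obtained by applying $\calE\times_{\calB}(-)$ to this postcomposition functor and is therefore cartesian.

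The substance is the converse. Given an equifibered object $\calD\to\ArRB$ in $\CatIcoc{\calB}$, I would set $\calE:=\calD\times_{\ArRB,s}\calB$, the pullback along the identity section; since $s$ carries $\calB_{L}$-morphisms to cocartesian morphisms of $\ArRB$, this pullback inherits cocartesian lifts of $\calB_{L}$-morphisms, so $\calE\in\CatIBL$, and the projection $\calE\to\calD$ over $\calB$ adjoints to a comparison $F\calE\to\calD$ over $\ArRB$. To see it is an equivalence I would check fibrewise over $\calB$: the equifibered square for a $\calB_{R}$-arrow $f\colon x\to a$ identifies $\calD_{x}\simeq\calD_{a}\times_{(\calB_{R})_{/a}}(\calB_{R})_{/x}$, and taking the fibre over $\mathrm{id}_{x}$ shows that the fibre of $\calD_{a}\to(\calB_{R})_{/a}$ over $f$ is exactly $\calE_{x}$; assembling over all $f\in(\calB_{R})_{/a}$ yields $\calD_{a}\simeq\calE\times_{\calB}(\calB_{R})_{/a}=(F\calE)_{a}$, compatibly with the comparison $(e,f)\mapsto f_{!}e$. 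I expect the main obstacle to be not any one computation but keeping the whole package coherent — producing the adjunction and the comparison map as genuine morphisms of \icats{} rather than assignments on objects — which is why I would fix the marked-simplicial-set model at the outset and phrase the unit, the extension $(e,f)\mapsto f_{!}h(e)$, and the equifibered base-change all within that model.
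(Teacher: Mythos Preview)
Your overall strategy matches the paper's: both build the adjunction by exhibiting the unit $\eta_{\calE}\colon e\mapsto(e,\id_{p(e)})$ and an explicit inverse to precomposition, both prove full faithfulness of the sliced left adjoint by showing the unit square over $s\colon\calB\to\ArRB$ is cartesian, and both handle the ``only if'' direction of the image description by a pasting/base-change argument. The paper also flags the coherence issue you mention and handles it by working with the $\Fun$-level adjunction (your marked-simplicial-set suggestion would serve the same purpose).

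There is, however, a genuine gap in your ``if'' direction. You correctly deduce from the equifibered square for $f\colon x\to a$ that the fibre of $\calD_a\to(\calB_R)_{/a}$ over $f$ is $\calE_x$, matching the fibre of $(F\calE)_a$. But you then write ``assembling over all $f$ yields $\calD_a\simeq(F\calE)_a$'', and this inference is not valid: a functor over a base that is an equivalence on each fibre need not be an equivalence (take the inclusion $\{0,1\}\hookrightarrow[1]$ over $[1]$). Neither $(F\calE)_a\to(\calB_R)_{/a}$ nor $\calD_a\to(\calB_R)_{/a}$ is a cocartesian (or cartesian) fibration in general, so fibrewise checking does not suffice without a separate full-faithfulness argument that you have not supplied.

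The paper avoids this by organising the check differently. Rather than taking fibres over $\calB$, it factors the counit as $\rmE(\rmQ\calD)\to\rmE(\calD)\to\calD$ over $\ArRB\to\rmE(\ArRB)\to\ArRB$ and shows the right-hand square is cartesian. The point is that the vertical maps $\rmE(\calD)\to\ArRB$ and $\calD\to\calB$ (and their $\ArRB$-analogues) are genuine cocartesian fibrations, so one may check the square is cartesian by taking fibres over each $\phi\in\ArRB$; unwinding, the resulting square of fibres is \emph{exactly} one of the equifibered squares assumed cartesian. Thus the fix is to check fibres over $\ArRB$, where the cocartesian structure legitimises the fibrewise reduction, rather than over $\calB$.
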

We emphasize that only the second point here is actually new ---
the first point has already been proved by both
Ayala, Mazel-Gee, and Rozenblyum~\cite{AyalaMazelGeeRozenblyum} and
Shah~\cite{Shah}.

We then review algebraic patterns in \S\ref{sec:patterns}, where we
also introduce the condition of soundness for patterns.
In \S\ref{sec:fbrs} we define fibrous patterns, specialize \cref{thm:partial-SU-introduction}
to this context to prove \cref{thm:introenv}, and explore several examples.
We are then ready to prove \cref{introthm:comp} in \S\ref{sec:comp},
where we also discuss the applications and an $(\infty,2)$-categorical version of \cref{introthm:comp}.

\subsubsection*{Acknowledgments}
\label{sec:acknowledgments}
We thank Manuel Krannich and the anonymous referee for helpful
corrections.  SB: I would like to thank Lior Yanovski for helpful
conversations.  RH: Substantial parts of this paper were written
during visits to the University of Regensburg in December 2021 and the
Centre de Recerca Matem\`atica at the Universitat Aut\`onoma de
Barcelona in May--June 2022, and I thank both for providing a very
pleasant working environment.  JS: I would like to thank Clark Barwick
and Jay Shah for helpful conversations during the Mathematical
Sciences Research Institute program (supported by NSF grant
no. DMS-1928930) at the Universidad Nacional Autónoma de México
(Cuernavaca campus) in June 2022.

\section{Envelopes for factorization systems}\label{sec:factenv}

Our goal in this section is to prove
\cref{thm:partial-SU-introduction}. We begin in \S\ref{sec:cocsubcat} by
explicitly describing the general procedure of freely adding cocartesian morphisms
over a wide subcategory $\mathcal{B}_{0}$ of $\mathcal{B}$ to a functor $p
\colon \mathcal{E} \to \mathcal{B}$, and then in \S\ref{sec:freefibfact} we specialize
this to the situation where $\mathcal{B}_{0}$ is the right class of a
factorization system and $\mathcal{E}$ already has $p$-cocartesian
morphisms over the left class. As already mentioned, these results are
not new, but we include complete proofs to make the paper
more self-contained. In \S\ref{sec:fullfaithsl} we then prove the new part
of \cref{thm:partial-SU-introduction}: we observe that for the induced
adjunction on slices the left adjoint is fully faithful, and identify
its image.

\subsection{Adding cocartesian morphisms over a subcategory}\label{sec:cocsubcat}

Let $\calB$ be an \icat{} equipped with a wide subcategory
$\calB_{0}$, and write $\Cat_{\infty/\calB}^{\calB_0-\mathrm{cocart}}$ for
the subcategory of $\Cat_{\infty/\calB}$ whose objects have
all cocartesian lifts of morphisms in $\calB_{0}$ and whose
morphisms preserve these. The aim of this subsection is to show that the forgetful functor
\[ \Cat_{\infty/\calB}^{\calB_0-\mathrm{cocart}} \to
  \Cat_{\infty/\calB} \]
admits an (explicitly defined) left adjoint. Before explaining the construction of the left
adjoint, let us first fix some notation: We let $\Ar(\calB):=\Fun([1],\calB)$ denote the arrow \icat{} of $\calB$, and write $\Ar_0(\calB)$ for
the full subcategory of $\ArB$ spanned by morphisms in
$\calB_{0}$. The left adjoint of the forgetful functor above is then given by
\[\left(\mathcal{E} \to \calB \right) \longmapsto \left(\mathcal{E} \times_{\calB} \Ar_0(\calB) \to \calB \right).\]
where the fiber product is over $\ev_{0} \colon
\Ar_0(\calB) \to \calB$, and the map $\mathcal{E} \times_{\calB}
\Ar_0(\calB) \to \calB$ is given by $\ev_{1}$. We will prove this by showing that for
any $\mathcal{E} \in \Cat_{\infty/\calB}$ and $\mathcal{F} \in
\Cat_{\infty/\calB}^{\calB_0-\mathrm{cocart}}$, 
restriction yields a natural equivalence:
\[ \Fun^{\calB_0-\mathrm{cocart}}_{/\calB}(\mathcal{E}
  \times_{\calB} \Ar_0(\calB), \mathcal{F}) \isoto
  \Fun_{/\calB}(\mathcal{E}, \mathcal{F}),\]
where the left-hand side consists of functors that preserve
cocartesian morphisms over $\calB_{0}$. This result is by
no means new, and has already appeared in
\cite{AyalaMazelGeeRozenblyum} and \cite{Shah}, but we include a
proof for completeness, as this is the key input needed for our work
in this paper.

\begin{notation}
  Since $\calB_0$ is a wide subcategory, the degeneracy map
  $s_{0}^{*} \colon \calB\to \Ar(\mathcal{B})$ restricts to a
  functor $i \colon \calB\to \Ar_0(\calB)$, taking an object of
  $\calB$ to its identity map. We also have evaluation maps
  $\ev_{0},\ev_{1}\colon \Ar_0(\calB) \to \calB$, and natural
  transformations
  $\sigma \colon i\circ \ev_{0} \to \id$ and $\tau \colon \id \to i \circ \ev_{1}$, given for an
  object $x \xto{\phi} y$ by the squares
  \[
    \begin{tikzcd}
      x \arrow[equals]{r} \arrow[equals]{d} & x \arrow{d}{\phi} \\
      x \arrow{r}{\phi} & y,
    \end{tikzcd}
    \qquad \qquad
    \begin{tikzcd}
      x \arrow{r}{\phi} \arrow{d}{\phi} & y \arrow[equals]{d} \\
      y \arrow[equals]{r} & y,
    \end{tikzcd}
  \]
  respectively. For any functor $p \colon \mathcal{E} \to
  \calB$, the functor $i$ induces a section $i_{\mathcal{E}}
  \colon \mathcal{E} \to \mathcal{E} \times_{\calB}\Ar_0(\calB)$
  of the projection $\pr_{\mathcal{E}} \colon \mathcal{E}
  \times_{\calB}\Ar_0(\calB) \to \mathcal{E}$, and $\sigma$
  induces a natural transformation $\sigma_{\mathcal{E}}\colon
  i_{\mathcal{E}}\pr_{\mathcal{E}} \to \id$.
\end{notation}

\begin{observation}
  Suppose $p \colon \mathcal{E} \to \calB$ is cocartesian over
  $\calB_0$. Then $i_{\mathcal{E}} \colon \mathcal{E} \to
  \mathcal{E} \times_{\calB} \Ar_0(\calB)$ has a left adjoint $\pi_{\mathcal{E}}$:
  Such an adjoint exists \IFF{}, given an object $(x, \phi \colon px
  \to b)$, there is an initial object in
  the \icat{}
  \[ \mathcal{E}_{(x,\phi)/} := \mathcal{E} \times_{\mathcal{E} \times_{\calB} \Ar_0(\calB)} \left(\mathcal{E} \times_{\calB} \Ar_0(\calB)\right)_{(x,\phi)/}
   \simeq \mathcal{E}_{x/}
    \times_{\calB_{px/}} \calB_{b/} \] with the functor
  $\calB_{b/} \to \calB_{px/}$ given by composition with $\phi$.
  A cocartesian morphism $x \to \phi_{!}x$ is precisely an
  initial object in the right-hand side that maps to the identity in
  $\calB_{b/}$. Thus $\pi_{\mathcal{E}}$ takes
  $(x, \phi \colon px \to b)$ to the target $\phi_{!}x$ of the
  cocartesian morphism over $\phi$. Note that we have
  $\pi_{\mathcal{E}} i_{\mathcal{E}} \simeq \id$, and the unit
  transformation $\id \to i_{\mathcal{E}} \pi_{\mathcal{E}}$ is given
  at $(x, \phi)$ by
  \[
    \begin{tikzcd}
      x \arrow{d} &  px \arrow{r}{\phi} \arrow{d}{\phi} & b
      \arrow[equals]{d} \\
      \phi_{!}x, &  b \arrow[equals]{r} & b.
    \end{tikzcd}
  \]
  Moreover, this is an adjunction over $\calB$ in the sense that we have commutative squares
  \[
    \begin{tikzcd}
        \calE \ar[r, hook, "\iota_\calE"] \ar[d, "p"] & \calE \times_{\calB} \Ar_0(\calB) \ar[d, "\ev_1"] \\
        \calB \ar[r, equal] & \calB
    \end{tikzcd}
    \qquad \text{and} \qquad
    \begin{tikzcd}
        \calE \times_\calB \Ar_0(\calB) \ar[d, "\ev_1"] \ar[r, "\pi_\calE"] & \calE \ar[d, "p"] \\
        \calB \ar[r, equal] & \calB.
    \end{tikzcd}
  \]
  For the left square this holds by construction and for the right square we have a transformation $p \circ \pi_\calE = \ev_1 \circ \iota_\calE \circ \pi_\calE \to \ev_1$ coming from the counit of $\pi_\calE \dashv \iota_\calE$.
  This is an equivalence by the point-wise description of $\pi_\calE$ given above.
\end{observation}

\begin{observation}
  Given $p \colon \mathcal{E} \to \calB$, observe that
  $\mathcal{E} \times_{\calB} \Ar_0(\calB)$ is cocartesian over
  $\calB_0$, with cocartesian morphisms given by composition
  in $\Ar_0(\calB)$. (For instance, we can write $\mathcal{E}
  \times_{\calB} \Ar_0(\calB)$ as a pullback $(\mathcal{E} \times
  \calB) \times_{(\calB\times \calB)} \Ar_0(\calB)$
  over $\calB$, where all three \icats{} appearing are
  cocartesian over $\calB_0$.)   
\end{observation}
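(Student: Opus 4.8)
The plan is to deduce the claim from the pullback presentation recorded in the parenthetical above, combined with the general principle that the property of being cocartesian over $\calB_{0}$ is stable under pullbacks in $\Cat_{\infty/\calB}$ formed along functors preserving cocartesian lifts of $\calB_{0}$-morphisms, and that the cocartesian morphisms in such a pullback are detected componentwise. Concretely, I would view
\[
  \begin{tikzcd}
    \mathcal{E} \times_{\calB} \Ar_0(\calB) \arrow{r} \arrow{d} &
    \Ar_0(\calB) \arrow{d}{(\ev_{0},\ev_{1})} \\
    \mathcal{E} \times \calB \arrow{r}{p \times \id} & \calB \times \calB
  \end{tikzcd}
\]
as a pullback square in $\Cat_{\infty/\calB}$, where each corner carries the functor to $\calB$ given by $\ev_{1}$ on the $\Ar_0(\calB)$-factor and by the second projection on the $\calB$-factor; these are compatible with the horizontal functors since $\ev_{1} = \pr_{2}\circ(\ev_{0},\ev_{1})$.

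First I would equip the three known corners with their cocartesian-over-$\calB_{0}$ structures. The product projections $\mathcal{E}\times\calB\to\calB$ and $\calB\times\calB\to\calB$ are cocartesian fibrations, the cocartesian lift of $\psi$ at $(e,b)$ being $(\id_{e},\psi)$. For $\ev_{1}\colon\Ar_0(\calB)\to\calB$, recall that $\ev_{1}\colon\Ar(\calB)\to\calB$ is a cocartesian fibration whose cocartesian lift of $\psi\colon b\to b'$ at $\phi\colon a\to b$ is the square with identity top edge exhibiting $\psi\phi\colon a\to b'$, i.e.\ post-composition by $\psi$. Since $\calB_{0}$ is a wide subcategory it is closed under composition, so this lift has target $\psi\phi\in\calB_{0}$ and therefore lies in the \emph{full} subcategory $\Ar_0(\calB)\subseteq\Ar(\calB)$; because $\Ar_0(\calB)$ is full and contains the target, the lift remains $\ev_{1}$-cocartesian in $\Ar_0(\calB)$. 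This post-composition description is exactly the source of the phrase ``composition in $\Ar_0(\calB)$''.

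Next I would verify that the two legs are morphisms of cocartesian-over-$\calB_{0}$ objects: $p\times\id$ carries the lift $(\id_{e},\psi)$ to $(\id_{pe},\psi)$, again a projection-cocartesian lift, while $(\ev_{0},\ev_{1})$ carries the post-composition square over $\psi$ to $(\id_{a},\psi)$, the cocartesian lift in $\calB\times\calB$; hence both preserve cocartesian lifts of $\calB_{0}$-morphisms. Granting the stability principle, the pullback $\mathcal{E}\times_{\calB}\Ar_0(\calB)$ is then cocartesian over $\calB_{0}$, and a morphism is a cocartesian lift of a $\calB_{0}$-morphism exactly when its images in $\mathcal{E}\times\calB$ and in $\Ar_0(\calB)$ are. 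Reading this off at $(e,\phi\colon pe\to b)$ over $\psi\colon b\to b'$: the lift fixes the $\mathcal{E}$-component and is post-composition $\psi\circ(-)$ in the $\Ar_0(\calB)$-factor, landing at $(e,\psi\phi)$ --- precisely composition in $\Ar_0(\calB)$, as claimed.

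The only genuine input, and hence the main obstacle, is the stability principle itself. I would prove it directly via mapping spaces: mapping spaces in a pullback of \icats{} are the corresponding pullbacks of mapping spaces, and the condition that a componentwise lift be cocartesian is a pullback condition on $\Map$-spaces, so the statement reduces to reassociating iterated pullbacks --- using uniqueness of cocartesian lifts to canonically identify the two images of the lift in the shared corner $\calB\times\calB$. This route directly produces a lift that is cocartesian with respect to all of $\calB$, as required. (One could instead restrict the entire square along $\calB_{0}\hookrightarrow\calB$ and invoke the straightening equivalence $\Cat^{\coc}_{\infty/\calB_{0}}\simeq\Fun(\calB_{0},\CatI)$, under which the pullback becomes the pointwise pullback of classifying functors and limits are computed pointwise; this route additionally requires checking that the resulting lifts, a priori cocartesian only over $\calB_{0}$, are cocartesian over all of $\calB$ --- which is exactly what the full-subcategory argument above secures for the corners.) Either way the argument is routine, consistent with the authors' remark that the result is not new.
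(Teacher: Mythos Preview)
Your proposal is correct and is precisely a fleshed-out version of the paper's parenthetical hint: the paper offers no further proof beyond noting the pullback decomposition $(\mathcal{E}\times\calB)\times_{\calB\times\calB}\Ar_0(\calB)$ over $\calB$ with all three corners cocartesian over $\calB_0$, and you have supplied the routine verifications (cocartesian structures on the corners, preservation by the legs, and the stability-under-pullback principle) that the paper leaves implicit. Your care in distinguishing ``cocartesian over $\calB_0$'' from ``$p$-cocartesian for $p$ the full map to $\calB$'' is appropriate and matches how the observation is used downstream.
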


\begin{propn}\label{propn:freefibdesc}
  If $q \colon \mathcal{F} \to
  \calB$ is cocartesian over $\calB_0$, composition with
  $i_{\mathcal{E}}$ gives a functor
  \[ \Fun^{\calB_0-\mathrm{cocart}}_{/\calB}(\mathcal{E}
    \times_{\calB}\Ar_0(\calB), \mathcal{F}) \to
    \Fun_{/\calB}(\mathcal{E}, \mathcal{F}).\]
  This is an equivalence, with inverse given by taking $F \colon
  \mathcal{E} \to \mathcal{F}$ to the composite
  \[ \mathcal{E} \times_{\calB} \Ar_0(\calB) \xto{F
      \times_{\calB} \Ar_0(\calB)} \mathcal{F}
    \times_{\calB} \Ar_0(\calB) \xto{\pi_{\mathcal{F}}} \mathcal{F}.\]
\end{propn}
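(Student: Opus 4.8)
The plan is to prove that the functor in question, which I abbreviate $R := (-)\circ i_{\mathcal{E}}$, and the proposed inverse $L := \pi_{\mathcal{F}}\circ\bigl((-)\times_{\calB}\Ar_0(\calB)\bigr)$ are mutually inverse equivalences. Write $\mathcal{E}' := \mathcal{E}\times_{\calB}\Ar_0(\calB)$, $\mathcal{F}' := \mathcal{F}\times_{\calB}\Ar_0(\calB)$ and $\mathcal{E}'' := \mathcal{E}'\times_{\calB}\Ar_0(\calB)$. First I would check both functors are well defined. For $R$ this is clear, as $i_{\mathcal{E}}$ is a map over $\calB$. For $L$ the point is that $L(F)$ must preserve $\calB_0$-cocartesian morphisms: the functor $(-)\times_{\calB}\Ar_0(\calB)$ does so because, as recalled in the observations above, these morphisms are detected in the $\Ar_0(\calB)$-coordinate, on which it acts as the identity; and $\pi_{\mathcal{F}}$ sends such a morphism $(y,\phi)\to(y,\gamma\phi)$ to the comparison $\phi_{!}y\to(\gamma\phi)_{!}y\simeq\gamma_{!}\phi_{!}y$, which is cocartesian over $\gamma$.

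For the composite $R\circ L$, I would use that the square comparing $i_{\mathcal{E}}$ and $i_{\mathcal{F}}$ commutes, i.e. $(F\times_{\calB}\Ar_0(\calB))\circ i_{\mathcal{E}}\simeq i_{\mathcal{F}}\circ F$; this holds because both functors are induced by the single map $i\colon\calB\to\Ar_0(\calB)$ in the $\Ar_0(\calB)$-coordinate while $F$ acts in the other. Post-composing with $\pi_{\mathcal{F}}$ and invoking $\pi_{\mathcal{F}}\circ i_{\mathcal{F}}\simeq\id$ from the observations then yields $R(L(F))\simeq\pi_{\mathcal{F}}\circ i_{\mathcal{F}}\circ F\simeq F$, naturally in $F$.

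For the composite $L\circ R$, given $G$ preserving $\calB_0$-cocartesian morphisms I would factor $L(R(G))=\pi_{\mathcal{F}}\circ\bigl(G\times_{\calB}\Ar_0(\calB)\bigr)\circ\bigl(i_{\mathcal{E}}\times_{\calB}\Ar_0(\calB)\bigr)$. The crux is a naturality statement for $\pi$: the square with verticals $i_{\mathcal{E}'}$, $i_{\mathcal{F}}$ and horizontals $G$, $G\times_{\calB}\Ar_0(\calB)$ commutes, and passing to the left adjoints $\pi_{\mathcal{E}'}$, $\pi_{\mathcal{F}}$ produces a mate $\pi_{\mathcal{F}}\circ(G\times_{\calB}\Ar_0(\calB))\to G\circ\pi_{\mathcal{E}'}$. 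Granting that this mate is an equivalence, $L(R(G))\simeq G\circ\pi_{\mathcal{E}'}\circ(i_{\mathcal{E}}\times_{\calB}\Ar_0(\calB))$, and it remains to identify $\pi_{\mathcal{E}'}\circ(i_{\mathcal{E}}\times_{\calB}\Ar_0(\calB))\simeq\id_{\mathcal{E}'}$: the functor $i_{\mathcal{E}}\times_{\calB}\Ar_0(\calB)$ inserts an identity, sending $(x,\alpha)\mapsto((x,\id_{px}),\alpha)$, and $\pi_{\mathcal{E}'}$ composes the two $\Ar_0(\calB)$-coordinates, returning $(x,\alpha\circ\id_{px})=(x,\alpha)$.

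The hard part will be promoting these object-level identifications to coherent equivalences of functor \icats{}. Specifically, the naturality of $\pi$ must be set up as a genuine mate of the commuting square above and shown to be an equivalence by the Beck--Chevalley criterion for cocartesian fibrations --- using precisely that $G$, and hence also $G\times_{\calB}\Ar_0(\calB)$, preserves cocartesian morphisms over $\calB_0$ --- rather than merely checked pointwise; on objects it is the comparison $\beta_{!}G(x,\alpha)\simeq G(\beta_{!}(x,\alpha))$. Likewise the equivalence $\pi_{\mathcal{E}'}\circ(i_{\mathcal{E}}\times_{\calB}\Ar_0(\calB))\simeq\id$ should be obtained functorially from the composition structure of $\Ar_0(\calB)$ and the explicit unit of the adjunction $\pi_{\mathcal{E}'}\dashv i_{\mathcal{E}'}$ supplied by the observations, so that the two displayed equivalences assemble into an inverse to $R$.
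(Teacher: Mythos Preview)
Your proposal is correct, and the $R\circ L\simeq\id$ direction matches the paper's argument verbatim. For $L\circ R\simeq\id$ you take a slightly different route than the paper: you introduce the iterated object $\mathcal{E}'' = \mathcal{E}'\times_{\calB}\Ar_0(\calB)$ and its left adjoint $\pi_{\mathcal{E}'}$, factor $(Gi_{\mathcal{E}})\times_{\calB}\Ar_0(\calB)$ as $(G\times_{\calB}\Ar_0(\calB))\circ(i_{\mathcal{E}}\times_{\calB}\Ar_0(\calB))$, and then invoke a Beck--Chevalley/mate argument together with the left-unit identity $\pi_{\mathcal{E}'}\circ(i_{\mathcal{E}}\times_{\calB}\Ar_0(\calB))\simeq\id$. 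The paper instead avoids $\mathcal{E}''$ entirely: it writes $\pr_{\mathcal{F}}\circ(Fi_{\mathcal{E}}\times_{\calB}\Ar_0(\calB))$ as $F\circ i_{\mathcal{E}}\circ\pr_{\mathcal{E}}$, uses $\sigma_{\mathcal{E}}\colon i_{\mathcal{E}}\pr_{\mathcal{E}}\to\id$ to get a transformation $\alpha$ landing in $\mathcal{F}$, then lifts this to $\beta\colon(Fi_{\mathcal{E}}\times_{\calB}\Ar_0(\calB))\to i_{\mathcal{F}}F$ by hand using $\tau$ and the explicit square description, and finally applies $\pi_{\mathcal{F}}$. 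Both arguments reduce to the same pointwise check---that the comparison $\phi_{!}F(e,\id)\to F(e,\phi)$ is an equivalence because $F$ preserves $\calB_0$-cocartesian morphisms---so the difference is packaging: your mate formulation is cleaner conceptually and makes the role of the cocartesian-preservation hypothesis transparent as a Beck--Chevalley condition, while the paper's explicit construction avoids setting up an extra adjunction and extra fiber product.
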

\begin{proof}
  Given $G \colon \mathcal{E} \to \mathcal{F}$, the
  definition of the sections $i_{\mathcal{E}}$ and $i_{\mathcal{F}}$
  give
  \[(G \times_{\calB} \Ar_0(\calB)) \circ
    i_{\mathcal{E}} \simeq i_{\mathcal{F}}\circ G, \]
  and so we have
  \[ \pi_{\mathcal{F}} \circ (G \times_{\calB} \Ar_0(\calB)) \circ
    i_{\mathcal{E}} \simeq \pi_{\mathcal{F}}\circ
    i_{\mathcal{F}}\circ G \simeq G. \]
  In the other direction, given $F \colon \mathcal{E}
  \times_{\calB} \Ar_0(\calB) \to \mathcal{F}$ that preserves
  cocartesian morphisms over $\calB_0$, we have to show that
  $F$ is naturally equivalent to $\pi_{\mathcal{F}} \circ
  (Fi_{\mathcal{E}} \times_{\calB} \Ar_0(\calB))$. Here we can
  write $\pr_{\mathcal{F}} \circ (Fi_{\mathcal{E}} \times_{\calB} \Ar_0(\calB))$ as the
  composite
  \[ \mathcal{E} \times_{\calB} \Ar_0(\calB)
    \xto{\pr_{\mathcal{E}}} \mathcal{E} \xto{i_{\mathcal{E}}}
    \mathcal{E} \times_{\calB} \Ar_0(\calB) \xto{F}
    \mathcal{F},\]
  so that $\sigma_{\mathcal{E}}$ induces a natural transformation
  \[ \alpha \colon \pr_{\mathcal{F}} \circ (Fi_{\mathcal{E}} \times_{\calB}
    \Ar_0(\calB))  \to F. \]
  Note that this is given at $(e, \phi\colon p(e) \to b)$ by the image
  $F(e,\id_{p(e)}) \to F(e,\phi)$ of a cocartesian morphism in
  $\mathcal{E}\times_{\calB}\Ar_0(\calB)$, and so is cocartesian
  in $\mathcal{F}$ since by assumption $F$ preserves cocartesian
  morphisms over $\calB_0$. Projecting to $\calB$, we
  see that $q\alpha$ factors as the projection to $\Ar_0(\calB)$
  followed by the evaluation map $\Ar_0(\calB) \times [1] \to \calB$.
  We can therefore define a natural transformation
  \[ \beta \colon \mathcal{E} \times_{\calB} \Ar_0(\calB) \times [1] \to
    \mathcal{F} \times_{\calB} \Ar_0(\calB)\]
  via the commutative diagram
  \[
    \begin{tikzcd}
    \mathcal{E} \times_{\calB} \Ar_0(\calB) \times [1] \arrow{r}{\alpha} \arrow{d} &
    \mathcal{F} \arrow{dd}{q} \\
    \Ar_0(\calB) \times [1] \arrow{d}{\tau} \arrow{dr}{\ev} \\
    \Ar_0(\calB) \arrow{r}{s} & \calB.
    \end{tikzcd}
  \]
  Here $\beta$ is a natural transformation $(Fi_{\mathcal{E}} \times_{\calB}
    \Ar_0(\calB))  \to i_{\mathcal{F}}F$, and
  takes $(e, \phi \colon p(e) \to b)$ to
  $(F(e,\id_{p(e)}), \phi) \to (F(e, \phi), \id_{c})$. Composing with
  $\pi_{\mathcal{F}}$ we get a natural transformation
  $\pi_{\mathcal{F}}\beta \colon \pi_{\mathcal{F}} \circ (Fi_{\mathcal{E}} \times_{\calB}
  \Ar_0(\calB))  \to \pi_{\mathcal{F}}i_{\mathcal{F}}F \simeq
  F$. This is given at $(e, \phi)$ by the canonical morphism
  $\phi_{!}F(e,\id) \to F(e,\phi)$. Since $F$ preserves cocartesian
  morphisms over $\calB_0$, this is an equivalence, and so we
  have obtained the natural equivalence we required.
\end{proof}

\begin{cor}\label{cor:freecocadj}
  The forgetful functor
  \[ \CatIzcoc{\mathcal{B}} \to
    \CatIsl{\mathcal{B}}\]
  has a left adjoint given by
  \[ (\mathcal{E} \to \calB) \quad\mapsto\quad \mathcal{E}
    \times_{\calB} \Ar_0(\calB) = s^{*}\mathcal{E} \to \Ar_0(\calB)
    \xto{t} \calB,\]
  and unit given by $i_{\mathcal{E}} \colon \mathcal{E} \to \mathcal{E}
  \times_{\calB} \Ar_0(\calB).$
\end{cor}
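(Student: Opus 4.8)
The plan is to deduce this corollary formally from \cref{propn:freefibdesc}, by reading the natural equivalence established there as the defining property of an adjunction, once the mapping spaces in the two subcategories of $\Cat_{\infty/\calB}$ have been identified. First I would record that mapping spaces in $\CatIsl{\mathcal{B}}$ are the cores of the corresponding functor \icats{} over $\calB$, i.e. $\Map_{\CatIsl{\mathcal{B}}}(\mathcal{E}, \mathcal{F}) \simeq \Fun_{/\calB}(\mathcal{E}, \mathcal{F})^{\simeq}$, while by the very definition of the subcategory $\CatIzcoc{\mathcal{B}}$ the mapping space out of $\mathcal{E} \times_{\calB} \Ar_0(\calB)$ into an object $\mathcal{F}$ that is cocartesian over $\calB_0$ is the subspace $\Fun^{\calB_0-\mathrm{cocart}}_{/\calB}(\mathcal{E} \times_{\calB} \Ar_0(\calB), \mathcal{F})^{\simeq}$ of functors preserving the cocartesian lifts over $\calB_0$.

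Before invoking the proposition I would check that the two pieces of data in the statement are legitimate: the second Observation above already shows that $\mathcal{E} \times_{\calB} \Ar_0(\calB)$ is cocartesian over $\calB_0$, hence defines an object of $\CatIzcoc{\mathcal{B}}$; and $i_{\mathcal{E}}$ is a section of $\pr_{\mathcal{E}}$ lying over $\calB$ (since $\ev_1 \circ i \simeq \id_{\calB}$), so it is a morphism $\mathcal{E} \to \mathcal{E} \times_{\calB} \Ar_0(\calB)$ in $\CatIsl{\mathcal{B}}$.

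With these identifications in place, passing to cores in \cref{propn:freefibdesc} says exactly that restriction along $i_{\mathcal{E}}$ induces an equivalence
\[ \Map_{\CatIzcoc{\mathcal{B}}}(\mathcal{E} \times_{\calB} \Ar_0(\calB), \mathcal{F}) \isoto \Map_{\CatIsl{\mathcal{B}}}(\mathcal{E}, \mathcal{F}) \]
for every $\mathcal{F} \in \CatIzcoc{\mathcal{B}}$, and that this equivalence is the composite of the forgetful map with precomposition by $i_{\mathcal{E}}$. This is precisely the objectwise hypothesis needed to conclude, via the standard criterion for the existence of adjoints (in the form that builds the adjoint functor from objectwise unit data), that the forgetful functor admits a left adjoint sending $\mathcal{E}$ to $\mathcal{E} \times_{\calB} \Ar_0(\calB)$ with unit $i_{\mathcal{E}}$; the criterion simultaneously produces the functoriality of $\mathcal{E} \mapsto \mathcal{E} \times_{\calB} \Ar_0(\calB)$ and the naturality of the unit, so no further bookkeeping is required.

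I expect essentially no serious obstacle here, since all the analytic content is already contained in \cref{propn:freefibdesc}. The only points demanding care are the identification of the relevant mapping spaces with (subspaces of) cores of functor \icats{}, and the verification that the equivalence of the proposition factors through the forgetful functor, so that the objectwise adjoint criterion genuinely applies.
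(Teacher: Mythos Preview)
Your proposal is correct and follows essentially the same approach as the paper: both pass from the functor-category equivalence of \cref{propn:freefibdesc} to the corresponding equivalence of mapping spaces induced by composition with $i_{\mathcal{E}}$, and then invoke the objectwise unit criterion for adjunctions. The paper's proof is simply a terser version of yours, omitting the explicit identification of mapping spaces with cores and the verification that $\mathcal{E} \times_{\calB} \Ar_0(\calB)$ lies in $\CatIzcoc{\mathcal{B}}$.
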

\begin{proof}
  By \cref{propn:freefibdesc}, for
   $\mathcal{E} \in \Cat_{\infty/\calB}$
  and $\mathcal{F} \in \Cat_{\infty/\calB}^{\calB_0-\mathrm{cocart}}$
  the composite 
  \[ \Map_{\Cat_{\infty/\calB}^{\calB_0-\mathrm{cocart}}}(\mathcal{E} \times_{\calB}
    \Ar_0(\calB), \mathcal{F}) \to
    \Map_{\Cat_{\infty/\calB}}(\mathcal{E} \times_{\calB}
    \Ar_0(\calB), \mathcal{F}) \xto{i_{\mathcal{E}}^{*}}
    \Map_{\Cat_{\infty/\calB}}(\mathcal{E}, \mathcal{F})
  \]
  is an equivalence, hence this natural transformation is indeed the
  unit of an adjunction.
\end{proof}

\begin{observation}
  The forgetful functors
  $\CatIzcoc{\mathcal{B}} \to
  \CatIsl{\mathcal{B}} \to \CatI$ detect pullbacks; in particular, the \icat{}
  $\CatIzcoc{\mathcal{B}}$ has all pullbacks.
  Indeed, given morphisms $\mathcal{E}_{1} \to \mathcal{E}_{0} \from
  \mathcal{E}_{2}$ in $\CatIzcoc{\mathcal{B}}$, it is easy to see that
  a morphism in
  the fiber product $\mathcal{E}_{1}\times_{\mathcal{E}_{0}}
  \mathcal{E}_{2}$ is cocartesian over $\mathcal{B}_{0}$ \IFF{} its
  images in $\mathcal{E}_{1}$ and $\mathcal{E}_{2}$ are cocartesian.
\end{observation}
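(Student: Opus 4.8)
The plan is to reduce everything to pullbacks in $\CatI$ and to isolate the one genuinely new ingredient, namely the levelwise detection of cocartesian morphisms in a fiber product. First I would recall that the forgetful functor $\CatIsl{\mathcal{B}} \to \CatI$ creates pullbacks (and more generally connected limits): the pullback of a cospan in the slice is computed by the pullback in $\CatI$ equipped with its induced structure map to $\mathcal{B}$. It therefore remains to analyze the non-full subcategory inclusion $\CatIzcoc{\mathcal{B}} \hookrightarrow \CatIsl{\mathcal{B}}$ and to show that, given a cospan $\mathcal{E}_1 \to \mathcal{E}_0 \from \mathcal{E}_2$ in $\CatIzcoc{\mathcal{B}}$, the pullback $\mathcal{P} := \mathcal{E}_1 \times_{\mathcal{E}_0} \mathcal{E}_2$ formed in $\CatIsl{\mathcal{B}}$ again lies in $\CatIzcoc{\mathcal{B}}$ and has the pullback universal property there.

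The crux is the stated characterization: a morphism $\phi = (\phi_1, \phi_2)$ in $\mathcal{P}$ over some $\bar\beta \in \mathcal{B}_0$ is $p$-cocartesian \IFF{} its images $\phi_1$ in $\mathcal{E}_1$ and $\phi_2$ in $\mathcal{E}_2$ are cocartesian. For the substantive ``if'' direction I would invoke the mapping-space criterion for cocartesianness: $\phi\colon x \to y$ over $\bar\beta\colon b_0 \to b_1$ is cocartesian precisely when, for every $z$, the commutative square with corners $\Map_{\mathcal{P}}(y,z)$, $\Map_{\mathcal{P}}(x,z)$, $\Map_{\mathcal{B}}(b_1, pz)$, $\Map_{\mathcal{B}}(b_0, pz)$ is cartesian. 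Expanding each mapping space out of $\mathcal{P}$ as the fiber product $\Map_{\mathcal{E}_1} \times_{\Map_{\mathcal{E}_0}} \Map_{\mathcal{E}_2}$, this square is identified with the levelwise fiber product, over the analogous $\mathcal{E}_0$-square, of the cocartesianness squares of $\phi_1$ and $\phi_2$; the $\mathcal{E}_1$- and $\mathcal{E}_2$-squares are cartesian by hypothesis, the $\mathcal{E}_0$-square is cartesian because the common image $\phi_0$ of $\phi_1, \phi_2$ is cocartesian (the functors $\mathcal{E}_i \to \mathcal{E}_0$ preserve cocartesian morphisms), and a fiber product of cartesian squares is cartesian. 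With this in hand, cocartesian lifts in $\mathcal{P}$ are constructed by gluing: given $x = (x_1, x_2)$ and $\bar\beta$, take cocartesian lifts $\psi_i$ in $\mathcal{E}_i$; their images in $\mathcal{E}_0$ are both cocartesian lifts of $\bar\beta$ out of the common object $x_0$, hence canonically identified, so $(\psi_1, \psi_2)$ is a morphism of $\mathcal{P}$, which is cocartesian by the ``if'' direction. The ``only if'' direction then follows from uniqueness of cocartesian lifts, since any cocartesian $\phi$ over $\bar\beta$ must agree with this glued lift.

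This characterization immediately yields that $\mathcal{P}$ has all cocartesian lifts over $\mathcal{B}_0$ and that the projections $\mathcal{P} \to \mathcal{E}_i$ preserve them, so that $\mathcal{P}$ and its cone lie in $\CatIzcoc{\mathcal{B}}$. For the universal property I would test against an arbitrary $\mathcal{F} \in \CatIzcoc{\mathcal{B}}$: the equivalence $\Map_{\CatIsl{\mathcal{B}}}(\mathcal{F}, \mathcal{P}) \simeq \Map_{\CatIsl{\mathcal{B}}}(\mathcal{F}, \mathcal{E}_1) \times_{\Map_{\CatIsl{\mathcal{B}}}(\mathcal{F}, \mathcal{E}_0)} \Map_{\CatIsl{\mathcal{B}}}(\mathcal{F}, \mathcal{E}_2)$ holds because $\mathcal{P}$ is a pullback in the slice, and by the characterization a functor $\mathcal{F} \to \mathcal{P}$ over $\mathcal{B}$ preserves cocartesian morphisms \IFF{} both composites $\mathcal{F} \to \mathcal{E}_i$ do. Hence the equivalence restricts to the subspaces of cocartesian-preserving maps, exhibiting $\mathcal{P}$ as the pullback in $\CatIzcoc{\mathcal{B}}$.

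Since $\CatI$ has all pullbacks, this simultaneously shows that $\CatIzcoc{\mathcal{B}}$ has all pullbacks and that both forgetful functors detect them (a square is cartesian \IFF{} its image is). I expect the mapping-space bookkeeping in the ``if'' direction to be the only real obstacle: the commutation of the nested fiber products is the single place where care is needed, while the construction of lifts, the universal property, and the slice-category input are all formal.
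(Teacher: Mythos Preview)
Your proposal is correct and follows exactly the approach the paper has in mind; the paper's own justification is just the single sentence asserting the cocartesian-morphism characterization, while you have carefully unpacked it via the mapping-space criterion (which the paper itself invokes in a later analogous observation). There is no difference in strategy, only in the level of detail supplied.
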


\begin{observation}\label{obs:freesubcatftr}
  Suppose $\mathcal{A}$ and $\mathcal{B}$ are \icats{} equipped with
  wide subcategories $\mathcal{A}_{0}$ and $\mathcal{B}_{0}$,
  respectively, and that $f \colon \mathcal{A} \to \mathcal{B}$ is a
  functor that takes $\mathcal{A}_{0}$ into
  $\mathcal{B}_{0}$. Pullback along $f$ clearly gives a commutative
  diagram
  \[
    \begin{tikzcd}
      \CatIzcoc{\mathcal{B}} \arrow{d}\arrow{r}{f^{*}} &
      \CatIzcoc{\mathcal{A}} \arrow{d} \\
      \CatIsl{\mathcal{B}} \arrow{r}{f^{*}} & \CatIsl{\mathcal{A}}.
    \end{tikzcd}
  \]
  We then have an induced Beck--Chevalley transformation between
  the left adjoints of the vertical maps, given for $p \colon \mathcal{E} \to
  \mathcal{B}$ by the natural map
  \[ (\mathcal{E} \times_{\mathcal{B}} \mathcal{A})
    \times_{\mathcal{A}} \Ar_{0}(\mathcal{A}) \to (\mathcal{E}
    \times_{\mathcal{B}} \Ar_{0}(\mathcal{B})) \times_{\mathcal{B}}
    \mathcal{A},\]
  which takes $(e \in \mathcal{E}, a\in \mathcal{A}, p(e) \simeq f(a),
  \phi \colon a \to a' \in \Ar_{0}(\mathcal{A}))$ to $(e, f(a),
  f(\phi),a)$. Note, however, that this is typically not
  an equivalence.
\end{observation}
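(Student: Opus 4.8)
The plan is to treat the three assertions of the statement in order---commutativity of the square, construction and identification of the Beck--Chevalley transformation, and its failure to be invertible. First I would verify that pullback along $f$ genuinely restricts to the top horizontal functor, so that the square of forgetful functors commutes. The only point to check is that $f^{*}$ preserves the relevant cocartesian structure: since cocartesian morphisms are stable under base change and $f$ carries $\mathcal{A}_{0}$ into $\mathcal{B}_{0}$, any $\mathcal{E} \to \mathcal{B}$ that is cocartesian over $\mathcal{B}_{0}$ pulls back to $\mathcal{E} \times_{\mathcal{B}} \mathcal{A} \to \mathcal{A}$, which then has all cocartesian lifts of $\mathcal{A}_{0}$-morphisms, computed fibrewise from those of $\mathcal{E}$; similarly a functor over $\mathcal{B}$ preserving cocartesian morphisms over $\mathcal{B}_{0}$ pulls back to one preserving cocartesian morphisms over $\mathcal{A}_{0}$. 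Hence $f^{*}$ lands in $\CatIzcoc{\mathcal{A}}$ and the square commutes.

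Next, both vertical forgetful functors $U_{\mathcal{A}}$ and $U_{\mathcal{B}}$ admit left adjoints $L_{\mathcal{A}}$ and $L_{\mathcal{B}}$ by \cref{cor:freecocadj}. A commuting square of right adjoints has a canonical mate, which furnishes the Beck--Chevalley transformation $L_{\mathcal{A}} f^{*} \to f^{*} L_{\mathcal{B}}$; substituting $L_{\mathcal{B}}(\mathcal{E}) = \mathcal{E} \times_{\mathcal{B}} \Ar_{0}(\mathcal{B})$ and $L_{\mathcal{A}}(-) = (-) \times_{\mathcal{A}} \Ar_{0}(\mathcal{A})$ produces exactly the source and target displayed in the statement. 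To identify the transformation, I would write down the evident candidate directly: applying $f$ to arrows defines a functor $\Ar_{0}(\mathcal{A}) \to \Ar_{0}(\mathcal{B})$, and the resulting map sends $(e, a, \phi \colon a \to a')$ to the triple consisting of $e$, the arrow $f(\phi) \colon f(a) \to f(a')$ in $\Ar_{0}(\mathcal{B})$, and the point $a' \in \mathcal{A}$; this lies over $\mathcal{A}$ and preserves cocartesian morphisms (these lie over $\mathcal{A}_{0}$ and are given by composition in $\Ar_{0}$). By uniqueness of mates it then suffices to check that the adjunct of this candidate under $L_{\mathcal{A}} \dashv U_{\mathcal{A}}$ agrees with $f^{*}$ of the unit $i_{\mathcal{E}}$ of \cref{cor:freecocadj}. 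Concretely, precomposing the candidate with the unit $i_{f^{*}\mathcal{E}} \colon (e,a) \mapsto (e,a,\id_{a})$ yields $(e, \id_{f(a)}, a)$, which is precisely $f^{*}(i_{\mathcal{E}})$ evaluated at $(e,a)$, since $i_{\mathcal{E}}(e) = (e, \id_{p(e)})$ and $p(e) \simeq f(a)$.

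Finally, for the failure of invertibility I would take $\mathcal{E} = \mathcal{B}$ with the identity projection, so that the Beck--Chevalley map reduces to the functor $\Ar_{0}(\mathcal{A}) \to \Ar_{0}(\mathcal{B})$ induced by $f$; this is visibly not essentially surjective as soon as $f$ fails to be so on $\mathcal{B}_{0}$-morphisms---for instance with $f = \id_{[1]}$, $\mathcal{A}_{0}$ the trivial wide subcategory, and $\mathcal{B}_{0} = [1]$, where the source is the two identity arrows while the target is $\Ar([1]) \simeq [2]$, so the arrow $0 \to 1$ is not in the image. The one step with genuine (if standard) mathematical content is the commutativity of the square, which rests on base-change stability of cocartesian morphisms; the construction of the mate is formal, and matching it with the explicit formula is pure bookkeeping, which I expect to be the most tedious but least conceptually difficult part.
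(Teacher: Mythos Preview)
Your proposal is correct and in fact more thorough than the paper, which treats this as an observation without proof: the paper simply asserts that the square ``clearly'' commutes, records the formula for the Beck--Chevalley map, and remarks that it is typically not an equivalence. Your verification of each step---stability of cocartesian morphisms under base change, identification of the mate via the unit, and an explicit counterexample---fills in exactly the details the paper leaves implicit; note also that your final component $a'$ is the correct one for the map to lie over $\mathcal{A}$, whereas the paper's displayed formula $(e, f(a), f(\phi), a)$ appears to contain a typo in the last slot.
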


\subsection{Free fibrations for factorization systems}\label{sec:freefibfact}
In this subsection we specialize our previous results to the case of
an \icat{} equipped with a factorization system. We again emphasize
that this result already appears in \cite{AyalaMazelGeeRozenblyum} and
\cite{Shah}.

\begin{notation}
  In this section we fix an \icat{} $\mathcal{B}$ with a factorization
  system $(\mathcal{B}_{L}, \mathcal{B}_{R})$; we write $\ArLB$
  and $\ArRB$ for the full subcategories of $\ArB$
  spanned by the morphisms in $\mathcal{B}_{L}$ and $\mathcal{B}_{R}$,
  respectively. We also abbreviate
  \[ \CatIBL := \CatIsl{\mathcal{B}}^{\mathcal{B}_{L}\dcoc}\]
\end{notation}

\begin{propn}[\cite{patterns1}*{Proposition 7.3}]\label{propn:free-fibration-is-cocartesian}
    Let $(q \colon \mathcal{C} \to \mathcal{B}) \in \CatIBL$.
    Then:
    \begin{enumerate}[(1)]
        \item The functor $q'\colon\mathcal{C} \times_{\mathcal{B}}
          \ArRB \to \mathcal{B}$ given by evaluation at the target is a cocartesian fibration.
        \item\label{item:envcoc} A morphism $(\alpha, \beta)\colon (c_0,\phi_0) \to (c_1,\phi_1)$ in $\mathcal{C} \times_{\mathcal{B}} \ArRB$ represented by the following diagram
        \[
        \left(
        \begin{tikzcd}
        c_0 \arrow{d}{\alpha} \\
        c_1
        \end{tikzcd},
        \begin{tikzcd}
        q(c_0) \arrow{r}{\phi_0} \arrow{d}{q(\alpha)}
        & b_0 \arrow{d}{\beta} \\
        q(c_1) \arrow{r}{\phi_1} & b_1
        \end{tikzcd}
        \right),
        \]
        is a $q'$-cocartesian lift of $\beta \colon b_0 \to b_1$ if and only if
        $q(\alpha)$ is in $\mathcal{B}_{L}$ and $\alpha$ is $q$-cocartesian.
    \end{enumerate}
\end{propn}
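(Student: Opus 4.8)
The plan is to isolate the content of the ``if'' direction of part~(2) --- that any morphism of the stated form is $q'$-cocartesian --- and then deduce both part~(1) and the ``only if'' direction from it, using the existence and essential uniqueness of cocartesian lifts.

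First I would construct the candidate lifts. Given an object $(c_0,\phi_0\colon q(c_0)\to b_0)$ and a morphism $\beta\colon b_0\to b_1$ in $\mathcal{B}$, I factor the composite $\beta\circ\phi_0\colon q(c_0)\to b_1$ through the factorization system as $q(c_0)\xrightarrow{l}m\xrightarrow{r}b_1$ with $l\in\mathcal{B}_{L}$ and $r\in\mathcal{B}_{R}$. Since $q\in\CatIBL$ has $q$-cocartesian lifts over $\mathcal{B}_{L}$, I take a $q$-cocartesian lift $\alpha\colon c_0\to c_1$ of $l$, so that $q(c_1)=m$, and set $\phi_1:=r\in\mathcal{B}_{R}$, making $(c_1,\phi_1)$ an object of $\mathcal{C}\times_{\mathcal{B}}\ArRB$. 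The factorization identity $r\circ l=\beta\circ\phi_0$ furnishes the commuting square exhibiting a morphism $(\alpha,\beta)\colon(c_0,\phi_0)\to(c_1,\phi_1)$ over $\beta$, and by construction $q(\alpha)=l\in\mathcal{B}_{L}$ with $\alpha$ being $q$-cocartesian. Thus, once the ``if'' direction is established, these are cocartesian lifts, so $q'$ admits all cocartesian lifts and part~(1) follows.

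The heart of the argument is verifying the mapping-space criterion for such an $(\alpha,\beta)$: for every $(c_2,\phi_2\colon q(c_2)\to b_2)$ the square
\[
\begin{tikzcd}
\Map((c_1,\phi_1),(c_2,\phi_2)) \arrow{r} \arrow{d} & \Map((c_0,\phi_0),(c_2,\phi_2)) \arrow{d} \\
\Map_{\mathcal{B}}(b_1,b_2) \arrow{r}{\beta^{*}} & \Map_{\mathcal{B}}(b_0,b_2)
\end{tikzcd}
\]
is cartesian. Since mapping spaces in the fiber product $\mathcal{C}\times_{\mathcal{B}}\ArRB$ are pullbacks of mapping spaces and $\ArRB\subset\Ar(\mathcal{B})$ is full, I rewrite $\Map((c_i,\phi_i),(c_2,\phi_2))$ as $\Map_{\mathcal{C}}(c_i,c_2)\times_{\Map_{\mathcal{B}}(q(c_i),b_2)}\Map_{\mathcal{B}}(b_i,b_2)$, with the two maps $g\mapsto\phi_2\circ q(g)$ and $h\mapsto h\circ\phi_i$. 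Using that $\alpha$ is $q$-cocartesian to replace $\Map_{\mathcal{C}}(c_1,c_2)$ by $\Map_{\mathcal{C}}(c_0,c_2)\times_{\Map_{\mathcal{B}}(q(c_0),q(c_2))}\Map_{\mathcal{B}}(q(c_1),q(c_2))$ and chasing the identifications, the claim reduces to showing that the forgetful map dropping the factor $\bar g\colon q(c_1)\to q(c_2)$ is an equivalence. Concretely, for fixed compatible $g_0\colon c_0\to c_2$ and $h\colon b_1\to b_2$ the remaining datum of $\bar g$ is exactly a lift in the square with left edge $q(\alpha)\in\mathcal{B}_{L}$, right edge $\phi_2\in\mathcal{B}_{R}$, top $q(g_0)$, and bottom $h\circ\phi_1$.

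The main obstacle, and the key point, is that this space of lifts is contractible: this is precisely the orthogonality $\mathcal{B}_{L}\perp\mathcal{B}_{R}$ built into the factorization system, i.e.\ a left-class map is left orthogonal to a right-class map. Granting this, the forgetful map is an equivalence, the square is cartesian, and $(\alpha,\beta)$ is $q'$-cocartesian, which proves the ``if'' direction and hence part~(1). Finally, for the ``only if'' direction, given any $q'$-cocartesian $(\alpha,\beta)$ I compare it with the lift constructed above over the same $\beta$ and with the same source; by the essential uniqueness of cocartesian lifts the two are equivalent, so $\alpha$ inherits the properties of being $q$-cocartesian with $q(\alpha)\in\mathcal{B}_{L}$. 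I expect the only delicate bookkeeping to be tracking the coherence homotopies through the successive pullback identifications, which becomes routine once the lifting-space interpretation is in place.
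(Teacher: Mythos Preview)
Your argument is correct and it takes a genuinely different route from the paper. The paper proceeds in two stages: it first shows that $q'$ is a \emph{locally} cocartesian fibration by explicitly identifying the \icat{} $(\mathcal{C}\times_{\mathcal{B}}\ArRB)_{(c_0,\phi_0)/}\times_{\mathcal{B}_{b_0/}}\{\beta\}$ as an iterated fiber product and exhibiting an initial object there (using \cite{HTT}*{Lemma 5.2.8.19} for the factorization-system part and the definition of a $q$-cocartesian lift for the other factor); it then invokes \cite{HTT}*{Proposition 2.4.2.8}, checking that the locally cocartesian morphisms just constructed are closed under composition to upgrade to a cocartesian fibration. You instead verify the global mapping-space criterion for cocartesianness directly, reducing it via the cocartesianness of $\alpha$ to the contractibility of a lifting space, which is exactly the orthogonality $\mathcal{B}_{L}\perp\mathcal{B}_{R}$. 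Your approach is more elementary in that it avoids the two-step ``locally cocartesian plus closure under composition'' detour and uses orthogonality in its cleanest form; the paper's approach has the mild advantage of giving an explicit description of the relevant slice \icat{}, but at the cost of citing two external results. The ``only if'' direction and part~(1) are handled the same way in both arguments.
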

\begin{proof}
  We first show that $q'$ is a locally cocartesian fibration. A
  locally $q'$-cocartesian morphism over $\beta \colon b_{0} \to b_{1}$ with
  source $(c_{0}, \phi_{0} \colon q(c_{0}) \to b_{0})$ is an
  initial object in the \icat{} $(\mathcal{C} \times_{\mathcal{B}}
  \ArRB)_{(c_{0},\phi_{0})/} \times_{\mathcal{B}_{b_{0}/}}
  \{\beta\}$. We can identify this \icat{} as the fiber product
  \[ \mathcal{C}_{c_{0}/} \times_{\mathcal{B}_{qc_{0}/}}
    \left( \mathcal{B}^{R}_{/b_{1}} \times_{\mathcal{B}_{/b_{1}}}
      (\mathcal{B}_{/b_{1}})_{\beta\phi_{0}/} \right), \]
  where $\mathcal{B}^{R}_{/b_{1}}$ denotes the full subcategory of
  $\mathcal{B}_{/b_{1}}$ spanned by morphisms in
  $\mathcal{B}_{R}$.

  We first observe that here $\mathcal{B}^{R}_{/b_{1}} \times_{\mathcal{B}_{/b_{1}}}
  (\mathcal{B}_{/b_{1}})_{\beta\phi_{0}/}$ has an initial object,
  given by
  \[
    \begin{tikzcd}
      qc_{0} \arrow{rr}{\lambda} \arrow{dr}[swap]{\beta\phi_{0}} & & b'
      \arrow{dl}{\rho} \\
       & b_{1},
    \end{tikzcd}
  \]
  where $(\lambda,\rho)$ is the $(L,R)$-factorization of
  $\beta\phi_{0}$ --- this follows from \cite{HTT}*{Lemma 5.2.8.19}.

  The projection $\mathcal{B}^{R}_{/b_{1}} \times_{\mathcal{B}_{/b_{1}}}
  (\mathcal{B}_{/b_{1}})_{\beta\phi_{0}/} \to
  \mathcal{B}^{R}_{/b_{1}}$ is a left fibration, since it is a base
  change of the left fibration
  $(\mathcal{B}_{/b_{1}})_{\beta\phi_{0}/} \to
  \mathcal{B}_{/b_{1}}$. The initial object of  $\mathcal{B}^{R}_{/b_{1}} \times_{\mathcal{B}_{/b_{1}}}
  (\mathcal{B}_{/b_{1}})_{\beta\phi_{0}/}$, which maps to $\rho$ in
  $\mathcal{B}^{R}_{/b_{1}}$, therefore gives an equivalence
  \[ \mathcal{B}^{R}_{/b_{1}} \times_{\mathcal{B}_{/b_{1}}}
    (\mathcal{B}_{/b_{1}})_{\beta\phi_{0}/} \simeq
    (\mathcal{B}^{R}_{/b_{1}})_{\rho/} \] by \cite{Kerodon}*{Tag 0199}.
  We can therefore rewrite our expression for the \icat{}
  $(\mathcal{C} \times_{\mathcal{B}} \ArRB)_{(c_{0},\phi_{0})/}
  \times_{\mathcal{B}_{b_{0}/}} \{\beta\}$ as
  \[ \left( \mathcal{C}_{c_{0/}} \times_{\mathcal{B}_{qc_{0}/}}
      \mathcal{B}_{b'/}\right) \times_{\mathcal{B}_{b'/}}
    (\mathcal{B}^{R}_{/b_{1}})_{\rho/}.\] A $q$-cocartesian morphism
  over $\lambda$ with source $c_{0}$, which exists by assumption since
  $\lambda$ is in $\mathcal{B}_{L}$, is precisely an initial object of
  $\mathcal{C}_{c_{0/}} \times_{\mathcal{B}_{qc_{0}/}}
  \mathcal{B}_{b'/}$ that maps to the initial object in
  $\mathcal{B}_{b'/}$. We thus have initial objects in 
  $\mathcal{C}_{c_{0/}} \times_{\mathcal{B}_{qc_{0}/}}
  \mathcal{B}_{b'/}$ and $(\mathcal{B}^{R}_{/b_{1}})_{\rho/}$ that
  both map to the initial object in $\mathcal{B}_{b'/}$, and these
  thus give an initial object in the fiber product $(\mathcal{C} \times_{\mathcal{B}}
  \ArRB)_{(c_{0},\phi_{0})/}$. This shows that if $\alpha \colon c_{0}
  \to c_{1}$ is a $q$-cocartesian lift of $\lambda$, then
  \[
    \left(
      \begin{tikzcd}
        c_0 \arrow{d}{\alpha} \\
        c_1
      \end{tikzcd},
      \begin{tikzcd}
        q(c_0) \arrow{r}{\phi_0} \arrow{d}{\lambda}
        & b_0 \arrow{d}{\beta} \\
        b' \arrow{r}{\rho} & b_1
      \end{tikzcd}
    \right)
  \]
  is a locally $q$-cocartesian lift of $\beta$ with source $(c_{0},\phi_{0})$.

  We have thus shown that $q'$ is a locally cocartesian fibration, and
  the locally $q'$-cocartesian morphisms are precisely those in \ref{item:envcoc}.
  To see that $q'$ is a cocartesian fibration it then suffices by
  \cite{HTT}*{Proposition 2.4.2.8} to check
  that the locally $q'$-cocartesian morphisms are closed under
  composition, which in our case is clear.
\end{proof}

\begin{notation}\label{notation:rmE}
      It follows from \cref{propn:free-fibration-is-cocartesian} that the construction $\calE \mapsto \calE \times_\calB \ArRB$ 
 restricts to a well-defined functor
  \[
    \rmE \colon  \CatIBL \to \CatIcoc{\calB}, \quad
    (\calE \to \calB) \mapsto (\calE \times_\calB \ArRB \to \calB).
  \]
\end{notation}

\begin{propn}\label{propn:envfuneq}
   Let $p \colon \mathcal{E} \to \mathcal{B}$ be functor admitting cocartesian lifts for all arrows in $\mathcal{B}_L$ and let $q \colon \mathcal{F} \to \mathcal{B}$
   be a cocartesian fibration. Then the equivalence of
   \cref{propn:freefibdesc} restricts to an equivalence
   \[ \Fun^{\txt{cocart}}_{/\mathcal{B}}(\rmE(\mathcal{E}), \mathcal{F}) \isoto
    \Fun^{L\txt{-cocart}}_{/\mathcal{B}}(\mathcal{E}, \mathcal{F}).\] 
\end{propn}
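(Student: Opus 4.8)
The plan is to deduce the statement from \cref{propn:freefibdesc} applied to the wide subcategory $\calB_{0} = \mathcal{B}_{R}$, for which $\rmE(\mathcal{E}) = \mathcal{E} \times_{\mathcal{B}} \ArRB$. Write $p' \colon \rmE(\mathcal{E}) \to \mathcal{B}$ for its structure map, which is a cocartesian fibration by \cref{propn:free-fibration-is-cocartesian}. Both \icats{} appearing in the claimed equivalence are \emph{full} subcategories of those in \cref{propn:freefibdesc}: the source $\Fun^{\txt{cocart}}_{/\mathcal{B}}(\rmE(\mathcal{E}), \mathcal{F})$ is the full subcategory of $\Fun^{\mathcal{B}_{R}\dcoc}_{/\mathcal{B}}(\rmE(\mathcal{E}), \mathcal{F})$ on the functors preserving \emph{all} $p'$-cocartesian morphisms, while $\Fun^{L\dcoc}_{/\mathcal{B}}(\mathcal{E}, \mathcal{F})$ is the full subcategory of $\Fun_{/\mathcal{B}}(\mathcal{E}, \mathcal{F})$ on the functors preserving $L$-cocartesian morphisms. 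Since the equivalence of \cref{propn:freefibdesc} is given by $F \mapsto F \circ i_{\mathcal{E}}$, it suffices to prove the object-level statement that $F$ preserves all $p'$-cocartesian morphisms if and only if $F \circ i_{\mathcal{E}}$ preserves $L$-cocartesian morphisms.

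For the forward direction I would first observe, from the criterion in part (2) of \cref{propn:free-fibration-is-cocartesian}, that $i_{\mathcal{E}}$ sends $L$-cocartesian morphisms of $\mathcal{E}$ to $p'$-cocartesian morphisms of $\rmE(\mathcal{E})$: an $L$-cocartesian $\alpha \colon e_{0} \to e_{1}$ (so $p(\alpha) \in \mathcal{B}_{L}$ and $\alpha$ is $p$-cocartesian) is sent to the morphism with $\mathcal{E}$-component $\alpha$ and $\ArRB$-component the square with identity rows, which is exactly of the form singled out there. Hence if $F$ preserves all $p'$-cocartesian morphisms, then $F \circ i_{\mathcal{E}}$ sends every $L$-cocartesian $\alpha$ to a cocartesian morphism of $\mathcal{F}$.

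The converse is the crux, and two ingredients enter. First, because $(\mathcal{B}_{L}, \mathcal{B}_{R})$ is a factorization system and $p'$ is a cocartesian fibration, every $p'$-cocartesian morphism factors as a $p'$-cocartesian morphism over $\mathcal{B}_{L}$ followed by one over $\mathcal{B}_{R}$; since $F$ already preserves $\mathcal{B}_{R}$-cocartesian morphisms, it is enough to treat cocartesian morphisms lying over $\mathcal{B}_{L}$. Second, the transformation $\sigma_{\mathcal{E}} \colon i_{\mathcal{E}}\pr_{\mathcal{E}} \to \id$ has components that are $p'$-cocartesian over $\mathcal{B}_{R}$, again by part (2) of \cref{propn:free-fibration-is-cocartesian} (its component at $(c, \phi)$ has identity $\mathcal{E}$-part and lies over $\phi \in \mathcal{B}_{R}$). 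Now given a $p'$-cocartesian $g \colon (c_{0}, \phi_{0}) \to (c_{1}, \phi_{1})$ over $\lambda \in \mathcal{B}_{L}$, its $\mathcal{E}$-component $\pr_{\mathcal{E}}(g)$ is $L$-cocartesian, so the naturality square of $\sigma_{\mathcal{E}}$ at $g$ is a commuting square in $\rmE(\mathcal{E})$ whose top edge is $i_{\mathcal{E}}\pr_{\mathcal{E}}(g)$ --- the $i_{\mathcal{E}}$-image of an $L$-cocartesian morphism --- and whose two vertical edges are $\mathcal{B}_{R}$-cocartesian. Applying $F$, the hypothesis on $F \circ i_{\mathcal{E}}$ makes the image of the top edge cocartesian and preservation of $\mathcal{B}_{R}$-cocartesian morphisms makes the images of the vertical edges cocartesian; comparing the two composites around the square and using the cancellation property of cocartesian morphisms (if $h \circ k$ and $k$ are cocartesian, so is $h$) then forces $F(g)$ to be cocartesian.

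The only delicate step is this converse, and the subtlety it addresses is that a $p'$-cocartesian morphism over $\mathcal{B}_{L}$ with non-degenerate source $(c_{0}, \phi_{0})$ is \emph{not} itself in the image of $i_{\mathcal{E}}$; the naturality square for $\sigma_{\mathcal{E}}$ together with the cancellation property is precisely what reduces such a morphism to the genuine $i_{\mathcal{E}}$-image handled by the hypothesis. Everything else is a direct unwinding of \cref{propn:freefibdesc} and \cref{propn:free-fibration-is-cocartesian}.
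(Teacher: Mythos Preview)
Your proof is correct and follows essentially the same approach as the paper: both reduce to the object-level statement, use the characterization of $p'$-cocartesian morphisms from \cref{propn:free-fibration-is-cocartesian} for the forward direction, and for the converse exploit the naturality square of $\sigma_{\mathcal{E}}$ together with composition and right-cancellation of cocartesian morphisms. Your preliminary $(\mathcal{B}_L,\mathcal{B}_R)$-factorization of a general $p'$-cocartesian morphism is in fact unnecessary---the paper's naturality-square argument already handles an arbitrary $p'$-cocartesian $(\alpha,\beta)$ directly, since by \cref{propn:free-fibration-is-cocartesian}(2) the $\mathcal{E}$-component $\alpha$ is automatically $p$-cocartesian over $\mathcal{B}_L$---but it does no harm.
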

\begin{proof}
  We must show that these full subcategories are identified under the
  equivalence
   \[ \Fun^{R-\txt{cocart}}_{/\mathcal{B}}(\mathcal{E}
    \times_{\mathcal{B}}\ArRB, \mathcal{F}) \isoto
    \Fun_{/\mathcal{B}}(\mathcal{E}, \mathcal{F})\] 
  of \cref{propn:freefibdesc}.
  Given a functor $F
  \colon \mathcal{E} \times_{\mathcal{B}} \ArRB \to \mathcal{F}$ that
  preserves cocartesian morphisms over $\mathcal{B}_{R}$, we must thus
  check that $F$ preserves all cocartesian morphisms \IFF{} $F \circ
  i_{\mathcal{E}}$ preserves cocartesian 
  morphisms over $\mathcal{B}_{L}$.
  We write $p' \colon \mathcal{E} \times_{\mathcal{C}} \ArRB \to
  \mathcal{B}$ for the map induced by $\ev_{1}$.
  
  First, assume that $F \colon\calE \times_\calB \ArRB \to \calF$
  preserves all cocartesian edges. For a $p$-cocartesian lift
  $\alpha \colon c_{0} \to c_{1}$ of an edge
  $\beta \colon b_0 \to b_1$ in $\mathcal{B}_{L}$, its image under
  $i_\calE$ is the edge
    \[
    \left(
      \begin{tikzcd}
        c_0 \arrow{d}{\alpha} \\
        c_1
      \end{tikzcd},
      \begin{tikzcd}
        b_{0} \arrow{r}{=} \arrow{d}{\beta}
        & b_{0} \arrow{d}{\beta} \\
        b_{1} \arrow{r}{=} & b_1
      \end{tikzcd}
    \right)
  \]
  in $\calE \times_\calB \ArRB$,
  which is $p'$-cocartesian by \cref{propn:free-fibration-is-cocartesian}.
  In other words, $i_\calE \colon \calE \to \calE \times_\calB \ArRB$ 
  preserves cocartesian lifts over $\mathcal{B}_{L}$, and hence so does $F \circ i_\calE$.
    
  For the converse assume that $F$ preserves cocartesian lifts of
  edges in $\mathcal{B}_{R}$ and $F \circ i_\calE$ preserves cocartesian lifts
  of edges in $\mathcal{B}_{L}$.  We would like to show that a general
  $p'$-cocartesian morphism
  $(\alpha, \beta)\colon (c_0,\phi_0) \to (c_1,\phi_1)$
  is sent to a $q$-cocartesian
  morphism in $\calF$.  According to
  \cref{propn:free-fibration-is-cocartesian}, the morphism $p(\alpha)$
  is in $\mathcal{B}_{L}$ and $\alpha$ is $p$-cocartesian.
  We can
  fit this morphism into the following diagram by applying the natural
  transformation $\sigma_\calE\colon i_\calE \pr_\calE \to \id$:
    \[\begin{tikzcd}[column sep = 5pc]
	    {(c_0,\id)} & {(c_0,\phi_0) } \\
	    {(c_1,\id)} & {(c_1,\phi_1)}
	    \arrow["{(\id,\phi_0) = (\sigma_\calE)_{(c_0,\phi_0)}}", from=1-1, to=1-2]
	    \arrow["{(\alpha, q(\alpha))}"', from=1-1, to=2-1]
	    \arrow["{(\id,\phi_1) = (\sigma_\calE)_{(c_1,\phi_1)}}"', from=2-1, to=2-2]
	    \arrow["{(\alpha, \beta)}", from=1-2, to=2-2]
	\end{tikzcd}\] 
	Both horizontal morphisms are cocartesian edges over $\mathcal{B}_{R}$ 
	(by \cref{propn:free-fibration-is-cocartesian})
	and the left-hand vertical morphism is the image under $i_\calE$
	of a $p$-cocartesian morphism over $\mathcal{B}_{L}$.
	Hence $F$ sends three of the morphisms in the above square 
	to cocartesian edges in $\calF$ and it follows by composition
	and right-cancellation for cocartesian edges that 
	$F(\alpha, \beta)$ is cocartesian too.
\end{proof}

\begin{cor}\label{cor:factadjn}
  The adjunction of \cref{cor:freecocadj} restricts to an adjunction
  \[ \rmE\colon \CatIBL \rightleftarrows \CatIcoc{\mathcal{B}} :\! \mathrm{forget}.\]
\end{cor}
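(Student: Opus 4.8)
The plan is to obtain this adjunction purely formally by restricting the adjunction of \cref{cor:freecocadj}, applied with $\calB_0 = \calB_R$; note that $\calB_R$ is indeed a wide subcategory, since the right class of a factorization system contains all equivalences and in particular all identities. With this choice $\Ar_0(\calB) = \ArRB$, so the left adjoint of \cref{cor:freecocadj} is exactly $\calE \mapsto \calE \times_\calB \ArRB$. The genuine content needed has already been established: \cref{propn:free-fibration-is-cocartesian} together with \cref{notation:rmE} shows that this left adjoint restricts to a functor $\rmE \colon \CatIBL \to \CatIcoc{\calB}$, and \cref{propn:envfuneq} upgrades the functor-category equivalence of \cref{propn:freefibdesc} to the cocartesian-preserving subcategories on both sides. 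What remains is bookkeeping of subcategories and mapping spaces.

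First I would confirm that the two functors in the claimed adjunction land in the right places. The left adjoint does so by the two results just cited. For the forgetful functor, observe that a cocartesian fibration $\calF \to \calB$ has cocartesian lifts of \emph{all} morphisms, in particular of those in $\calB_L$, and that a functor over $\calB$ preserving all cocartesian morphisms in particular preserves the $\calB_L$-cocartesian ones; hence the forgetful functor $\CatIcoc{\calB} \to \CatIsl{\calB}$ factors through $\CatIBL$, as needed. Next I would exhibit the unit: I take $i_\calE \colon \calE \to \calE \times_\calB \ArRB$, the unit of the adjunction of \cref{cor:freecocadj}. The proof of \cref{propn:envfuneq} already records that $i_\calE$ preserves $\calB_L$-cocartesian morphisms, so it is a morphism in $\CatIBL$.

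Finally I would verify the adjunction criterion against this unit. For $\calE \in \CatIBL$ and $\calF \in \CatIcoc{\calB}$, the mapping space $\Map_{\CatIcoc{\calB}}(\rmE(\calE),\calF)$ is by definition the maximal subgroupoid of $\Fun^{\txt{cocart}}_{/\calB}(\rmE(\calE),\calF)$, and $\Map_{\CatIBL}(\calE,\calF)$ is the maximal subgroupoid of $\Fun^{L\txt{-cocart}}_{/\calB}(\calE,\calF)$. Under these identifications the canonical composite
\[ \Map_{\CatIcoc{\calB}}(\rmE(\calE),\calF) \xto{i_\calE^{*}} \Map_{\CatIBL}(\calE,\calF), \]
obtained by forgetting along $\CatIcoc{\calB}\to\CatIBL$ and then precomposing with $i_\calE$, is precisely the map induced on maximal subgroupoids by the equivalence of \cref{propn:envfuneq}, and is therefore an equivalence, naturally in $\calE$ and $\calF$. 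This exhibits $i_\calE$ as the unit of the desired adjunction $\rmE \dashv \mathrm{forget}$. I do not expect a genuine obstacle here, as the two nontrivial inputs (cocartesianness of the free fibration and the restricted functor-category equivalence) are already in hand; the only point requiring care is the passage from the functor-category equivalence of \cref{propn:envfuneq} to an equivalence of the mapping spaces computed in the two subcategories, which is immediate from the definitions of $\CatIBL$ and $\CatIcoc{\calB}$ as subcategories of $\CatIsl{\calB}$.
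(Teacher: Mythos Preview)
Your proposal is correct and follows exactly the approach the paper intends; the paper states this corollary without proof, treating it as an immediate consequence of \cref{propn:envfuneq} (which restricts the functor-category equivalence of \cref{propn:freefibdesc} to the cocartesian-preserving subcategories), together with the fact that $\rmE$ lands in $\CatIcoc{\mathcal{B}}$ by \cref{propn:free-fibration-is-cocartesian}. You have simply spelled out the bookkeeping that the paper leaves implicit.
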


\begin{observation}
  Suppose $(\mathcal{A}, \mathcal{A}_{L}, \mathcal{A}_{R})$ and
  $(\mathcal{B}, \mathcal{B}_{L}, \mathcal{B}_{R})$ are \icats{}
  equipped with factorization systems, and that $f \colon \mathcal{A}
  \to \mathcal{B}$ is a functor that preserves both classes of maps in
  these. Pullback along $f$ then gives a commutative
  diagram
  \[
    \begin{tikzcd}
      \CatILcoc{\mathcal{B}} \arrow{d}\arrow{r}{f^{*}} &
      \CatILcoc{\mathcal{A}} \arrow{d} \\
      \CatIcoc{\mathcal{B}} \arrow{r}{f^{*}} & \CatIcoc{\mathcal{A}}.
    \end{tikzcd}
  \]
  As in
  \cref{obs:freesubcatftr}, this induces a Beck--Chevalley
  transformation, but this is typically not an equivalence.
\end{observation}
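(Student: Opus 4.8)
The plan is to obtain this as the specialization of \cref{obs:freesubcatftr} to the right classes $\mathcal{A}_{0} = \mathcal{A}_{R}$ and $\mathcal{B}_{0} = \mathcal{B}_{R}$, upgraded along the restricted adjunction of \cref{cor:factadjn}. First I would record that pullback along $f$ is compatible with the forgetful functors $\CatIcoc{\mathcal{B}} \to \CatILcoc{\mathcal{B}}$ and $\CatIcoc{\mathcal{A}} \to \CatILcoc{\mathcal{A}}$. Since a cocartesian fibration pulls back to a cocartesian fibration along any functor, $f^{*}$ carries $\CatIcoc{\mathcal{B}}$ into $\CatIcoc{\mathcal{A}}$; and because $f$ sends $\mathcal{A}_{L}$ into $\mathcal{B}_{L}$, a cocartesian lift over $f(\mu)$ for $\mu \in \mathcal{A}_{L}$ pulls back to a cocartesian lift over $\mu$, so $f^{*}$ also carries $\CatILcoc{\mathcal{B}}$ into $\CatILcoc{\mathcal{A}}$ and commutes with forgetting. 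This produces a commuting square of right adjoints, formed by the two copies of $f^{*}$ and the two forgetful functors.

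Next I would pass to left adjoints. By \cref{cor:factadjn} the forgetful functor $\CatIcoc{(-)} \to \CatILcoc{(-)}$ admits the left adjoint $\rmE$ over both $\mathcal{A}$ and $\mathcal{B}$, so the mate construction applies to the square just described and produces the Beck--Chevalley transformation filling the square displayed in the statement. Because \cref{cor:factadjn} exhibits $\rmE \dashv \mathrm{forget}$ as the restriction of the free--forget adjunction of \cref{cor:freecocadj} for the wide subcategory $\mathcal{B}_{R}$, with unchanged unit $i_{\mathcal{E}}$, naturality of the mate shows this transformation to be the restriction of the Beck--Chevalley map already written down in \cref{obs:freesubcatftr}. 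Explicitly, for $\mathcal{E} \in \CatILcoc{\mathcal{B}}$ it is the natural comparison
\[
  (\mathcal{E} \times_{\mathcal{B}} \mathcal{A}) \times_{\mathcal{A}} \ArR(\mathcal{A}) \longrightarrow (\mathcal{E} \times_{\mathcal{B}} \ArRB) \times_{\mathcal{B}} \mathcal{A},
\]
induced on the arrow factors by $f \colon \ArR(\mathcal{A}) \to \ArRB$ together with the evaluation maps, exactly as in \cref{obs:freesubcatftr}. The single point requiring care is that the mate for the sub-adjunction $\rmE \dashv \mathrm{forget}$ genuinely coincides with the one for the ambient free--forget adjunction, which is precisely what \cref{cor:factadjn} guarantees.

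Finally, to see that the transformation is typically not an equivalence, I would isolate its content on the arrow factors by taking $\mathcal{E} = \mathcal{B}$ with structure map the identity. The comparison then collapses to
\[
  \ArR(\mathcal{A}) \longrightarrow \ArRB \times_{\mathcal{B}} \mathcal{A},
\]
a functor over $\mathcal{A}$ whose fiber over $a$ is the map $\mathcal{A}^{R}_{/a} \to \mathcal{B}^{R}_{/f(a)}$ induced by $f$, where $\mathcal{A}^{R}_{/a} \subseteq \mathcal{A}_{/a}$ is the full subcategory on morphisms in $\mathcal{A}_{R}$ (and similarly for $\mathcal{B}$). Hence the Beck--Chevalley transformation is an equivalence for all $\mathcal{E}$ only when $f$ induces equivalences $\mathcal{A}^{R}_{/a} \simeq \mathcal{B}^{R}_{/f(a)}$ for every $a$, a strong condition that fails for a general $f$---for instance when $f$ is a fully faithful inclusion that is not essentially surjective, or more generally whenever $\mathcal{B}_{R}$ contains morphisms into $f(a)$ not in the image of $\mathcal{A}_{R}$. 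This is exactly the failure that the comparison theorem (\cref{introthm:comp}) is designed to overcome. The main---indeed, only---obstacle here is therefore bookkeeping rather than mathematical depth: one must verify that the restricted mate agrees with the explicit formula, which \cref{cor:factadjn} settles, after which the non-equivalence is immediate from the fiberwise description.
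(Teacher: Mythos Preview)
Your proposal is correct and follows essentially the same approach as the paper. The paper itself offers no separate proof for this Observation; its entire justification is the phrase ``As in \cref{obs:freesubcatftr}'', meaning one specializes that earlier observation to the wide subcategories $\mathcal{A}_{0} = \mathcal{A}_{R}$ and $\mathcal{B}_{0} = \mathcal{B}_{R}$ and upgrades via the restricted adjunction of \cref{cor:factadjn}, exactly as you do. Your additional analysis of why the Beck--Chevalley map generically fails (by testing at $\mathcal{E} = \mathcal{B}$ and reading off the fiberwise map $\mathcal{A}^{R}_{/a} \to \mathcal{B}^{R}_{/f(a)}$) goes beyond what the paper records, which simply asserts the failure; the explicit formula you give for the transformation also appears verbatim later in the paper in \cref{obs:basechange-envelope}.
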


\subsection{Full faithfulness on slices}\label{sec:fullfaithsl}
In this subsection we prove the main new result of this section: We
observe that the adjunction of \cref{cor:factadjn} induces an
adjunction
\[ \CatIBL \rightleftarrows (\CatIcoc{\mathcal{B}})_{/\ArRB}\]
where the left adjoint is fully faithful, and characterize its image
as in \cref{thm:partial-SU-introduction}.

To construct this adjunction, we recall the general construction of
adjunctions on slices:
\begin{observation}\label{rmk:sliceadj}
  Given an adjunction
  \[ L\colon \mathcal{C} \rightleftarrows \mathcal{D} :\! R \]
  where $\mathcal{C}$ admits pullbacks, we have (by
  \cite{HTT}*{Proposition 5.2.5.1}) for any $c$ in $\mathcal{C}$ an
  induced adjunction
  \[ L_{c} \colon \mathcal{C}_{/c} \rightleftarrows \mathcal{D}_{/Lc} :\!
    R_{c} \]
  where $L_{c}$ is simply given by applying $L$, while $R_{c}$ is
  defined at $f \colon d \to Lc$ by the natural pullback square
  \[
    \begin{tikzcd}
      R_{c}d \arrow{d} \arrow{r} & Rd \arrow{d}{Rf} \\
      c \arrow{r}{\eta_{c}} & RLc
    \end{tikzcd}
  \]
  over the unit map $\eta_{c}$. The unit for the new adjunction is
  then given at $c'\to c$ by the canonical map $c' \to R_{c}L_{c}c'$
  obtained by factoring the square
  \[
    \begin{tikzcd}
      c' \arrow{d} \arrow{r}{\eta_{c'}} & RLc' \arrow{d} \\
      c \arrow{r}{\eta_{c}} & RLc
    \end{tikzcd}
  \]
  through the pullback, while the counit $L_{c}R_{c}d \to d$ is given
  by the outer square in the diagram
  \[
    \begin{tikzcd}
      L R_{c}d \arrow{d} \arrow{r} & LRd \arrow{d} \arrow{r}{\epsilon_{d}} & d \arrow{d} \\
      Lc \arrow[bend right,equals]{rr} \arrow{r}{L\eta_{c}} & LRLc \arrow{r}{\epsilon_{Lc}} & Lc,
    \end{tikzcd}
  \]
  where $\epsilon$ is the counit of the original adjunction.
\end{observation}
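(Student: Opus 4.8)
The plan is to establish the adjunction $L_{c} \dashv R_{c}$ directly at the level of mapping spaces; this simultaneously forces the pullback formula for $R_{c}$ and prepares the identification of the unit and counit. (One could instead simply invoke \cite{HTT}*{Proposition 5.2.5.1} for the existence of the adjunction and then match the explicit formulas, but the mapping-space computation is short and self-contained.) The only input needed is that mapping spaces in a slice \icat{} $\mathcal{C}_{/c}$ are fibers: for objects $g \colon x \to c$ and $h \colon y \to c$ one has $\Map_{\mathcal{C}_{/c}}(x,y) \simeq \mathrm{fib}_{g}\bigl(\Map_{\mathcal{C}}(x,y) \xrightarrow{h \circ -} \Map_{\mathcal{C}}(x,c)\bigr)$, and likewise in $\mathcal{D}_{/Lc}$.

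First I would compute $\Map_{\mathcal{D}_{/Lc}}(L_{c}x, d)$ for $g \colon x \to c$ in $\mathcal{C}_{/c}$ and $f \colon d \to Lc$ in $\mathcal{D}_{/Lc}$. By the fiber description this is the fiber over $L(g)$ of $\Map_{\mathcal{D}}(Lx,d) \xrightarrow{f \circ -} \Map_{\mathcal{D}}(Lx,Lc)$. Applying the adjunction equivalence $\Map_{\mathcal{D}}(Lx,-) \simeq \Map_{\mathcal{C}}(x, R-)$ turns this into the fiber of $\Map_{\mathcal{C}}(x,Rd) \xrightarrow{Rf \circ -} \Map_{\mathcal{C}}(x,RLc)$, where by naturality of the unit the basepoint $L(g)$ is sent to its adjunct $\eta_{c}\circ g$. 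On the other hand, unwinding the defining pullback $R_{c}d = c \times_{RLc} Rd$ (formed over $\eta_{c}$ and $Rf$) together with the fiber description of $\Map_{\mathcal{C}_{/c}}(x, R_{c}d)$ produces exactly this same fiber. Promoting these identifications to be natural in $x$ and $d$ yields the adjunction $L_{c} \dashv R_{c}$ with the asserted right adjoint.

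Finally I would read off the unit and counit, which are determined once the adjunction is in place. The unit at $g \colon x \to c$ is the adjunct of $\id_{L_{c}x}$, and tracing it through the pullback defining $R_{c}L_{c}x = c \times_{RLc} RLx$ exhibits it as the map obtained by factoring the naturality square of $\eta$ through that pullback, as stated; dually the counit $L_{c}R_{c}d \to d$ is the adjunct of $\id_{R_{c}d}$ and is given by the pasted square assembled from $\epsilon_{d}$ and $\epsilon_{Lc}$, with the structure over $Lc$ controlled by the triangle identity $\epsilon_{Lc}\circ L\eta_{c} \simeq \id_{Lc}$. The triangle identities for the new adjunction then descend from those of $L \dashv R$. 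I expect the only real difficulty to be coherence bookkeeping: every step is transparent on homotopy categories, but one must check that the slice-fiber descriptions and the adjunction equivalence assemble into a natural equivalence of functors of $x$ and $d$ --- \ie{} with all higher homotopies --- rather than merely objectwise. The cleanest way to avoid this is to phrase the whole comparison as an equivalence of the two representable functors and appeal to the Yoneda lemma, or else to take existence from \cite{HTT}*{Proposition 5.2.5.1} and use the mapping-space computation purely to match the adjoint, unit, and counit with the explicit formulas.
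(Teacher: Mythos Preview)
Your proposal is correct. The paper treats this as an observation and does not give a proof beyond the citation to \cite{HTT}*{Proposition 5.2.5.1}; your mapping-space computation is a valid self-contained verification of what the paper simply records, and you correctly flag the coherence bookkeeping (and the Yoneda/HTT fallback) as the only real subtlety.
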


\begin{propn}\label{propn:lifting-Env}
    By applying the construction of \cref{rmk:sliceadj} to the adjunction
    of \cref{cor:factadjn} at the terminal object 
    $(\mathcal{B} \xto{=} \mathcal{B}) \in \CatIBL$ we obtain an adjunction
    \begin{equation}
      \label{eq:sliceenvadj}
        \rmE \colon  \CatIBL \rightleftarrows
        \left(\CatIcoc{\mathcal{B}}\right)_{/\ArRB} :\! \rmQ.
    \end{equation}
    The left adjoint in this adjunction is fully faithful.
\end{propn}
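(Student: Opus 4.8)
The plan is to prove full faithfulness by showing that the unit of the adjunction \eqref{eq:sliceenvadj} is a natural equivalence; a left adjoint is fully faithful exactly when its unit is invertible, so this suffices. Since the slice adjunction is produced by \cref{rmk:sliceadj} from $\rmE \dashv \mathrm{forget}$ (\cref{cor:factadjn}) at the terminal object $(\mathcal{B}\xto{=}\mathcal{B})$ of $\CatIBL$, I would begin by writing out the resulting unit explicitly. According to \cref{rmk:sliceadj}, at an object $p\colon \mathcal{E}\to\mathcal{B}$ the unit is the comparison map $\mathcal{E}\to\rmQ\rmE(\mathcal{E})$ induced by the naturality square of the original unit $i_{(-)}$ evaluated at the structure morphism $p$, namely
\[
\begin{tikzcd}
\mathcal{E} \arrow{d}{p} \arrow{r}{i_{\mathcal{E}}} & \mathcal{E}\times_{\mathcal{B}}\ArRB \arrow{d}{\rmE(p)} \\
\mathcal{B} \arrow{r}{i_{\mathcal{B}}} & \ArRB,
\end{tikzcd}
\]
where $\rmQ\rmE(\mathcal{E})$ is by definition the pullback of $\rmE(p)$ along the unit $\eta_{\mathcal{B}}=i_{\mathcal{B}}$ at the terminal object. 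The comparison map is an equivalence precisely when this square is cartesian, so the whole statement reduces to checking that.

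To verify the square is cartesian, I would use that $\rmE(p)$ is the projection $\pr_{\ArRB}\colon \mathcal{E}\times_{\mathcal{B}}\ArRB\to\ArRB$, which by construction is the base change of $p$ along $\ev_0\colon\ArRB\to\mathcal{B}$. Since $i_{\mathcal{B}}$ is the identity section, $\ev_0\circ i_{\mathcal{B}}\simeq \id_{\mathcal{B}}$, and the pasting law for pullbacks then identifies the pullback of $\pr_{\ArRB}$ along $i_{\mathcal{B}}$ with the base change of $p$ along the identity, i.e.\ with $\mathcal{E}$ itself; moreover the induced section is exactly $i_{\mathcal{E}}\colon e\mapsto (e,\id_{p(e)})$. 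Concretely this reflects the fact that an object of the pullback is a pair $(e,\phi)$ together with an equivalence $\phi\simeq\id_b$ in $\ArRB$, which forces $\phi$ to be invertible and collapses the pullback onto $\mathcal{E}$. Because the forgetful functor $\CatIcoc{\mathcal{B}}\to\CatI$ detects pullbacks (as in the earlier observation for $\CatIzcoc{\mathcal{B}}$), this computation performed underlying in $\CatI$ is valid in $\CatIcoc{\mathcal{B}}$, so the square is cartesian and the unit is an equivalence.

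The main obstacle is not the geometry --- restricting $\rmE(\mathcal{E})=\mathcal{E}\times_{\mathcal{B}}\ArRB$ to the identity arrows obviously recovers $\mathcal{E}$ --- but the bookkeeping around \cref{rmk:sliceadj}. I would need to check carefully that the abstract slice unit really is the comparison map for the displayed naturality square, that $\eta_{\mathcal{B}}$ at the terminal object is the identity section $i_{\mathcal{B}}$ (so that it lands in $\ArRB$ via identities, which lie in $\mathcal{B}_R$), and that the pullback defining $\rmQ\rmE(\mathcal{E})$ may indeed be computed underlying in $\CatI$. Once these identifications are pinned down, the cartesianness of the square, and hence full faithfulness of $\rmE$, follows immediately from the pasting law with no further computation.
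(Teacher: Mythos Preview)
Your proposal is correct and follows essentially the same approach as the paper: both identify the slice unit with the comparison map for the square of original units, then show this square is cartesian by pasting it with the defining pullback square of $\rmE(\mathcal{E})$ along $\ev_0$ (using $\ev_0\circ i_{\mathcal{B}}\simeq\id_{\mathcal{B}}$). Your remarks about detecting pullbacks underlying in $\CatI$ are a fine sanity check but are not strictly needed beyond what the paper uses.
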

\begin{proof}
    Here $\rmE$ sends $\calE \to \calB$ to the cocartesian fibration
    $\calE \times_\calB \ArRB \to \calB$, equipped with the canonical
    projection to $\ArRB \to \calB$.
    The right adjoint $\rmQ$ is given by
    \[
         \mathcal{E} \to \ArRB \quad \longmapsto \quad
        i^{*}\mathcal{E} = \calB \times_{\Ar_R(\calB)} \calE  \to \mathcal{B}
    \]
    where the pullback is taken along the inclusion of the identities 
    $i \colon \mathcal{B} \to \ArRB$.
    The unit of this adjunction is then the map
    $\mathcal{E} \to \rmQ(\rmE(\mathcal{E}))$ obtained from the
    commutative square of units for the adjunction $\rmE \dashv \mathrm{forget}$
    (from \cref{cor:factadjn}) as the canonical map from $\calE$ to the pullback. 
    This square of units is the left hand square in the following commutative diagram:
    \[
      \begin{tikzcd}
        \mathcal{E}\ar[r, "{i_{\mathcal{E}}}", near end] \arrow{d} \arrow[bend left,equals]{rr} &
        \rmE(\mathrm{forget}(\mathcal{E})) \arrow{d} \arrow{r} & 
        \mathcal{E} \arrow{d}\\
        \mathcal{B} \arrow{r}{i} \arrow[bend right,equals]{rr} & \ArRB \arrow{r}{\ev_{0}} & \mathcal{B},
      \end{tikzcd}
    \]
    where the right-hand square is cartesian by construction of 
    $\rmE(\calE)$ in \cref{notation:rmE}.
    Hence the left-hand square is also cartesian and
    thus the unit $\mathcal{E}\to \rmQ(\rmE(\mathcal{E}))$ is an equivalence,
    and so $\rmE$ is indeed fully faithful.
\end{proof}

Now that we have the fully faithful envelope functor
all that is left to do to prove \cref{thm:partial-SU-introduction}
is to characterize its essential image:

\begin{propn}\label{propn:image-of-Env}
    A morphism $\mathcal{D} \to \ArRB$ of cocartesian fibrations over $\mathcal{B}$
    is in the essential image of the left adjoint $\rmE$ from
    \cref{propn:lifting-Env} if and only if it is \emph{equifibered},
    meaning that for every object $\phi \colon a \to b$ in $\ArRB$, the natural square
	\[\begin{tikzcd}
	{\calD_{a}} & {\calD_b} \\
	{\ArRB_{a}} & {\ArRB_b}
	\arrow["{\phi \circ (\blank)}", from=2-1, to=2-2]
	\arrow[from=1-2, to=2-2]
	\arrow[from=1-1, to=2-1]
	\arrow["{\phi_!}", from=1-1, to=1-2]
    \end{tikzcd}\]  
    is cartesian.
\end{propn}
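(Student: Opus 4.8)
The plan is to exploit the full faithfulness of $\rmE$ established in \cref{propn:lifting-Env}: since $\rmE$ is a fully faithful left adjoint with right adjoint $\rmQ$, its essential image consists exactly of those $\mathcal{D}\to\ArRB$ for which the counit $\epsilon_{\mathcal{D}}\colon \rmE\rmQ(\mathcal{D})\to\mathcal{D}$ is an equivalence. So the task reduces to showing that $\epsilon_{\mathcal{D}}$ is an equivalence \IFF{} $\mathcal{D}$ is equifibered. First I would unwind $\epsilon_{\mathcal{D}}$ explicitly. By the description of $\rmQ$ in \cref{propn:lifting-Env}, $\rmQ(\mathcal{D})=i^{*}\mathcal{D}=\calB\times_{\ArRB}\mathcal{D}$ is the ``identity part'' $\mathcal{D}^{\id}$ of $\mathcal{D}$ (the full subcategory lying over the identities $i(\calB)\subset\ArRB$), so $\rmE\rmQ(\mathcal{D})=\mathcal{D}^{\id}\times_{\calB}\ArRB$. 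Chasing the slice counit of \cref{rmk:sliceadj} through the explicit formula for the counit $\pi_{\mathcal{F}}$ of \cref{cor:factadjn} (the description preceding \cref{propn:freefibdesc}), one finds that $\epsilon_{\mathcal{D}}$ is the ``cocartesian pushforward'' functor sending a pair $(\tilde d,\phi\colon a\to b)$ with $\tilde d\in\mathcal{D}_{\id_a}$ to $\phi_{!}\tilde d\in\mathcal{D}_{b}$.

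For the forward inclusion (essential image $\subseteq$ equifibered) I would take $\mathcal{D}=\rmE(\mathcal{E})$ and compute the squares directly. For $\phi\colon a\to b$ in $\mathcal{B}_{R}$ the fiber $(\rmE\mathcal{E})_{a}$ is $\mathcal{E}\times_{\calB}(\mathcal{B}_{R})_{/a}$, and since $\phi\psi\in\mathcal{B}_{R}$ has trivial $(\mathcal{B}_{L},\mathcal{B}_{R})$-factorization, \cref{propn:free-fibration-is-cocartesian} shows the pushforward $\phi_{!}$ leaves the $\mathcal{E}$-coordinate unchanged and postcomposes with $\phi$, i.e.\ $(e,\psi)\mapsto(e,\phi\psi)$. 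Hence the equifibered square is $\id_{\mathcal{E}}\times(\phi\circ(\blank))$ sitting over $\phi\circ(\blank)$, which is manifestly cartesian.

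For the reverse implication I would show that equifiberedness forces $\epsilon_{\mathcal{D}}$ to be an equivalence. Essential surjectivity is immediate from the pushforward formula: any $d\in\mathcal{D}$ over $\phi\colon a\to b$ satisfies $d\simeq\phi_{!}\tilde d$ for some $\tilde d\in\mathcal{D}_{\id_a}$ — this is precisely the restriction of the (cartesian) equifibered square for $\phi$ to the fiber over $\id_a\in(\mathcal{B}_{R})_{/a}$, which maps to $\phi$ under $\phi\circ(\blank)$ — so $d\simeq\epsilon_{\mathcal{D}}(\tilde d,\phi)$. For full faithfulness I would use that $\epsilon_{\mathcal{D}}$ is a morphism of cocartesian fibrations over $\calB$ preserving cocartesian morphisms, so it is an equivalence \IFF{} it is an equivalence on each fiber over $c\in\calB$; the mapping spaces decompose over $\calB$ and reduce the question to the fiber. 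Over $c$ the map is $\mathcal{D}^{\id}\times_{\calB}(\mathcal{B}_{R})_{/c}\to\mathcal{D}_{c}$ of \icats{} over $(\mathcal{B}_{R})_{/c}$, and over an object $\psi\colon a\to c$ it is exactly $\psi_{!}\colon\mathcal{D}_{\id_a}\to\mathcal{D}_{\psi}$, an equivalence by the same restricted equifibered square. Thus $\epsilon_{\mathcal{D}}$ is a fiberwise equivalence over $(\mathcal{B}_{R})_{/c}$.

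The main obstacle is upgrading this fiberwise statement over $(\mathcal{B}_{R})_{/c}$ to a genuine equivalence: neither $\mathcal{D}^{\id}\times_{\calB}(\mathcal{B}_{R})_{/c}$ nor $\mathcal{D}_{c}$ is a (co)cartesian fibration over $(\mathcal{B}_{R})_{/c}$ — both only admit cocartesian lifts over those morphisms whose $\ev_{0}$-image lies in $\mathcal{B}_{L}$ — so a fiberwise equivalence is not automatically an equivalence. I would resolve this by showing that the equifibered squares, being natural in $\psi\in(\mathcal{B}_{R})_{/c}$, assemble to an equivalence $\mathcal{D}_{c}\simeq\ev_{0}^{*}\mathcal{D}^{\id}$ over $(\mathcal{B}_{R})_{/c}$, i.e.\ that they exhibit $\mathcal{D}_{c}\to(\mathcal{B}_{R})_{/c}$ as the pullback of $\mathcal{D}^{\id}\to\calB$ along $\ev_{0}$ compatibly with $\epsilon_{\mathcal{D}}$; this is the technical heart. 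Once $\epsilon_{\mathcal{D}}$ is known to be an equivalence precisely for equifibered $\mathcal{D}$, the equivalence of the two conditions closes up, since conversely the forward direction applied to $\rmQ\mathcal{D}$ shows that $\mathcal{D}\simeq\rmE\rmQ\mathcal{D}$ is equifibered whenever $\epsilon_{\mathcal{D}}$ is an equivalence.
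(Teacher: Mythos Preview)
Your overall strategy matches the paper's: reduce to showing that the counit $\epsilon_{\mathcal{D}}\colon\rmE\rmQ(\mathcal{D})\to\mathcal{D}$ is an equivalence precisely when $\mathcal{D}$ is equifibered, and your forward direction is essentially the paper's argument phrased in terms of pushforwards rather than pasted squares.

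For the reverse implication you correctly identify the obstacle, but your proposed resolution is not a proof. Saying that the fiberwise equivalences ``assemble to'' an equivalence $\mathcal{D}_{c}\simeq\ev_{0}^{*}\mathcal{D}^{\id}$ over $(\mathcal{B}_{R})_{/c}$ is precisely the statement that $\epsilon_{\mathcal{D}}$ restricted over $c$ is an equivalence, which is what must be shown. Naturality of the equifibered squares in $\psi$ does not by itself promote pointwise equivalences to a global one: a map of \icats{} over a base that is an equivalence on every fiber need not be an equivalence unless one side is a (co)cartesian fibration, and you have just observed that neither side is one over $(\mathcal{B}_{R})_{/c}$.

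The paper avoids this by never restricting to fibers over $c\in\calB$. It factors the counit as the top row of
\[
\begin{tikzcd}
\rmE(\rmQ\mathcal{D}) \arrow{r}\arrow{d} & \rmE(\mathcal{D}) \arrow{r}\arrow{d} & \mathcal{D} \arrow{d}\\
\ArRB \arrow{r} & \rmE(\ArRB) \arrow{r} & \ArRB,
\end{tikzcd}
\]
notes that the bottom composite is an equivalence and the left square is $\rmE$ applied to a cartesian square, and reduces to showing the right-hand square cartesian. That square sits as the top face of a cube whose vertical maps are genuine cocartesian fibrations: $\rmE(\mathcal{D})\to\ArRB$ (since $\mathcal{D}\to\calB$ is cocartesian and $\rmE(\mathcal{D})=\mathcal{D}\times_{\calB}\ArRB$), $\mathcal{D}\to\calB$, $\rmE(\ArRB)\to\ArRB$, and $\ArRB\to\calB$; one checks that the top maps preserve cocartesian morphisms. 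The bottom square of the cube is trivially cartesian, so the top is cartesian \IFF{} the induced square on fibers over each $\phi\in\ArRB$ is --- and that fiber square is exactly the equifibered square for $\phi$. The missing idea is thus to compare cocartesian fibrations over $\ArRB$ and over $\calB$ simultaneously in a cube, rather than to pass to fibers over $\calB$ first and lose the fibration structure.
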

\begin{proof}
  We begin with the ``only if'' direction for
  $(\mathcal{E} \to \mathcal{B}) \in \CatIBL$ and
  $(\phi \colon a \to b) \in \ArRB$. We need to show that the left square
  of the following diagram is cartesian:
	\[\begin{tikzcd}
	{(\mathcal{E} \times_{\mathcal{B}} \ArRB)_{a}} \arrow[bend left]{rr}{\mathrm{pr}_{\mathcal{E}}} & {(\mathcal{E} \times_{\mathcal{B}} \ArRB)_b} & \mathcal{E} \\
	{\ArRB_{a}} \arrow[bend right]{rr}{s} & {\ArRB_b} & \mathcal{B},
	\arrow["{\phi \circ (\blank)}", from=2-1, to=2-2]
	\arrow[from=1-3, to=2-3]
	\arrow[from=1-2, to=2-2]
	\arrow[from=1-1, to=2-1]
	\arrow["{\phi_!}", from=1-1, to=1-2]
	\arrow["{\mathrm{pr}_\mathcal{E}}", from=1-2, to=1-3]
	\arrow["s", from=2-2, to=2-3]
      \end{tikzcd}\]
    where the identification of the composite in the top row uses the
    description of cocartesian morphisms in $(\mathcal{E}
    \times_{\mathcal{B}} \ArRB)$ from \cref{propn:free-fibration-is-cocartesian}.
    This follows since the right-hand square and the outer rectangle are both cartesian.

	For the ``if'' direction we must show that the counit
        $\rmE(\rmQ(\mathcal{D})) \to \mathcal{D}$ is an equivalence if $\mathcal{D}$ is equifibered.
    By \cref{rmk:sliceadj} this counit can be factored as the composite of the
    top horizontal maps in the following diagram:
	\[\begin{tikzcd}
	{\rmE(\rmQ(\mathcal{D}))} & {\rmE(\mathcal{D})} & \mathcal{D} \\
	{\ArRB} & {\rmE(\ArRB)} & \ArRB
	\arrow[from=1-2, to=1-3]
	\arrow[from=2-2, to=2-3]
	\arrow[from=2-1, to=2-2]
	\arrow[from=1-1, to=1-2]
	\arrow[from=1-1, to=2-1]
	\arrow[from=1-3, to=2-3]
	\arrow[from=1-2, to=2-2]
    \end{tikzcd}\label{rectangle-1}\tag{$\star$}\]
    Here the right-hand horizontal maps come from the counit of the adjunction from \cref{cor:factadjn}.
    The bottom horizontal composite is an equivalence, so it will
    suffice to show that the composite rectangle is cartesian.
    Since the left-hand square is given by $\rmE$ applied to the
    cartesian square defining $\rmQ$ (as $\ArR(\mathcal{B})$ is $\rmE(\mathcal{B})$), and $\rmE$ preserves weakly contractible limits, it suffices to show that the right-hand square is cartesian.

    By assumption, the functor $\mathcal{D} \to \mathcal{B}$ is a cocartesian
    fibration, and so the projection $\rmE(\mathcal{D}) \simeq
    \mathcal{D} \times_{\mathcal{B}} \ArRB \to \ArRB$ is
    also a cocartesian fibration, with cocartesian morphisms exactly
    those that project to cocartesian morphisms in $\mathcal{D}$. Consider now the following square
    \[
      \begin{tikzcd}
        \rmE(\mathcal{D}) \arrow{d} \arrow{r} &
        \mathcal{D} \arrow{d}{\pi} \\
        \ArRB \arrow{r}{t} & \mathcal{B},
      \end{tikzcd}
    \]
    in which the top map is
    the counit for the adjunction of \cref{cor:factadjn}. The top map in the square takes cocartesian morphisms over $\ArRB$ to
    $\pi$-cocartesian morphisms in $\mathcal{D}$. To see this, note that a cocartesian
    morphism in $\rmE(\mathcal{D})$ over $\ArRB$ is of the form
    \[\left(
      \begin{tikzcd}
        d \arrow{d} & \pi(d) \arrow{d}{\phi} \arrow{r}{\alpha} & b \arrow{d}{\beta} \\
        \phi_{!}d, &  a \arrow{r}{\gamma} & b'
      \end{tikzcd}
      \right),
      \]
    and this is by construction sent to the canonical map $\alpha_{!}d
    \to \gamma_{!}\phi_{!}d$, which is indeed cocartesian over
    $\beta$.

    Consequently the top right square of (\ref{rectangle-1}) sits as the top face in the following cube
    \[
      \begin{tikzcd}[row sep=tiny,column sep=tiny]
        \rmE(\mathcal{D}) \arrow{dr} \arrow{rr}
        \arrow{dd} & &
        \mathcal{D} \arrow{dd} \arrow{dr} \\
         & \rmE(\ArRB) \arrow[crossing over]{rr} & &\ArRB
         \arrow{dd} \\
         \ArRB \arrow{rr} \arrow[equals]{dr} & & \mathcal{B} \arrow[equals]{dr}
         \\
         & \ArRB \arrow[leftarrow,crossing over]{uu} \arrow{rr} & & \mathcal{B},
      \end{tikzcd}
    \]
    in which the vertical maps are cocartesian fibrations and the maps in
    the top square preserve cocartesian morphisms. Since the bottom
    square is obviously cartesian, to show that the top square is
    cartesian it suffices to check that taking fibers over any $\phi \in \ArRB$ yields a cartesian square.
    We thus want to show that the following square is cartesian:
    \[\begin{tikzcd}
        {\rmE(\mathcal{D})_{\phi}} & {\calD_b} \\
        {\rmE(\ArRB)_{\phi}} & {\ArRB_b}.
        \arrow[from=1-1, to=1-2]
        \arrow[from=1-1, to=2-1]
        \arrow[from=2-1, to=2-2]
        \arrow[from=1-2, to=2-2]
      \end{tikzcd}\]
    Here there is a canonical equivalence $\rmE(\mathcal{D})_{\phi} \simeq (\mathcal{D}
    \times_{\mathcal{B}} \ArRB_{\phi}) \simeq \mathcal{D}_a$ and similarly $\rmE(\ArRB)_{\phi} \simeq (\ArRB)_a$. Via these equivalences the horizontal maps are identified  with the cocartesian pushforward along
    $\phi$. The resulting square is then precisely one of the squares that are
    cartesian by the assumption that $\mathcal{D}$ is equifibered.
\end{proof}

In \S\ref{sec:segenv} it will be notationally convenient to use a
``straightened'' version of the adjunction \cref{eq:sliceenvadj}; to
state this we first introduce some notation:

\begin{notation}\label{defn:partial-s/u}
  Let $\calB$ be an $\infty$-category equipped with a factorization
  system $(\calB_{L},\calB_{R})$, and let $\calR\colon \calB \to
  \CatI$ be the
  straightening of the cocartesian fibration $\Ar_R(\calB) \to \calB$.
  We define the functor
  \[\StBL \colon \CatIBL \to \Fun(\calB,\CatI)_{/\calR}, \]
  which we think of as a form of ``straightening relative to the
  factorization system'', as the composite
  \[ \CatIBL \xto{\rmE} \left(\CatIcoc{\mathcal{B}}\right)_{/\ArRB}
    \xto{\St{\mathcal{B}}{}} \Fun(\calB,\CatI)_{/\calR},\]
   sending $(p\colon\calE \to \calB)$ to the straightening of
   $\calE \times_{\calB} \ArRB \to \calB$. 
   Dually, we define
   \[ \Un{\calB}{L} \colon \Fun(\calB,\CatI)_{/\calR} \to \CatIBL\]
   as the composite
   \[ \Fun(\calB,\CatI)_{/\calR}
     \xto{\Un{\mathcal{B}}{}}\left(\CatIcoc{\mathcal{B}}\right)_{/\ArRB}
     \xto{\rmQ} \CatIBL. \]
   For a functor $F \colon \calB \to \Cat$ together with natural
   transformation $\alpha \colon F \to \calR$ we then have that
   $\Un{\calB}{L}(\alpha)$ is the pullback
    \[
        \begin{tikzcd}
            \Un{\calB}{L}(\alpha) \ar[r] \ar[d] \ar[dr, phantom, very near start, "\lrcorner"] & 
            \Un{\calB}{}(F) \ar[d, "\Un{\calB}{}(\alpha)"] \\
            \calB \ar[r] &
            \ArRB.
        \end{tikzcd}
    \]
\end{notation}

This yields the following reformulation of \cref{thm:partial-SU-introduction}:
\begin{thm}\label{thm:partial-SU}
    The functors $\StBL$ and $\UnBL$ give an adjunction
    \[\StBL \colon \CatIBL \adj \Fun(\calB,\CatI)_{/\calR}  :\! \UnBL.\]
    The functor $\StBL$ is fully faithful and
    a natural transformation $F \to \calR$ is in the essential image 
    of $\StBL$ if and only if it is \emph{equifibered}, meaning that
    for every object $a \xto{\phi} b$ in $\ArRB$, the natural square
    \[\begin{tikzcd}
        {F(a)} & {F(b)} \\
        {\calR(a)} & {\calR(b)}
        \arrow["{F(\phi)}", from=1-1, to=1-2]
        \arrow[from=1-1, to=2-1]
        \arrow["{\calR(\phi)}", from=2-1, to=2-2]
        \arrow[from=1-2, to=2-2]
      \end{tikzcd}\]
    is cartesian.
\end{thm}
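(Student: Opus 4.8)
The plan is to deduce everything formally from the adjunction $\rmE \dashv \rmQ$ of \cref{propn:lifting-Env} together with the straightening equivalence, reducing the computation of the essential image to the already-established \cref{propn:image-of-Env}. By \cref{defn:partial-s/u} we have $\StBL = \St{\mathcal{B}}{} \circ \rmE$ and $\UnBL = \rmQ \circ \Un{\mathcal{B}}{}$, where $\St{\mathcal{B}}{}$ and $\Un{\mathcal{B}}{}$ are the mutually inverse (sliced) straightening and unstraightening equivalences between $(\CatIcoc{\mathcal{B}})_{/\ArRB}$ and $\Fun(\calB,\CatI)_{/\calR}$ (using that $\calR$ is the straightening of $\ArRB \to \calB$). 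Composing the adjunction $\rmE \dashv \rmQ$ with this equivalence immediately yields the adjunction $\StBL \dashv \UnBL$, since postcomposing a left adjoint with an equivalence and the right adjoint with its inverse again produces an adjoint pair.

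For full faithfulness I would argue that $\rmE$ is fully faithful by \cref{propn:lifting-Env} and $\St{\mathcal{B}}{}$ is an equivalence, so their composite $\StBL$ is fully faithful.

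To identify the essential image, note that since $\St{\mathcal{B}}{}$ is an equivalence, the essential image of $\StBL$ is the image under $\St{\mathcal{B}}{}$ of the essential image of $\rmE$; by \cref{propn:image-of-Env} the latter consists precisely of the equifibered morphisms $\calD \to \ArRB$. I would then check that straightening carries this condition to the one in the statement. Writing $F := \St{\mathcal{B}}{}(\calD)$, the defining properties of straightening give $F(a) \simeq \calD_a$ and $F(\phi) \simeq \phi_!$ (the cocartesian pushforward) for objects $a$ and morphisms $\phi$ of $\calB$, and similarly $\calR(a) \simeq \ArRB_a = (\calB_R)_{/a}$. The one nontrivial point is that, for $\phi \colon a \to b$ lying in $\calB_R$, the functoriality $\calR(\phi)$ agrees with postcomposition $\phi \circ (\blank)$; this follows from \cref{propn:free-fibration-is-cocartesian} applied with $\calE = \calB$, since when $\phi \in \calB_R$ the $(L,R)$-factorization of $\phi\psi$ has trivial left part, so the cocartesian lift of $\phi$ in $\ArRB \to \calB$ is exactly postcomposition. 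Under these identifications the square in the statement becomes the straightening of the square in \cref{propn:image-of-Env}, so the two equifibered conditions coincide and the essential image is as claimed.

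The only genuinely nonformal step is this last matching of the two equifibered squares; everything else is a formal manipulation of adjunctions and equivalences. The point demanding care is to keep track of which evaluation ($\ev_0$ versus $\ev_1$) exhibits $\ArRB$ as a cocartesian fibration over $\calB$ and to confirm that the induced functoriality of $\calR$ along a morphism in $\calB_R$ is genuinely postcomposition --- rather than the factor-then-pushforward behaviour exhibited by a general morphism of $\calB$.
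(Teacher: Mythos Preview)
Your proposal is correct and matches the paper's approach exactly: the paper marks this theorem with \qed and introduces it as ``the following reformulation of \cref{thm:partial-SU-introduction}'', treating it as an immediate consequence of \cref{propn:lifting-Env} and \cref{propn:image-of-Env} after composing with the straightening equivalence. You have simply made explicit the translation step (in particular the identification of $\calR(\phi)$ with postcomposition for $\phi \in \calB_R$) that the paper leaves implicit.
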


A pleasant consequence of \cref{thm:partial-SU} is that $\StBL$
also has a \emph{left} adjoint and that $\CatIBL$ is
presentable. To see this, we use the following observation:

\begin{observation}\label{obs:rightortholoc}
  Let $\mathcal{C}$ be a presentable \icat{}, and $S$ a set of
  morphisms in $\mathcal{C}$. Recall that a morphism $\phi \colon X
  \to Y$ in $\mathcal{C}$ is \emph{right orthogonal} to $S$ if there
  exists a unique filler in every commutative square
  \[
    \begin{tikzcd}
      A \arrow{r} \arrow{d}[swap]{f} & X \arrow{d}{\phi} \\
      B \arrow{r} \arrow[dotted]{ur} & Y
    \end{tikzcd}
  \]
  where $f$ is in $S$. Equivalently, $\phi$ is right orthogonal to $S$
  \IFF{} the commutative square
  \[
    \begin{tikzcd}
      \Map_{\mathcal{C}}(B,X) \arrow{r}{f^{*}} \arrow{d}{\phi_{*}} &
      \Map_{\mathcal{C}}(A,X) \arrow{d}{\phi_{*}} \\
      \Map_{\mathcal{C}}(B,Y) \arrow{r}{f^{*}} & \Map_{\mathcal{C}}(A,Y)
    \end{tikzcd}
  \]
  is cartesian for all $f \colon A \to B$ in $S$. This square is in
  turn cartesian \IFF{} for all maps $B \to Y$, the map on fibers
  \[ \Map_{/Y}(B,X) \to \Map_{/Y}(A, X)\]
  is an equivalence. Thus the map $\phi$ is right orthogonal to $S$
  \IFF{} as an object of $\mathcal{C}_{/Y}$ it is local with respect
  to the set of maps
  \[ \left\{
      \begin{tikzcd}
        A \arrow{rr}{f} \arrow{dr} & & B \arrow{dl} \\
         & Y
       \end{tikzcd}
       : f \in S    
     \right\}.\]
   In particular, the full subcategory of $\mathcal{C}_{/Y}$ spanned
   by the objects that are right orthogonal to $S$ is an accessible
   localization of $\mathcal{C}_{/Y}$, and so is also presentable.
\end{observation}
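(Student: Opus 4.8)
The plan is to reduce the final claim to the standard theory of accessible localizations generated by a small set of morphisms, after transporting the right-orthogonality condition into the slice $\mathcal{C}_{/Y}$.

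First I would justify the two reformulations recorded in the observation. By definition $\phi \colon X \to Y$ is right orthogonal to $f \colon A \to B$ exactly when every lifting square with left edge $f$ and right edge $\phi$ has a contractible space of fillers; unwinding the mapping-space description of such lifting problems shows this is equivalent to the displayed square of mapping spaces being cartesian. Since a map of spaces over a fixed base is an equivalence \IFF{} it is an equivalence on all fibers, taking fibers of this square over each map $B \to Y$ rephrases the pullback condition as the requirement that $\Map_{/Y}(B,X) \to \Map_{/Y}(A,X)$ be an equivalence for every such base map. This is precisely the statement that $\phi$, regarded as an object of $\mathcal{C}_{/Y}$, is local with respect to the morphism $(A \to Y) \to (B \to Y)$ determined by $f$ and that chosen base map. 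Letting $f$ range over $S$ and the base map range over all maps with target $Y$, I obtain a set $T$ of morphisms in $\mathcal{C}_{/Y}$ whose local objects are exactly the right-orthogonal morphisms.

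Next I would verify the smallness hypothesis required by the localization theorem. Since $S$ is a set and $\mathcal{C}$, being presentable, is locally small, each mapping space $\Map_{\mathcal{C}}(B,Y)$ is essentially small; hence the generating collection $T$ is essentially small. As $\mathcal{C}_{/Y}$ is again presentable (slices of presentable \icats{} are presentable), I may invoke the standard fact that the full subcategory of $T$-local objects in a presentable \icat{} is an accessible localization, and in particular itself presentable \cite{HTT}*{Proposition 5.5.4.15}. This is exactly the asserted conclusion.

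There is no genuinely hard step here; the argument is essentially bookkeeping. The one point that demands care is the passage to the slice: I must confirm that the \emph{unique}-filler condition against a single $f$ corresponds to locality against one morphism over each \emph{fixed} base map $B \to Y$, and that quantifying over all base maps recovers the full orthogonality condition, while simultaneously checking that the resulting generating set remains (essentially) small — this smallness being precisely what makes the cited localization theorem applicable.
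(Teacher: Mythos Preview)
Your proposal is correct and follows essentially the same route as the paper: the observation itself contains the reasoning inline (there is no separate proof), and you have simply elaborated on each step, adding the explicit smallness check for $T$ and the citation to \cite{HTT}*{Proposition 5.5.4.15} that the paper leaves implicit.
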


\begin{propn}\label{lem:Env-admits-left-adjoint}
  Let $(\mathcal{B}, \mathcal{B}_{L}, \mathcal{B}_{R})$ be a small
  \icat{} equipped with a factorization system. The functor
  $\St{\calB}{L}$ has a left adjoint, which exhibits $\CatIBL$ as an
  accessible localization of $\Fun(\calB,\CatI)_{/\calR}$. In
  particular, $\CatIBL$ is a presentable $\infty$-category.
\end{propn}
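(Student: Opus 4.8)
The functor $\UnBL$ already possesses a left adjoint, namely the fully faithful functor $\StBL$ of \cref{thm:partial-SU}; the real content is that $\StBL$ exhibits $\CatIBL$ as an \emph{accessible} localization of $\mathcal{M} := \Fun(\calB,\CatI)_{/\calR}$, from which presentability is automatic. Since $\calB$ is small and $\CatI$ is presentable, $\Fun(\calB,\CatI)$ is presentable and hence so is its slice $\mathcal{M}$. By \cref{thm:partial-SU} the essential image of $\StBL$ is the full subcategory of \emph{equifibered} transformations $F \to \calR$, so it suffices to realize this full subcategory as an accessible localization of $\mathcal{M}$: then $\CatIBL$ is presentable, and $\StBL$ is the inclusion of a reflective subcategory, i.e.\ admits a left adjoint (the reflection). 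The plan is to apply \cref{obs:rightortholoc} with $\mathcal{C} = \Fun(\calB,\CatI)$ and $Y = \calR$, after rewriting ``equifibered'' as right orthogonality to a small set of maps.

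For each $a \in \calB$ let $j_a \colon \CatI \to \Fun(\calB,\CatI)$ denote the left adjoint of evaluation $\ev_a$, so that $\Map(j_a K, F) \simeq \Map_{\CatI}(K, F(a))$ naturally in $F$ and $K$; here $j_a K$ is a genuine object of $\Fun(\calB,\CatI)$ because $\calB$ is small. A morphism $\phi \colon a \to b$ in $\calB_{R}$ gives a natural transformation $\ev_a \to \ev_b$ (evaluate on $F(\phi)$), and passing to left adjoints yields, for each $K$, a map $\psi^{\phi}_{K} \colon j_b K \to j_a K$ whose precomposition implements $F(\phi)_{*} \colon \Map_{\CatI}(K,F(a)) \to \Map_{\CatI}(K,F(b))$ and likewise $\calR(\phi)_{*}$. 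I would then set
\[ S := \bigl\{\, \psi^{\phi}_{K} \ :\ \phi \in \calB_{R},\ K \in \{\pt, [1]\} \,\bigr\}, \]
a small set since $\calB$ is small. The key point is that a square in $\CatI$ is cartesian if and only if $\Map_{\CatI}(K,-)$ carries it to a cartesian square of spaces for $K \in \{\pt,[1]\}$, because $\pt$ and $[1]$ generate $\CatI$ under colimits and therefore jointly detect equivalences. Applying this to the equifibered square attached to $\phi$ and unwinding the identifications above shows that $F \to \calR$ is equifibered precisely when it is right orthogonal to every element of $S$; thus the equifibered objects are exactly the objects of $\mathcal{M}$ right orthogonal to $S$.

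The remaining steps are formal. By \cref{obs:rightortholoc}, the objects of $\mathcal{M} = \mathcal{C}_{/\calR}$ that are right orthogonal to $S$ form an accessible localization of $\mathcal{M}$, in particular a presentable $\infty$-category, and the inclusion admits a left adjoint given by the associated reflection. Transporting along the equivalence between $\CatIBL$ and this essential image supplied by the fully faithful $\StBL$, we conclude that $\CatIBL$ is presentable and that $\StBL$ realizes it as an accessible localization of $\mathcal{M}$; in particular $\StBL$ acquires a left adjoint, which together with $\StBL \dashv \UnBL$ assembles into a triple adjunction and yields the statement.

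I expect the only non-formal step to be the construction in the second paragraph: arranging the maps $\psi^{\phi}_{K}$ with the correct variance and checking that right orthogonality to the \emph{small} set $S$ reproduces exactly the equifibered squares. The reduction of a cartesian square of $\infty$-categories to cartesian squares of mapping spaces out of the two generators $\pt$ and $[1]$ is precisely what keeps $S$ small, and verifying that precomposition with $\psi^{\phi}_{K}$ induces $F(\phi)_{*}$ and $\calR(\phi)_{*}$ compatibly is the one piece of bookkeeping that must be done by hand; everything else is an application of \cref{obs:rightortholoc} and transport along $\StBL$.
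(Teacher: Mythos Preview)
Your proposal is correct and takes essentially the same approach as the paper: the paper also shows that the equifibered objects are exactly those right orthogonal to a small set of maps and then invokes \cref{obs:rightortholoc}. The only cosmetic difference is that the paper writes your maps $\psi^{\phi}_{K}$ explicitly as $y(\phi)\times\id \colon y(b)\times[\epsilon] \to y(a)\times[\epsilon]$ using the Yoneda embedding, which is precisely your $j_{a}K$ since $j_{a}K \simeq y(a)\times K$; the reduction to $K\in\{[0],[1]\}$ via generation of $\CatI$ under colimits is identical.
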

\begin{proof}
  The \icat{} $\Fun(\calB,\CatI)_{/\calR}$ is clearly presentable, and
  we know that the functor $\StBL$ is fully faithful, with its essential image
    given by functors equifibered over $\calR$. It therefore suffices
    to show that this is the full subcategory of objects in
    $\Fun(\calB,\CatI)_{/\calR}$ that are local with respect to a set
    of morphisms.

    Let $S$ be the collection of morphisms of the form
    \[ (y(\phi) \times \id) \colon y(b) \times [\epsilon]
      \to y(a) \times [\epsilon] \]
    for $\epsilon \in \{0,1\}$ and $(\phi \colon a \to b) \in \ArRB$,
    where $y(a)(\blank) := \Map_\calB(a,\blank)$ is the Yoneda
    embedding of $\calB^{\op}$; this is a set since $\mathcal{B}$ is
    by assumption a small \icat{}.
    An object $\gamma \colon F \to \calR$ in $\Fun(\calB, \CatI)_{/\calR}$ 
    is then equifibered if and only if it is right orthogonal to $S$:
    The latter means that the commutative squares
    \[
      \begin{tikzcd}
      \Map(y(a) \times [\epsilon], F) \arrow{r} \arrow{d} &
      \Map(y(b) \times [\epsilon], F) \arrow{d} \\
      \Map(y(a) \times [\epsilon], \mathcal{R}) \arrow{r} &
      \Map(y(b) \times [\epsilon], \mathcal{R})
      \end{tikzcd}
    \]
    are cartesian; by the Yoneda lemma this square can be identified
    with
    \[
      \begin{tikzcd}
      \Map([\epsilon], F(a)) \arrow{r} \arrow{d} &
      \Map([\epsilon], F(b)) \arrow{d} \\
      \Map([\epsilon], \mathcal{R}(a)) \arrow{r} &
      \Map([\epsilon], \mathcal{R}(b)),
      \end{tikzcd}
    \]
    which is cartesian for $\epsilon=0,1$ \IFF{} the square
    \[
      \begin{tikzcd}
        F(a) \arrow{r} \arrow{d} & F(b) \arrow{d} \\
        \mathcal{R}(a) \arrow{r} & \mathcal{R}(b)
      \end{tikzcd}
    \]
    is cartesian, since the objects $[0],[1]$ generate
    $\CatI$ under colimits. The result then follows from \cref{obs:rightortholoc}.
\end{proof}

\begin{observation}\label{obs:Lcoclim}
  It is easy to see (using the mapping space criterion for cocartesian
  morphisms) that the forgetful functor
  $\CatIBL \to \CatIsl{\mathcal{B}}$ preserves limits and filtered
  colimits. Since both \icats{} are presentable by
  \cref{lem:Env-admits-left-adjoint}, it follows by the adjoint
  functor theorem that this functor has a left adjoint.
\end{observation}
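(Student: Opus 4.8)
The plan is to reduce the claim to the adjoint functor theorem for presentable \icats{} (\cite{HTT}*{Corollary 5.5.2.9}). We already know that $\CatIBL$ is presentable by \cref{lem:Env-admits-left-adjoint}, and $\CatIsl{\mathcal{B}}$ is presentable as a slice of the presentable \icat{} $\CatI$. By that theorem, the forgetful functor $G \colon \CatIBL \to \CatIsl{\mathcal{B}}$ admits a left adjoint as soon as it preserves small limits and is accessible; accessibility will follow from preservation of filtered colimits, so the whole statement rests on showing that $G$ preserves limits and filtered colimits.

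The key tool, as suggested, is the mapping space criterion for cocartesian morphisms, used exactly as in the earlier observation that $\CatIzcoc{\mathcal{B}} \to \CatIsl{\mathcal{B}}$ detects pullbacks. I would argue that $\CatIBL$ is closed under limits and filtered colimits inside $\CatIsl{\mathcal{B}}$, with $G$ creating them. Concretely, let $\mathcal{E}$ be a limit (resp.\ filtered colimit) of a diagram $\{\mathcal{E}_i\}$ in $\CatIBL$, formed in $\CatIsl{\mathcal{B}}$. Mapping spaces in $\mathcal{E}$ are computed as the corresponding limit (resp.\ filtered colimit) of the mapping spaces in the $\mathcal{E}_i$, over the mapping spaces of $\mathcal{B}$, which are constant across the diagram; and the cocartesian condition is precisely that a certain square of such mapping spaces be cartesian. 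Since this cartesianness is preserved by limits and by filtered colimits (the latter commuting with the finite limits involved), a morphism of $\mathcal{E}$ lying over a map in $\mathcal{B}_L$ is cocartesian \IFF{} its image in each $\mathcal{E}_i$ is cocartesian.

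Granting this detection statement, the rest is formal. Closure holds because, given a map in $\mathcal{B}_L$ and a lift of its source to $\mathcal{E}$, one forms the cocartesian lifts in each $\mathcal{E}_i$ (which exist as $\mathcal{E}_i \in \CatIBL$); these are compatible since the transition maps preserve cocartesian morphisms, so they assemble to a lift in $\mathcal{E}$ that is cocartesian by the criterion. Hence $\mathcal{E} \in \CatIBL$ and the canonical maps relating $\mathcal{E}$ and the $\mathcal{E}_i$ preserve cocartesian morphisms. The universal property in $\CatIBL$ then follows from the one in $\CatIsl{\mathcal{B}}$ together with the criterion: a functor into (resp.\ out of) $\mathcal{E}$ preserves cocartesian-over-$\mathcal{B}_L$ morphisms \IFF{} each of its composites with a projection (resp.\ structure map) does, so the subspaces of cocartesian-preserving functors match on both sides. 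Thus $G$ creates, and so preserves, limits and filtered colimits, and the adjoint functor theorem produces the desired left adjoint.

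I expect the only real subtlety to be the bookkeeping in the two instances of the criterion. For general, possibly disconnected, limits one must genuinely work inside the slice $\CatIsl{\mathcal{B}}$, where products are fibre products over $\mathcal{B}$, so that the mapping spaces of $\mathcal{B}$ remain constant across the indexing diagram; and for filtered colimits one must use that cocartesian morphisms in $\mathcal{E}$ are already represented at finite stages and that filtered colimits commute with the finite limits in the cocartesian square. Everything else is routine, which is why the authors regard the statement as easy to see.
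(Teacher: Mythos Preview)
Your proposal is correct and follows exactly the approach the paper intends: the observation has no separate proof in the paper, and the reasoning you spell out---using the mapping space criterion to show that limits and filtered colimits in $\CatIsl{\mathcal{B}}$ of objects in $\CatIBL$ remain in $\CatIBL$, then invoking the adjoint functor theorem---is precisely the content the authors gesture at with ``it is easy to see.''
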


\begin{observation}\label{obs:basechange-envelope}
  Let $(\mathcal{A}, \mathcal{A}_{L}, \mathcal{A}_{R})$ and
  $(\mathcal{B}, \mathcal{B}_{L}, \mathcal{B}_{R})$ be \icats{}
  equipped with factorization systems, and let $f \colon \mathcal{A}
  \to \mathcal{B}$ be a functor that preserves both classes of maps in
  these.

  The functor $f$ then induces a commutative diagram
  \[
    \begin{tikzcd}
      \mathcal{A} \arrow{r}{i_{\mathcal{A}}} \arrow{dd}[swap]{f} &
      \Ar_{R}(\mathcal{A}) \arrow{d}{q} \arrow{r}{\ev_{1}} & \mathcal{A} \arrow[equals]{d}\\
      & f^{*}\ArRB \arrow{d} \arrow{r}{\ev_{1}} \ar[dr, phantom, near start, "\lrcorner"] &
      \mathcal{A} \arrow{d}{f} \\
      \mathcal{B} \arrow{r}{i_{\mathcal{B}}} & \ArRB \arrow{r}{\ev_{1}} & \mathcal{B}.
    \end{tikzcd}
  \]
  From this we get the following commutative diagram of \icats{}:
  \begin{equation}\label{eq:ostarradjsq}
    \begin{tikzcd}
      (\CatIcoc{\mathcal{B}})_{/\ArRB} \arrow{r}{f^{*}} \arrow{d} \arrow[bend
      right=70]{dd}[swap]{\rmQ_{\mathcal{B}}} &
      (\CatIcoc{\mathcal{A}})_{/f^{*}\ArRB} \arrow{r}{q^{*}} \arrow{d} &
      (\CatIcoc{\mathcal{A}})_{/\Ar_{R}(\mathcal{A})} \arrow{d}
      \arrow[bend left=70]{dd}{\rmQ_{\mathcal{A}}}\\
      (\CatILcoc{\mathcal{B}})_{/\ArRB} \arrow{r}{f^{*}} \arrow{d}{\iota_{\calB}^*} &
      (\CatILcoc{\mathcal{A}})_{/f^{*}\ArRB} \arrow{r}{q^{*}}  &
      (\CatILcoc{\mathcal{A}})_{/\Ar_{R}(\mathcal{A})} \arrow{d}{\iota_{\calA}^*} \\
      \CatILcoc{\mathcal{B}} \arrow{rr}{f^{*}} & & \CatILcoc{\mathcal{A}}.
    \end{tikzcd}
  \end{equation}
  Let us write $f^{\ostar}$ 
  for the composite in the top row, which takes
  $\mathcal{E} \to \ArRB$ to the fiber product
  $\mathcal{E} \times_{\ArRB} \Ar_{R}(\mathcal{A}) \to
  \Ar_{R}(\mathcal{A})$.  Passing to vertical left adjoints now yields a Beck--Chevalley
  transformation
  \[ \rmE_{\mathcal{A}} f^{*} \to f^{\ostar}\rmE_{\mathcal{B}};\]
  Unwinding the definitions, this is given at $\mathcal{E} \to
  \mathcal{B}$ in $\CatIBL$  by the natural map
  \[ \left(\mathcal{E} \times_{\mathcal{B}} \mathcal{A}
    \right)\times_{\mathcal{A}} \Ar_{R}(\mathcal{A}) \to \left(\mathcal{E}
      \times_{\mathcal{B}} \ArRB\right) \times_{\ArRB}
    \Ar_{R}(\mathcal{A}). \]
  This is an equivalence, so that we also have a commutative square
  \begin{equation}
    \label{eq:fostarEsq}
    \begin{tikzcd}
      \CatILcoc{\mathcal{B}} \arrow{d}{\rmE_{\mathcal{B}}} \arrow{r}{f^{*}} & \CatILcoc{\mathcal{A}} \arrow{d}{\rmE_{\mathcal{A}}} \\
      (\CatIcoc{\mathcal{B}})_{/\ArRB} \arrow{r}{f^{\ostar}} & (\CatIcoc{\mathcal{A}})_{/\Ar_{R}(\mathcal{A})}.
    \end{tikzcd}
  \end{equation}
\end{observation}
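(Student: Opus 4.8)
The plan is to establish the three parts of the observation in sequence: commutativity of \cref{eq:ostarradjsq}, the passage to vertical left adjoints producing the Beck--Chevalley transformation, and finally that this transformation is an equivalence. Only the last point carries real content; the rest is bookkeeping with adjunctions already in hand.

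First I would observe that every vertical functor in \cref{eq:ostarradjsq} is a slice of one of the forgetful functors whose left adjoint is the envelope: by \cref{cor:factadjn} and \cref{propn:lifting-Env}, together with the slice formalism of \cref{rmk:sliceadj}, the composites $\rmQ_{\calB}$ and $\rmQ_{\calA}$ are right adjoints with left adjoints $\rmE_{\calB}$ and $\rmE_{\calA}$. Commutativity of the individual squares is pure functoriality of pullback: the left-hand squares say that base change along $f$ commutes with forgetting the $\calB_{L}$-cocartesian structure, while the right-hand squares are the pasting identity $q^{*}f^{*}\simeq(fq)^{*}$ for the composable maps $\Ar_{R}(\calA)\xto{q}f^{*}\ArRB\to\ArRB$. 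Granting this, the outer rectangle yields $\rmQ_{\calA}\circ f^{\ostar}\simeq f^{*}\circ\rmQ_{\calB}$, and the mate of this equivalence under $\rmE\dashv\rmQ$ on both sides is precisely the asserted transformation $\rmE_{\calA}f^{*}\to f^{\ostar}\rmE_{\calB}$. Unwinding it at $(\calE\to\calB)$ using the explicit unit $i_{\calE}$ of \cref{cor:freecocadj} and the description $f^{\ostar}\colon\mathcal{D}\mapsto\mathcal{D}\times_{\ArRB}\Ar_{R}(\calA)$ reproduces the comparison map
\[ \bigl(\calE\times_{\calB}\calA\bigr)\times_{\calA}\Ar_{R}(\calA)\longrightarrow\bigl(\calE\times_{\calB}\ArRB\bigr)\times_{\ArRB}\Ar_{R}(\calA), \]
exactly as in the subcategory version, \cref{obs:freesubcatftr}, which I would invoke rather than recompute.

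The substantive step, and the one I expect to be the main obstacle, is showing this comparison map is an equivalence, which I would do by cancelling redundant pullbacks and checking that both sides are canonically $\calE\times_{\calB}\Ar_{R}(\calA)$ over the \emph{same} structure map. On the source, the cancellation $\calA\times_{\calA,\ev_{0}}\Ar_{R}(\calA)\simeq\Ar_{R}(\calA)$ identifies it with $\calE\times_{\calB}\Ar_{R}(\calA)$ along $f\circ\ev_{0}$. On the target, since $f^{*}\ArRB\simeq\ArRB\times_{\calB}\calA$ and $q=(\Ar_{R}(f),\ev_{1})$, the cancellation $\ArRB\times_{\ArRB,\Ar_{R}(f)}\Ar_{R}(\calA)\simeq\Ar_{R}(\calA)$ identifies it with $\calE\times_{\calB}\Ar_{R}(\calA)$ along $\ev_{0}\circ\Ar_{R}(f)$. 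The one thing that must be verified is that these two structure maps coincide, i.e.\ that $\ev_{0}\circ\Ar_{R}(f)=f\circ\ev_{0}$; this holds because $\Ar_{R}(f)=\Fun([1],f)$ applies $f$ levelwise and hence commutes with evaluation at the source. A short diagram chase with the pasting lemma then shows the comparison map is the identity under these two identifications, so it is an equivalence, and passing to the left adjoints gives the square \cref{eq:fostarEsq}.
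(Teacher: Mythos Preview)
Your proposal is correct and follows the same route as the paper. The paper presents this as an ``observation'' with the reasoning embedded in the statement itself and simply asserts that the Beck--Chevalley map is an equivalence; you have correctly filled in the implicit verification by identifying both sides with $\mathcal{E}\times_{\mathcal{B}}\Ar_{R}(\mathcal{A})$ via the evident cancellations, which is exactly the intended argument.
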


\section{Algebraic patterns}\label{sec:patterns}

In this section we will first review the basic definitions related to
algebraic patterns and Segal objects in
\S\ref{sec:algebraic-patterns}, and then look at some examples thereof
in \S\ref{sec:examples}. We then introduce the condition of
\emph{soundness} for algebraic patterns in \S\ref{sec:sound}; this is
somewhat technical, but turns out to be the key property needed for
some of our results in the next section.

\subsection{Algebraic patterns and Segal objects}
\label{sec:algebraic-patterns}
In this subsection we review the definitions of algebraic patterns and
Segal objects,
and some related basic notions introduced in \cite{patterns1}. We also
introduce a \emph{relative} version of Segal objects, which will show
up later.

\begin{defn}
  An \emph{algebraic pattern} is an \icat{} $\mathcal{O}$ equipped
  with a factorization system, whereby every morphism factors
  (uniquely up to equivalence) as an
  \emph{inert} morphism followed by an \emph{active} morphism, as well
  as a collection of \emph{elementary objects}. We write
  $\mathcal{O}^{\xint}$ and $\mathcal{O}^{\act}$ for the subcategories
  of $\mathcal{O}$ containing only the inert and active morphisms,
  respectively, and $\mathcal{O}^{\el}$ for the full subcategory of
  $\mathcal{O}^{\xint}$ containing elementary objects and inert
  morphisms among them. For $X \in \mathcal{O}$, we also write
  \[ \mathcal{O}^{\el}_{X/} := \mathcal{O}^{\el}
    \times_{\mathcal{O}^{\xint}} \mathcal{O}^{\xint}_{X/} \]
  for the \icat{} of inert maps $X \to E$ with $E \in \mathcal{O}^{\el}$.
\end{defn}

\begin{notation}
  We often indicate inert maps as $X\intto Y$ and active maps as $X
  \actto Y$. These arrows are not meant to indicate any particular
  intuition about inert or active morphisms. 
\end{notation}

\begin{ex}\label{ex:xF*}
  We write $\xF_{*}$ for a skeleton
  of the category of pointed finite sets, with objects $\angled{n} :=
  (\{0,1,\ldots,n\}, 0)$, and say a morphism $\phi \colon \angled{n}
  \to \angled{m}$ is \emph{inert} if $\phi$ restricts to an
  isomorphism $\angled{n}\setminus \phi^{-1}(0) \to \angled{m}
  \setminus \{0\}$, and \emph{active} if $\phi^{-1}(0) = \{0\}$. 
  Then the inert and active morphisms form a factorization system on $\xF_{*}$,
  and we make this an algebraic pattern%
  \footnote{
    In \cite{patterns1} this pattern was denoted $\xF_{*}^{\flat}$ to
    distinguish it from the pattern $\xF_{*}^{\natural}$, where the
    elementary objects are $\angled{0}$ and $\angled{1}$.
    However, in this paper $\xF_* = \xF_*^\flat$ is the key example, so we use a simplified notation for it.
    }
    by taking $\angled{1}$ to be the single elementary object. 
\end{ex}

\begin{ex}\label{ex:Dop}
  Another basic example is $\simp^{\op}$, where $\simp$
  is the simplex category. Recall that $\Dop$ admits an inert-active
  factorization system where inert maps are opposite to interval
  inclusions and active maps are opposite to maps preserving the
  maximal and minimal elements.
  To make $\Dop$ an algebraic pattern, we can take the elementary
  objects to be $[0]$ and $[1]$, in which case we denote the pattern
  by $\simp^{\op,\natural}$, or alternatively just $[1]$, in which
  case the pattern is denoted $\simp^{\op,\flat}$.
\end{ex}

The main reason for introducing algebraic patterns is that they
describe algebraic structures via Segal conditions:
\begin{defn}
  A functor $F \colon \mathcal{O} \to \mathcal{C}$ is a
  \emph{Segal $\mathcal{O}$-object} in the \icat{} $\mathcal{C}$ if for every
  object $X \in \mathcal{O}$ the induced functor
  \[ (\mathcal{O}^{\el}_{X/})^{\triangleleft} \to \mathcal{O} \xto{F}
    \mathcal{C} \]
  is a limit diagram. If $\mathcal{C}$ has limits for diagrams
  indexed by $\mathcal{O}^{\el}_{X/}$ for all $X \in \mathcal{O}$, in
  which case we say that $\mathcal{C}$ is \emph{$\mathcal{O}$-complete},
  then this condition is equivalent to the canonical maps
  \[ F(X) \to \lim_{E \in \mathcal{O}^{\el}_{X/}} F(E)\]
  being equivalences.
  We refer to Segal $\mathcal{O}$-objects in the
  \icat{} $\mathcal{S}$ of spaces as \emph{Segal $\mathcal{O}$-spaces}
  and Segal $\mathcal{O}$-objects in the
  \icat{} $\CatI$ of \icats{} as \emph{Segal $\mathcal{O}$-\icats{}}.
\end{defn}

\begin{ex}\label{ex:Segal-Finstar}
  We can
  identify $(\xF_{*})^{\el}_{\angled{n}/}$
  with the set $\{\rho_{i} : i = 1,\ldots,n\}$, where $\rho_{i}$ is
  the inert morphism $\angled{n} \to \angled{1}$ given by
  \[\rho_{i}(j) =
    \begin{cases}
      0, & j \neq i,\\
      1, & j = i.
    \end{cases}
    \]
  A functor $F \colon \xF_{*} \to \mathcal{C}$ is then a Segal $\xF_{*}$-object if
  for every $n$ the map
  \[ F(\angled{n}) \to \prod_{i=1}^{n} F(\angled{1}),\] induced by the
  maps $\rho_{i}$, is an equivalence.  Thus Segal $\xF_{*}$-objects
  are precisely commutative monoids in the sense of \cite{HA}*{\S
    2.4.2}. For $\calC = \calS$, this gives the $\infty$-categorical
  analogue of \emph{special $\Gamma$-spaces} in the sense of
  Segal~\cite{SegalCatCohlgy}.
\end{ex}

\begin{ex}
    Segal $\simp^{\op,\natural}$-spaces are precisely
    \emph{Segal spaces} in the sense of \cite{RezkCSS}, while
    Segal $\simp^{\op,\flat}$-objects in $\calC$ are associative
    monoids (or $E_1$-algebras). 
\end{ex}

Later on, we will also need to consider a \emph{relative} version of
Segal objects:
\begin{defn}\label{defn:relative-Segal}
  Let $\calO$ be an algebraic pattern and $\calC$ an $\mathcal{O}$-complete
  $\infty$-category. A \emph{relative Segal $\calO$-object} of $\calC$
  is a morphism $\pi \colon Y \to X$ in $\Fun(\calO,\calC)$
  such that for every $O \in \calO$ the natural commutative square
  \[\begin{tikzcd}
      Y(O) & \lim_{E \in \calO^{\el}_{O/}} Y(E) \\
      X(O)  & \lim_{E \in \calO^{\el}_{O/}} X(E)
      \arrow[from=1-1, to=1-2]
      \ar[from=1-1, to=2-1, "\pi(O)"{swap}]
      \arrow[from=2-1, to=2-2]
      \arrow[from=1-2, to=2-2, "\lim_{E \in \calO^{\el}_{O/}} \pi(E)"]
    \end{tikzcd}\] is cartesian. We denote by
  $\Seg^{/X}_{\calO}(\calC) \subseteq \Fun(\calO,\calC)_{/X}$ the full
  subcategory whose objects are the $X$-relative Segal
  $\calO$-objects.
\end{defn}

\begin{observation}\label{rmk:cancellation-for-relative-Segal-objects}
  If $Y \to X$ is a relative Segal $\mathcal{O}$-object of
  $\mathcal{C}$, then the pasting lemma for cartesian squares implies
  that a morphism $Z \to Y$ is a relative Segal
  $\mathcal{O}$-object \IFF{} the composite $Z \to X$ is one.
  Moreover, a morphism $X \to *$ to the terminal object is a relative
  Segal $\mathcal{O}$-object \IFF{} $X$ is a Segal
  $\mathcal{O}$-object in $\mathcal{C}$. Combining these two
  observations, we see that if $X$ is a Segal $\mathcal{O}$-object of
  $\calC$ then an
  $X$-relative Segal $\mathcal{O}$-object is just a Segal
  $\mathcal{O}$-object with a map to $X$, \ie{} we have
  \[ \Seg^{/X}_{\mathcal{O}}(\calC) =
    \Seg_{\mathcal{O}}(\calC)_{/X}\]
  as full subcategories of $\Fun(\mathcal{C}, \mathcal{O})_{/X}$.
\end{observation}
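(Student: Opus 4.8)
The plan is to deduce all three assertions of the statement from the defining cartesian squares of \cref{defn:relative-Segal} by formal diagram-chasing; the only genuine inputs are the pasting lemma for pullback squares and the computation of the terminal object in $\Fun(\calO,\calC)$, so the work is essentially bookkeeping.

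First I would establish the cancellation statement. Fix a relative Segal $\calO$-object $Y \to X$ and a further map $Z \to Y$. For each $O \in \calO$, stack the defining square of $Z \to Y$ on top of that of $Y \to X$ along their common middle row:
\[
\begin{tikzcd}
Z(O) \arrow{r} \arrow{d} & \lim_{E \in \calO^{\el}_{O/}} Z(E) \arrow{d} \\
Y(O) \arrow{r} \arrow{d} & \lim_{E \in \calO^{\el}_{O/}} Y(E) \arrow{d} \\
X(O) \arrow{r} & \lim_{E \in \calO^{\el}_{O/}} X(E).
\end{tikzcd}
\]
Composing the vertical maps identifies the outer rectangle with the defining square of the composite $Z \to X$. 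By hypothesis the bottom square is cartesian, so the pasting lemma gives that the top square is cartesian if and only if the outer rectangle is. Since the top square encodes the relative Segal condition for $Z \to Y$ and the outer rectangle encodes it for $Z \to X$, this is exactly the claimed equivalence.

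Next I would treat the terminal object. The terminal functor $\ast \in \Fun(\calO,\calC)$ has $\ast(O)$ the terminal object of $\calC$ for every $O$, and since a limit of terminal objects is terminal we get $\lim_{E \in \calO^{\el}_{O/}} \ast(E) \simeq \ast$ as well. Hence the defining square of $X \to \ast$ at $O$ has both bottom corners terminal, and such a square is cartesian precisely when the top map $X(O) \to \lim_{E \in \calO^{\el}_{O/}} X(E)$ is an equivalence, i.e.\ precisely when $X$ satisfies the Segal condition at $O$. Ranging over all $O$ shows that $X \to \ast$ is a relative Segal object if and only if $X$ is a Segal object.

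Finally I would combine the two. If $X$ is a Segal $\calO$-object, then $X \to \ast$ is a relative Segal object by the previous paragraph, so the cancellation statement (applied with $X \to \ast$ playing the role of $Y \to X$) shows that $Z \to X$ is a relative Segal object if and only if $Z \to \ast$ is, i.e.\ if and only if $Z$ is a Segal object. Thus the objects of $\Seg^{/X}_{\calO}(\calC)$ are exactly the maps $Z \to X$ with $Z$ a Segal object, which are the objects of $\Seg_{\calO}(\calC)_{/X}$; as both are full subcategories of $\Fun(\calO,\calC)_{/X}$ with the same objects, they coincide. I do not expect any real obstacle, since the content is entirely formal: the only points requiring a moment's care are checking that the stacked squares glue so that the outer rectangle really is the $Z \to X$ square, and that the terminal object of the functor category is computed pointwise.
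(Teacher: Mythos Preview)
Your proposal is correct and follows exactly the approach the paper indicates: the observation is stated without a separate proof, and its text already sketches the argument (pasting lemma, then terminal case, then combine), which is precisely what you have spelled out.
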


\begin{lemma}\label{rel Segal pullback}
  Suppose $X \to Y$ is a relative Segal $\mathcal{O}$-object in $\mathcal{C}$. Then for
  any map $\eta \colon Y' \to Y$, the pullback $X' := X \times_{Y}Y' \to Y'$ is also a
  relative Segal $\mathcal{O}$-object. In other words, pullback along
  $\eta$ gives a functor $\eta^{*} \colon
  \Seg^{/Y}_{\mathcal{O}}(\mathcal{C}) \to \Seg^{/Y'}_{\mathcal{O}}(\mathcal{C})$.
\end{lemma}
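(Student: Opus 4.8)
The plan is to fix an object $O \in \calO$ and verify that the defining square of \cref{defn:relative-Segal} for the morphism $X' \to Y'$ is cartesian, by realizing it as one face of a commutative cube whose three other relevant faces are cartesian and then applying the pasting lemma for pullback squares twice. Since $O$ is arbitrary this shows $X' \to Y'$ is a relative Segal $\calO$-object, so that $\eta^{*}$ indeed lands in $\Seg^{/Y'}_{\calO}(\calC)$.

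First I would record two standard facts. Because limits in $\Fun(\calO,\calC)$ are computed pointwise, the pullback $X' = X \times_Y Y'$ satisfies $X'(E) \simeq X(E) \times_{Y(E)} Y'(E)$ for every $E \in \calO$; in particular the left-hand vertical map $X'(O) \to Y'(O)$ is the base change of $X(O) \to Y(O)$ along $\eta(O) \colon Y'(O) \to Y(O)$. Since $\calC$ is $\calO$-complete and limits commute with limits, applying $\lim_{E \in \calO^{\el}_{O/}}$ to these pointwise pullbacks yields
\[
  \lim_{E} X'(E) \simeq \Bigl(\lim_{E} X(E)\Bigr) \times_{\lim_{E} Y(E)} \lim_{E} Y'(E),
\]
so the right-hand vertical map of the square for $X' \to Y'$ is likewise a base change of the corresponding map for $X \to Y$.

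Next I would assemble the evident cube whose front face is the Segal square of \cref{defn:relative-Segal} for $X \to Y$ (cartesian by hypothesis), whose back face is the corresponding square for $X' \to Y'$, and whose connecting edges are induced by $\eta$ and the projection $X' \to X$. By the previous paragraph the left face (comparing $X'(O), X(O), Y'(O), Y(O)$) and the right face (comparing the four limits) are both cartesian. To conclude, I would first paste the left and front faces into the rectangle
\[\begin{tikzcd}
 X'(O) \ar[r] \ar[d] & X(O) \ar[r] \ar[d] & \lim_E X(E) \ar[d] \\
 Y'(O) \ar[r] & Y(O) \ar[r] & \lim_E Y(E),
\end{tikzcd}\]
whose outer rectangle is therefore cartesian. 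Using naturality of the Segal maps with respect to $X' \to X$ to identify the top composite with $X'(O) \to \lim_E X'(E) \to \lim_E X(E)$ (and similarly on the bottom), I would then paste this same outer rectangle against the right face,
\[\begin{tikzcd}
 X'(O) \ar[r] \ar[d] & \lim_E X'(E) \ar[r] \ar[d] & \lim_E X(E) \ar[d] \\
 Y'(O) \ar[r] & \lim_E Y'(E) \ar[r] & \lim_E Y(E).
\end{tikzcd}\]
Here the outer rectangle is cartesian (just shown) and the right-hand square is the cartesian right face, so the cancellation half of the pasting lemma forces the left-hand square---which is exactly the Segal square for $X' \to Y'$ at $O$---to be cartesian.

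There is no genuinely hard step here: the argument is entirely formal, resting only on the pointwise computation of functor-category limits and on limits commuting with limits. The sole point requiring care is the bookkeeping of the cube and the order in which the pasting lemma is applied---in particular identifying, via naturality, the two composites in the outer rectangle with the honest Segal maps for $X'$, so that the final cancellation produces precisely the square demanded by \cref{defn:relative-Segal} rather than a merely isomorphic one.
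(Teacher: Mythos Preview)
Your proof is correct and follows essentially the same approach as the paper: both assemble the evident commutative cube, observe that the left, right, and front faces are cartesian (by the pointwise pullback, commutation of limits, and the hypothesis respectively), and deduce that the back face is cartesian. The paper states this deduction in one line, while you spell out the two applications of the pasting lemma explicitly, but there is no substantive difference.
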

\begin{proof}
  For $O \in \mathcal{O}$, consider the commutative cube
  \[
    \begin{tikzcd}[row sep=tiny,column sep=tiny]
      X'(O) \arrow{rr} \arrow{dd} \arrow{dr} & & \lim_{E \in
        \mathcal{O}^{\el}_{O/}} X'(E) \arrow{dr}  \arrow{dd} \\
       & X(O) \arrow[crossing over]{rr} & & \lim_{E \in
         \mathcal{O}^{\el}_{O/}} X(E)  \arrow{dd} \\
       Y'(O) \arrow{rr} \arrow{dr} & & \lim_{E \in
         \mathcal{O}^{\el}_{O/}} Y'(E) \arrow{dr} \\
        & Y(O) \arrow{rr} \arrow[crossing over, leftarrow]{uu} & & \lim_{E \in
        \mathcal{O}^{\el}_{O/}} Y(E).
    \end{tikzcd}
  \]
  Here the left, right, and front faces are all cartesian, hence so is
  the back face.
\end{proof}

\begin{lemma}\label{lem:relative-Segal-strongly-reflective}
    For every presentable \icat{} $\calC$ the full subcategory 
    \[\Seg^{/X}_{\calO}(\calC) \subseteq \Fun(\calO,\calC)_{/X}\]
    is an accessible localization.
    In particular, it is a presentable \icat{}.
\end{lemma}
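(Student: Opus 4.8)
The plan is to exhibit $\Seg^{/X}_{\calO}(\calC)$ as the full subcategory of objects that are \emph{right orthogonal} to an explicit small set of morphisms, and then invoke \cref{obs:rightortholoc} directly, exactly as in the proof of \cref{lem:Env-admits-left-adjoint}. Since $\calC$ is presentable, so is $\Fun(\calO,\calC)$, and hence so is the slice $\Fun(\calO,\calC)_{/X}$. I will produce a set $S$ of morphisms in $\Fun(\calO,\calC)$ with the property that a map $\pi\colon Y \to X$ is a relative Segal $\calO$-object precisely when it is right orthogonal to $S$; applying \cref{obs:rightortholoc} with ambient presentable category $\Fun(\calO,\calC)$ and base object $X$ then immediately identifies $\Seg^{/X}_{\calO}(\calC)$ with an accessible localization of $\Fun(\calO,\calC)_{/X}$, which is in particular presentable.

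To build $S$, fix a small set $\calG$ of objects generating $\calC$ under colimits, so that the functors $\Map_\calC(G,\blank)$ for $G \in \calG$ are jointly conservative and hence detect cartesian squares in $\calC$. For $O \in \calO$ and $G \in \calG$, let $F_{O,G}\colon \calO \to \calC$ be the copowered corepresentable $O' \mapsto \Map_\calO(O,O') \otimes G$, so that by the enriched Yoneda lemma $\Map_{\Fun(\calO,\calC)}(F_{O,G},F) \simeq \Map_\calC(G,F(O))$. An inert map $O \to E$ induces $F_{E,G} \to F_{O,G}$, and assembling these over $E \in \calO^{\el}_{O/}$ produces a canonical map
\[ \sigma_{O,G}\colon \colim_{E \in \calO^{\el}_{O/}} F_{E,G} \longrightarrow F_{O,G}, \]
which satisfies $\Map_{\Fun(\calO,\calC)}\bigl(\colim_E F_{E,G}, F\bigr) \simeq \lim_{E} \Map_\calC(G,F(E)) \simeq \Map_\calC\bigl(G, \lim_{E} F(E)\bigr)$. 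I set $S := \{\sigma_{O,G} : O \in \calO,\ G \in \calG\}$, which is a small set since $\calO$ is small.

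With these identifications in hand, unwinding the orthogonality condition for $\pi\colon Y \to X$ against $\sigma_{O,G}$ and applying $\Map_\calC(G,\blank)$ reproduces, for each $O$ and each $G \in \calG$, the square
\[\begin{tikzcd}
\Map_\calC(G,Y(O)) \arrow{r} \arrow{d} & \Map_\calC\bigl(G,\lim_{E} Y(E)\bigr) \arrow{d} \\
\Map_\calC(G,X(O)) \arrow{r} & \Map_\calC\bigl(G,\lim_{E} X(E)\bigr)
\end{tikzcd}\]
and right orthogonality to $\sigma_{O,G}$ is exactly the assertion that this square is cartesian. Since $\Map_\calC(G,\blank)$ preserves pullbacks and the $G \in \calG$ are jointly conservative, these squares are cartesian for all $G \in \calG$ if and only if the defining square of \cref{defn:relative-Segal} at $O$ is cartesian in $\calC$. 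Thus right orthogonality to $S$ is literally the relative Segal condition, and \cref{obs:rightortholoc} completes the argument.

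The only real subtlety is bookkeeping rather than substance: getting the variance of $\sigma_{O,G}$ right (inert maps $O \to E$ induce $F_{E,G} \to F_{O,G}$, so that the colimit over $\calO^{\el}_{O/}$ corepresents the \emph{limit} $\lim_E F(E)$), and confirming that a generating set detects cartesian squares. Once these checks are made, no further work is required beyond the orthogonal-localization machinery of \cref{obs:rightortholoc}.
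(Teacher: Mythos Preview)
The proposal is correct and takes essentially the same approach as the paper: both construct the set $S$ of maps $\colim_{E \in \calO^{\el}_{O/}} y(E)\otimes G \to y(O)\otimes G$ (your $F_{O,G}$ is exactly the paper's $y(O)\otimes G$) indexed by $O \in \calO$ and $G$ in a generating set, identify right orthogonality to $S$ with the relative Segal condition, and then invoke \cref{obs:rightortholoc}. Your write-up is more explicit about why a generating set detects cartesian squares, but the argument is the same.
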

\begin{proof}
    Consider the following collection of morphisms in $\Fun(\calO, \calC)$:
    \[
        \left\{ \colim_{E \in \calO^\el_{X/}} y(E) \otimes C \to y(X) \otimes C \right\}_{X \in \calO, C \in K}
    \]
    where $K$ is a set of compact generators for $\calC$, $y$ is the
    Yoneda embedding for $\mathcal{O}^{\op}$, and $T \otimes C$ for $T
    \in \mathcal{S}$, $C \in \mathcal{C}$, is the canonical tensoring
    of $\mathcal{C}$ with $\mathcal{S}$, given by the colimit over $T$
    of the constant diagram with value $C$.
    A morphism $X \to Y$ in $\Fun(\calO, \calC)$ is a relative Segal $\calO$-object
    if and only if it is right orthogonal to this set of morphisms, hence the claim 
    follows from \cref{obs:rightortholoc}.
\end{proof}

Next, we take a brief look at morphisms between patterns:
\begin{defn}
  If $\mathcal{O}$ and $\mathcal{P}$ are algebraic patterns, a
  \emph{morphism of algebraic patterns} is a functor $f \colon
  \mathcal{O} \to \mathcal{P}$ that preserves inert and active
  morphisms as well as elementary objects. We say that such a morphism is a
  \emph{Segal morphism} if for every Segal $\mathcal{P}$-space $F$ and
  every $X \in \mathcal{O}$ the functor $f^{\el}_{X/}\colon \mathcal{O}^{\el}_{X/} \to
  \mathcal{P}^{\el}_{f(X)/}$ arising from $f$ induces an equivalence
  \[ \lim_{\mathcal{P}^{\el}_{f(X)/}} F \isoto
    \lim_{\mathcal{O}^{\el}_{X/}} F \circ f; \]
  by \cite{patterns1}*{Lemma 4.5} this is equivalent to composition
  with $f$ giving a functor \[f^{*} \colon \Seg_{\mathcal{P}}(\mathcal{C}) \to
  \Seg_{\mathcal{O}}(\mathcal{C})\] for any $\mathcal{O}$-complete \icat{} $\mathcal{C}$. The
  Segal morphisms that occur in practice are those where the functor
  $f^{\el}_{X/}$ is coinitial for all $X \in\mathcal{O}$; if this is
  the case we say that $f$ is a \emph{strong Segal morphism}. In the
  special case where $f^{\el}_{X/}$ is an \emph{equivalence} for every
  $X$, we say that $f$ is an \emph{iso-Segal morphism}.
\end{defn}

\begin{ex}\label{ex:Dop->F-isoSegal}
  There is a morphism of algebraic patterns $\mathfrak{c} \colon \simp^{\op,\flat}  \to \xF_{*}$, given on
  objects by $\mathfrak{c}([n]) = \angled{n}$, and with
  $\mathfrak{c}(\phi) \colon \angled{n} \to \angled{m}$ for a morphism
  $\phi \colon [m] \to [n]$ in $\simp$ given by
  \[ \mathfrak{c}(\phi)(i) =
    \begin{cases}
      j, & \text{ if } \phi(j-1) < i \leq \phi(j),\\
      0, & \text{otherwise}.
    \end{cases}
  \]
  It is straightforward to check that this is an iso-Segal morphism.
\end{ex}

\begin{notation}
  We write $\AlgPatt$ for the \icat{} of algebraic patterns together
  with all morphisms of algebraic patterns.
\end{notation}

\begin{observation}\label{rmk:strong Seg pres rel}
  Composition with a \emph{strong} Segal morphism $f \colon \mathcal{O} \to \mathcal{P}$ also preserves \emph{relative} Segal
  objects: If $X \to Y$ is a relative Segal $\mathcal{P}$-object in
  $\mathcal{C}$, then for $O \in \mathcal{O}$ we have a commutative
  diagram
  \[
    \begin{tikzcd}
      X(f(O)) \arrow{r} \arrow{d} & 
      \lim_{E \in \mathcal{P}^{\el}_{f(O)/}} X(E) \arrow{d} \arrow{r}{\sim}
      & \lim_{E' \in \mathcal{O}^{\el}_{O/}} X(f(E')) \arrow{d} \\
      Y(f(O))\arrow{r} & 
      \lim_{E \in \mathcal{P}^{\el}_{f(O)/}} Y(E) \arrow{r}{\sim}
      & \lim_{E' \in \mathcal{O}^{\el}_{O/}} Y(f(E'));
    \end{tikzcd}
  \]
  here both the left and right squares are cartesian, and hence so is the
  outer composite square.
  Composition with $f$ thus gives a functor
  $f^{*} \colon \Seg^{/Y}_{\mathcal{P}}(\mathcal{C}) \to \Seg^{/f^{*}Y}_{\mathcal{O}}(\mathcal{C})$.
\end{observation}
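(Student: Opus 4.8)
The plan is to check the relative Segal $\calO$-condition for $f^{*}X \to f^{*}Y$ one object at a time and then invoke the pasting lemma for cartesian squares, exactly as in \cref{rel Segal pullback}. Fix $O \in \calO$. Using the identifications $(f^{*}X)(O) = X(f(O))$ and $(f^{*}X)(E') = X(f(E'))$ for $E' \in \calO^{\el}_{O/}$ (and likewise for $Y$), the square that must be shown cartesian is the outer rectangle in the displayed diagram, with corners $X(f(O))$, $\lim_{E' \in \calO^{\el}_{O/}} X(f(E'))$, $Y(f(O))$, and $\lim_{E' \in \calO^{\el}_{O/}} Y(f(E'))$.

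First I would factor this rectangle through the intermediate limits $\lim_{E \in \calP^{\el}_{f(O)/}} X(E)$ and $\lim_{E \in \calP^{\el}_{f(O)/}} Y(E)$, producing the two squares shown. The left square is the instance at $f(O) \in \calP$ of the relative Segal condition for $X \to Y$, so it is cartesian by hypothesis. For the right square, the horizontal maps are the canonical comparison maps induced by $f^{\el}_{O/} \colon \calO^{\el}_{O/} \to \calP^{\el}_{f(O)/}$; because $f$ is a \emph{strong} Segal morphism this functor is coinitial, and coinitial functors induce equivalences on limits of arbitrary diagrams, so both horizontal maps are equivalences. A square whose two horizontal arrows are equivalences is cartesian, hence the right square is cartesian and the pasting lemma makes the outer rectangle cartesian.

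The only genuinely load-bearing step --- and the reason the hypothesis must be \emph{strong} Segal rather than merely Segal --- is the cartesianness of the right square. The plain Segal-morphism condition only supplies the limit comparison for Segal $\calP$-objects, whereas here $X$ and $Y$ are arbitrary functors $\calP \to \calC$; invoking coinitiality of $f^{\el}_{O/}$ is precisely what upgrades the comparison to an equivalence in this generality. Once both squares are cartesian the objectwise check is complete, so $f^{*}X \to f^{*}Y$ is a relative Segal $\calO$-object; naturality of the construction in the morphism $X \to Y$ then yields the asserted functor $f^{*} \colon \Seg^{/Y}_{\calP}(\calC) \to \Seg^{/f^{*}Y}_{\calO}(\calC)$.
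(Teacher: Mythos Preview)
Your proof is correct and follows exactly the same route as the paper's own argument: both squares are cartesian for the reasons you give (the left by the relative Segal hypothesis at $f(O)$, the right because coinitiality of $f^{\el}_{O/}$ makes the horizontal maps equivalences), and pasting yields the outer rectangle. Your added remark on why \emph{strong} Segal is needed is a nice clarification not made explicit in the paper.
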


We now recall a simple criterion for a Segal morphism to give an equivalence
on Segal objects: 
\begin{propn}[\cite{ShaulThesis}*{Corollary 2.64}]\label{propn:Segmndcomp}
  Suppose $\mathcal{O}$ and $\mathcal{P}$ are algebraic patterns,
  and $f \colon \mathcal{O} \to \mathcal{P}$ is a strong Segal morphism
  such that
  \begin{enumerate}[(1)]
  \item\label{item:eleq} $f^{\el} \colon \mathcal{O}^{\el} \to \mathcal{P}^{\el}$ is an
    equivalence of \icats{},
  \item\label{item:acteq1} for every $O \in \mathcal{O}$, the functor
    $(\calO_{/O}^\act)^\simeq \to (\calP_{/f(O)}^\act)^\simeq$ 
    is an equivalence of \igpds{}.
  \end{enumerate}
  Then for any complete \icat{} $\mathcal{C}$ the
  functor
  $f^{*}\colon \Seg_{\mathcal{P}}(\mathcal{C}) \to
  \Seg_{\mathcal{O}}(\mathcal{C})$ is an equivalence, with inverse
  given by right Kan extension along $f$.
\end{propn}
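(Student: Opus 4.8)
The plan is to exhibit the right Kan extension $f_{*}\colon \Fun(\mathcal{O},\mathcal{C}) \to \Fun(\mathcal{P},\mathcal{C})$, which is right adjoint to the restriction $f^{*}$, as the claimed inverse. Since $f$ is in particular a Segal morphism, $f^{*}$ preserves Segal objects by \cite{patterns1}*{Lemma 4.5}; the substance of the statement is that $f_{*}$ likewise lands in Segal objects and that the unit and counit of $f^{*} \dashv f_{*}$ become equivalences on Segal objects. Everything will follow from an explicit pointwise formula for $f_{*}$.

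First I would compute $f_{*}$ pointwise. For $P \in \mathcal{P}$ the value $(f_{*}G)(P)$ is the limit of $G$ over the comma \icat{} $\mathcal{K}_{P} := \mathcal{P}_{P/} \times_{\mathcal{P}} \mathcal{O}$ of pairs $(O, P \to f(O))$. Inside it sits the subcategory $\mathcal{K}_{P}^{0}$ of pairs $(E, P \intto f(E))$ with $E \in \mathcal{O}^{\el}$ and inert structure map, with inert morphisms between these; using \ref{item:eleq} and that $f$ preserves inert morphisms one identifies $\mathcal{K}_{P}^{0} \simeq \mathcal{P}^{\el}_{P/}$. The crucial claim is that the inclusion $\mathcal{K}_{P}^{0}\hookrightarrow \mathcal{K}_{P}$ is coinitial, which yields
\[ (f_{*}G)(P) \isoto \lim_{E \in \mathcal{P}^{\el}_{P/}} G\bigl((f^{\el})^{-1}(E)\bigr) \]
for every $G$. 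Because for elementary $E$ the slice $\mathcal{P}^{\el}_{E/}$ has initial object $\id_{E}$, this formula exhibits $f_{*}G$ as a limit of its elementary values and hence as a Segal $\mathcal{P}$-object, so $f_{*}$ preserves Segal objects.

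Granting the formula, the unit and counit are read off directly. For a Segal $\mathcal{O}$-object $G$ and $O \in \mathcal{O}$ we get
\[ (f^{*}f_{*}G)(O) = (f_{*}G)(f(O)) \simeq \lim_{E \in \mathcal{P}^{\el}_{f(O)/}} G\bigl((f^{\el})^{-1}(E)\bigr) \simeq \lim_{E' \in \mathcal{O}^{\el}_{O/}} G(E') \simeq G(O), \]
where the middle equivalence uses that $f^{\el}_{O/}$ is coinitial (as $f$ is a \emph{strong} Segal morphism) together with \ref{item:eleq}, and the last uses that $G$ is Segal; one checks this is the counit. Symmetrically, for a Segal $\mathcal{P}$-object $F$ one finds $(f_{*}f^{*}F)(P) \simeq \lim_{E \in \mathcal{P}^{\el}_{P/}} F(E) \simeq F(P)$, the unit. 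Hence $f^{*}$ and $f_{*}$ restrict to mutually inverse equivalences, with inverse $f_{*}$ the right Kan extension, as claimed.

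\textbf{The main obstacle} is the coinitiality of $\mathcal{K}_{P}^{0}\hookrightarrow \mathcal{K}_{P}$, which by the cofinality criterion of \cite{HTT} amounts to the weak contractibility of $\mathcal{K}_{P}^{0}\times_{\mathcal{K}_{P}} (\mathcal{K}_{P})_{/(O,\alpha)}$ for every $(O,\alpha\colon P \to f(O))$. This is a purely combinatorial assertion about the two patterns, and it is exactly here that both \ref{item:eleq} and \ref{item:acteq1} enter. I would prove it by factoring a general map $E \to O$ out of an elementary object as an inert map followed by an active one, and using the uniqueness of inert--active factorizations in $\mathcal{P}$ to contract this comma category onto the groupoid of active maps into $O$; condition \ref{item:acteq1}, matching $(\mathcal{O}^{\act}_{/O})^{\simeq}$ with $(\mathcal{P}^{\act}_{/f(O)})^{\simeq}$ \emph{including automorphisms}, is precisely what makes that groupoid contractible. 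That the automorphisms matter, and not merely the sets of components, is visible from $\mathfrak{c}\colon \simp^{\op,\flat}\to \xF_{*}$: it is iso-Segal and satisfies \ref{item:eleq}, but its active cores over $\angled{1}$ differ by symmetric-group automorphisms, so \ref{item:acteq1} fails --- consistently with associative monoids not being commutative monoids.
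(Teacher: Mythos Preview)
The paper itself does not give a proof of this proposition; it is cited from \cite{ShaulThesis}*{Corollary 2.64}. Your overall strategy --- exhibit $f_{*}$ as inverse via a pointwise formula, then read off the unit and counit --- is the standard one and is correct. However, the key technical claim is false as stated, and this is a genuine gap, not a detail.

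You assert that $\mathcal{K}_{P}^{0} \hookrightarrow \mathcal{K}_{P}$ is coinitial and that consequently the formula
\[
  (f_{*}G)(P) \;\simeq\; \lim_{E \in \mathcal{P}^{\el}_{P/}} G\bigl((f^{\el})^{-1}(E)\bigr)
\]
holds for \emph{every} $G$. This coinitiality fails already for $f = \id_{\xF_{*}}$: take $P = \angled{2}$ and the object $(\angled{2},\id_{\angled{2}}) \in \mathcal{K}_{P}$. The relevant comma category consists of factorizations $\angled{2} \intto \angled{1} \to \angled{2}$ of $\id_{\angled{2}}$, and there are none, so the comma is empty and not weakly contractible. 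Equivalently, your displayed formula would force $G(\angled{2}) \simeq G(\angled{1})^{2}$ for arbitrary $G$, which is absurd. The formula is only valid for \emph{Segal} $G$, and cannot be deduced from a coinitiality statement.

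The repair is a two-step reduction. First, let $\mathcal{K}_{P}^{\xint} \subset \mathcal{K}_{P}$ be the full subcategory on pairs $(O,\beta)$ with $\beta$ inert. This inclusion \emph{is} coinitial: for $(O,\alpha) \in \mathcal{K}_{P}$, factor $\alpha$ as $P \intto Q \actto f(O)$ in $\mathcal{P}$ and use condition~\ref{item:acteq1} to lift the active part to $\tilde{Q} \actto O$ in $\mathcal{O}$; the resulting object $(\tilde{Q}, P \intto f(\tilde{Q}))$ is terminal in the comma category. This is where condition~\ref{item:acteq1} actually enters --- it provides an essentially unique lift of one specific active morphism, not (as your last paragraph suggests) contractibility of the entire groupoid $(\mathcal{O}^{\act}_{/O})^{\simeq}$. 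Second, one must pass from $\mathcal{K}_{P}^{\xint}$ down to $\mathcal{K}_{P}^{0}$. This step is \emph{not} a coinitiality; rather, one uses that a Segal $G$ restricted to $\mathcal{O}^{\xint}$ is right Kan extended from $\mathcal{O}^{\el}$ (\cite{patterns1}*{Lemma 2.9}), together with condition~\ref{item:eleq} and the strong Segal hypothesis on $f$, to rewrite the limit. Once the formula is established for Segal $G$, your deduction that $f_{*}G$ is again Segal and your unit/counit computations go through unchanged.
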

\begin{proof}
  We refer to \cite[\S 2]{ShaulThesis} for a detailed proof, but since this result will play an important role in this paper we recall the key steps for the reader's convenience.

  By \cite{patterns1}*{Proposition 6.3}, condition \ref{item:acteq1} implies that right Kan extension along $f$ restricts to Segal objects, giving an adjunction
    \[ f^{*} : \Seg_{\mathcal{P}}(\mathcal{C}) \rightleftarrows \Seg_{\mathcal{O}}(\mathcal{C}) : f_{*}.\]
    Moreover, the proof of \cite{patterns1}*{Proposition 6.3} shows that for $F \in \Seg_{\mathcal{O}}(\mathcal{C})$ we have $(f_{*}F)|_{\mathcal{P}^{\el}} \simeq f^{\el}_{*}(F|_{\mathcal{O}^{\el}})$.

    Condition \ref{item:eleq} therefore implies that the counit $f^{*}f_{*}F \to F$ is an equivalence for $F \in \Seg_{\mathcal{P}}(\mathcal{C})$, since it is an equivalence when evaluated on $\mathcal{P}^{\el}$.    Moreover, \ref{item:eleq} implies that $f^{*}$ is conservative on Segal objects, again since equivalences are detected on elementary objects. To see that the unit map $G \to f_{*}f^{*}G$ is an equivalence, it then suffices to check this after applying $f^{*}$, but then the adjunction $f^{*} \dashv f_{*}$ implies that the composite
    \[ f^{*}G \to f^{*}f_{*}f^{*}G \isoto f^{*}G\]
    is an equivalence, and here we already saw that the counit is an equivalence. Since both the unit and counit of the adjunction are natural equivalences, it must be an equivalence of \icats{}.
\end{proof}

\begin{remark}
  If $(\calO_{/O}^\act)^\simeq$ is a Segal $\mathcal{O}$-space and
  $(\calP_{/f(O)}^\act)^\simeq$ is a
  Segal $\mathcal{P}$-space in \cref{propn:Segmndcomp}, then it
  suffices to check condition \ref{item:acteq1} for elementary objects
  in $\mathcal{O}$. This holds, for instance, if $\mathcal{O}$ and
  $\mathcal{P}$ are extendable (see \cref{defn:soundly-extendable}).
\end{remark}

\subsection{Examples of algebraic patterns}
\label{sec:examples}

We now look at some examples of algebraic patterns. Our focus here
will be on examples that will be relevant in the next sections; we
refer the reader to \cite{patterns1}*{\S 3} for many other
examples.

\begin{ex}
  We have patterns $\simp^{n,\op,\natural}$ and $\simp^{n,\op,\flat}$
  with underlying category
  $\simp^{n,\op} := (\simp^{\op})^{\times n}$, equipped with the
  factorization system where the inert and active maps are those that
  are inert or active in $\Dop$ in each component. Here
  $(\simp^{n,\op,\flat})^{\el} = \{([1],\ldots,[1])\}$ while
  $(\simp^{n,\op,\natural})^{\el}$ consists of all objects whose
  components are all either $[0]$ or $[1]$.
  Then Segal $\simp^{n,\op,\natural}$-spaces are $n$-uple Segal
  spaces, which model $n$-fold \icats{}, while Segal
  $\simp^{n,\op,\flat}$-objects are $\mathbb{E}_{n}$-algebras (by the
  Dunn--Lurie additivity theorem).
\end{ex}

\begin{ex}
  Let $\bbTheta_{n}$ be the inductively defined wreath product $\simp
  \wr \bbTheta_{n-1}$, starting with $\bbTheta_{0} = [0]$; see for
  example \cites{BergerWreath,thetan} for more details. This has a
  factorization system where the active/inert maps are those whose
  components in $\simp$ and $\bbTheta_{n-1}$ are both active or
  inert. There are two interesting pattern structures on
  $\bbTheta_{n}^{\op}$: if we define the objects $C_{i}$ in
  $\bbTheta_{n}$ by $C_{0} := [0]()$ and $C_{i} := [1](C_{i-1})$ for
  $i = 1,\ldots,n$, then for $\bbTheta_{n}^{\op,\flat}$ we take
  $C_{n}$ to be the only elementary object, while for
  $\bbTheta_{n}^{\op,\natural}$ we take all of $C_{0},\ldots,C_{n}$.
  Then Segal $\bbTheta_{n}^{\op,\natural}$-spaces are Rezk's model for
  $(\infty,n)$-categories \cite{RezkThetaN}, while  Segal
  $\bbTheta_{n}^{\op,\flat}$-object are again
  $\mathbb{E}_{n}$-algebras (see \cite{BarwickOpCat}).
\end{ex}

\begin{ex}\label{arity-example}
  Let $\xF_{\ast}^{\le k} \subseteq \xF_{\ast}$ denote the full
  subcategory containing pointed finite sets of cardinality $\le k$
  (excluding the basepoint). Consider $\xF_{\ast}^{\le k}$ as an
  algebraic pattern by restricting the inert-active factorization
  system on $\xF_{\ast}$ and choosing $\langle 1 \rangle $ to be the only
  elementary object.  Segal objects for $\xF_{\ast}^{\le k}$ are
  \emph{arity $k$-restricted commutative monoids} --- a variant of
  commutative monoids in which the homotopy coherence data is only
  supplied up to arity $k$. More generally, if $\calO$ is an
  $\infty$-operad then
  $\calO^{\le k} :=\xF_{\ast}^{\le k} \times_{\xF_{\ast}}\calO$ has a
  natural structure of an algebraic pattern whose Segal objects are
  arity $k$-restricted $\calO$-monoids. For more details see
  \cite{ShaulThesis}.
\end{ex}

The remaining examples we want to discuss are all instances of a
general class of algebraic patterns on \icats{} of spans. For this
purpose we briefly recall the construction of such \icats{} --- this
is originally due to Barwick~\cite{BarwickMackey}; see also
\cite{paradj2} for a more ``model-independent'' version.
\begin{construction}\label{constr:spancats}
  Let $\frX$ be an \icat{} equipped with a pair of
  wide subcategories $\frXb$ and $\frXf$ (where
  ``$b$'' stands for \emph{backwards} and ``$f$'' stands for
  \textit{forwards}. Following Barwick, we say that
  the triple $(\frX,\frXb,\frXf)$ is \emph{adequate} if for every
  pair of morphisms $\beta \colon x \to y$ in $\frXb$ and $\phi \colon
  y' \to y$ in
  $\frXf$,  we have:
  \begin{enumerate}
  \item the pullback $x' := x \times_{y} y'$ exists in $\frX$,
  \item the projection $x' \to y'$ lies in $\frXb$.
  \item the projection $x' \to x$ lies in $\frXf$.
  \end{enumerate}
  Given an adequate triple $(\frX,\frXb,\frXf)$ 
  Barwick defines an $\infty$-category $\Spanbf(\frX)$
  (denoted $\mathrm{A}^{\rm eff}(\frX,\frXb,\frXf)$ in \cite{BarwickMackey})
  such that the objects of $\Spanbf(\frX)$ are the objects of $\frX$
  and the morphisms from $x$ to $y$ are spans (or correspondences)
  \[
    \begin{tikzcd}
      {} &  w \arrow[dl, "\beta"{swap}] \arrow{dr}{\phi} \\
      x & & y
    \end{tikzcd}
  \]
  where the arrow $\beta$ lies in $\frXb$ and the arrow $\phi$ lies
  in $\frXf$.  The assumption that the triple is adequate allows for
  a composition law defined by taking pullbacks. If $\frX$ is an
  \icat{} with pullbacks, then we can take $\frXb = \frXf = \frX$,
  in which case we just write $\Span(\frX)$ for the corresponding
  \icat{} of spans. 
\end{construction}

\begin{observation}
  By the first part of \cite{paradj2}*{Proposition 4.9} the \icat{}
  $\Spanbf(\frX)$ always has a factorization system given by the
  classes of maps as above with $\phi$ or $\beta$ required to be an
  equivalence (which we might call the ``backwards'' and
  ``forwards'' maps) and the subcategories of these maps are
  equivalent to $\frX^{b,\op}$ and $\frXf$, respectively.
\end{observation}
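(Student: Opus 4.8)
The plan is to take the existence of the factorization system from the cited \cite{paradj2}*{Proposition 4.9} and then identify its two classes by hand. First I would record the canonical factorization at the level of spans: a morphism $x \xleftarrow{\beta} w \xrightarrow{\phi} y$ of $\Spanbf(\frX)$ splits as the backwards map $x \xleftarrow{\beta} w \xrightarrow{\sim} w$ followed by the forwards map $w \xleftarrow{\sim} w \xrightarrow{\phi} y$. Since composition in $\Spanbf(\frX)$ is computed by pullback and the pullback of the cospan $w \xrightarrow{\sim} w \xleftarrow{\sim} w$ is again $w$, this composite recovers the original span. That the backwards and forwards maps constitute the left and right classes of a factorization system---i.e.\ the orthogonality and essential uniqueness of this factorization---is precisely the content of \cite{paradj2}*{Proposition 4.9}.

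It remains to identify the two subcategories. I would consider the functor $\frXf \to \Spanbf(\frX)$ sending a morphism $\phi \colon x \to y$ of $\frXf$ to the span $x \xleftarrow{\sim} x \xrightarrow{\phi} y$, together with the functor $\frX^{b,\op} \to \Spanbf(\frX)$ sending a morphism of $\frX^{b,\op}$ from $x$ to $y$ (that is, a morphism $\beta \colon y \to x$ in $\frXb$) to the span $x \xleftarrow{\beta} y \xrightarrow{\sim} y$. On objects both are the identity, and on morphisms they land exactly in the forwards, respectively backwards, maps. The essential point is compatibility with composition: composing two forwards (resp.\ backwards) maps requires forming a pullback in which one leg is an equivalence, so the pullback is trivial and the composite is again a forwards (resp.\ backwards) map, corresponding to the composite in $\frXf$ (resp.\ $\frX^{b,\op}$).

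The step requiring the most care---and the one that genuinely leans on the cited result---is upgrading these assignments from objects and $1$-morphisms to equivalences of \icats{} onto the respective subcategories, that is, checking full faithfulness on all mapping spaces together with the coherences of composition. Concretely, the mapping space of backwards (resp.\ forwards) maps from $x$ to $y$ is the subspace of the full space of spans spanned by those with invertible forward (resp.\ backward) leg; since the space of equivalences with fixed target is contractible, contracting that leg to an identity identifies this subspace with $\Map_{\frXb}(y,x)$ (resp.\ $\Map_{\frXf}(x,y)$), which yields the claimed equivalences $\frX^{b,\op}$ and $\frXf$. I would obtain this either directly from \cite{paradj2}*{Proposition 4.9} or, if an explicit argument is preferred, from Barwick's simplicial-space model of $\Spanbf(\frX)$ in \cite{BarwickMackey}, in which the spans with an invertible leg assemble into exactly these two subcategories.
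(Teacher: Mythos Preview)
The paper gives no proof of this observation at all: it simply records the statement and cites \cite{paradj2}*{Proposition 4.9} as its source. Your proposal is correct and in fact supplies considerably more detail than the paper does; everything you write (the explicit factorization of a span, the triviality of the pullbacks when composing backwards or forwards maps, the contraction of the invertible leg to identify mapping spaces) is the standard unwinding of what the cited proposition asserts. There is nothing to compare against on the paper's side beyond the bare citation.
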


\begin{defn}\label{defn:span}
  Given an adequate triple $(\frX,\frXb,\frXf)$ and a full
  subcategory $\frX_0 \subseteq \frX$, we denote by
  $\Spanbf(\frX; \frX_0)$ the algebraic pattern given by
  $\Spanbf(\frX)$ with the factorization system whose 
  inert
  and active maps are the backwards and forward maps, respectively,
  and with the objects of $\frX_{0}$ as the elementary objects.
\end{defn}

\begin{remark}\label{rem:Segal-condition-span-patterns}
  The Segal condition for $\Spanbf(\frX;\frX_0)$ takes the
  following form for a functor $F$:
  \[
    F(x) \simeq \lim_{e \rightarrow x \in (\frXb_{0/x})^{\op}} F(e),
  \]
  where $\Spanbf(\frX)^{\el}_{x/} \simeq (\frXb_{0/x})^{\op}$ with
  $\frXb_{0/x} := \frXb_{0}  \times_{\frXb} \frXb_{/x}$ and
  $\frXb_{0}$ is the full subcategory of $\frXb$ containing the
  objects of $\frX_{0}$.
\end{remark}

\begin{ex}
  Let $\xF$ denote the category of finite sets. Since this has
  pullbacks, \cref{constr:spancats} produces an \icat{} (in fact a
  (2,1)-category) $\SpF$ whose objects are finite sets, and whose
    morphisms are spans of the form
        \[
    \begin{tikzcd}
      {} & \fset{m} \arrow["\alpha"']{dl} \arrow["\beta"]{dr} \\
      \fset{n} & & \fset{n}'
    \end{tikzcd}
  \]
  for finite sets $\fset{n}$, $\fset{m}$, and $\fset{n}'$, with
  composition given by taking pullbacks.
  We consider $\SpF = \Span(\xF; \{\bfone\})$ as an algebraic pattern by
  taking the backward maps as inerts,  
  forward maps as actives and 
  $\bfone \in \SpF$ as the only elementary object.
\end{ex}

\begin{observation}\label{rmk:F*asspan}
  The category $\xF_{*}$ may be thought of as the wide subcategory
  $\Span_{\text{inj},\txt{all}}(\xF)$ of $\SpF$ containing only those
  morphisms where the backwards map is injective. The inert-active
  factorization system on $\xF_{*}$ then coincides with the one
  obtained by restriction from $\SpF$, and the inclusion
  $\xF_{\ast} \to \SpF$ is an iso-Segal morphism.
\end{observation}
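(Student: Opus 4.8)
The plan is to exhibit an explicit equivalence of $1$-categories $\xF_{*} \simeq \Span_{\txt{inj},\txt{all}}(\xF)$ identifying the right-hand side with the wide subcategory of $\SpF$ spanned by the backward-injective spans, and then to read off the statements about the factorization system and the iso-Segal condition. First I would check that $(\xF,\xF_{\txt{inj}},\xF)$, where $\xF_{\txt{inj}}\subseteq \xF$ is the wide subcategory of injections, is an adequate triple in the sense of \cref{constr:spancats}: pullbacks in $\xF$ exist, and the pullback of an injection along any map is again an injection, which gives both required conditions. The inclusion of adequate triples $(\xF,\xF_{\txt{inj}},\xF)\to(\xF,\xF,\xF)$ then induces a functor $\Span_{\txt{inj},\txt{all}}(\xF)\to\SpF$ that is the identity on objects and identifies morphisms with the spans whose backward leg is injective. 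Since a self-map of the apex $w$ of a span commuting with an injective backward leg $w\to x$ is forced to be the identity, these spans have no nontrivial automorphisms; hence $\Span_{\txt{inj},\txt{all}}(\xF)$ is (equivalent to) an ordinary category and the displayed functor is a wide subcategory inclusion.

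Because both sides are $1$-categories, it then suffices to give a functor of ordinary categories $\Phi\colon\xF_{*}\to\Span(\xF)$. I would set $\Phi(\angled{n})=\fset{n}$ and send a pointed map $\phi\colon\angled{n}\to\angled{m}$ to the span $\fset{n}\hookleftarrow\phi^{-1}(\fset{m})\xrightarrow{\phi}\fset{m}$, whose backward leg is the injective inclusion of the non-basepoint preimage. Functoriality is the elementary set-theoretic identity $\phi^{-1}(\fset{m})\times_{\fset{m}}\psi^{-1}(\fset{p})\cong(\psi\phi)^{-1}(\fset{p})$, which says precisely that composing the two spans by pullback returns the span attached to $\psi\phi$; identities are clearly preserved. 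The assignment is bijective on objects, lands in $\Span_{\txt{inj},\txt{all}}(\xF)$, and is inverse to the ``partial function'' construction sending a backward-injective span $\fset{n}\xleftarrow{\alpha}\fset{k}\xrightarrow{\beta}\fset{m}$ (with $\alpha$ injective) to the pointed map equal to $\beta\circ\alpha^{-1}$ on the image of $\alpha$ and to the basepoint elsewhere. Thus $\Phi$ is an equivalence onto $\Span_{\txt{inj},\txt{all}}(\xF)$, and composing with the wide inclusion yields the embedding $\xF_{*}\hookrightarrow\SpF$.

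Finally I would match the extra structure. A pointed map $\phi$ is inert (as in \cref{ex:xF*}) exactly when $\phi|_{\phi^{-1}(\fset{m})}$ is a bijection, i.e.\ when the forward leg of $\Phi(\phi)$ is an equivalence, which is exactly an inert (backward) morphism of the span pattern; dually $\phi$ is active iff $\phi^{-1}(0)=\{0\}$, iff the backward leg of $\Phi(\phi)$ is an equivalence, iff $\Phi(\phi)$ is active. Hence the two inert--active factorization systems agree, and since $\Phi(\angled{1})=\fset{1}=\bfone$ carries the elementary object to the elementary object, $\Phi$ is a morphism of algebraic patterns. For the iso-Segal claim I would invoke \cref{rem:Segal-condition-span-patterns}: the target slice $\SpF^{\el}_{\fset{n}/}\simeq(\xF_{0/\fset{n}})^{\op}$, with $\xF_{0}=\{\bfone\}$, is the discrete set $\Map_{\xF}(\bfone,\fset{n})$ of $n$ points of $\fset{n}$, while $(\xF_{*})^{\el}_{\angled{n}/}=\{\rho_{1},\dots,\rho_{n}\}$ by \cref{ex:Segal-Finstar}, and $\Phi$ sends $\rho_{i}$ to the point $i\in\fset{n}$. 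This is a bijection, so $\Phi^{\el}_{\angled{n}/}$ is an equivalence for every $n$, i.e.\ $\Phi$ is iso-Segal.

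The main delicacy is the second step: ensuring that the naive object-and-morphism assignment genuinely assembles into a functor of $\infty$-categories compatible with pullback-composition of spans, rather than a mere bijection on cells. This is exactly why I first reduce to the $1$-categorical setting --- using the triviality of automorphisms of backward-injective spans --- after which functoriality collapses to the single pullback identity above and everything else is a routine verification.
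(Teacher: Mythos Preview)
Your argument is correct and complete. The paper states this as an observation without proof, leaving the verification to the reader; you have supplied exactly the expected details --- the adequate triple check, the explicit equivalence via partial functions, the matching of inert/active classes, and the identification of both elementary under-categories with the $n$-element set --- so there is no alternative approach to compare against.
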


\begin{ex}\label{ex:SpanFG}
  Let $G$ be a finite group and $\xF_G$ the category of finite
  $G$-sets. Denote by $\Orb_G \subseteq \xF_G$ the collection of
  $G$-orbits (\ie{} transitive $G$-sets). Since $\xF_{G}$ has pullbacks
  we have an \icat{} (really a (2,1)-category)
  $\Span(\xF_{G})$. Abusing notation slightly, we also denote the span
  pattern with the orbits as elementary objects by 
  $\Span(\xF_G):=\Span(\xF_G;\Orb_G)$.  
  Segal objects for this pattern are precisely $G$-commutative monoids in the sense of \cite{NardinThesis}; 
  they also appear in \cite{redshift}
  where they are called semiMackey functors.
    More generally, for any full subcategory
    $\calF \subseteq \Orb_G$ we have a span pattern
    $\Span(\xF_G;\calF)$ whose Segal objects may be thought of as
    $G$-commutative monoids that are Borel-$\calF$-complete.  Segal
    objects for $\Span(\xF_G;\{G/e\})$ appear implicitly in
    \cite{redshift}, where they are called Borel-equivariant.
\end{ex}

\begin{ex}\label{ex:Fin-G}
  As a variant of the previous example, we can consider subcategories
  $\xF_{G}^{f}$ of $\xF_{G}$ that are closed under base change; if
  $\xF_G^f$ is moreover closed under finite coproducts, this data is
  equivalent to an \textit{indexing system} in the sense of
  \cite{blumberg-hill}. We can then define the span pattern
  $\Span_{\text{all},f}(\xF_{G}) := \Span_{\text{all},f}(\xF_{G};
  \Orb_{G})$, whose Segal objects we can think of as $G$-commutative
  monoids where only transfers that lie in $\xF_{G}^{f}$ are allowed.
  As an illustrative example we may consider the extreme case where
  all forward maps are isomorphisms, \ie{} $\xF_G^f:=\xF_G^{\simeq}$.
  The corresponding span pattern
  $\Span_{\text{all},\simeq}(\xF_G;\Orb_G)$ has an underlying \icat{}
  equivalent to $\xF_G^{\op}$ with all the maps inert and with
  $\Orb_G^{\op}$ as the subcategory of elementary objects.  Segal
  objects for this pattern are thus equivalent to presheaves on
  $\Orb_G$, and by Elemendorf's theorem this \icat{} is equivalent to
  that of $G$-spaces.
\end{ex}

\begin{ex}\label{ex:m-commutative}
    A space $X \in \calS$ is called \emph{$m$-finite} if it is $m$-truncated and all of its homotopy groups are finite; we let $\calS_m \subseteq \calS$ denote the full subcategory of $m$-finite spaces. 
    Since $m$-finite spaces are closed under finite limits we may
    consider the span pattern $\Span(\calS_m):=\Span(\calS_m;\pt)$. 
    If we write
    $\mathcal{S}_{m}^{n\txt{-tr}}$ for the wide subcategory of
    $\mathcal{S}_{m}$ whose maps are $n$-truncated, then
    $(\mathcal{S}_{m},\mathcal{S}^{n\txt{-tr}}_{m},\mathcal{S}_{m})$ is also
      an adequate triple, and we can likewise consider the pattern
      \[ \Span_{n\text{-tr},\text{all}}(\mathcal{S}_{m}) :=
        \Span_{n\text{-tr},\text{all}}(\mathcal{S}_{m}; *) \]
      for any $n$.
   For $n=m-1$, the Segal objects for $\Span_{(m-1)\text{-tr},\text{all}}(\mathcal{S}_{m})$ are precisely the
    \emph{$m$-commutative monoids} of Harpaz~\cite{HarpazAmbi}. It also follows from \cite{HarpazAmbi}*{Proposition 5.14} that these are equivalent to Segal
      objects for $\Span(\calS_m)$.
\end{ex}


\subsection{Sound patterns}\label{sec:sound}

In this subsection we define the notion of a \emph{sound} pattern
--- a technical condition satisfied in almost all the usual examples.
This requires first introducing some notation:

\begin{notation}\label{omega_alpha!}
    Fix a morphism $\omega\colon X \to Y$ in an algebraic pattern $\calO$.
    For every elementary object $(\alpha\colon Y \xintto{} E) \in \calO_{Y/}^{\el}$ 
    we denote the inert-active factorization of $\alpha \circ \omega$ as follows:
  \[
    \begin{tikzcd}
        X \ar[d, "\omega"']  \ar[r, tail, "\omega_\alpha"] & 
        \omega_{\alpha!} X \ar[d, squiggly, "(\alpha \circ \omega)^\act"] \\
        Y \ar[r, "\alpha", tail] & E
    \end{tikzcd}
  \]
    Factorization defines a functor $\omega_{(-)}\colon \calO_{Y/}^\el \to \calO_{X/}^\xint$
    by sending $\alpha$ to $\omega_\alpha$.
\end{notation}

\begin{defn}
  For $\omega\colon X \actto Y$ we define $\calO^\el(\omega)$ as the pullback
  \[
    \begin{tikzcd}[column sep = large]
      {\calO^\el(\omega)} \ar[r] \ar[d] & 
      {\Ar(\calO^\xint_{X/})} \ar[d, "{(s,t)}"] \\
      \calO_{Y/}^\el \times \calO_{X/}^\el  \ar[r, "{(\omega_{(-)},\id)}"] &
      \calO_{X/}^\xint \times \calO_{X/}^\xint.
    \end{tikzcd}
  \]
  An object in $\mathcal{O}^{\el}(\omega)$ can thus be represented by 
  a diagram in $\calO$ of the following shape:
  \[
    \begin{tikzcd}
      X \ar[d, squiggly, "\omega"']  \ar[r, tail, "\omega_\alpha"] & 
      \omega_{\alpha!}X \ar[d, squiggly] \ar[r, tail] &
      E'
      \\
      Y \ar[r, tail, "\alpha"] & E,
    \end{tikzcd}
  \]
  where the arrows labeled by $\xintto{}$ and $\actto{}$ are required to be 
  inert and active, respectively, $E$ and $E'$ are elementary, and $\omega$ is fixed.
  Morphisms in $\calO^\el(\omega)$ are natural transformations of such diagrams
  that are constant at $\omega\colon X \actto{} Y$ and inert at all other objects.
\end{defn}

\begin{remark}\label{rmk:Oel(phi)-and-limits}
    By construction $\calO^\el(\omega) \to \calO_{Y/}^\el \times \calO_{X/}^\el$ is the bifibration (see \cite[Definition 2.4.7.2]{HTT}) 
    corresponding to the functor
    \[
        (\calO_{Y/}^\el)^{\op} \times \calO_{X/}^\el  \to \calS,  \qquad
        (\alpha\colon Y \intto E, \beta \colon X\intto E') \mapsto \Map_{\calO_{X/}^\xint}(\omega_\alpha, \beta).
      \]
\end{remark}

\begin{defn}\label{defn:sound}
    We say that a pattern $\calO$ is \emph{sound} if 
    for every active morphism $\omega \colon X \actto Y$ the functor $\calO^\el(\omega) \to \calO^\el_{X/}$ is coinitial.
\end{defn}

The point of introducing the condition of soundness is that it allows us to rewrite certain double limits, as described below in \cref{lem:sound-stronger-Segal}. Before we state this property we look at a first example, namely $\xF_{*}$, where soundness is particularly easy to check; further examples will be given below.

\begin{ex}\label{ex:xF-is-sound}
  In the pattern $\xF_{*}$ an active morphism 
  $\omega \colon X_+ \actto{} Y_+$ is simply a map $\omega \colon X \to Y$ in $\xF$.
  The inert undercategory $(\xF_*)^\xint_{Y_+/}$ may be identified 
  with the poset $(\mathrm{Sub}(Y), \supseteq)$ of subsets of $Y$, by assigning to each $\gamma \colon Y_+ \intto Z_+$ the subset $\gamma^{-1}(Z) \subset Y$.
  The category of elementary objects under $Y_+$ is given by the one-element subsets, and we may hence identify it with $Y$ itself.
  For an elementary $\alpha \colon Y_+ \intto{} E$ corresponding to $e \in Y$,
  the pushforward $\omega_{\alpha!}X_+$ 
  can be identified with $\omega^{-1}(e)_+ \subset X_+$.
  Hence we have a cartesian square:
    \[
        \begin{tikzcd}[column sep = large]
            {\xF_*^\el(\omega)} \ar[r] \ar[d] & 
            {\Ar(\Sub(X))} \ar[d, "{(t,s)}"] \\
            X \times Y  \ar[r, "{(\id, \omega^{-1})}"] &
            \Sub(X) \times \Sub(X).
        \end{tikzcd}
    \]
    and so $\xF_*^\el(\omega)$ is the poset of pairs $(x,y) \in X \times Y$ such that $\{x\} \subset \omega^{-1}(y)$.
    In other words, $y = \omega(x)$ and hence the map $\xF_*^\el(\omega) \to (\xF_*)^\el_{X_+/} \simeq X$ is an equivalence.
    In particular it is coinitial and thus $\xF_*$ is sound.
\end{ex}

\begin{observation}
    The composite $\calO^\el(\omega) \to \calO_{X/}^\el \times \calO_{Y/}^\el \to  \calO_{Y/}^\el$ is by construction a cartesian fibration.
    Its straightening is the functor 
    \[
        (\calO_{Y/}^\el)^\op \xrightarrow{\omega_{(-)}} (\calO_{X/}^\xint)^\op
        \xrightarrow{\calO_{-/}^\el} \Cat
    \]
    that sends $\alpha \colon Y \intto{} E$ to the \icat{} $\calO_{\omega_{\alpha!}X/}^\el$
    of elementaries under $\omega_{\alpha!}X$.
    Our definition of $\calO^\el(\omega)$ therefore matches that given in \cite[Remark 7.6]{patterns1}. Moreover, a limit over $\mathcal{O}^{\el}(\omega)$ can be rewritten as a double limit, that is for $F \colon \mathcal{O}^{\el}(\omega) \to \mathcal{C}$ we have
    \[ \lim_{\mathcal{O}^{\el}(\omega)} F \simeq \lim_{\alpha \in
        \mathcal{O}^{\el}_{Y/}}\lim_{\calO_{\omega_{\alpha!}X/}^\el}F. \]
    If $\calO$ is sound, then we can use this to rewrite a limit over
    $\mathcal{O}^{\el}_{X/}$ as a double limit.
\end{observation}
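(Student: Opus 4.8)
The plan is to read off all four assertions from the bifibration of \cref{rmk:Oel(phi)-and-limits}. That remark exhibits $\calO^\el(\omega) \to \calO^\el_{Y/}\times\calO^\el_{X/}$ as the bifibration classified by $(\alpha,\beta)\mapsto \Map_{\calO^\xint_{X/}}(\omega_\alpha,\beta)$, which is contravariant in $\alpha$ and covariant in $\beta$. Projecting a bifibration onto the factor in which it is contravariant gives a cartesian fibration, so $q\colon \calO^\el(\omega)\to\calO^\el_{Y/}$ is cartesian, with cartesian transport along a map $\alpha\to\alpha'$ in $\calO^\el_{Y/}$ given by restriction. First I would identify the fibers: the fiber of $q$ over $\alpha\colon Y\intto E$ is the total space of the left fibration over $\calO^\el_{X/}$ classified by $\beta\mapsto\Map_{\calO^\xint_{X/}}(\omega_\alpha,\beta)$, namely $\calO^\el_{X/}\times_{\calO^\xint_{X/}}(\calO^\xint_{X/})_{\omega_\alpha/}$. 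Invoking the iterated-slice equivalence $(\calO^\xint_{X/})_{\omega_\alpha/}\simeq\calO^\xint_{\omega_{\alpha!}X/}$ over $\calO^\xint_{X/}$ (the map to $\calO^\xint_{X/}$ being precomposition with $\omega_\alpha$) collapses this pullback to $\calO^\el\times_{\calO^\xint}\calO^\xint_{\omega_{\alpha!}X/}=\calO^\el_{\omega_{\alpha!}X/}$. Keeping track of the transport maps then identifies the straightening of $q$ with the composite $\calO^\el_{-/}\circ(\omega_{(-)})^{\op}$ displayed in the statement, and since this is exactly the functor whose unstraightening defines $\calO^\el(\omega)$ in \cite[Remark 7.6]{patterns1}, the two descriptions agree.

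For the double-limit formula I would use that, for any $q$, a limit over the source factors through the right Kan extension: $\lim_{\calO^\el(\omega)}F\simeq\lim_{\calO^\el_{Y/}}q_*F$. The point is that when $q$ is cartesian the pointwise formula for $q_*$ computes a fiberwise limit: the value $(q_*F)(\alpha)$ is the limit over the comma \icat{} $\calO^\el(\omega)\times_{\calO^\el_{Y/}}(\calO^\el_{Y/})_{\alpha/}$, and the inclusion of the fiber $q^{-1}(\alpha)\simeq\calO^\el_{\omega_{\alpha!}X/}$ into this comma \icat{} is coinitial, because cartesian transport of any $(e,\alpha\to q(e))$ into the fiber exhibits a terminal object of the relevant slice. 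Hence $(q_*F)(\alpha)\simeq\lim_{\calO^\el_{\omega_{\alpha!}X/}}F$, which yields the claimed identity $\lim_{\calO^\el(\omega)}F\simeq\lim_{\alpha\in\calO^\el_{Y/}}\lim_{\calO^\el_{\omega_{\alpha!}X/}}F$.

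Finally, for the sound case, \cref{defn:sound} says precisely that the second projection $\pr\colon\calO^\el(\omega)\to\calO^\el_{X/}$ is coinitial, so $\lim_{\calO^\el_{X/}}G\simeq\lim_{\calO^\el(\omega)}G\circ\pr$ for every $G$; combining this with the previous paragraph, and noting that $\pr$ restricted to the fiber $\calO^\el_{\omega_{\alpha!}X/}$ is the restriction functor along $\omega_\alpha$ sending $\omega_{\alpha!}X\intto E'$ to $X\intto E'$, rewrites $\lim_{\calO^\el_{X/}}G$ as the double limit $\lim_{\alpha\in\calO^\el_{Y/}}\lim_{E'\in\calO^\el_{\omega_{\alpha!}X/}}G(E')$. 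Each ingredient --- the bifibration, the Kan-extension formula, and the coinitiality of fibers for cartesian fibrations --- is standard, so I expect the only real work to be the bookkeeping in the first paragraph: getting the variances right and verifying that the iterated-slice equivalence is compatible with the projections to $\calO^\xint_{X/}$, so that the fiber genuinely collapses to $\calO^\el_{\omega_{\alpha!}X/}$.
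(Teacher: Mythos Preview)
Your proposal is correct and matches the paper's (implicit) reasoning: the paper treats this as an observation without a separate proof, and the argument you give—reading the cartesian fibration off from the bifibration of \cref{rmk:Oel(phi)-and-limits}, identifying the fibers via the iterated-slice equivalence, and deducing the double-limit formula from fiberwise right Kan extension along a cartesian fibration—is precisely the standard unpacking the paper has in mind. Your bookkeeping on the variances and the fiber identification is accurate, and the final sound-case remark is exactly how the paper uses soundness downstream in \cref{lem:sound-stronger-Segal}.
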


We now show that soundness is inherited along iso-Segal morphisms.
Together with \cref{ex:xF-is-sound} this implies that all cartesian patterns of \cite[Definition 2.6]{patterns2} are sound, and in particular any $\infty$-operad in the sense of Lurie is sound.
\begin{lemma}\label{lem:iso-Segal-sound}
    Let $f\colon \calO \to \calP$ be an iso-Segal morphism of algebraic patterns.
    Then $\calO$ is sound, if $\calP$ is sound.
    The converse implication holds if we further assume that \[\Ar^{\act}(f)\colon \Ar^\act(\calO) \to \Ar^\act(\calP)\] is essentially surjective.
\end{lemma}
\begin{proof}
    Being a morphism of algebraic patterns, $f$ induces for each active $\omega \colon X \actto Y$ a morphism of cartesian fibrations 
    \[\begin{tikzcd}
        \calO^\el(\omega) \ar[r, "f"] \ar[d] & \calP^\el(f(\omega)) \ar[d] \\
        \calO^\el_{Y/} \ar[r, "f", "\simeq"'] & \calP^\el_{f(Y)/}
    \end{tikzcd}\]
    where the bottom functor is an equivalence because we assumed that $f$ is iso-Segal.
    On the fibers over some $(\alpha \colon Y \intto E) \in \calO^\el_{Y/}$ and $f(\alpha) \in \calP^\el_{f(Y)/}$ we get an induced functor
    \[
         f\colon \calO^\el_{\alpha_!X/} \longrightarrow \calP^\el_{f(\alpha_! X)/}
    \]
    which again is an equivalence because $f$ is iso-Segal.
    Therefore it follows that $f\colon \calO^\el(\omega) \to \calP^\el(f(\omega))$ is an equivalence.
    This is also the top map in the square
    \[\begin{tikzcd}
        \calO^\el(\omega) \ar[r, "f", "\simeq"'] \ar[d] & \calP^\el(f(\omega)) \ar[d] \\
        \calO^\el_{X/} \ar[r, "f", "\simeq"'] & \calP^\el_{f(X)/}.
    \end{tikzcd}\]
    Here the right functor is coinitial because $\calP$ is sound, and hence the left functor is coinitial, which proves that $\calO$ is sound. 

    For the converse implication, let $\rho\colon X' \actto Y'$ be some active morphism in $\calP$.
    Because we assume that $\Ar^{\act}(f)$ is essentially surjective, we can write $\rho = f(\omega)$ for $\omega\colon X \actto Y$ active in $\calO$ as before. 
    Then the above argument shows that $\calP^\el(\rho) \to \calP^\el(Y')$ must be coinitial, as it is equivalent to $\calO^\el(\omega) \to \calO^\el(Y)$.
\end{proof}

The crucial application of soundness for us will be through the following lemma: this will be used in the proof of \cref{lem:relative-verySegal-iff-unstraighting-is-relative-fibrous}, which is how the assumption of soundness enters our main theorem.
\begin{lemma}\label{lem:sound-stronger-Segal}
    Let $\calO$ be a sound pattern and $\calC$ a sufficiently complete \icat{}.
    Consider a natural transformation 
    $(\eta\colon  F \Rightarrow G)\colon \calO_{X/}^\xint \to \calC$ such that
    for all $X \intto{} X' \in \calO_{X/}^\xint$ the square 
    \[ \begin{tikzcd}
        F(X') \ar[r] \ar[d, "\eta_{X'}"'] &
        \lim_{X' \intto{} E \in \calO_{X'/}^\el} F(E) \ar[d, "\lim \eta_E"] \\
        G(X') \ar[r] &
        \lim_{X' \intto{} E \in \calO_{X'/}^\el} G(E)
    \end{tikzcd} \]
    is cartesian.
    Then for every active morphism $\omega\colon X \actto{} Y$ the square
    \[ \begin{tikzcd}
        F(X) \ar[r] \ar[d, "\eta_X"'] & 
        \lim_{\alpha\colon  Y \intto E \in \calO^\el_{Y/}} F(\omega_{\alpha!} X) \ar[d, "\lim \eta_E"] \\
        G(X) \ar[r] & 
        \lim_{\alpha\colon  Y \intto E \in \calO^\el_{Y/}} G(\omega_{\alpha!} X)
    \end{tikzcd} \]
    is cartesian.
\end{lemma}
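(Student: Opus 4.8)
The plan is to obtain the square for $\omega$ by pasting it against a second cartesian square whose horizontal composite is the ``elementary'' square supplied by the hypothesis at $X' = X$, using soundness to identify the limits that appear. First I would record two consequences of the hypothesis. Taking $X' = X$ with the identity inert map shows that the square
\[
\begin{tikzcd}
F(X) \ar[r] \ar[d, "\eta_X"'] & \lim_{E \in \calO^\el_{X/}} F(E) \ar[d] \\
G(X) \ar[r] & \lim_{E \in \calO^\el_{X/}} G(E)
\end{tikzcd}
\]
is cartesian. Next, for each $(\alpha\colon Y \intto E) \in \calO^\el_{Y/}$ the inert map $\omega_\alpha\colon X \intto \omega_{\alpha!}X$ is an object of $\calO^\xint_{X/}$, so applying the hypothesis with $X' = \omega_{\alpha!}X$ yields a cartesian square relating $F(\omega_{\alpha!}X),G(\omega_{\alpha!}X)$ to $\lim_{E' \in \calO^\el_{\omega_{\alpha!}X/}} F(E'), \lim_{E'} G(E')$. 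Since limits of cartesian squares are cartesian, taking the limit of these over $\alpha \in \calO^\el_{Y/}$ produces a cartesian square (call it (B)) whose left column is $\lim_\alpha F(\omega_{\alpha!}X) \to \lim_\alpha G(\omega_{\alpha!}X)$ --- precisely the right column of the target square --- and whose right column is the double limit $\lim_\alpha \lim_{E' \in \calO^\el_{\omega_{\alpha!}X/}}$.

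The role of soundness is to identify this double limit with the elementary limit over $\calO^\el_{X/}$. By the observation preceding the lemma, $\calO^\el(\omega) \to \calO^\el_{Y/}$ is a cartesian fibration whose fibre over $\alpha$ is $\calO^\el_{\omega_{\alpha!}X/}$, so for any functor the double limit $\lim_\alpha \lim_{\calO^\el_{\omega_{\alpha!}X/}}$ computes the limit over $\calO^\el(\omega)$. Soundness (\cref{defn:sound}) asserts exactly that the projection $\calO^\el(\omega) \to \calO^\el_{X/}$ is coinitial; hence restriction along it preserves limits, giving $\lim_{\calO^\el(\omega)} F \simeq \lim_{\calO^\el_{X/}} F$ and likewise for $G$. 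Thus the right column of (B) is identified with the right column of the elementary square above.

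With these identifications in hand, the elementary square at $X' = X$ factors as the horizontal composite of the target square (for $\omega$) followed by (B): the top edge $F(X) \to \lim_{\calO^\el_{X/}} F$ decomposes as
\[
F(X) \longrightarrow \lim_{\alpha} F(\omega_{\alpha!}X) \longrightarrow \lim_{\alpha} \lim_{E' \in \calO^\el_{\omega_{\alpha!}X/}} F(E'),
\]
the first map induced by the inert maps $\omega_\alpha$ and the second by the canonical Segal maps for each $\omega_{\alpha!}X$, and similarly on the bottom. Since both the outer composite (the elementary square) and the right-hand square (B) are cartesian, the pasting lemma for cartesian squares implies that the left-hand square --- the square for $\omega$ --- is cartesian, as required.

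The only genuinely delicate point is the middle paragraph: one must verify that the coinitiality furnished by soundness identifies the double limit with the elementary limit \emph{compatibly} with the structure maps out of $F(X)$ and $G(X)$, so that the factorization used in the final step is the one induced by the canonical cones (where the composite $F(X) \to F(\omega_{\alpha!}X) \to F(E')$ is $F$ applied to the composite inert map $X \intto \omega_{\alpha!}X \intto E'$). Once this bookkeeping is in place, the remainder is a formal application of the pasting lemma.
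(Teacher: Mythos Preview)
Your proposal is correct and is essentially the paper's own argument: the paper organizes exactly the same three ingredients---the hypothesis at $X'=X$, the $\calO^{\el}_{Y/}$-limit of the hypothesis at $X'=\omega_{\alpha!}X$, and the coinitiality of $\calO^{\el}(\omega)\to\calO^{\el}_{X/}$ identifying the double limit with $\lim_{\calO^{\el}_{X/}}$---as a commutative cube and invokes the same pasting lemma to conclude. Your ``delicate point'' about compatibility of the identification with the structure maps is real but routine, and is also left implicit in the paper's cube.
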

\begin{proof}
    Consider the commutative cube
    \[
    \begin{tikzcd}[column sep=-2ex, row sep=tiny]
      F(X) \arrow{dr} \arrow{rr}
      \arrow{dd} & &
      {\lim_{\alpha\colon Y \intto{} E' \in \calO^\el_{Y/}} F(\omega_{\alpha!}X)} \arrow{dd} 
      \arrow{dr} \\
       & \lim_{\beta\colon X \intto{} E \in \calO^\el_{X/}} F(E) \arrow[crossing
       over]{rr}[near start]{\sim}  & &
       \lim_{(\alpha\colon Y \intto{} E', \gamma\colon  \omega_{\alpha!}X \intto{} E) \in \calO^\el(\omega)} F(E) \arrow{dd} \\
       G(X) \arrow{dr} \arrow{rr}{\sim} & &
      \lim_{\alpha\colon Y \intto{} E' \in \calO^\el_{Y/}} G(\omega_{\alpha!}X) \arrow{dr} \\
      & \lim_{\beta\colon X \intto{} E \in \calO^\el_{X/}} G(E) \arrow{rr}[near start]{\sim}
       \arrow[leftarrow,crossing over]{uu}  & &
       \lim_{(\alpha\colon Y \intto{} E', \gamma\colon  \omega_{\alpha!}X \intto{} E) \in \calO^\el(\omega)} G(E).
    \end{tikzcd}
    \]
    The front horizontal maps are equivalences because $\calO$ 
    is assumed to be sound and hence $\calO^\el(\omega) \to \calO^\el_{X/}$
    is coinitial.
    The left square is cartesian by applying the assumption.
    We would like to show that the back square is cartesian
    and by pullback pasting it will suffice to show that the right
    square is cartesian.
    We may write the limit over $\calO^\el(\omega)$ as a double limit,
    by first right Kan extending
    along the cartesian fibration $\calO^\el(\omega) \to \calO^\el_{Y/}$,
    which is computed by taking limits over the fibers $\calO^\el_{\omega_{\alpha!}Y/}$,
    and then taking the limit over $\calO^\el_{Y/}$.
    Using this reformulation the right square can be written as a $\calO^\el_{Y/}$-limit of diagrams of the form
    \[ \begin{tikzcd}
        F(\omega_{\alpha!}X) \ar[r] \ar[d, "\eta_{\omega_{\alpha!}X}"'] &
        \lim_{\omega_{\alpha!}X \intto{} E \in \calO_{X'/}^\el} F(E) \ar[d, "\lim \eta_{E}"] \\
        G(\omega_{\alpha!}X) \ar[r] &
        \lim_{\omega_{\alpha!}X \intto{} E \in \calO_{X'/}^\el} G(E).
    \end{tikzcd} \]
    Each of these diagrams is cartesian by assumption, and hence so is their limit. 
\end{proof}

We will now check explicitly that the examples of patterns we discussed
above are indeed sound. To do so, the following observation will be useful:

\begin{lemma}\label{lem:O-el-beta}
    For an algebraic pattern $\calO$ the following conditions are equivalent:
    \begin{enumerate}
        \item
        $\calO$ is sound.
        \item 
        For every active morphism $\omega \colon X \actto{} Y$ and $\beta\colon X \xintto{} E' \in \calO^{\el}_{X/}$, the \icat{}
        \[\calO^\el_\beta(\omega) := \calO^\el_{Y/} \times_{\calO^\xint_{X/}} (\calO^\xint_{X/})_{/\beta}\] 
        is weakly contractible.
        \item 
        For every $\omega$ and $\beta$ as in $(2)$ we have $\colim_{\alpha \in (\calO^{\el}_{Y/})^{\op}} \Map_{\calO^{\xint}_{X/}}(\omega_{\alpha},\beta) \simeq \ast$.
    \end{enumerate}
\end{lemma}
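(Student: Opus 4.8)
The plan is to rewrite all three conditions in terms of a single fibration, namely the projection $\calO^\el(\omega) \to \calO^\el_{X/}$. First I would record the structural fact that this projection is a \emph{cocartesian} fibration whose fiber over $\beta \in \calO^\el_{X/}$ is exactly $\calO^\el_\beta(\omega)$. This should follow directly from the bifibration description in \cref{rmk:Oel(phi)-and-limits}: $\calO^\el(\omega)$ is classified by $(\alpha,\beta) \mapsto \Map_{\calO^\xint_{X/}}(\omega_\alpha,\beta)$, which is covariant in $\beta$, so unstraightening first in the variable $\alpha$ and then in $\beta$ presents $\calO^\el(\omega) \to \calO^\el_{X/}$ as cocartesian with fiber over $\beta$ the total space of the right fibration over $\calO^\el_{Y/}$ classified by $\alpha \mapsto \Map_{\calO^\xint_{X/}}(\omega_\alpha,\beta)$ --- which is precisely $\calO^\el_{Y/} \times_{\calO^\xint_{X/}} (\calO^\xint_{X/})_{/\beta} = \calO^\el_\beta(\omega)$.

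For the equivalence $(1) \Leftrightarrow (2)$ I would unwind \cref{defn:sound}: soundness says exactly that $\calO^\el(\omega) \to \calO^\el_{X/}$ is coinitial for every active $\omega$, and by the opposite of Quillen's Theorem A (\cite{HTT}*{Theorem 4.1.3.1}) this is equivalent to weak contractibility of every comma \icat{} $\calO^\el(\omega) \times_{\calO^\el_{X/}} (\calO^\el_{X/})_{/\beta}$. The crux is then to compare this comma category with the fiber $\calO^\el_\beta(\omega)$. Here I would use that the pullback of our cocartesian fibration along $(\calO^\el_{X/})_{/\beta} \to \calO^\el_{X/}$ is again cocartesian, now over a base with terminal object $\id_\beta$, and that its fiber over $\id_\beta$ is the fiber of the original over $\beta$, namely $\calO^\el_\beta(\omega)$. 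Since the inclusion of the fiber over a terminal object of the base of a cocartesian fibration is cofinal (the cocartesian lift of the essentially unique map to the terminal object furnishes an initial object of the relevant comma \icat{}), the comma category and the fiber have the same classifying space. Hence all comma categories are weakly contractible \IFF{} all fibers $\calO^\el_\beta(\omega)$ are, which is condition $(2)$.

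Finally, $(2) \Leftrightarrow (3)$ I expect to be routine: the fiber $\calO^\el_\beta(\omega) = \calO^\el_{Y/} \times_{\calO^\xint_{X/}} (\calO^\xint_{X/})_{/\beta}$ is the total space of the right fibration over $\calO^\el_{Y/}$ classified by $\alpha \mapsto \Map_{\calO^\xint_{X/}}(\omega_\alpha,\beta)$, and the classifying space of the total space of a right fibration is the colimit of its straightening, so that
\[ |\calO^\el_\beta(\omega)| \simeq \colim_{\alpha \in (\calO^\el_{Y/})^{\op}} \Map_{\calO^\xint_{X/}}(\omega_\alpha,\beta); \]
weak contractibility of the left-hand side is exactly condition $(3)$. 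I expect the main obstacle to be the middle step, i.e.\ identifying the coinitiality comma category with the fiber up to weak equivalence: this is the only place where one must genuinely combine the cocartesian structure with the terminal-object/cofinality argument, rather than merely manipulate fibrations formally, while the surrounding steps are bookkeeping with (un)straightening.
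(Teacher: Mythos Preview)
Your proposal is correct and follows essentially the same strategy as the paper: both arguments identify $\calO^\el(\omega) \to \calO^\el_{X/}$ as a cocartesian fibration with fibers $\calO^\el_\beta(\omega)$, and both treat $(2)\Leftrightarrow(3)$ by computing the classifying space of a right (or, after passing to opposites, left) fibration as the colimit of its straightening.

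The one place you take a longer route is in $(1)\Leftrightarrow(2)$. You invoke Quillen's Theorem~A (\cite{HTT}*{Theorem~4.1.3.1}) and then separately argue that for a cocartesian fibration the comma \icat{} and the strict fiber are weakly equivalent, via cofinality of the fiber over a terminal object. The paper instead cites the dual of \cite{HTT}*{Theorem~4.1.3.2} directly, which already packages exactly this: a cocartesian fibration is coinitial \IFF{} all its fibers are weakly contractible. So what you flag as ``the main obstacle'' is in fact a one-line citation; your argument is a valid (and instructive) unpacking of that result, but you can shortcut it entirely.
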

\begin{proof}
    $(1 \Leftrightarrow 2)$ The functor $\calO^\el(\omega) \to \calO^\el_{X/}$ is a cocartesian fibration.
    By the dual of \cite[Theorem 4.1.3.2.]{HTT}
    it is coinitial if and only if its fibers are weakly contractible.
    Unwinding definitions yields the following description of the straightening: 
    \[
        \calO^\el_{X/} \to \Cat, \qquad
        (\beta\colon X \to E') \longmapsto \calO^\el_{Y/} \times_{\calO^\xint_{X/}} (\calO^\xint_{X/})_{/\beta}.
    \]
    $(2 \Leftrightarrow 3)$ 
    Since $\mathcal{O}^{\el}(\omega) \to \calO^{\el}_{Y/} \times \calO^{\el}_{X/}$ 
    is a bifibration, passing to the fiber over
    $\beta \in \calO^{\el}_{X/}$
    and taking opposites yields a left fibration $q:\mathcal{O}^{\el}_{\beta}(\omega)^\op \to (\mathcal{O}^{\el}_{Y/})^\op$.
    By \cite[Corollary 3.3.4.6]{HTT}, the $\infty$-groupoid
    $|\calO^\el_\beta(\omega)| \simeq |\calO^\el_\beta(\omega)^{\op}|$ 
    can be computed as the colimit of the straightening $\St{}{}(q)$, which is given by
    \[
        \St{}{}(q)\colon (\calO^\el_{Y/})^{\op} \to \calS, \qquad
        (\alpha\colon Y \to E) \longmapsto \Map_{\calO^{\xint}_{X/}}(\omega_{\alpha},\beta). \qedhere
    \]    
\end{proof}

\begin{observation}\label{obs:Oel-poset}
    Suppose that $\calO$ is a pattern such that for all $X \in \calO$ 
    the inert under-category $\calO^\xint_{X/}$ is a poset. 
    In this case, spelling out the definition as in \cref{ex:xF-is-sound} we may identify $\calO^\el_\beta(\omega)$ with the following sub-poset of $\calO^\el_{Y/}$:
    \[
        \calO^\el_\beta(\omega) \simeq
        \{ (\alpha\colon  Y \intto E) \in \calO^\el_{Y/} \;|\;
        \beta = \gamma \circ \omega_\alpha \}.
    \]
\end{observation}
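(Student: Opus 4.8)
The plan is to unwind the definition of $\calO^\el_\beta(\omega)$ and exploit the poset hypothesis to turn the homotopy pullback into an honest sub-poset. Recall from \cref{lem:O-el-beta} that by definition
\[ \calO^\el_\beta(\omega) = \calO^\el_{Y/} \times_{\calO^\xint_{X/}} (\calO^\xint_{X/})_{/\beta}, \]
where the first functor is $\omega_{(-)}$ and the second is the forgetful functor out of the slice. First I would record that the assumption propagates to full subcategories: since $\calO^\xint_{X/}$ and $\calO^\xint_{Y/}$ are posets, so are their full subcategories $\calO^\el_{X/}$ and $\calO^\el_{Y/}$, and $\omega_{(-)} \colon \calO^\el_{Y/} \to \calO^\xint_{X/}$ is then just a map of posets.

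The key elementary input is that for any poset $P$ with an element $\beta$, the slice projection $P_{/\beta} \to P$ is simply the inclusion of the downward-closed sub-poset $\{\gamma : \gamma \leq \beta\}$, and in particular a monomorphism. Applying this to $P = \calO^\xint_{X/}$ identifies $(\calO^\xint_{X/})_{/\beta} \to \calO^\xint_{X/}$ with the inclusion of those inert maps $\gamma \colon X \intto E''$ admitting a (necessarily unique) inert factorization through $\beta$, i.e.\ an inert $E'' \intto E'$ under $X$. Since monomorphisms are stable under base change, I would conclude that the projection $\calO^\el_\beta(\omega) \to \calO^\el_{Y/}$ is again a monomorphism, and hence exhibits $\calO^\el_\beta(\omega)$ as the full sub-poset of $\calO^\el_{Y/}$ spanned by those $\alpha \colon Y \intto E$ whose image $\omega_\alpha$ lies in the downset of $\beta$, that is with $\omega_\alpha \leq \beta$.

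It then remains to unwind the order relation: $\omega_\alpha \leq \beta$ in $\calO^\xint_{X/}$ says exactly that there is an (inert) morphism $\gamma$ with $\beta = \gamma \circ \omega_\alpha$, which is the asserted description and is the abstract counterpart of the explicit computation carried out for $\xF_*$ in \cref{ex:xF-is-sound}. I do not expect a genuine obstacle here, as the statement is essentially a routine unwinding; the only point requiring any care is the passage from the $\infty$-categorical fiber product to a strict condition on objects, and this is precisely what the poset hypothesis provides, since all the relevant mapping spaces are $(-1)$-truncated, so ``$\omega_\alpha$ maps to $\beta$'' is a property rather than extra data and no higher coherences intervene.
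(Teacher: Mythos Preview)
Your proposal is correct and follows the same approach as the paper, which treats this observation as a routine unwinding of the definition (the paper gives no formal proof beyond the reference to \cref{ex:xF-is-sound}). Your argument via the poset hypothesis making the slice projection a monomorphism and then pulling back is exactly the intended reasoning.
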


\begin{ex}\label{ex:Dop-is-sound}
    For the pattern $\Dop$ the inert under-category $(\Dop)^\xint_{[n]/}$ 
    is equivalent to the poset of pairs $(a_0 \le a_1) \in [n]$. 
    This is elementary in $\simp^{\op,\flat}$ iff $a_1-a_0 = 1$ 
    and it is elementary in $\simp^{\op,\natural}$ iff $a_1-a_0 \le 1$.
    To check soundness we consider, for an active morphism $\omega \colon [m] \actto [n]$ in $\simp$ 
    and elementary $(b_0 \le b_1) \in [n]$, the poset
    \[
        (\Dop)^\el_\beta(\omega) \simeq
        \{  (a_0 \le a_1) \in (\Dop)^\el_{[m]/} \;|\; \omega(a_0) \le b_0 \le b_1 \le \omega(a_1) \}.
    \]
    In the case of $\simp^{\op,\flat}$ this poset has a single element,
    namely that given by $a_0 = \max\{a \in [m]\;|\; \omega(a) \le b_0\}$
    and $a_1 = a_0 + 1$, which satisfies $\omega(a_1) > b_0$
    and hence $\omega(a_1) \ge b_1 = b_0+1$.
    For the pattern $\simp^{\op,\natural}$ the poset still has 
     a single element if $b_1 = b_0 +1$ 
    or if $b_1 = b_0$ with $b_i \not\in \omega([m])$.
    But if $b_1 = b_0 = \omega(a)$ for some $a \in [m]$,
    then the poset is the category
    \[
        (a-1 \le a) \to (a \le a) \from (a \le a+1),
    \]
    which is not trivial, but still weakly contractible.
    This shows that $\simp^{\op,\flat}$ and $\simp^{\op,\natural}$ 
    are both sound.
\end{ex}

\begin{ex}\label{ex:Fptnatural}
    The pattern $\xF_*^\natural$ is sound.
    The inert under-category $(\xF_*^\natural)_{A_+/}$ is the poset of subsets $U \subset A$.
    Given an active morphism $\omega\colon A_+ \to B_+$ and an elementary $E \subset A$ (i.e.\ $|E| \le 1$),
    we need to check that the poset of $E' \subset Y$ with $|E'| \le 1$ and $E \subset \omega^{-1}(E')$ is contractible.
    If $E = \{a\} \neq \emptyset$, then this poset has exactly one element $E' = \{\omega(a)\}$,
    and if $E = \emptyset$, then this poset has an initial element $E' = \emptyset$.
    So the poset is contractible in both cases, which proves that $\xF_*^\natural$ is sound.
\end{ex}

\begin{lemma}\label{lem:prodsound}
        Products of sound patterns are sound: if $\calO_1$ and $\calO_2$ are sound patterns, then $\calO_1 \times \calO_2$ is also a sound pattern.
\end{lemma}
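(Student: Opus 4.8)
The plan is to reduce everything to the componentwise structure of the product pattern and then invoke the colimit criterion for soundness from \cref{lem:O-el-beta}.

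First I would record how the pattern data on $\calO_1 \times \calO_2$ decomposes. By definition the product pattern carries the inert--active factorization system computed componentwise, so that $(\calO_1\times\calO_2)^\xint \simeq \calO_1^\xint \times \calO_2^\xint$ and $(\calO_1 \times \calO_2)^\act \simeq \calO_1^\act \times \calO_2^\act$, while its elementary objects are exactly the pairs $(E_1,E_2)$ with each $E_i$ elementary, so that $(\calO_1\times\calO_2)^\el \simeq \calO_1^\el \times \calO_2^\el$. Consequently, writing $X = (X_1,X_2)$ and $Y=(Y_1,Y_2)$, the relevant under-categories split as $(\calO_1\times\calO_2)^\xint_{X/} \simeq (\calO_1)^\xint_{X_1/}\times(\calO_2)^\xint_{X_2/}$ and $(\calO_1\times\calO_2)^\el_{X/}\simeq (\calO_1)^\el_{X_1/}\times(\calO_2)^\el_{X_2/}$, and likewise for $Y$. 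An active morphism out of $X$ is a pair $\omega=(\omega_1,\omega_2)$ of active morphisms, and since the factorization is computed componentwise, for an elementary $\alpha=(\alpha_1,\alpha_2)\colon Y \intto (E_1,E_2)$ the inert part of $\alpha\circ\omega$ is $\omega_\alpha \simeq \bigl((\omega_1)_{\alpha_1},(\omega_2)_{\alpha_2}\bigr)$.

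With this bookkeeping in place, I would verify soundness through criterion (3) of \cref{lem:O-el-beta}: it suffices to show that for every active $\omega=(\omega_1,\omega_2)\colon X \actto Y$ and every elementary $\beta=(\beta_1,\beta_2)\colon X \intto (E_1',E_2')$ the colimit
\[ \colim_{\alpha \in ((\calO_1\times\calO_2)^\el_{Y/})^\op} \Map_{(\calO_1\times\calO_2)^\xint_{X/}}(\omega_\alpha,\beta) \]
is contractible. Under the product decompositions above the indexing category is $\calI_1\times\calI_2$ with $\calI_i := ((\calO_i)^\el_{Y_i/})^\op$, and the mapping space splits as a product
\[ \Map_{(\calO_1)^\xint_{X_1/}}((\omega_1)_{\alpha_1},\beta_1) \times \Map_{(\calO_2)^\xint_{X_2/}}((\omega_2)_{\alpha_2},\beta_2). \]

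Finally I would compute the colimit. Using that colimits over a product category may be computed iteratively, and that $\calS$ is cartesian closed so that $Z \times (\blank)$ preserves colimits for every space $Z$, the colimit of this product of one-variable functors distributes into a product of colimits, yielding
\[ \Bigl(\colim_{\alpha_1 \in \calI_1} \Map_{(\calO_1)^\xint_{X_1/}}((\omega_1)_{\alpha_1},\beta_1)\Bigr) \times \Bigl(\colim_{\alpha_2 \in \calI_2} \Map_{(\calO_2)^\xint_{X_2/}}((\omega_2)_{\alpha_2},\beta_2)\Bigr). \]
Each factor is contractible since $\calO_1$ and $\calO_2$ are sound, again by criterion (3) of \cref{lem:O-el-beta}, so the whole colimit is $\ast \times \ast \simeq \ast$, which is exactly soundness of $\calO_1\times\calO_2$. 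The only genuinely delicate point is the first step --- confirming that the elementary objects, the inert under-categories, and the active factorizations of the product pattern really are the componentwise products --- after which the argument is the routine observation that the soundness colimit is multiplicative.
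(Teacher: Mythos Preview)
Your proof is correct. The approach differs from the paper's only in which characterization of soundness you invoke: the paper works directly with \cref{defn:sound}, observing that the functor $(\calO_1\times\calO_2)^\el(\omega) \to (\calO_1\times\calO_2)^\el_{X/}$ is identified with the product of the two projections $\calO_i^\el(\omega_i) \to (\calO_i)^\el_{X_i/}$, and then uses that a product of coinitial functors is coinitial. You instead pass through criterion (3) of \cref{lem:O-el-beta} and show that the relevant colimit of mapping spaces splits as a product of colimits, each contractible. The two arguments are essentially dual to one another---your distributivity step (colimits of external products in $\calS$ are products of colimits) is the colimit-side shadow of the paper's ``product of coinitial functors is coinitial'' step---so neither buys anything substantial over the other; the paper's version is marginally shorter since it avoids the explicit manipulation of the colimit.
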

\begin{proof}
  Let $\omega = (\omega_1,\omega_2) \colon  (X_1,X_2) \actto{} (Y_1,Y_2)$ be
  an active morphism in $\calO_1 \times \calO_2$. The projection
  $(\calO_1 \times \calO_2)^{\el}(\omega) \longrightarrow (\calO_1
  \times \calO_2)^{\el}_{(X_1,X_2)/}$ can be identified with the
  product of the projections
  $\calO_1^{\el}(\omega_1) \times \calO_2^{\el}(\omega_2)
  \longrightarrow (\calO_1)^{\el}_{X_1/} \times
  (\calO_2)^{\el}_{X_2/}$ which, by assumption, is a product of
  coinitial functors and hence again coinitial.
\end{proof}

\begin{ex}
  Applying \cref{lem:prodsound} to \cref{ex:Dop-is-sound}, we see that the patterns $\simp^{n,\op,\flat}$ and $\simp^{n,\op,\natural}$ are both sound.
\end{ex}


Next, we introduce a further condition for sound patterns; for this we first need some notation:
\begin{notation}
  By \cref{propn:free-fibration-is-cocartesian}, evaluation at the target
  $\ev_{1} \colon \Ar_{\act}(\calO) \to \calO$ is a cocartesian fibration.
  Its straightening, denoted by
  $\calA_{\calO} \colon \mathcal{O} \to \CatI$, takes $X \in \mathcal{O}$ to the \icat{}
  $\calA_{\calO}(X) \simeq \mathcal{O}^{\act}_{/X}$ of active morphisms to $X$.
  (Compare with \cite{patterns1}*{Corollary 7.4 and Remark 7.5}.)
\end{notation}

\begin{defn}\label{defn:soundly-extendable}
  We say an algebraic pattern $\mathcal{O}$ is \emph{soundly extendable} 
  if it is sound and in addition
  the functor  $\calA_{\calO}$
  is a Segal $\calO$-$\infty$-category,
  \ie{} for every $X \in \mathcal{O}$, the functor
  \[ \calO_{/X}^\act \to \lim_{E \in \mathcal{O}^{\el}_{X/}} \calO_{/E}^\act \]
  is an equivalence.
\end{defn}

\begin{remark}\label{rmk:extOactseg}
  The notion of a soundly extendable pattern is a mild strengthening
  of the notion of extendable pattern from
  \cite{patterns1}*{Definition 8.5} (which uses a slightly weaker, but more complicated
  condition than what we are here calling ``soundness'').
  It was shown in \cite{patterns1}*{Lemma 9.14} that every extendable
  pattern $\calO$ satisfies the condition in
  \cref{defn:soundly-extendable}, so in particular a sound pattern is
  extendable if and only if it is soundly extendable.  In principle,
  there could exist extendable patterns that are not sound, but we are
  not aware of any examples.
\end{remark}

\begin{ex}\label{ex:soundext}
    The patterns $\xF_{*}$, $\simp^{\op,\natural}$, and $\simp^{\op,\flat}$ 
    are soundly extendable.
    Their soundness was verified in \cref{ex:xF-is-sound} and \cref{ex:Dop-is-sound}.
    For extendability see \cite[Example 8.13 and 8.14]{patterns1}. The pattern
    $\bbTheta_{n}^{\op,\natural}$ is soundly extendable for all $n$ 
    (by \cite[Proposition 2.7]{thetan} and \cite[Lemma 3.5]{thetan}),
    but note that 
    $\bbTheta_{n}^{\op,\flat}$ fails to be extendable for $n > 1$.
    (See \cite[Example 8.15]{patterns1}.)
\end{ex}

\begin{ex}\label{ex:operads-soundly-extendable}
    Let $\calO \to \xF_\ast$ be an \iopd{}. 
    Then $\calO$ is a soundly extendable pattern. 
    This will follow by \cref{ex:xF-fibrous} and \cref{lem:fbrssound} in the next section.
\end{ex}

\begin{ex}\label{ex:xFk-sound-not-extendable}
    The patterns $\xF_{*}^{\le k}$ are sound by \cref{lem:iso-Segal-sound},
    but not soundly extendable. 
    Indeed, $\calA_{\xF_{\ast}^{\le k}}\colon \xF_{\ast}^{\le k} \xrightarrow{} \CatI$ does not satisfy the Segal condition:
    for any $n \le k$ the Segal map may be identified with the inclusion
    \[  (\xF^{\times n})^{\le k} \simeq \calA_{\xF_{\ast}^{\le k}}(n)  \longrightarrow \calA_{\xF_{\ast}^{\le k}}(1)^{\times n} \simeq (\xF^{\le k})^{\times n} \]
    where $(\xF^{\le k})^{\times n}$ is the category of $n$-tuples of sets such that each set has size $\le k$,
    and $(\xF^{\times n})^{\le k}$ denotes the full subcategory on those $n$-tuples of total size $\le k$.
\end{ex}

\begin{lemma}\label{lem:product-of-soundly-extendable}
    Let $\calO$ and $\calP$ be soundly extendable patterns such that $\calO^{\el}_{O/}$ and $\calP^{\el}_{P/}$ are weakly contractible for all $O \in \calO$ and $P\in \calP$.
    Then $\calO \times \calP$ is a soundly extendable pattern.
\end{lemma}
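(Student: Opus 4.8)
The plan is to combine the already-established closure of soundness under products with a limit-splitting computation that is exactly where the weak contractibility hypothesis enters. By \cref{lem:prodsound} the product $\calO \times \calP$ is sound, so it only remains to verify extendability, i.e.\ that $\calA_{\calO \times \calP}$ is a Segal $(\calO \times \calP)$-$\infty$-category. First I would record the identifications coming from the product structure. Since the factorization system on $\calO \times \calP$ is componentwise, the active maps are pairs of active maps, so $\calA_{\calO \times \calP}(X,Y) \simeq \calO^\act_{/X} \times \calP^\act_{/Y}$, and (as already used in the proof of \cref{lem:prodsound}) the elementary objects under $(X,Y)$ decompose as $(\calO \times \calP)^\el_{(X,Y)/} \simeq \calO^\el_{X/} \times \calP^\el_{Y/}$, with $\calA_{\calO\times\calP}(E,E') \simeq \calO^\act_{/E} \times \calP^\act_{/E'}$ on an elementary $E \times E'$. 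Under these identifications the Segal map for $\calA_{\calO \times \calP}$ at $(X,Y)$ becomes the canonical map
\[ \calO^\act_{/X} \times \calP^\act_{/Y} \longrightarrow \lim_{(E,E') \in \calO^\el_{X/} \times \calP^\el_{Y/}} \calO^\act_{/E} \times \calP^\act_{/E'}. \]

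The key step is to split this limit over a product of indexing categories. For functors $A \colon I \to \CatI$ and $B \colon J \to \CatI$, I claim that the ``external product'' $(E,E') \mapsto A(E) \times B(E')$ on $I \times J$ satisfies
\[ \lim_{(E,E') \in I \times J} A(E) \times B(E') \simeq \left( \lim_{E \in I} A(E) \right) \times \left( \lim_{E' \in J} B(E') \right) \]
whenever both $I$ and $J$ are weakly contractible. Concretely, fixing $E$ and taking the inner limit over $J$ yields $A(E) \times \lim_{E' \in J} B(E')$, since $A(E)$ is constant in $E'$ and the limit of a constant diagram indexed by a weakly contractible category is the value of that diagram; taking the outer limit over $I$ then pulls out $\lim_{E'} B(E')$ as a constant factor by the same contractibility argument applied to $I$. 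This is exactly the place where the weak contractibility hypothesis on $\calO^\el_{O/}$ and $\calP^\el_{P/}$ is indispensable: without it the constant factors would instead contribute cotensors by the classifying spaces of the elementary under-categories, and the splitting would fail. I expect this limit-splitting to be the main (though not difficult) point of the proof.

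Applying the splitting with $I = \calO^\el_{X/}$, $J = \calP^\el_{Y/}$, $A = \calA_\calO$, and $B = \calA_\calP$ identifies the target of the Segal map with $\left(\lim_{E \in \calO^\el_{X/}} \calO^\act_{/E}\right) \times \left(\lim_{E' \in \calP^\el_{Y/}} \calP^\act_{/E'}\right)$. Since $\calO$ and $\calP$ are soundly extendable, each factor is computed by the respective Segal equivalences $\calO^\act_{/X} \isoto \lim_{E} \calO^\act_{/E}$ and $\calP^\act_{/Y} \isoto \lim_{E'} \calP^\act_{/E'}$ of \cref{defn:soundly-extendable}. Under the splitting the product of these two equivalences is precisely the Segal map for $\calA_{\calO \times \calP}$ displayed above, so the latter is an equivalence for every $(X,Y)$, which proves extendability. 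The only genuinely delicate bookkeeping is the naturality check that the limit-splitting equivalence carries the product of the two individual Segal maps to the Segal map of the product pattern; this follows by tracing through the identification of active slices in $\calO \times \calP$ as products, together with the compatibility of the elementary-slice decomposition with the structure maps.
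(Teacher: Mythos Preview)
Your proposal is correct and follows essentially the same approach as the paper: reduce soundness to \cref{lem:prodsound}, identify the elementary under-category and active slices of the product componentwise, then split the limit over $\calO^\el_{X/} \times \calP^\el_{Y/}$ using that products distribute over weakly contractible limits, and finish with extendability of each factor. The paper's version is more terse (it records the computation as a three-line chain of equivalences and cites the limit-splitting as ``products distribute over weakly contractible limits''), but the substance is identical.
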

\begin{proof}
    Soundness follows from \cref{lem:prodsound}.
    For extendability we have:
    \begin{align*}
        \lim_{(\alpha,\beta) : (O,P) \intto{} (E,E') \in (\calO \times \calP)^{\el}_{(O,P)/}} (\calO\times \calP)^{\act}_{/(E,E')}
        & \simeq
        \lim_{(\alpha:O\intto{} E,\beta:P\intto{} E') \in \calO^\el_{O/} \times \calP^\el_{P/}} \calO^{\act}_{/E} \times \calP^{\act}_{/E'}\\
        & \simeq 
        \lim_{\alpha:O\intto{} E\in \calO^\el_{O/}} \calO^{\act}_{/E} \times \lim_{\beta:P\intto{} E' \in \calP^\el_{P/}} \calP^{\act}_{/E'}\\
        & \simeq \calO^\act_{/O} \times \calP^{\act}_{/P}
    \end{align*}
    where in the second line we used that in any \icat{}, products distribute over weakly contractible limits.
\end{proof}

\begin{ex}
    The pattern $\simp^{n,\op,\natural}$ is soundly extendable.
    Indeed the case $n=1$ appears in \cref{ex:soundext}, and for $n >1$ this follows from \cref{lem:product-of-soundly-extendable} by observing that $(\simp^{\op,\natural})^\el_{[k]/}$ is weakly contractible for all $k$. (Note that this argument fails for $\simp^{n,\op,\flat}$ since $(\simp^{\op,\flat})^{\el}_{[0]/} = \emptyset$, and indeed this pattern is \emph{not} extendable for $n > 1$.)
\end{ex}

\begin{propn}\label{prop:Span-sound}
    The pattern $\Spanbf(\frX;\frX_0)$, as defined in \cref{defn:span}, is
   \begin{enumerate}[(1)]
        \item\label{it:spansound-alternative}  
        sound if 
        $\frXb_{/y} \to \frX_{/y}$ is fully faithful and
        the inclusion
            $\frXb_{0/y} \hookrightarrow \frX_{0/y}$ 
        is cofinal for every $y \in \frX$.
        \item\label{item:sound-ext-span} soundly extendable if and only if it is sound and the functor $\frXf_{/-} \colon \frX^{b,\op} \to \CatI$ (defined on morphisms by pullback) is right Kan extended from $\frX_{0}^{b,\op} \subseteq \frX^{b,\op}$.
    \end{enumerate}
\end{propn}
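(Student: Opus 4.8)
The plan is to reduce both statements to explicit computations with slice categories of $\frX$, using the translation of the pattern structure of $\Spanbf(\frX;\frX_0)$ into data of $\frX$. Recall that the inert maps form the subcategory $\frX^{b,\op}$ and the active maps the subcategory $\frXf$, so that for $X\in\frX$ one has $\calO^\xint_{X/}\simeq(\frXb_{/X})^\op$ and, by \cref{rem:Segal-condition-span-patterns}, $\calO^\el_{X/}\simeq(\frXb_{0/X})^\op$, while $\calO^\act_{/X}\simeq\frXf_{/X}$. The single computation feeding both parts is the inert--active factorization of a composite: for an active $\omega\colon X\actto Y$ (a forward map $f\colon X\to Y$) and an inert $\alpha\colon Y\intto E$ (a backward map $a\colon E\to Y$), composing the two spans and taking the pullback gives $\omega_{\alpha!}X\simeq X\times_Y E$, with $\omega_\alpha$ the backward projection $X\times_Y E\to X$; dually, for an inert $\alpha\colon X\intto E$ the pushforward $\alpha_!\colon\calO^\act_{/X}\to\calO^\act_{/E}$ is the pullback functor $a^*\colon\frXf_{/X}\to\frXf_{/E}$, which is well-defined by adequacy.

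For part (1) I would verify soundness through the colimit criterion of \cref{lem:O-el-beta}(3): for every active $\omega$ as above and every elementary $\beta\colon X\intto E'$ (a backward map $b\colon E'\to X$ with $E'\in\frX_0$) one must show $\colim_{\alpha\in(\calO^\el_{Y/})^\op}\Map_{\calO^\xint_{X/}}(\omega_\alpha,\beta)\simeq\ast$. Here $(\calO^\el_{Y/})^\op\simeq\frXb_{0/Y}$, and under $\calO^\xint_{X/}\simeq(\frXb_{/X})^\op$ the mapping space is $\Map_{\frXb_{/X}}(\beta,\omega_\alpha)$, the space of backward maps $E'\to X\times_Y E$ over $X$. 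Using the hypothesis that $\frXb_{/X}\to\frX_{/X}$ is fully faithful, this equals $\Map_{\frX_{/X}}(E',X\times_Y E)$, which by the universal property of the pullback is naturally equivalent to $\Map_{\frX_{/Y}}((E',fb),(E,a))$, where $fb\colon E'\xrightarrow{b}X\xrightarrow{f}Y$. The colimit thus becomes $\colim_{(E,a)\in\frXb_{0/Y}}\Map_{\frX_{/Y}}((E',fb),(E,a))$. Since $E'\in\frX_0$ the object $(E',fb)$ lies in $\frX_{0/Y}$, so on the full subcategory $\frX_{0/Y}\subseteq\frX_{/Y}$ the functor $\Map_{\frX_{/Y}}((E',fb),-)$ is corepresented by $(E',fb)$; hence $\colim_{\frX_{0/Y}}\Map_{\frX_{/Y}}((E',fb),-)\simeq\lvert(\frX_{0/Y})_{(E',fb)/}\rvert\simeq\ast$, the under-category having an initial object. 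The second hypothesis, cofinality of $\frXb_{0/Y}\hookrightarrow\frX_{0/Y}$, then identifies the colimit over $\frXb_{0/Y}$ with this contractible colimit, proving soundness.

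For part (2), recall from \cref{defn:soundly-extendable} that soundly extendable means sound together with the requirement that $\calA_{\calO}$ be a Segal $\calO$-\icat{}, i.e.\ that $\calO^\act_{/X}\to\lim_{E\in\calO^\el_{X/}}\calO^\act_{/E}$ be an equivalence for every $X$. Under the identifications above this comparison map is precisely the map
\[ \frXf_{/X}\longrightarrow\lim_{(E\to X)\in(\frXb_{0/X})^\op}\frXf_{/E} \]
induced by the pullback functors $a^*$. On the other hand, the pointwise formula for the right Kan extension of $\frXf_{/-}\colon\frX^{b,\op}\to\CatI$ along $\frX_0^{b,\op}\hookrightarrow\frX^{b,\op}$, evaluated at $X$, is the limit over the comma category $(\frX_0^{b,\op})_{X/}\simeq(\frXb_{0/X})^\op$ of the same diagram $E\mapsto\frXf_{/E}$, with unit the same map. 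Therefore $\calA_{\calO}$ satisfies the Segal condition at $X$ if and only if $\frXf_{/-}$ is right Kan extended from $\frX_0^{b,\op}$ at $X$; since both sides of the claimed equivalence additionally carry the hypothesis ``sound'', part (2) follows.

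I expect the main obstacle to be bookkeeping rather than genuine difficulty: correctly tracking the several opposites and slice/comma categories, and in particular pinning down the two places where the hypotheses of part (1) enter — full faithfulness of $\frXb_{/X}\to\frX_{/X}$, used to replace backward mapping spaces by ambient ones so that the universal property of the pullback $X\times_Y E$ applies, and cofinality of $\frXb_{0/Y}\hookrightarrow\frX_{0/Y}$, used to pass from the colimit indexed by $\frXb_{0/Y}$ to the corepresentable colimit over $\frX_{0/Y}$. Once these identifications are in place the actual homotopical content is light, the key point being that the relevant comma/under-categories have initial objects.
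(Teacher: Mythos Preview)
Your proposal is correct and follows essentially the same route as the paper. For part (1) both you and the paper invoke \cref{lem:O-el-beta}(3), identify $\omega_{\alpha!}X$ as the pullback $X\times_Y E$, use full faithfulness of $\frXb_{/X}\hookrightarrow\frX_{/X}$ together with the universal property of the pullback (which the paper phrases as the adjunction $\omega_!\dashv\omega^*$) to rewrite the mapping space as $\Map_{\frX_{0/Y}}((E',fb),(E,a))$, and then conclude contractibility from the cofinality hypothesis; for part (2) the paper cites \cite[Lemma 2.9]{patterns1} (a functor is Segal iff its restriction to inerts is right Kan extended from elementaries) where you simply unpack that statement directly, but the content is identical.
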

\begin{proof}
    \ref{it:spansound-alternative}
    By \cref{lem:O-el-beta}.(3) the pattern $\Span^{b,f}(\frX;\frX_0)$ is sound if and only if 
    for every $\beta \colon e' \to x$ in $\frXb$ and $\omega\colon x \to y$ in $\frXf$ the following 
    colimit indexed by $\alpha\colon e \rightarrow y \in \frX^b_{0/y}$
    is contractible:
    \begin{align*}
        & \colim_{\alpha \in \frX^b_{0/y}} \Map_{\frX^b_{/x}}(\beta\colon e' \rightarrow x,\; \omega^*\alpha\colon  x \times_y e \rightarrow x)  \\
        & \simeq \colim_{\alpha \in \frX^b_{0/y}} \Map_{\frX_{/x}}(\beta\colon e' \rightarrow x,\; \omega^*\alpha\colon  x \times_y e \rightarrow x)  
        & (\frXb_{/x} \subset \frX_{/x} \text{ full}) \\
        & \simeq \colim_{\alpha \in \frX^b_{0/y}} \Map_{\frX_{/y}}(\omega \circ \beta\colon e' \rightarrow y,\; \alpha\colon  e \rightarrow x)  
        & (\omega_! \dashv \omega^*) \\
        & \simeq \colim_{\alpha \in \frX^b_{0/y}} \Map_{\frX_{0/y}}(\omega \circ \beta\colon e' \rightarrow y,\; \alpha\colon  e \rightarrow x)  
        & (\frX_{0/y} \subset \frX_{/y} \text{ full}) \\
        & \simeq |\frX^b_{0/y} \times_{\frX_{0/y}}(\frX_{0/y})_{\omega \circ \beta/}|
    \end{align*}
    By \cite{HTT}*{Theorem 4.1.3.1} this category is weakly contractible if $\frXb_{0/y} \to \frX_{0/y}$ is cofinal, so the claim follows.
    
    \ref{item:sound-ext-span} 
    Since $\Spanbf(\frX;\frX_0)^{\act}_{/-} \simeq \frX_{/-}^f$,
    this is a consequence of the fact that a functor is Segal if and only if its restriction to the inert category is right Kan extended from the elementaries by \cite[Lemma 2.9]{patterns1}.
\end{proof}

As an important special case, we have:
\begin{cor}\label{cor:b=all-sound}
  If $\frXb=\frX$ then $\Span_{\txt{all},f}(\frX;\frX_0)$ is sound. 
\end{cor}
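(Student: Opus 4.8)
The plan is to deduce this directly from \cref{prop:Span-sound}\ref{it:spansound-alternative}, which asserts that $\Spanbf(\frX;\frX_0)$ is sound provided that, for every $y \in \frX$, the functor $\frXb_{/y} \to \frX_{/y}$ is fully faithful and the inclusion $\frXb_{0/y} \hookrightarrow \frX_{0/y}$ is cofinal. The entire content of the corollary is that when $\frXb = \frX$ both of these hypotheses degenerate into statements about identity functors, and hence hold automatically.

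Concretely, I would first observe that the hypothesis $\frXb = \frX$ gives $\frXb_{/y} = \frX_{/y}$ on the nose, so the comparison functor $\frXb_{/y} \to \frX_{/y}$ is the identity functor of $\frX_{/y}$, which is trivially fully faithful. This disposes of the first condition.

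For the second condition, I would recall (from the discussion preceding \cref{rem:Segal-condition-span-patterns}) that $\frXb_0$ is by definition the full subcategory of $\frXb$ spanned by the objects of $\frX_0$. When $\frXb = \frX$ this yields $\frXb_0 = \frX_0$, and therefore
\[
  \frXb_{0/y} = \frXb_0 \times_{\frXb} \frXb_{/y} = \frX_0 \times_{\frX} \frX_{/y} = \frX_{0/y}.
\]
Hence the inclusion $\frXb_{0/y} \hookrightarrow \frX_{0/y}$ is again the identity functor, which is in particular cofinal. Both hypotheses of \cref{prop:Span-sound}\ref{it:spansound-alternative} are thus satisfied, and soundness follows immediately.

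There is essentially no obstacle here: the result is a pure specialization, and the only point requiring (minimal) care is the notational identification $\frXb_0 = \frX_0$ together with the induced identification of the slice categories, after which the two hypotheses reduce to the evident fact that an identity functor is both fully faithful and cofinal.
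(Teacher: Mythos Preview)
Your proposal is correct and matches the paper's intended argument: the corollary is stated immediately after \cref{prop:Span-sound} as a direct specialization, and your verification that both hypotheses of part~\ref{it:spansound-alternative} reduce to statements about identity functors when $\frXb = \frX$ is exactly what is needed.
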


\begin{ex}\label{ex:spansoundext}
  The pattern $\Span(\xF)$ is soundly extendable.
\end{ex}

\begin{ex}\label{ex:spanFGrestrsound}
    Let $\xF_G^f \subset \xF_G$ be closed under base-change and coproduct as in \cref{ex:Fin-G}.
    The patterns $\Span_{\txt{all},f}(\xF_G)$ and $\Span_{\txt{inj},f}(\xF_G)$ are soundly extendable.
    The slice $(\xF_G)_{/A}^f$ decomposes as a product $\prod_{U \in A/G} (\xF_G)_{/U}^f$ since the morphisms of $\xF_G^f$ are closed under base-change.
    This implies that $(\xF_G)_{/-}^f$ is a $\Span_{\txt{inj},f}(\xF_G)$-Segal category 
    since the elementary slice category $\Span_{\txt{inj},f}(\xF_G)_{A/}^\el \simeq (\Orb_G)_{/A}^{\txt{inj}}$
    is equivalent to the discrete set $A/G$ over which we are taking the product.
    It also follows that $(\xF_G)_{/-}^f$ is a Segal $\Span_{\txt{all},f}(\xF_G)$-category since 
    $(\Orb_G)_{/A}^{\txt{inj}} \simeq \Span_{\txt{inj},f}(\xF_G)_{A/}^\el \hookrightarrow \Span_{\txt{all},f}(\xF_G)_{A/}^\el $ 
    is coinitial.
\end{ex}

\begin{ex}\label{ex:SpanSm-soundly-extendable}
    The pattern $\Span(\mathcal{S}_{m})$
    is soundly extendable. 
    Soundness follows from \cref{cor:b=all-sound}. 
   For extendability 
    we need to show that the functor
    \[
        (\calS_m)_{/-} \colon  \calS_m^\op \to \CatI
    \]
    is right Kan extended from its value at $* \in \calS_m^\op$.
    Since being $m$-truncated can be checked fiberwise over $Y \in \calS_m$, this functor is equivalent to $\Fun(-, \calS_m)$ by straightening.
    This is now right Kan extended because $\Fun(X, \calS_m) \simeq \lim_X \calS_m$.
    One can show that $\Span_{(m-1)\txt{-tr},\txt{all}}(\mathcal{S}_{m})$ is also soundly extendable; we will not need this, however.
\end{ex}

Finally, we give an example of a pattern that is \emph{not} sound:
\begin{ex}\label{ex:not-sound}
    We expect that the pattern $\mathbf{U}^\op$ of undirected graphs of Hackney, Robertson, and Yau \cite{HRYmodular} is sound.
    However, this pattern does not include the nodeless loop $S^1$.
    In \cite{HackneyGraphs}, Hackney gives a simpler description of $\mathbf{U}^\op$ and also defines a variant $\widetilde{\mathbf{U}}^\op$ that does include the nodeless loop. 
    We will now show that this is an example of a non-sound pattern $\calO = \widetilde{\mathbf{U}}^\op$.
    For the sake of brevity we shall not recall the definition, but rather the following facts:
    \begin{itemize}
        \item The category of elementaries under $S^1$ is trivial $\calO_{S^1/}^\el \simeq *$.
        \item There is an active morphism $\omega \colon S^1 \actto S^1_n$ to the $n$-vertex loop $S^{1}_{n}$ ($n\ge 2$), for which $\calO_{S^1_n/}^\el$ is the poset of simplices of $S^1_n$,
        which is weakly equivalent to $S^1$.
    \end{itemize}
    We can now use the characterisation of soundness from \cref{lem:O-el-beta}.(3)
    in the case of the active morphism $\omega\colon  S^1 \actto S^1_n$ described above.
    Since $\calO_{S^1/}^\el$ is trivial (and in this case $\omega_{\alpha!}S^1$ is always elementary), the colimit runs over the constant diagram on the point and hence evaluates to the classifying space of $\calO^\el_{S^1_n/}$, which is not contractible. 
    Note that this could be resolved by introducing a variant of $\widetilde{\mathbf{U}}^\op$ where 
    $\Map_{\calO}(S^1, S^1) \simeq \Map_{\calO}(S^1, e) \simeq O(2)$,
    in which case $\calO_{S^1/}^\el$ is equivalent to the $\infty$-groupoid $S^1$.
\end{ex}

\section{Fibrous patterns and Segal envelopes}\label{sec:fbrs}

We begin this section by introducing the notion of \emph{fibrous
  $\mathcal{O}$-patterns} as a generalization of \iopds{} over an
arbitrary base pattern $\mathcal{O}$ in
\S\ref{sec:weak-segal-fibr-1}. We then apply the results of
\S\ref{sec:factenv} to fibrous patterns in \S\ref{sec:segenv}, where
we prove \cref{thm:introenv}. 
Finally, in \S\ref{subsec:ex-of-envelopes} we give some examples of Segal envelopes.

\subsection{Fibrous patterns}
\label{sec:weak-segal-fibr-1}

In this subsection we introduce the notion of a \emph{fibrous
  $\mathcal{O}$-pattern} over a base algebraic pattern $\mathcal{O}$.
(We borrow the adjective ``fibrous'' from \cite{HA}*{\S 2.3.3}, where
it is used for a somewhat related concept.) Fibrous patterns
specialize to give, for example, Lurie's \iopds{} and generalized
\iopds{} if we take the base pattern to be $\xF_{*}$ or
$\xF_{*}^{\natural}$. The concept is also a variant of the definition
of \emph{weak Segal fibrations} given in \cite{patterns1}; as we will
see in \cref{propn:fibrous=WSF} the two notions coincide if the base
pattern is sound, \ie{} for almost all interesting examples of
patterns, but the definition of fibrous patterns seems to be 
simpler and better behaved if we do not assume soundness.

\begin{observation}\label{prefbrsdef}
  Let $\mathcal{O}$ be an algebraic pattern. If
  $\pi \colon \calP \to \calO$ has cocartesian lifts of inert
  morphisms, then applying \cref{propn:free-fibration-is-cocartesian}
  to the inert--active factorization system on $\calO$ furnishes a
  cocartesian fibration
  $\mathcal{P} \times_{\mathcal{O}} \Ar_{\act}(\mathcal{O}) \to
  \mathcal{O}$ (where this functor is given as $(P, \pi(P) \actto O) \mapsto O$). For a morphism $\omega \colon O_{1} \to O_{2}$ in $\calO$
  the cocartesian transport functor $\omega_{!} \colon \calP \times_{\calO} \mathcal{O}^{\act}_{/O_{1}} \to \calP \times_{\calO} \mathcal{O}^{\act}_{/O_{2}}$ is given by
  \[ (P,\, \phi \colon \pi(P) \actto O_{1}) \mapsto (\alpha_{!}P, \,\beta \colon O' \actto O_{2}), \]
  where \[\pi(P) \xintto{\alpha} O' \xactto{\beta} O_{2}\] is the inert--active factorization of the composite \[\pi(P) \xactto{\phi} O_{1} \xto{\omega} O_{2}\] and $P \to \alpha_{!}P$ is a cocartesian lift of $\alpha$.
\end{observation}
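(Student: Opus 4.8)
The plan is to derive both assertions by specializing \cref{propn:free-fibration-is-cocartesian} to the inert--active factorization system on $\calO$. First I would set up the dictionary between the two situations: in an algebraic pattern the inert maps form the left class and the active maps the right class, so that $\Ar_{\act}(\calO)$ coincides with $\ArR(\calO)$ from \S\ref{sec:freefibfact}, while the hypothesis that $\pi \colon \calP \to \calO$ has cocartesian lifts of inert morphisms says precisely that $\pi$ defines an object of $\CatILcoc{\calO}$. With these identifications, \cref{propn:free-fibration-is-cocartesian}(1) directly yields that $\ev_1 \colon \calP \times_\calO \Ar_{\act}(\calO) \to \calO$ is a cocartesian fibration, and unwinding the fibre product shows that its fibre over $O$ is $\calP \times_\calO \calO^{\act}_{/O}$, whose objects are the pairs $(P, \phi \colon \pi(P) \actto O)$.

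For the transport functor I would invoke part (2) of the same proposition. Fixing $\omega \colon O_1 \to O_2$ and an object $(P, \phi \colon \pi(P) \actto O_1)$ of the source fibre, a cocartesian lift of $\omega$ out of $(P,\phi)$ is, by that characterization, a morphism of $\calP \times_\calO \Ar_{\act}(\calO)$ whose underlying square in $\Ar_{\act}(\calO)$ has $\phi$ as top edge and $\omega$ as right edge, whose left edge is inert, and whose underlying $\calP$-morphism is $\pi$-cocartesian. Since the bottom edge is again an object of $\Ar_{\act}(\calO)$ it is active, and commutativity of the square exhibits $\pi(P) \xintto{\alpha} O' \xactto{\beta} O_2$ as the inert--active factorization of $\omega \circ \phi$, matching the $(L,R)$-factorization used in the proof of \cref{propn:free-fibration-is-cocartesian}. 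Taking the underlying $\calP$-morphism to be a cocartesian lift $P \to \alpha_! P$ of the inert part $\alpha$ then identifies the target with $(\alpha_! P, \beta \colon O' \actto O_2)$, which is exactly the asserted value of $\omega_!$.

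Since the whole argument is a translation of \cref{propn:free-fibration-is-cocartesian} into the language of patterns, there is no substantive obstacle; the only point demanding care is keeping the conventions straight, and in particular noticing that the letters $\alpha, \beta$ play different roles here than in \cref{propn:free-fibration-is-cocartesian} (where $\beta$ denotes the base morphism being lifted rather than the active factor). Well-definedness of $\omega_!$ up to equivalence is then guaranteed by the essential uniqueness of inert--active factorizations together with that of cocartesian lifts.
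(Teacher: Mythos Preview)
Your proposal is correct and is exactly the intended derivation: the paper presents this as an Observation precisely because it is nothing more than the specialization of \cref{propn:free-fibration-is-cocartesian} to the inert--active factorization system, which you have spelled out accurately (including the identification $\Ar_{\act}(\calO)=\ArR(\calO)$ and the reading of the transport from part (2)). Your remark about the clash in the roles of the symbols $\alpha,\beta$ between the two statements is also on point.
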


\begin{defn}\label{defn:fibrous}
  Let $\mathcal{O}$ be an algebraic pattern.
  Then a \emph{fibrous $\calO$-pattern} is a functor $\pi \colon
  \mathcal{P} \to \mathcal{O}$ such that:
  \begin{enumerate}[(1)]
  \item\label{fbrs1} 
    $\mathcal{P}$ has all $\pi$-cocartesian lifts of inert morphisms in $\mathcal{O}$.
  \item\label{fbrs2} For all $O \in \mathcal{O}$, the commutative square of \icats{}
    \[
      \begin{tikzcd}
        \mathcal{P} \times_{\mathcal{O}} \mathcal{O}^{\act}_{/O}
        \arrow{r} \arrow{d} & \lim_{E \in \mathcal{O}^{\el}_{O/}}
        \mathcal{P} \times_{\mathcal{O}} \mathcal{O}^{\act}_{/E}
        \arrow{d} \\
        \mathcal{O}^{\act}_{/O}
        \arrow{r}  & \lim_{E \in \mathcal{O}^{\el}_{O/}}
        \mathcal{O}^{\act}_{/E}
      \end{tikzcd}
    \]
    is cartesian.
    Here the horizontal functors are induced by cocartesian transport along the maps $O \intto E$ in $\mathcal{O}^{\el}_{O/}$ for the cocartesian fibrations from \cref{prefbrsdef}, applied to $\pi$ and $\id_{\calO}$.
  \end{enumerate}
\end{defn}

\begin{observation}\label{obs:fbrsrelseg}
  Condition \ref{fbrs2} in \cref{defn:fibrous} says precisely that the straightening of the
  projection $\mathcal{P} \times_{\mathcal{O}} \Ar_{\act}(\mathcal{O}) \to
  \Ar_{\act}(\mathcal{O})$ over $\mathcal{O}$, \ie{} the natural
  transformation $\StOint(\mathcal{P})\colon \St{\mathcal{O}}{}(\mathcal{P} \times_{\mathcal{O}} \Ar_{\act}(\mathcal{O})) \to
  \Afun{\mathcal{O}}$,
  is a relative Segal
  $\mathcal{O}$-\icat{}.
\end{observation}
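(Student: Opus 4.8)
The plan is to observe that the statement is a direct matter of unwinding definitions: the relative Segal square of \cref{defn:relative-Segal} for the natural transformation $\StOint(\calP)$, evaluated at an object $O \in \calO$, agrees with the square appearing in condition \ref{fbrs2} of \cref{defn:fibrous}. Recall from \cref{defn:partial-s/u}, applied to $\calB = \calO$ with its inert--active factorization system (so that $\calO^{\xint}$ is the left class, $\Ar_{\act}(\calO)$ is the \icat{} of arrows in the right class, and $\calR = \Afun{\calO}$), that $\StOint(\calP)$ is by definition the straightening of the cocartesian fibration $\ev_{1}\colon \calP \times_{\calO} \Ar_{\act}(\calO) \to \calO$ produced by \cref{propn:free-fibration-is-cocartesian}, viewed as a natural transformation to $\Afun{\calO}$.

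First I would record the pointwise description of this straightening. Its value at $O \in \calO$ is the fiber of $\ev_{1}$ over $O$, which is $\calP \times_{\calO} \calO^{\act}_{/O}$, sitting over the fiber $\Afun{\calO}(O) \simeq \calO^{\act}_{/O}$; this identifies the left-hand vertical map of the relative Segal square with the projection $\calP \times_{\calO} \calO^{\act}_{/O} \to \calO^{\act}_{/O}$. For an inert morphism $\alpha \colon O \intto E$, the value of the straightening on $\alpha$ is the cocartesian transport functor of $\ev_{1}$ along $\alpha$, and the same applies to $\Afun{\calO}$ upon taking $\calP = \calO$.

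The next step is to match these cocartesian transports with the horizontal functors of condition \ref{fbrs2}. This is exactly the content of \cref{prefbrsdef}, which describes the cocartesian transport of $\calP \times_{\calO} \Ar_{\act}(\calO) \to \calO$ along a morphism of $\calO$; specialized to inert maps $O \intto E$ in $\calO^{\el}_{O/}$, these are precisely the maps inducing the top horizontal functor of the square in \cref{defn:fibrous}, while the bottom one arises by taking $\calP = \calO$. Hence the relative Segal square of \cref{defn:relative-Segal} for $\StOint(\calP) \to \Afun{\calO}$ at $O$ is literally the square of condition \ref{fbrs2}. Since both conditions ask that this square be cartesian for every $O \in \calO$, they are equivalent.

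I expect no genuine obstacle here, since the statement is essentially a translation between a fibrational and a straightened point of view; the only point requiring a moment's care is the identification of the straightening's functoriality on inert maps with the cocartesian transport maps in \ref{fbrs2}, which is handled by \cref{prefbrsdef}.
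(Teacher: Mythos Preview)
Your proposal is correct and matches the paper's treatment: the paper states this as an observation without proof, since it is indeed just a matter of unwinding the definitions of $\StOint$, $\Afun{\calO}$, and the relative Segal condition, exactly as you describe. Your only addition is spelling out explicitly the identification of fibers and cocartesian transports via \cref{prefbrsdef}, which is the appropriate justification.
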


\begin{remark}\label{rmk:fbrs-over-extendable}
  For many patterns $\mathcal{O}$, the functor
  $\mathcal{O}^{\act}_{/\blank}$ is a Segal $\mathcal{O}$-\icat{};
  this is the case, for instance, if $\mathcal{O}$ is
  \emph{extendable} in the sense of \cite{patterns1} by
  \cite{patterns1}*{Lemma 9.14}. In this case, 
  \cref{rmk:cancellation-for-relative-Segal-objects} implies that condition \ref{fbrs2}
  is satisfied if and only if the functor $\StOint(\mathcal{P})$
  is a Segal
  $\mathcal{O}$-\icat{}, \ie{} the functor \[\mathcal{P}
  \times_{\mathcal{O}} \mathcal{O}^{\act}_{/O} \to \lim_{E \in \mathcal{O}^{\el}_{O/}}
  \mathcal{P} \times_{\mathcal{O}} \mathcal{O}^{\act}_{/E}\] is
  an equivalence for all $O \in \mathcal{O}$.
\end{remark}

\begin{ex}\label{ex:xF-fibrous}
  Since $\xF_{*}$ is extendable, a fibrous $\xF_*$-pattern is a
  functor $\pi \colon \calP \to \xF_*$ such that
    $\mathcal{P}$ has $\pi$-cocartesian lifts for inerts, and for all $n$ the functor
    \[
      \calP^\act \times_{\xF} \xF_{/\angled{n}} \simeq \calP \times_{\xF_*} (\xF_*)_{/\angled{n}}^\act 
      \longrightarrow \prod_{\langle n \rangle \intto{} \angled{1}} \calP \times_{\xF_*} \xF
      \simeq (\calP^{\rm act})^n,
    \]
    is an equivalence. This functor takes an object
    $P \in \mathcal{P}^{\act}$ over $\angled{m}$ in $\xF_{*}$ together with
    an active map $\omega\colon  \angled{m} \actto{} \angled{n}$ to the list of objects $(P_1,\dots,P_n)$
    where $P \intto{} P_j$ is the cocartesian lift of the inert map $\omega_j:=(\rho_j \circ \omega )^{\xint}\colon  \angled{m} \intto{} \angled{m}_j$
    where $\rho_j\colon  \angled{n} \intto{} \angled{1}$ is as in \cref{ex:Segal-Finstar}.
    We will see later (\cref{propn:fibrous=WSF}) that this condition is equivalent to $\calP \to \xF_*$ being an $\infty$-operad
    in the sense of Lurie. 
\end{ex}

We can rewrite the second condition in \cref{defn:fibrous} to obtain
the following equivalent characterization of fibrous patterns:
\begin{propn}\label{propn:fibrous-axioms}
  For any algebraic pattern $\mathcal{O}$, a functor
  $\pi \colon \mathcal{P} \to \mathcal{O}$ is a fibrous
  $\calO$-pattern \IFF{}:
  \begin{enumerate}[(1)]
  \item\label{wsf1} $\mathcal{P}$ has $\pi$-cocartesian morphisms over
    inert morphisms in $\mathcal{O}$.
  \item\label{wsf2} For every active morphism $\omega \colon O_1 \actto O_2$ in $\mathcal{O}$, 
    and all objects $X_0 \in \mathcal{P}_{O_0}, X_1 \in \mathcal{P}_{O_1}$,
    the commutative square
      \begin{equation}
        \label{eq:wsegfibmapsq}
        \begin{tikzcd}
          \Map_{\mathcal{P}}(X_0,X_1) \arrow{r} \arrow{d} & \lim_{\alpha
            \colon O_2 \intto E \in \mathcal{O}^{\el}_{O_2/}}
          \Map_{\mathcal{P}}(X_0, \omega_{\alpha, !} X_1) \arrow{d} \\
          \Map_{\mathcal{O}}(O_0,O_1) \arrow{r} & \lim_{\alpha
            \colon O_2 \intto E \in \mathcal{O}^{\el}_{O_2/}}
          \Map_{\mathcal{O}}(O_0, \omega_{\alpha, !} O_1)
        \end{tikzcd}
    \end{equation}
    is cartesian. 
    Here the horizontal maps are defined using the functor 
    $\omega_{(-)} \colon \calO_{O_2/}^\el \to \calO_{O_1/}^\xint$
    from \cref{omega_alpha!}.
  \item\label{wsf3} For every active morphism $\omega \colon  O_1 \actto O_2$ in $\calO$, the functor
    \[ 
        \mathcal{P}_{O_1}^\simeq \to 
        \lim_{\alpha \colon  O_2 \to E \in \mathcal{O}^{\el}_{O_2/}} \mathcal{P}_{\omega_{\alpha !}O_1}^\simeq,
    \]
    induced by cocartesian transport along the inert morphisms 
    $\omega_{\alpha} \colon  O_1 \intto \omega_{\alpha !} O_1$ in $\calO^{\xint}_{O_1/}$, 
    is an equivalence.
  \end{enumerate}
\end{propn}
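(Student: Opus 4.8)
The plan is to read condition~\ref{fbrs2} of \cref{defn:fibrous} as the assertion that a commutative square of \icats{} is cartesian, and then to test cartesianness by the standard criterion: a commutative square in $\CatI$ is cartesian \IFF{} the induced square of maximal subgroupoids is cartesian in $\calS$ \emph{and} for every pair of objects in the top-left corner the induced square of mapping spaces is cartesian (equivalently, the comparison functor to the pullback is essentially surjective and fully faithful). I will match the core/essential-surjectivity half with condition~\ref{wsf3} and the mapping-space/full-faithfulness half with condition~\ref{wsf2}. Since \ref{fbrs1}${}={}$\ref{wsf1} is assumed throughout, the cocartesian-transport functors of \cref{prefbrsdef} are available and all horizontal maps are defined.

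For the core half, I use that $(-)^{\simeq}\colon \CatI \to \calS$ preserves limits, so applying it to the square of~\ref{fbrs2} commutes past the $\lim_{E}$ on the right. Both vertical maps become maps of spaces whose fibre over an active morphism $(A \actto O)$ is the core $\calP_{A}^{\simeq}$ of the corresponding fibre of $\pi$. A square of spaces over a common base is cartesian \IFF{} it is a fibrewise equivalence, so the core square is cartesian \IFF{} for every $(A \actto O)$ the cocartesian-transport map $\calP_{A}^{\simeq} \to \lim_{\alpha \colon O \intto E} \calP_{\phi_{\alpha!}A}^{\simeq}$ is an equivalence, where $\phi_{\alpha}$ is the inert part of the factorization of $A \actto O \intto E$. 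Letting $O$ and the active map $A \actto O$ range over all active morphisms of $\calO$, this is exactly condition~\ref{wsf3}.

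For the mapping-space half, I compute mapping spaces in the fibre products $\calP \times_{\calO} \calO^{\act}_{/X}$ and in the limits $\lim_{E}$, and take fibres over the bottom-left mapping space. Full faithfulness of the comparison functor then unwinds to: for all $(P_0,\phi_0),(P_1,\phi_1)$ over a common $O$ and every $g\colon \pi P_0 \to \pi P_1$ over $O$, the map $\Map^{g}_{\calP}(P_0,P_1) \to \lim_{\alpha}\Map^{g_\alpha}_{\calP}(\phi_{0,\alpha!}P_0,\phi_{1,\alpha!}P_1)$ is an equivalence, with \emph{both} source and target transported along the inert parts of their respective factorizations. The key step is to apply the universal property of the cocartesian lift $P_0 \to \phi_{0,\alpha!}P_0$ of the inert morphism $\phi_{0,\alpha}$: precomposition gives $\Map^{g_\alpha}_{\calP}(\phi_{0,\alpha!}P_0,\phi_{1,\alpha!}P_1) \simeq \Map^{g_\alpha\phi_{0,\alpha}}_{\calP}(P_0,\phi_{1,\alpha!}P_1)$, leaving only the target transported, and naturality of the factorization identifies $g_\alpha\phi_{0,\alpha} \simeq \phi_{1,\alpha}g$, which is exactly the base map of~\ref{wsf2} with $\omega$ taken to be $\phi_1$. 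This recovers the square of~\ref{wsf2} whenever the base map satisfies $\omega g$ active.

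To upgrade this to condition~\ref{wsf2} for an \emph{arbitrary} base map $h\colon O_0 \to O_1$, I reduce to the active case by the same cocartesian device applied to the source: factoring $h = a\circ\iota$ with $\iota$ inert and $a$ active, the cocartesian lift $X_0 \to \iota_! X_0$ yields $\Map^{h}_{\calP}(X_0,-) \simeq \Map^{a}_{\calP}(\iota_! X_0,-)$ compatibly with the target transports, carrying the \ref{wsf2}-square for $(X_0,h)$ isomorphically onto the one for $(\iota_! X_0,a)$, which is of the form already handled. Since cartesianness of the \ref{wsf2}-square may be tested fibrewise over $\Map_{\calO}(O_0,O_1)$, this finishes the identification. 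I expect the genuine difficulty to lie entirely in this mapping-space half: keeping the two distinct inert--active factorizations (of the source and target structure maps) compatible while stripping off the transports through the cocartesian universal property, and checking that every identification is natural in the limit variable $\alpha \in \calO^{\el}_{O/}$ so that it survives passage to $\lim_{\alpha}$.
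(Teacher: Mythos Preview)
Your proposal is correct and follows essentially the same approach as the paper: both split the cartesianness of the square in \cref{defn:fibrous}\ref{fbrs2} into the core part (yielding~\ref{wsf3} via fibrewise analysis over active morphisms) and the mapping-space part (yielding~\ref{wsf2} first for active base maps, then for all via the inert--active factorization and cocartesian lifts on the source). The only cosmetic difference is that you make the intermediate ``both source and target transported'' form of the fibre map explicit and then strip the source transport via the cocartesian universal property, whereas the paper writes down the stripped form directly.
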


\begin{proof}
    A square of \icats{} is cartesian \IFF{} the underlying square of
    \igpds{} as well as all induced squares of mapping spaces are cartesian. 
    For the square in the definition of a fibrous pattern the underlying square of \igpds{} is 
    \[
    \begin{tikzcd}
      \mathcal{P}^{\simeq} \times_{\mathcal{O}^{\simeq}}(\mathcal{O}_{/O})^\simeq
      \arrow{r} \arrow{d} & \lim_{E \in \mathcal{O}^{\el}_{O/}}
      \mathcal{P}^{\simeq} \times_{\mathcal{O}^{\simeq}} (\mathcal{O}_{/E})^\simeq
      \arrow{d} \\
      (\mathcal{O}_{/O})^\simeq
      \arrow{r}  & 
      \lim_{E \in \mathcal{O}^{\el}_{O/}} (\mathcal{O}_{/E})^\simeq;
    \end{tikzcd}
    \]
    this is cartesian \IFF{} the map on fibers
    over each $\omega \colon O' \actto O$ is an equivalence. This map takes the form
    \begin{equation}
      \label{eq:fbrsgpd}
     \mathcal{P}_{O'}^{\simeq} \to \lim_{\alpha \colon O \intto E \in
        \mathcal{O}^{\el}_{O/}} \mathcal{P}_{\omega_{\alpha,!}O'}^{\simeq}.      
    \end{equation}
    and is induced by the cocartesian transport along the inert morphisms 
    $\omega_{\alpha} 
    \colon  O' \xintto{} \omega_{\alpha !} O' $ as in \cref{omega_alpha!}.
    This is exactly the map from condition \ref{wsf3}, so the square of 
    $\infty$-groupoids is cartesian \IFF{} condition \ref{wsf3} holds.

    Now consider the square of mapping spaces for two 
    objects $(P, \phi \colon \pi(P) \actto O)$ and 
    $(P', \phi' \colon \pi(P') \actto O) \in \calP \times_\calO \calO_{/O}^\act$:
      \begin{equation}
        \label{eq:fibractmapsq}
        \begin{tikzcd}
          \Map_{\mathcal{P} \times_{\mathcal{O}}\mathcal{O}^{\act}_{/O}}((P,\phi), (P',\phi')) 
          \arrow{r} \arrow{d} &
          \lim_{\alpha\colon  O \intto E \in \calO_{O/}^\el}
          \Map_{\mathcal{P} \times_{\mathcal{O}}\mathcal{O}^{\act}_{/E}}((P,\phi), (\phi'_{\alpha,!}P', (\alpha \circ \phi')^\act)) 
          \arrow{d} \\
          \Map_{\mathcal{O}_{/O}^\act}(\phi, \phi') \arrow{r} &
          \lim_{\alpha\colon  O \intto E \in \calO_{O/}^\el} 
          \Map_{\mathcal{O}_{/E}^\act}(\phi, (\alpha \circ \phi')^\act).
        \end{tikzcd}
    \end{equation}
    A point in $\Map_{\calO_{/O}^\act}(\phi, \phi')$ is 
    a (necessarily active) morphism $f \colon \pi(P) \actto \pi(P')$ 
    together with a homotopy $\phi \simeq \phi' \circ f$.
    To compute the fiber of the vertical maps at this point, 
    note that the mapping space in $\calP \times_\calO \calO^\act_{/O}$ can be computed as: 
    \[\Map_{\mathcal{P}
      \times_{\mathcal{O}}\mathcal{O}^{\act}_{/O}}((P,\phi),
    (P',\phi')) \simeq \Map_{\mathcal{P}}(P,P')
    \times_{\Map_{\mathcal{O}}(\pi(P), \pi(P'))}
    \Map_{\mathcal{O}^{\act}_{/O}}(\phi, \phi'),\]
    Hence the map on the vertical fibers of the square is given by
    \begin{equation}
      \label{eq:fbrsmap}
        \Map_\calP^f(P, P') \longrightarrow 
        \lim_{\alpha\colon O \intto E \in \calO_{O/}^\el} \Map_\calP^{\phi'_\alpha \circ f}(P, \phi'_{\alpha,!} P'),
      \end{equation}
      where the superscripts indicate fibers over maps in $\mathcal{O}$.
    This agrees with the map on fibers over $f$ of the square in condition \ref{wsf2}.
    Therefore condition \ref{wsf2} implies that the square of mapping spaces is a pullback.
    
    However, we have not shown the converse yet, 
    because we have only considered the fibers in \cref{eq:wsegfibmapsq} over morphisms
    $f \in \Map_\calO(O_0, O_1)$ that are active. 
    Let us now assume that the square of mapping spaces \cref{eq:fibractmapsq} is cartesian.
    For a general morphism $O_0 \to O_1$ we can find an inert-active factorization
    $O_0 \xintto{j} Q \xactto{g} O_1$. 
    Since $j$ is inert we can find a cocartesian lift $\widetilde{j} \colon P_0 \to j_!P_0$ 
    and by virtue of this being cocartesian, pre-composition with $\widetilde{j}$ 
    induces the vertical equivalences in the following diagram:
    \[
    \begin{tikzcd}
        \Map_\calP^g(j_! P, P') \ar[r] \ar[d, "\simeq", "(-)\circ \widetilde{j}"'] & 
        \lim_{\alpha\colon  O \intto E \in \calO_{O/}^\el} \Map_\calP^{\phi'_\alpha \circ g}(j_! P, \phi'_{\alpha,!} P') 
        \ar[d, "\simeq"', "(-) \circ \widetilde{j}"] \\
        \Map_\calP^{g \circ j}(P, P') \ar[r] &
        \lim_{\alpha\colon  O \intto E \in \calO_{O/}^\el} \Map_\calP^{\phi'_\alpha \circ g \circ j}(P, \phi'_{\alpha,!} P'). 
    \end{tikzcd}
    \]
    Since $g$ is active, the previous argument shows that the top map is an equivalence.
    Hence the bottom map is an equivalence and as $f = g \circ j$ was arbitrary
    this shows that condition \ref{wsf2} is implied.
\end{proof}

The conditions in \cref{propn:fibrous-axioms} are reminiscent of
Lurie's definition of an \iopd{} \cite{HA}. Note, however, that in
conditions \ref{wsf2} and \ref{wsf3} we need to consider \emph{all}
active maps in $\mathcal{O}$, while Lurie's definition of \iopds{}, or
the definition of \emph{weak Segal fibrations} in \cite{patterns1},
only involve the conditions corresponding to identity maps. If the
base pattern is \emph{sound}, however, the conditions for all active
maps are implied by this special case:
\begin{propn}\label{propn:fibrous=WSF}
    Suppose $\mathcal{O}$ is a sound pattern.
    Then a functor $\pi \colon \mathcal{P} \to \mathcal{O}$ is a fibrous $\mathcal{O}$-pattern 
    \IFF{} it is a weak Segal $\calO$-fibration
    in the sense of \cite[Definition 9.6]{patterns1},
    \ie{} the conditions of \cref{propn:fibrous-axioms} hold whenever $\omega$ is an identity morphism.
    Concretely:
  \begin{enumerate}[(1)]
  \item\label{wsf1-sound} $\mathcal{P}$ has all $\pi$-cocartesian
    lifts of
    inert morphisms in $\mathcal{O}$.
  \item\label{wsf2-sound} For every $O_1 \in \calO$, the functor
    \[ 
        \mathcal{P}_{O_1}^\simeq \to 
        \lim_{\alpha \colon O_1 \to E \in \mathcal{O}^{\el}_{O_1/}} \mathcal{P}_{E}^\simeq,
    \]
    induced by cocartesian transport along 
    $\alpha \colon O_1 \intto E$ is an equivalence.
  \item\label{wsf3-sound} For all $O_{0},O_1 \in \mathcal{O}$,
    and all objects $X_0 \in \mathcal{P}_{O_0}, X_1 \in \mathcal{P}_{O_1}$,
    the commutative square
    \[
      \begin{tikzcd}
        \Map_{\mathcal{P}}(X_0,X_1) \arrow{r} \arrow{d} & \lim_{\alpha
          \colon O_1 \intto E \in \mathcal{O}^{\el}_{O_1/}}
        \Map_{\mathcal{P}}(X_0, \alpha_! X_1) \arrow{d} \\
        \Map_{\mathcal{O}}(O_0,O_1) \arrow{r} & \lim_{\alpha
          \colon O_1 \intto E \in \mathcal{O}^{\el}_{O_1/}}
        \Map_{\mathcal{O}}(O_0, E)
      \end{tikzcd}
    \]
    is cartesian. 
  \end{enumerate}
\end{propn}

\begin{remark}
  In \cite{patterns1} (and \cite{HA}), the analogue of condition \ref{wsf2-sound} says
  that the functor
    \[ 
        \mathcal{P}_{O_1} \to 
        \lim_{\alpha \colon O_1 \to E \in \mathcal{O}^{\el}_{O_1/}} \mathcal{P}_{\alpha_! O_2'}
    \]
    is an equivalence, rather than that the underlying map of \igpds{}
    is one. However, it follows from  \ref{wsf3-sound} that this
    functor gives an equivalence on mapping spaces, \ie{} it is already
    fully faithful, and so it is an equivalence \IFF{} it is an
    equivalence on underlying \igpds{}. In fact, it would suffice in
    \ref{wsf2-sound} to assume that the map is merely surjective on $\pi_{0}$.
\end{remark}

\begin{proof}[Proof of \cref{propn:fibrous=WSF}]
    Suppose $\pi\colon \calP \to \calO$ is a weak Segal fibration.
    Consider the functor $F \colon \calO_{O_1/}^\xint \to \calO^\xint \to \calS$ 
    defined by $F(O_1 \intto O_2) := \calP_{O_2}^\simeq$ and cocartesian transport along inerts. 
    The natural transformation $\eta\colon F \Rightarrow *$ to the terminal functor
    satisfies the conditions of \cref{lem:sound-stronger-Segal}.
    The conclusion of the lemma tells us that \ref{wsf2-sound}
    holds for all $\omega \colon O_1 \actto{} O_2$.
    
    For property \ref{wsf3-sound}, fix $X_0, X_1 \in \calP$ 
    with $\pi(X_0) = O_0$ and $\pi(X_1) = O_1$.
    Then cocartesian transport along inerts defines a functor
    \[
        F \colon  \calO_{O_1/}^\xint \to \calS, \quad 
        (\phi \colon O_1 \intto{} O_2) \mapsto \Map(X_0, \phi_!X_1)
    \]
    and this admits a canonical natural transformation 
    to the functor $G(\phi \colon O_1 \intto{} O_2) := \Map(O_0, O_2)$.
    Applying lemma \ref{lem:sound-stronger-Segal}
    to $\eta \colon F \Rightarrow G$ shows that \ref{wsf3-sound} 
    holds for all $\omega \colon O_1 \actto O_2$.
\end{proof}

\begin{ex}\label{exs:fibrous-patterns-generalize-operads}
  Fibrous $\xF_{*}$-patterns are precisely (symmetric) \iopds{} as
  defined in \cite{HA}, 
  while fibrous $\xF_{\ast}^{\natural}$- patterns are generalized (symmetric) \iopds{}.
  Similarly, fibrous $\simp^{\op,\flat}$- and
  $\simp^{\op,\natural}$-patterns are non-symmetric (or planar) \iopds{} and
  generalized non-symmetric \iopds{}, respectively.
\end{ex}

\begin{observation}\label{cor:cocartesian-fibrous}
  For a sound pattern $\mathcal{O}$ we can also describe the fibrous
$\mathcal{O}$-patterns that are cocartesian fibrations as the
unstraightenings of Segal $\mathcal{O}$-\icats{}, \ie{} as the
\emph{Segal $\mathcal{O}$-fibrations} of  \cite[Definition
9.1]{patterns1}. This is easy to check directly, but it is also a special case of 
\cref{lem:relative-verySegal-iff-unstraighting-is-relative-fibrous} (taking $Y = \ast$), which we will prove below.
\end{observation}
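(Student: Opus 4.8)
The plan is to verify this directly, using the characterization of fibrous patterns over a sound base from \cref{propn:fibrous=WSF}. Fix a cocartesian fibration $\pi\colon \mathcal{P} \to \mathcal{O}$ and let $F \colon \mathcal{O} \to \CatI$ be its straightening, so that the fibers are $\mathcal{P}_{O} \simeq F(O)$ and cocartesian transport along a morphism $\phi$ is identified with $F(\phi)$. Under straightening/unstraightening the Segal $\mathcal{O}$-fibrations of \cite{patterns1} are by definition the unstraightenings of Segal $\mathcal{O}$-\icats{}, so the content of the observation is that such a $\pi$ is a fibrous $\mathcal{O}$-pattern if and only if $F$ satisfies the Segal condition, \ie{} $F(O) \to \lim_{E \in \mathcal{O}^{\el}_{O/}} F(E)$ is an equivalence for every $O$. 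Since $\mathcal{O}$ is sound, \cref{propn:fibrous=WSF} lets me replace the fibrous conditions by the weak Segal fibration conditions \ref{wsf1-sound}--\ref{wsf3-sound}, which only involve identity active maps and are phrased directly in terms of the fibers $\mathcal{P}_E$ and the inert transports $\alpha_! \simeq F(\alpha)$.

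Condition \ref{wsf1-sound} is automatic, as a cocartesian fibration has all cocartesian lifts of inert morphisms. For \ref{wsf2-sound} I would compute the fibers of the mapping-space square: using the standard formula $\Map_{\mathcal{P}}(X_0,X_1) \simeq \coprod_{\phi} \Map_{F(O_1)}(F(\phi)(X_0), X_1)$ for a cocartesian fibration, the fiber of the square over a morphism $f \colon O_0 \to O_1$ is identified with
\[ \Map_{F(O_1)}(F(f)(X_0), X_1) \longrightarrow \lim_{\alpha \colon O_1 \intto E \in \mathcal{O}^{\el}_{O_1/}} \Map_{F(E)}(F(\alpha)F(f)(X_0), F(\alpha)(X_1)). \]
Taking $O_0 = O_1$ and $f = \mathrm{id}$, the object $F(f)(X_0)$ ranges over all of $F(O_1)$, so condition \ref{wsf2-sound} holds if and only if $F(O_1) \to \lim_{\alpha} F(E)$ is fully faithful for every $O_1$ (and conversely full faithfulness plainly gives the condition for all $f$). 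Condition \ref{wsf3-sound} says exactly that $F(O_1)^\simeq \to \lim_{\alpha}(F(E)^\simeq) \simeq (\lim_{\alpha} F(E))^\simeq$ is an equivalence, where the last identification uses that the core functor $(\blank)^\simeq \colon \CatI \to \mathcal{S}$ is a right adjoint and so preserves limits; this is precisely essential surjectivity of $F(O_1) \to \lim_{\alpha} F(E)$.

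Combining the two, \ref{wsf2-sound} makes $F(O_1) \to \lim_{\alpha} F(E)$ fully faithful and \ref{wsf3-sound} upgrades this to an equivalence, which is the Segal condition for $F$; conversely the Segal condition yields both. The only point requiring care is the bookkeeping in the mapping-space computation — correctly identifying the fiber of the right-hand vertical map as a limit of mapping spaces in the fibers $F(E)$ and matching up the transport maps — but this is routine once the cocartesian mapping-space formula is in hand. Alternatively, as the observation indicates, the statement is the special case $Y = \ast$ of \cref{lem:relative-verySegal-iff-unstraighting-is-relative-fibrous}, using that a relative Segal object over the terminal object is simply a Segal object (\cref{rmk:cancellation-for-relative-Segal-objects}).
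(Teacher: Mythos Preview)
Your proposal is correct and follows exactly the two routes the paper itself indicates: the direct verification via \cref{propn:fibrous=WSF} (which the paper says is ``easy to check directly'' without giving details) and the specialization $Y=\ast$ of \cref{lem:relative-verySegal-iff-unstraighting-is-relative-fibrous}. One minor phrasing point: condition \ref{wsf3-sound} asserts that the map of underlying \igpds{} is an equivalence, which is a priori stronger than mere essential surjectivity of the functor, but in the presence of the full faithfulness supplied by \ref{wsf2-sound} the two are equivalent, so your argument is fine as it stands.
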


Fibrous $\calO$-patterns admit a canonical pattern structure, which we now introduce:
\begin{defn}\label{defn:pattern-structure-on-fibrous}
  Suppose $\pi \colon \mathcal{P} \to \mathcal{O}$ is a fibrous
  $\calO$-pattern. We say a morphism in $\mathcal{P}$ is \emph{inert}
  if it is $\pi$-cocartesian and lies over an inert morphism in
  $\mathcal{O}$, and \emph{active} if it just lies over an active
  morphism in $\mathcal{O}$. The inert and active morphisms then form
  a factorization system on $\mathcal{P}$ by \cite{HA}*{Proposition
    2.1.2.5}, and we give $\mathcal{P}$ an algebraic pattern structure
  with this factorization system by taking the elementary objects to
  be all those that lie over elementary objects in $\mathcal{O}$.
\end{defn}

\begin{defn}
  A \emph{morphism of fibrous $\calO$-patterns} is a
  commutative triangle
  \[
    \begin{tikzcd}
      \mathcal{P} \arrow{rr}{f} \arrow{dr}[swap]{\pi} & &  \mathcal{P}'
      \arrow{dl}{\pi'} \\
       & \mathcal{O},
    \end{tikzcd}
  \]
  where $\pi$ and $\pi'$ are fibrous $\calO$-patterns and $f$ is a
  morphism of algebraic patterns. It is immediate from the definition
  of the pattern structures that for this it suffices to require that
  $f$ preserves inert morphisms. We write $\Fbrs(\mathcal{O})$ for the
  full subcategory of $\AlgPatt_{/\mathcal{O}}$ whose objects are the
  fibrous $\calO$-patterns; this is equivalently a full subcategory of
  $\CatIOintcoc$.
\end{defn}

\begin{lemma}\label{lem:fbrslim}
  The inclusion $\Fbrs(\mathcal{O}) \hookrightarrow \CatIOintcoc$
  preserves limits and $\kappa$-filtered colimits where $\kappa$ is a
  regular cardinal such that $\mathcal{O}^{\el}_{O/}$ is
  $\kappa$-small for all $O \in \mathcal{O}$. Limits and
  $\kappa$-filtered colimits of $\mathcal{O}$-fibrous patterns can
  therefore be computed in $\CatIsl{\calO}$.
\end{lemma}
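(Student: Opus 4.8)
The plan is to exploit that $\Fbrs(\mathcal{O})$ is a \emph{full} subcategory of $\CatIOintcoc$, so that proving the inclusion preserves limits and $\kappa$-filtered colimits amounts to showing $\Fbrs(\mathcal{O})$ is closed under the limits and $\kappa$-filtered colimits computed in $\CatIOintcoc$. These exist in $\CatIOintcoc$ (e.g.\ by presentability via \cref{lem:Env-admits-left-adjoint}, or directly from the detection statements for cocartesian lifts), and by \cref{obs:Lcoclim}, applied to $\mathcal{O}$ with its inert–active factorization system (for which $\mathcal{O}^{\xint}$ is the left class, so $\CatIOintcoc = \CatILcoc{\mathcal{O}}$), the forgetful functor $\CatIOintcoc \to \CatIsl{\mathcal{O}}$ preserves them; since $\CatIsl{\mathcal{O}} \to \CatI$ preserves all colimits and connected limits, this also yields the final sentence that these (co)limits are computed on underlying \icats{}. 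An object of $\CatIOintcoc$ is fibrous precisely when it satisfies condition \ref{fbrs2} of \cref{defn:fibrous}, as condition \ref{fbrs1} holds automatically in $\CatIOintcoc$; thus I only need to check that condition \ref{fbrs2} is stable under the relevant limits and $\kappa$-filtered colimits.

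So I would let $\{\mathcal{P}_i\}_{i\in I}$ be a diagram of fibrous $\mathcal{O}$-patterns and write $\mathcal{P}$ for its limit (resp.\ $\kappa$-filtered colimit) in $\CatIOintcoc$, whose underlying object over $\mathcal{O}$ is the corresponding limit (resp.\ colimit) of the $\mathcal{P}_i$. The key point is that for each $O\in\mathcal{O}$ the construction $\mathcal{Q}\mapsto \mathcal{Q}\times_{\mathcal{O}}\mathcal{O}^{\act}_{/O}$ is a pullback along $\mathcal{O}^{\act}_{/O}\to\mathcal{O}$, hence commutes with all limits in the $\mathcal{Q}$-variable (being a base-change right adjoint) and with $\kappa$-filtered colimits (being a finite limit, and finite limits commute with filtered colimits in $\CatI$). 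Likewise, the limit $\lim_{E\in\mathcal{O}^{\el}_{O/}}$ occurring in \ref{fbrs2} commutes with arbitrary limits, and commutes with $\kappa$-filtered colimits precisely because $\mathcal{O}^{\el}_{O/}$ is $\kappa$-small by hypothesis. Consequently the defining square of condition \ref{fbrs2} for $\mathcal{P}$ is identified with the limit (resp.\ $\kappa$-filtered colimit) over $i\in I$ of the corresponding squares for the $\mathcal{P}_i$.

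It then remains to observe that each of these squares is cartesian, since each $\mathcal{P}_i$ is fibrous, and that being cartesian is the condition that the map from the initial vertex to the pullback of the remaining cospan is an equivalence; as this pullback is a finite limit, it is preserved both by arbitrary limits and by $\kappa$-filtered colimits, so a limit or $\kappa$-filtered colimit of cartesian squares is again cartesian. Hence the square for $\mathcal{P}$ is cartesian, $\mathcal{P}$ is fibrous, and $\Fbrs(\mathcal{O})$ is closed under limits and $\kappa$-filtered colimits in $\CatIOintcoc$, as required. I expect the only genuine subtlety to be the bookkeeping of where the various (co)limits are computed, rather than any deep content: one must check that the underlying-\icat{} functors really compute them (connected limits and all colimits being unproblematic) and, crucially, that the $\kappa$-smallness of $\mathcal{O}^{\el}_{O/}$ is exactly what licenses commuting the Segal-type limit past the $\kappa$-filtered colimit.
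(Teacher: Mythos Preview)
Your proposal is correct and takes essentially the same approach as the paper's proof: both reduce to showing that the cartesian-square condition \ref{fbrs2} is stable under limits and $\kappa$-filtered colimits computed in $\CatIOintcoc$ (using \cref{obs:Lcoclim} to pass to $\CatI$), and both conclude by noting that limits and filtered colimits of cartesian squares remain cartesian. You spell out more explicitly than the paper why the square construction commutes with these (co)limits---decomposing it into a pullback and a $\kappa$-small limit---whereas the paper simply asserts this; but the argument is the same.
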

\begin{proof}
  By \cref{obs:Lcoclim} the forgetful functor $\CatIOintcoc \to \CatIsl{\mathcal{O}}$ preserves limits and $\kappa$-filtered colimits, and is also conservative.
  It therefore suffices to observe that the commutative square that is required to be cartesian for an
  object of $\CatIOintcoc$ to be a fibrous $\mathcal{O}$-pattern
  commutes with limits and $\kappa$-filtered colimits of \icats{}. 
  Since a limit or filtered colimit of cartesian squares in $\CatI$ is again
  cartesian, this implies the result.
\end{proof}

\begin{observation}\label{rmk:pattern-iso-Segal}
  If $\pi \colon \mathcal{P} \to \mathcal{O}$ is a fibrous $\calO$-pattern, 
  then for every object $\overline{X} \in \mathcal{P}$ over $X$ in $\mathcal{O}$, the functor
  \[ \mathcal{P}^{\el}_{\overline{X}/} \to \mathcal{O}^{\el}_{X/} \]
  is an equivalence.
  Indeed, since $\calP^\xint \to \calO^\xint$ is a cocartesian fibration
  the functor $\calP_{\overline{X}/}^\xint \to \calO_{X/}^\xint$ is 
  an equivalence, 
  and the above functor is obtained by restricting to the full 
  subcategories of elementary objects.
  In particular, $\pi$ is an iso-Segal morphism. More generally, if $f
  \colon \mathcal{P} \to \mathcal{Q}$ is a morphism of fibrous
  $\mathcal{O}$-patterns, then $f$ induces an equivalence
  \[ \mathcal{P}^{\el}_{\overline{X}/} \isoto
    \mathcal{Q}^{\el}_{f(\overline{X})/} \]
  for the same reason, so that $f$ is also an iso-Segal morphism.
\end{observation}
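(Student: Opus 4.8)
The plan is to reduce the whole statement to the single fact that the restriction of $\pi$ to inert morphisms, $\pi^\xint \colon \calP^\xint \to \calO^\xint$, is a \emph{left} fibration. First I would record why this holds. By condition (1) of \cref{defn:fibrous} the functor $\pi$ admits cocartesian lifts of all inert morphisms of $\calO$, and by \cref{defn:pattern-structure-on-fibrous} these cocartesian lifts are \emph{exactly} the inert morphisms of $\calP$. Hence $\pi^\xint$ is a cocartesian fibration whose cocartesian edges are all of the morphisms of $\calP^\xint$. Its fibers are $\infty$-groupoids: a morphism of $\calP^\xint$ lying over an identity $\id_X$ is a $\pi$-cocartesian lift of an equivalence, hence an equivalence, so the fiber over $X$ is the maximal subgroupoid of $\calP_X$. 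A cocartesian fibration with $\infty$-groupoid fibers is a left fibration, which gives the claim. (I would flag here that the inline reasoning calling this merely a ``cocartesian fibration'' really uses the sharper left-fibration property; the groupoid-fibers point is what makes the slice argument below go through.)

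Next I would invoke the standard behaviour of left fibrations on undercategories: for a left fibration $p \colon \calE \to \calB$ and $e \in \calE$ over $b$, the induced functor $\calE_{e/} \to \calB_{b/}$ is again a left fibration, and its fiber over a morphism $\beta \colon b \to b'$ is the space of extensions of $e$ along $\beta$, which is contractible because cocartesian lifts in a left fibration are essentially unique. A left fibration with contractible fibers is a trivial fibration, hence an equivalence. Applying this to $\pi^\xint$ yields an equivalence $\calP^\xint_{\overline{X}/} \isoto \calO^\xint_{X/}$ for every $\overline{X}$ over $X$.

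To pass to elementaries, I would observe that $\calP^\el_{\overline{X}/}$ and $\calO^\el_{X/}$ are by definition the full subcategories of $\calP^\xint_{\overline{X}/}$ and $\calO^\xint_{X/}$ spanned by the objects lying over elementary objects. Since the elementary objects of $\calP$ are, by \cref{defn:pattern-structure-on-fibrous}, precisely those lying over elementary objects of $\calO$, the equivalence $\calP^\xint_{\overline{X}/} \isoto \calO^\xint_{X/}$ carries elementary objects to elementary objects and so restricts to an equivalence $\calP^\el_{\overline{X}/} \isoto \calO^\el_{X/}$. This is exactly the iso-Segal condition for $\pi$.

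Finally, for a morphism $f \colon \calP \to \calQ$ of fibrous $\calO$-patterns with structure maps $\pi$ and $\rho \colon \calQ \to \calO$, I would note that $f$ preserves inert morphisms and satisfies $\rho \circ f = \pi$, so for $\overline{X}$ over $X$ the induced functor $\calP^\el_{\overline{X}/} \to \calQ^\el_{f(\overline{X})/}$ sits in a commutative triangle over $\calO^\el_{X/}$. Both legs to $\calO^\el_{X/}$ are equivalences by the previous paragraph (applied to $\pi$ and to $\rho$), so $f$ induces an equivalence by two-out-of-three, whence $f$ is iso-Segal as well. The only genuinely substantive step is the identification of $\pi^\xint$ as a left fibration together with the resulting slice equivalence; everything after that is formal bookkeeping with full subcategories and the triangle identity.
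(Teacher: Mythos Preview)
Your proof is correct and follows the same route as the paper's inline argument: show $\calP^{\xint}_{\overline{X}/} \to \calO^{\xint}_{X/}$ is an equivalence, then restrict to elementaries, and use two-out-of-three for the relative statement. You add a genuine improvement: the paper says only that $\calP^{\xint} \to \calO^{\xint}$ is a cocartesian fibration and concludes the coslice equivalence, but as you correctly flag, a bare cocartesian fibration does not in general induce equivalences on coslices --- one needs the fibers to be groupoids, i.e.\ the left fibration property, which holds here precisely because the inert morphisms of $\calP$ are \emph{defined} to be the $\pi$-cocartesian lifts of inerts in $\calO$. Your version makes this step explicit where the paper leaves it implicit.
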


\begin{lemma}\label{lem:fbrssound}
  Suppose $\mathcal{O}$ is a sound pattern and $\pi \colon \mathcal{P}
  \to \mathcal{O}$ is $\calO$-fibrous. Then $\mathcal{P}$ is also a
  sound pattern. Moreover, if $\mathcal{O}$ is soundly extendable,
  then so is $\mathcal{P}$.
\end{lemma}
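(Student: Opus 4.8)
The plan is to handle soundness and extendability separately. Throughout I use that, by \cref{rmk:pattern-iso-Segal}, the projection $\pi$ induces an equivalence $\calP^{\xint}_{\overline{X}/}\isoto\calO^{\xint}_{\pi\overline{X}/}$ for every $\overline{X}\in\calP$, and hence also $\calP^{\el}_{\overline{X}/}\isoto\calO^{\el}_{\pi\overline{X}/}$. For soundness, fix an active morphism $\overline{\omega}\colon\overline{X}\actto\overline{Y}$ and set $\omega:=\pi(\overline{\omega})$. I would show that $\pi$ identifies the pullback square defining $\calP^{\el}(\overline{\omega})$ with the one defining $\calO^{\el}(\omega)$: the equivalence $\calP^{\xint}_{\overline{X}/}\simeq\calO^{\xint}_{X/}$ gives $\Ar(\calP^{\xint}_{\overline{X}/})\simeq\Ar(\calO^{\xint}_{X/})$, and under the equivalences on elementary undercategories the functor $\overline{\omega}_{(-)}$ corresponds to $\omega_{(-)}$, because the inert--active factorization of $\overline{\alpha}\circ\overline{\omega}$ projects along $\pi$ to that of $\alpha\circ\omega$ (by the definition of the pattern structure on $\calP$, and because inert morphisms of $\calP$ are $\pi$-cocartesian). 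Consequently $\calP^{\el}(\overline{\omega})\simeq\calO^{\el}(\omega)$ over $\calP^{\el}_{\overline{X}/}\simeq\calO^{\el}_{X/}$, so $\calP^{\el}(\overline{\omega})\to\calP^{\el}_{\overline{X}/}$ is coinitial because $\calO^{\el}(\omega)\to\calO^{\el}_{X/}$ is, $\calO$ being sound. Thus $\calP$ is sound, by \cref{defn:sound}.

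Now assume $\calO$ is soundly extendable; by \cref{defn:soundly-extendable} it remains to prove that $\Afun{\calP}$ is a Segal $\calP$-\icat{}, i.e.\ that $\calP^{\act}_{/\overline{X}}\to\lim_{\overline{E}\in\calP^{\el}_{\overline{X}/}}\calP^{\act}_{/\overline{E}}$ is an equivalence for all $\overline{X}$. Applying $\pi$ to the active arrow categories gives a map $\Ar_{\act}(\calP)\to\Ar_{\act}(\calO)$ over $\pi$, whose straightening is a natural transformation $\eta\colon\Afun{\calP}\to\pi^{*}\Afun{\calO}$, given at $\overline{X}$ by the projection $\calP^{\act}_{/\overline{X}}\to\calO^{\act}_{/X}$ that remembers only the underlying active morphism of $\calO$. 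Since $\pi$ is iso-Segal, hence a Segal morphism, and $\Afun{\calO}$ is a Segal $\calO$-\icat{} ($\calO$ being extendable), the pullback $\pi^{*}\Afun{\calO}$ is a Segal $\calP$-\icat{}; in particular the bottom map $\calO^{\act}_{/X}\to\lim_{E\in\calO^{\el}_{X/}}\calO^{\act}_{/E}$ of the naturality square of $\eta$ at $\overline{X}$ is an equivalence. As the pullback of an equivalence is an equivalence, it therefore suffices to show that this square is cartesian for every $\overline{X}$ --- that is, that $\eta$ is a relative Segal $\calP$-\icat{} in the sense of \cref{defn:relative-Segal}.

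To check that the square is cartesian I would test it on maximal subgroupoids and on mapping spaces, as in the proof of \cref{propn:fibrous-axioms}. Both $(-)^{\simeq}$ and mapping spaces commute with the limit over $\calP^{\el}_{\overline{X}/}\simeq\calO^{\el}_{X/}$, so taking fibres over an active $g\colon Z\actto X$ reduces the groupoid statement to reconstructing a lift $\overline{Z}\actto\overline{X}$ from its inert pushforwards: the statement about the objects $\overline{Z}$ is condition \ref{wsf3} of \cref{propn:fibrous-axioms} applied to $g$, while the statement about the spaces of lifts is condition \ref{wsf2} applied to $\id_{X}$ (after using that the relevant sources are $\pi$-cocartesian pushforwards). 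The mapping--space squares reduce to condition \ref{wsf2} for active morphisms exactly as \cref{eq:fibractmapsq} and \cref{eq:fbrsmap} do in \cref{propn:fibrous-axioms}. Since $\calP$ is fibrous these conditions hold for all active morphisms, so the square is cartesian, $\Afun{\calP}$ is Segal, and $\calP$ is soundly extendable. I expect this last step to be the main obstacle: unlike the envelope square of \cref{defn:fibrous}, the fibres of $\eta$ retain the chosen lift of the active morphism to $\overline{X}$, so the reduction to conditions \ref{wsf2} and \ref{wsf3} cannot simply be quoted but must be carried out while tracking both the object-- and the morphism--components of the active slice $\calP^{\act}_{/\overline{X}}$.
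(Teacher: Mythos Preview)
Your soundness argument is essentially the paper's: both identify $\calP^{\el}(\overline\omega)$ with $\calO^{\el}(\omega)$ via the equivalence $\calP^{\xint}_{\overline X/}\simeq\calO^{\xint}_{X/}$ of \cref{rmk:pattern-iso-Segal} (the paper phrases this through the equivalent criterion \cref{lem:O-el-beta}(2), but the content is identical).

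For extendability your route is different and, as your final paragraph concedes, not fully executed. The paper avoids the fibre-by-fibre unpacking altogether. Since $\calO$ is extendable, \cref{rmk:fbrs-over-extendable} upgrades the fibrous condition to an \emph{equivalence}
\[
\calP^{\act} \times_{\calO^{\act}} \calO^{\act}_{/\pi(\overline X)} \;\isoto\; \lim_{E\in\calP^{\el}_{\overline X/}} \calP^{\act} \times_{\calO^{\act}} \calO^{\act}_{/\pi(E)},
\]
after rewriting the indexing via iso-Segality and using cancellation for actives. One then simply passes to the slice over $(\overline X,\id_{\pi(\overline X)})$ on the left and its images $(\overline E,\id_{\pi(\overline E)})$ on the right; since slicing commutes with limits and $(\calP^{\act}\times_{\calO^{\act}}\calO^{\act}_{/\pi(\overline X)})_{/(\overline X,\id)}\simeq\calP^{\act}_{/\overline X}$, the Segal condition for $\Afun{\calP}$ drops out immediately. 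Your framing via the relative Segal property of $\Afun{\calP}\to\pi^{*}\Afun{\calO}$ is correct and makes the role of \cref{rmk:cancellation-for-relative-Segal-objects} explicit, but the cleanest way to verify cartesianness of that square is not to decompose into groupoids and mapping spaces and chase conditions \ref{wsf2}--\ref{wsf3}: your square is literally obtained from the fibrous square of \cref{defn:fibrous} by slicing over the point $(\overline X,\id)$, so cartesianness is inherited for free and the bookkeeping you anticipate evaporates.
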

\begin{proof}
    As we just observed that $\pi$ is iso-Segal, soundness follows from \cref{lem:iso-Segal-sound}.
    
    Now assume $\calO$ is soundly extendable.
    Then, by \cref{rmk:fbrs-over-extendable}, the functor
    \[
    \calP \times_\calO \calO^{\act}_{/Y} \to 
    \lim_{E' \in \calO^{\el}_{Y/}} \calP \times_\calO \calO^{\act}_{/E'}
    \] 
    is an equivalence.
    Since any morphism in $\calP$ that is mapped to an active morphism in $\calO$ is active by definition and active morphisms satisfy cancellation, we have that
    $\calP \times_\calO \calO^\act_{/Y} = \calP^\act \times_{\calO^\act} \calO^\act_{/Y}$.
    Consider the case where $Y = \pi(X)$ for $X \in \calP$.
    Since $\calP \to \calO$ is an equivalence on elementary slices, we can rewrite the limit on the right-hand side as a limit over $E \in \calP^\el_{X/}$ and set $E' := \pi(E)$:
    \[
    \calP^\act \times_{\calO^\act} \calO^{\act}_{/\pi(X)} \xrightarrow{\simeq} 
    \lim_{E \in \calP^{\el}_{X/}} \calP^\act \times_{\calO^\act} \calO^{\act}_{/\pi(E)}.
    \]
    Now, passing to the over-category of $(X, \id_{\pi(X)})$ we obtain an equivalence:
    \[
    \calP_{/X}^\act \simeq 
    (\calP^\act \times_{\calO^\act} \calO^{\act}_{/\pi(X)})_{/(X,\id_{\pi(X)})} \xrightarrow{\simeq}  
    \lim_{E \in \calP^{\el}_{X/}} (\calP^\act \times_{\calO^\act} \calO^{\act}_{/\pi(E)})_{/(E, \id_{\pi(E)})}
    \simeq \lim_{E \in \calP^{\el}_{X/}} \calP_{/E}^\act,
    \] 
    which shows that $\calP$ is soundly extendable.
\end{proof}

\begin{propn}\label{propn:fibrous-over}
  Suppose we have a commutative triangle of algebraic patterns
  \[
    \begin{tikzcd}
      \mathcal{Q} \arrow{rr}{F} \arrow{dr}[swap]{q} & & \mathcal{P}
      \arrow{dl}{p} \\
       & \mathcal{O},
    \end{tikzcd}
  \]
  where $\mathcal{P}$ is $\mathcal{O}$-fibrous. 
  Assume further that $\calO$ is sound. 
  Then $\mathcal{Q}$ is $\mathcal{O}$-fibrous \IFF{} it is $\mathcal{P}$-fibrous.
\end{propn}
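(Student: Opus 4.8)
The plan is to verify the two conditions of \cref{defn:fibrous} for $q$ and for $F$ separately, using that the pattern structure on $\calP$ from \cref{defn:pattern-structure-on-fibrous} is designed precisely so that the inert morphisms of $\calP$ are the $p$-cocartesian lifts of inert morphisms of $\calO$, and the elementaries lie over elementaries. First I would treat the cocartesian-lift condition \ref{fbrs1}. Since $p$ has cocartesian lifts of inerts and these \emph{are} the inert morphisms of $\calP$, the standard comparison of cocartesian edges for a composite (\cite{HTT}*{Proposition 2.4.1.3}) shows that a morphism of $\calQ$ over an inert morphism of $\calO$ is $q$-cocartesian if and only if it is $F$-cocartesian and lies over an inert morphism of $\calP$. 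Hence $\calQ$ has all $q$-cocartesian lifts of inert morphisms of $\calO$ (preserved by $F$) if and only if it has all $F$-cocartesian lifts of inert morphisms of $\calP$; so condition \ref{fbrs1} for $q$ is equivalent to condition \ref{fbrs1} for $F$, and from now on I assume it.

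For the Segal condition I would pass to the relative-Segal reformulation of \cref{obs:fbrsrelseg}. Applying the active-arrow construction of \cref{prefbrsdef} to $q$ and $p$ and straightening over $\calO$, the functor $F$ induces a factorization
\[ \StOint(\calQ) \longrightarrow \StOint(\calP) \longrightarrow \Afun{\calO} \]
of natural transformations of functors $\calO \to \CatI$, with $\StOint(\calQ)(O) = \calQ\times_\calO\calO^\act_{/O}$ and likewise for $\calP$. Since $\calP$ is $\calO$-fibrous, the second map is a relative Segal $\calO$-object by \cref{obs:fbrsrelseg}, so the cancellation property of relative Segal objects (\cref{rmk:cancellation-for-relative-Segal-objects}) shows that the composite $\StOint(\calQ)\to\Afun{\calO}$ is a relative Segal object — which is exactly condition \ref{fbrs2} for $q$ — if and only if $\StOint(\calQ)\to\StOint(\calP)$ is one. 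This cleanly disposes of the reduction and, in particular, of the implication ``$\calQ$ is $\calO$-fibrous $\Rightarrow$ $\calQ$ is $\calP$-fibrous'', since the $\calP$-fibrousness squares form a subfamily of those witnessing the relative Segal condition over $\calO$.

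The remaining, and main, task is to identify the condition that $\StOint(\calQ)\to\StOint(\calP)$ be a relative Segal $\calO$-object with condition \ref{fbrs2} for $F$. Here I would use the pullback identity $\calQ\times_\calO\Aract(\calO) \simeq \calQ\times_\calP\bigl(\calP\times_\calO\Aract(\calO)\bigr)$, which realizes $\StOint(\calQ)\to\StOint(\calP)$ fibrewise as a base change of $F$, so that its relative Segal condition at $O$ unwinds, on fibres over each object of $\calP\times_\calO\calO^\act_{/O}$, into cartesian squares of the fibres $\calQ_{\overline Z}$ of $F$ indexed by pairs $(\overline Z, \phi\colon p(\overline Z)\actto O)$. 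One then matches these with the fibres of the squares of \cref{propn:fibrous-axioms} for $F$ over $\calP$, using the iso-Segal identification $\calP^\el_{\overline X/}\simeq\calO^\el_{p(\overline X)/}$ of \cref{rmk:pattern-iso-Segal} together with the compatibility of cocartesian transports from condition \ref{fbrs1}. The hard part is exactly this comparison across the two base patterns: the relative Segal condition over $\calO$ is phrased through the \emph{$\calO$-envelope} $\StOint(\calP)$ and $\calO^\el$-indexed limits, whereas $\calP$-fibrousness is phrased through the \emph{$\calP$-self-envelope} $\Afun{\calP}$ and $\calP^\el$-indexed limits, and an active map $\phi$ of $\calO$ need not lift to an active map of $\calP$ with prescribed source. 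Reconciling the indexing $\calO^\el_{O/}$-limits and the induced pushforwards on $\calQ$ across $p$ — so that every such square, including those over non-liftable $\phi$, is accounted for — is where the full force of $p$ being an iso-Segal morphism that transports the factorization system and elementaries of $\calP$ to those of $\calO$ is needed, and I expect this bookkeeping to be the principal obstacle.
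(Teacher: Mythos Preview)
Your treatment of condition \ref{fbrs1} is correct and essentially identical to the paper's argument.

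For the Segal condition, you take a different route from the paper. The paper works directly with the characterization of \cref{propn:fibrous-axioms}: for objects $Q,Q'$ of $\calQ$ over $P,P'$ of $\calP$ over $O,O'$ of $\calO$ and an active $\omega$, it stacks the mapping-space square \cref{eq:wsegfibmapsq} for the pair $(\calQ,\calP)$ on top of the one for $(\calP,\calO)$. The bottom square is cartesian since $\calP$ is $\calO$-fibrous, so the top square is cartesian if and only if the outer one is. The iso-Segal identification $\calP^{\el}_{P'/}\simeq\calO^{\el}_{O'/}$ of \cref{rmk:pattern-iso-Segal} then lets one freely reindex the limit in the top square, and this reindexed square is precisely condition \ref{wsf2} for $F$ over $\calP$. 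Condition \ref{wsf3} is handled identically. This is clean and symmetric in the two directions.

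Your reduction via \cref{obs:fbrsrelseg} and the cancellation property \cref{rmk:cancellation-for-relative-Segal-objects} is valid: it shows that $\calQ$ is $\calO$-fibrous if and only if $\StOint(\calQ)\to\StOint(\calP)$ is a relative Segal $\calO$-object. But your subsequent claim---that one direction is already disposed of because ``the $\calP$-fibrousness squares form a subfamily'' of these---is not correct. The $\calP$-fibrousness square at $P\in\calP$ involves $\calQ\times_{\calP}\calP^{\act}_{/P}$ and a $\calP^{\el}_{P/}$-indexed limit, whereas the relative Segal square at $O=p(P)$ involves $\calQ\times_{\calO}\calO^{\act}_{/O}$ and an $\calO^{\el}_{O/}$-indexed limit; these are genuinely different diagrams of \icats{}, related only through the iso-Segal reindexing together with the slicing identification $\calP^{\act}_{/P}\simeq(\calP\times_{\calO}\calO^{\act}_{/p(P)})_{/(P,\id)}$ (compare the proof of \cref{lem:fbrssound}). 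So neither direction comes for free, and your step 3 is not residual bookkeeping but the entire content of the comparison. Once you unwind it on fibres and mapping spaces as you propose, you recover exactly the paper's stacked-square argument---so the detour through $\StOint$ adds length without avoiding any of the work.
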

\begin{proof}
    By \cref{lem:fbrssound} $\calP$ is also sound, so we may use the characterisation from \cref{propn:fibrous=WSF}.
    
  Any inert morphism
  $ \pi \colon P \to P'$ in $\mathcal{P}$ is cocartesian over an inert
  morphism $\omega \colon O
  \to O'$ in $\mathcal{O}$; if $\phi \colon Q \to Q'$ is an inert morphism over
  $\omega$ in $\mathcal{Q}$ such that $F(Q) \simeq P$, then we have $F(\phi) \simeq \pi$ since
  $F$ preserves inert morphisms and $\pi$ is the unique inert morphism
  over $\omega$ with source $P$. 
  It now follows from \cite[Proposition 2.4.1.3]{HTT} that 
  $\phi \colon Q \to Q'$ is $F$-cocartesian \IFF{} it is $q$-cocartesian.
  Thus condition \ref{wsf1-sound} in \cref{propn:fibrous=WSF} holds for $F$ \IFF{} it holds for $q$.
  
  Assuming this holds, then for 
  $Q, Q' \in \calQ$, $P = F(Q)$, $P' = F(Q')$ and $O = q(Q)$, $O' = q(Q')$, 
  we have a commutative diagram
  \[
    \begin{tikzcd}
      \Map_{\mathcal{Q}}(Q, Q') \arrow{r} \arrow{d} &
      \lim_{\alpha \in \mathcal{O}^{\el}_{O'/}} \Map_{\mathcal{Q}}(Q,
      \alpha_! Q')  \arrow{d} \\
      \Map_{\mathcal{P}}(P, P') \arrow{r} \arrow{d} & \lim_{\alpha \in \mathcal{O}^{\el}_{O'/}} \Map_{\mathcal{P}}(P,
      \alpha_! P')
      \arrow{d}  \\
      \Map_{\mathcal{O}}(O, O') \arrow{r} &  \lim_{(\alpha\colon O' \intto E) \in
        \mathcal{O}^{\el}_{O'/}} \Map_{\mathcal{O}}(O, E).
    \end{tikzcd}
  \]
  Here the bottom square is cartesian since $\mathcal{P}$ is
  $\mathcal{O}$-fibrous, so the top square is cartesian \IFF{} the
  outer square is cartesian. But since $p$ is an iso-Segal morphism (by \cref{rmk:pattern-iso-Segal}) we
  can rewrite the top square as
  \[
    \begin{tikzcd}
      \Map_{\mathcal{Q}}(Q, Q') \arrow{r} \arrow{d} &
      \lim_{\beta \in \mathcal{P}^{\el}_{P'/}} \Map_{\mathcal{Q}}(Q,
      \beta_! Q') \arrow{d} \\
      \Map_{\mathcal{P}}(P, P') \arrow{r}  & \lim_{(\beta\colon P' \to E') \in \mathcal{P}^{\el}_{P'/}} \Map_{\mathcal{P}}(P,
      E'),
    \end{tikzcd}
  \]
  and so we have that condition \ref{wsf3-sound} in \cref{propn:fibrous=WSF} holds for $F$ \IFF{} it holds for
  $q$.
  The proof for \ref{wsf2-sound} is similar.
\end{proof}

\begin{cor}\label{cor:compeqWSF}
  If $\calO$ is sound and $\pi \colon \mathcal{P} \to \mathcal{O}$ exhibits $\mathcal{P}$
  as an $\mathcal{O}$-fibrous pattern, then composition with $\pi$ gives a functor
  \[ \pi_{!} \colon \Fbrs(\mathcal{P}) \to \Fbrs(\mathcal{O}),\]
  and this induces an equivalence
  \[ \Fbrs(\mathcal{P}) \isoto \Fbrs(\mathcal{O})_{/\mathcal{P}}.\]
\end{cor}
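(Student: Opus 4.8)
The plan is to realize $\pi_{!}$ as (the restriction of) the tautological equivalence of iterated slices $\Cat_{\infty/\calP} \isoto (\Cat_{\infty/\calO})_{/\calP}$, and then cut this down to the subcategories of fibrous patterns. First I would check that post-composition with $\pi$ makes $\pi_{!}\colon \Fbrs(\calP) \to \Fbrs(\calO)$ well-defined: on objects, a fibrous $\calP$-pattern $\calQ \to \calP$ composes with $\pi$ to a fibrous $\calO$-pattern by \cref{propn:fibrous-over}; and the first paragraph of that proof shows that the inert morphisms of $\calQ$ relative to $\calP$ coincide with those relative to $\calO$, so the pattern structure is unchanged and a morphism of fibrous $\calP$-patterns (which preserves $\calP$-inerts) automatically preserves $\calO$-inerts. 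Since the composite $\calQ \to \calO$ comes equipped with the structure map $\calQ \to \calP$ over $\calO$, the functor refines canonically to $\bar\pi_{!}\colon \Fbrs(\calP) \to \Fbrs(\calO)_{/\calP}$, and it remains to prove $\bar\pi_{!}$ is an equivalence.

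Second, I would identify $\bar\pi_{!}$ with the restriction of the canonical equivalence $\Cat_{\infty/\calP} \isoto (\Cat_{\infty/\calO})_{/\calP}$ (associativity of slicing), which sends $\calQ \to \calP$ to the pair consisting of the composite $\calQ \to \calO$ and the structure map $\calQ \to \calP$. The key step is to verify that this equivalence restricts to an equivalence of the non-full subcategories
\[ \Cat_{\infty/\calP}^{\intcoc} \isoto \bigl(\Cat_{\infty/\calO}^{\intcoc}\bigr)_{/\calP}. \]
For this I would show, for an arbitrary functor $F\colon \calQ \to \calP$, that $\calQ \to \calP$ admits cocartesian lifts of $\calP$-inert morphisms if and only if $\calQ \to \calO$ admits cocartesian lifts of $\calO$-inert morphisms and $F$ preserves inert-cocartesian morphisms, and that under either hypothesis the $\calP$-inert-cocartesian and $\calO$-inert-cocartesian morphisms of $\calQ$ literally coincide. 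Both implications follow from \cite[Proposition 2.4.1.3]{HTT} exactly as in the first paragraph of \cref{propn:fibrous-over}: since $\calP$ is $\calO$-fibrous it has $p$-cocartesian lifts of the $\calO$-inerts (these being its inert morphisms), so one transports inert-cocartesian lifts back and forth between $\calP$ and $\calO$ using that a morphism is $pF$-cocartesian over an inert whenever it is $F$-cocartesian and its $F$-image is $p$-cocartesian. The resulting identification of inert morphisms then matches the morphism conditions on the two sides (preservation of $\calP$-inert- versus $\calO$-inert-cocartesian morphisms), so the two non-full subcategories agree.

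Finally, I would pass to full subcategories. Since $\Fbrs(\calO) \subseteq \Cat_{\infty/\calO}^{\intcoc}$ is full and $\calP \in \Fbrs(\calO)$, the slice $\Fbrs(\calO)_{/\calP}$ is the full subcategory of $\bigl(\Cat_{\infty/\calO}^{\intcoc}\bigr)_{/\calP}$ on those $\calQ \to \calP$ for which $\calQ \to \calO$ is fibrous; likewise $\Fbrs(\calP)$ is the full subcategory of $\Cat_{\infty/\calP}^{\intcoc}$ on the fibrous $\calP$-patterns. By \cref{propn:fibrous-over}, an object $\calQ \to \calP$ is fibrous over $\calP$ if and only if it is fibrous over $\calO$, so the equivalence of the previous step carries one full subcategory onto the other. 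Hence $\bar\pi_{!}$ is an equivalence, yielding the desired $\Fbrs(\calP) \isoto \Fbrs(\calO)_{/\calP}$. The only delicate point is the middle step, namely that the inert-cocartesian structure on $\calQ$ is detected identically over $\calP$ and over $\calO$; but this is precisely a relative restatement of the first paragraph of \cref{propn:fibrous-over} and requires no new idea.
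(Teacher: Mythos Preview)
Your argument is correct and is precisely the natural way to fill in what the paper leaves implicit: the paper states this corollary without proof, treating it as an immediate consequence of \cref{propn:fibrous-over}, and your proof is exactly the detailed unpacking of that implication via the slice equivalence $\Cat_{\infty/\calP} \simeq (\Cat_{\infty/\calO})_{/\calP}$ restricted first to the inert-cocartesian subcategories and then to the fibrous full subcategories.
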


\begin{ex}\label{ex:fibrous-for-operads}
    Let $(\pi\colon  \calO \to \xF_*) \in \Opd_\infty$ be an $\infty$-operad
    in the sense of Lurie, \ie{} a fibrous $\xF_*$-pattern.
    Applying \cref{cor:compeqWSF} we obtain an equivalence:
    \[
        \Fbrs(\calO) \isoto \Fbrs(\xF_*)_{/\calO} = \Opd_{\infty/\calO}
    \]
    so fibrous $\calO$-patterns are simply $\infty$-operads over $\calO$.
\end{ex}

\begin{lemma}\label{lem:pullback-of-fibrous}
  Suppose $f \colon \mathcal{O} \to \mathcal{P}$ is a strong Segal morphism.
  Then pullback along $f$ restricts to a functor 
  \[ 
  f^{*} \colon \Fbrs(\mathcal{P}) \to \Fbrs(\mathcal{O}) , \qquad
  (\pi \colon \calF \to \calP) \mapsto (f^*\pi \colon \calF \times_\calP \calO \to \calO).
  \]
\end{lemma}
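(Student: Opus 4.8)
The plan is to verify directly that $f^{*}\pi \colon \calF \times_{\calP} \calO \to \calO$ satisfies the two conditions of \cref{defn:fibrous}; since pullback along $f$ is already a functor $\CatIsl{\calP} \to \CatIsl{\calO}$, the only genuine content is this object-level check, together with the observation that $f^{*}$ carries inert-cocartesian-preserving maps to inert-cocartesian-preserving maps. Throughout write $\mathcal{Q} := \calF \times_{\calP} \calO$ and $q \colon \mathcal{Q} \to \calO$ for the projection.

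For condition (1), note that $f$ preserves inert morphisms, so any inert $\omega \colon O \intto O'$ in $\calO$ has inert image $f(\omega)$ in $\calP$; since $\calF$ is fibrous over $\calP$ it admits a cocartesian lift of $f(\omega)$, and because cocartesian morphisms are stable under base change (\cite[Proposition 2.4.1.3]{HTT}) this produces a $q$-cocartesian lift of $\omega$ in $\mathcal{Q}$, detected by the projection to $\calF$. In particular $\mathcal{Q} \in \CatILcoc{\calO}$, so by \cref{propn:free-fibration-is-cocartesian} the functor $\mathcal{Q} \times_{\calO} \Aract(\calO) \to \calO$ is a cocartesian fibration and its straightening is defined.

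For condition (2), I will use \cref{obs:fbrsrelseg}, which reformulates it as the assertion that $\St{\calO}{}(\mathcal{Q} \times_{\calO} \Aract(\calO)) \to \Afun{\calO}$ is a relative Segal $\calO$-object. The crux is the identification of iterated fiber products
\[ \mathcal{Q} \times_{\calO} \Aract(\calO) \;\simeq\; f^{*}\bigl(\calF \times_{\calP} \Aract(\calP)\bigr) \times_{f^{*}\Aract(\calP)} \Aract(\calO), \]
obtained by unwinding the defining pullbacks and using that $f$ preserves active maps, so that $\Aract(f)\colon \Aract(\calO) \to \Aract(\calP)$ is defined and compatible with source and target evaluation. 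Straightening this cartesian square over $\calO$ exhibits $\St{\calO}{}(\mathcal{Q} \times_{\calO} \Aract(\calO)) \to \Afun{\calO}$ as the pullback, along the canonical map $\Afun{\calO} \to f^{*}\Afun{\calP}$, of the restricted transformation $f^{*}\St{\calP}{}(\calF \times_{\calP} \Aract(\calP)) \to f^{*}\Afun{\calP}$. Since $\calF$ is fibrous over $\calP$, \cref{obs:fbrsrelseg} gives that $\St{\calP}{}(\calF \times_{\calP}\Aract(\calP)) \to \Afun{\calP}$ is a relative Segal $\calP$-object; because $f$ is a strong Segal morphism, \cref{rmk:strong Seg pres rel} shows its restriction along $f$ is a relative Segal $\calO$-object; and \cref{rel Segal pullback} shows that the further pullback along $\Afun{\calO} \to f^{*}\Afun{\calP}$ remains a relative Segal $\calO$-object. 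This is exactly condition (2).

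The main obstacle is the middle step of the previous paragraph: establishing the cartesian square that realizes $\St{\calO}{}(\mathcal{Q} \times_{\calO} \Aract(\calO))$ as a pullback of $f^{*}$ of the relative Segal object for $\calF$ over $\calP$. The subtlety is that $\Afun{\calO} \neq f^{*}\Afun{\calP}$, since the comparison $\calO^{\act}_{/O} \to \calP^{\act}_{/f(O)}$ is typically not an equivalence; one therefore cannot simply transport the relative Segal object along $f$, but must pull back \emph{further} along $\Afun{\calO} \to f^{*}\Afun{\calP}$, and identifying $\St{\calO}{}(\mathcal{Q} \times_{\calO}\Aract(\calO))$ with this double pullback requires the careful bookkeeping of fiber products over $\calP$ and $\calO$ above (in particular the factorization of $\calO^{\act}_{/O} \to \calP$ through $\calP^{\act}_{/f(O)}$). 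Once this is in place, functoriality on morphisms is immediate: a map of fibrous $\calP$-patterns preserves inert-cocartesian morphisms, hence so does its pullback by the base-change description of these morphisms in condition (1), and it is manifestly a map over $\calO$; thus $f^{*}$ restricts to a functor $\Fbrs(\calP) \to \Fbrs(\calO)$ as claimed.
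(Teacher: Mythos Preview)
Your proposal is correct and follows essentially the same approach as the paper's proof: both verify condition~(1) via stability of cocartesian morphisms under base change, then for condition~(2) identify $\mathcal{Q} \times_{\calO} \Aract(\calO)$ as the pullback of $f^{*}(\calF \times_{\calP} \Aract(\calP))$ along $\Aract(\calO) \to f^{*}\Aract(\calP)$, straighten this cartesian square over $\calO$, and apply \cref{obs:fbrsrelseg}, \cref{rmk:strong Seg pres rel}, and \cref{rel Segal pullback} in exactly that order. The paper's write-up is terser and does not dwell on the fiber-product identification you flag as the ``main obstacle'' (it records it in one line as $f^{*}\mathcal{F} \times_{\mathcal{O}} \Aract(\calO) \simeq \mathcal{F} \times_{\mathcal{P}} \Aract(\calO)$), but the argument is the same.
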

\begin{proof}
    Suppose $\pi \colon \mathcal{F} \to \mathcal{P}$ is a
    $\mathcal{P}$-fibrous pattern. Condition \ref{fbrs1} in
    \cref{defn:fibrous} for $f^{*}\mathcal{F}$ follows from the usual description of cocartesian
    morphisms in a pullback, since $f$ preserves inert morphisms. 
    To prove \ref{fbrs2}, we observe that 
    $f^{*}\mathcal{F} \times_{\mathcal{O}} \Aract(\calO) 
    \simeq \mathcal{F} \times_{\mathcal{P}} \Aract(\calO)$,
    so that we have a cartesian square
    \[
      \begin{tikzcd}
        f^{*}\mathcal{F} \times_{\mathcal{O}} \Aract(\calO) \arrow{r} \arrow{d} &
        \mathcal{F} \times_{\mathcal{P}} \Aract(\calP) \times_\calP \calO
    \arrow{d} \\
    \Aract(\calO) \arrow{r} & \Aract(\calP) \times_{\calP} \calO
      \end{tikzcd}
    \]
    of cocartesian fibrations over $\calO$.
    Straightening yields the cartesian square:
    \[
      \begin{tikzcd}
        \StOint(f^*\calF) \arrow{r} \arrow{d} &
        \St{\calP}{\xint}(\calF) \circ f \arrow{d} \\
        \Afun{\mathcal{O}} \arrow{r} & 
        \Afun{\mathcal{P}} \circ f
      \end{tikzcd}
    \]
    of functors $\calO \to \CatI$.
    By \cref{obs:fbrsrelseg} the natural transformation $\St{\calP}{\xint}(\calF) \to \calA_\calP$ is a relative $\calP$-Segal \icat.
    This remains true after precomposing with $f$ 
    (by \cref{rmk:strong Seg pres rel}, since $f$ is strong Segal) .
    Hence the right vertical map in the square is a relative $\calO$-Segal \icat and 
    by \cref{rel Segal pullback} so is the left vertical arrow.
    Using \cref{obs:fbrsrelseg} again we see that $f^*\calF$ is fibrous.
\end{proof}

\begin{ex}
    The morphism
    $\mathfrak{c} \colon \simp^{\op,\flat} \to \xF_{*}$
    from \cref{ex:Dop->F-isoSegal}
    is iso-Segal and hence \cref{lem:pullback-of-fibrous} 
    shows that pulling back along it defines a functor:
    \[
        \mathfrak{c}^*\colon  \Fbrs(\xF_*) \longrightarrow \Fbrs(\simp^{\op,\flat}).
    \]
    Under the identifications of
    \cref{exs:fibrous-patterns-generalize-operads}
    this is exactly the forgetful functor from (symmetric) $\infty$-operads
    to non-symmetric $\infty$-operads.
  \end{ex}

Finally, let us note that we can lift the comparison of \cref{propn:Segmndcomp} to fibrous
patterns:
\begin{propn}\label{propn:pullback-of-fibrous-along-morita-equivalence}
    Suppose $f \colon \calO \to \calP$ is a strong Segal morphism that satisfies the conditions of \cref{propn:Segmndcomp} and let $\pi\colon  \calQ \to \calP$ be a fibrous pattern.
    Then $\overline{f} \colon f^{\ast}\calQ \to \calQ$ is also a strong Segal morphism that satisfies the conditions of \cref{propn:Segmndcomp} and thus induces an equivalence
      \[ \overline{f}^{*} \colon \Seg_{\calQ}(\mathcal{S}) \isoto \Seg_{f^{*}\calQ}(\mathcal{S}). \]
\end{propn}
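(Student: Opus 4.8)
The plan is to verify the hypotheses of \cref{propn:Segmndcomp} for the functor $\overline{f}$ and then invoke that proposition with $\calC = \mathcal{S}$. Throughout, $\calQ$ carries the fibrous $\calP$-pattern structure of \cref{defn:pattern-structure-on-fibrous}, while $f^{*}\calQ = \calQ \times_{\calP} \calO$ carries the fibrous $\calO$-pattern structure: it is indeed a fibrous $\calO$-pattern by \cref{lem:pullback-of-fibrous}, using that $f$ is a strong Segal morphism. The first step is to record two pullback descriptions obtained from the defining square $f^{*}\calQ = \calQ \times_{\calP} \calO$ by passing to inert and active subcategories,
\[ (f^{*}\calQ)^{\xint} \simeq \calO^{\xint} \times_{\calP^{\xint}} \calQ^{\xint}, \qquad (f^{*}\calQ)^{\act} \simeq \calO^{\act} \times_{\calP^{\act}} \calQ^{\act}. \]
For the first I would use that the inert morphisms of a fibrous pattern are precisely the cocartesian lifts of inert morphisms in the base, so that the description of cocartesian morphisms in a fiber product identifies them with the pairs whose $\calQ$-component is inert; for the second, that any morphism lying over an active map is active by definition. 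These identifications immediately show that $\overline{f}$ preserves inert, active and elementary morphisms, so that it is a morphism of patterns.

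Next I would establish condition (1) of \cref{propn:Segmndcomp}, namely that $\overline{f}^{\el}$ is an equivalence. Restricting the inert identification above to the elementary objects (those lying over $\calO^{\el}$, resp.\ $\calP^{\el}$) and using that $f$ preserves elementaries gives a further pullback square
\[ (f^{*}\calQ)^{\el} \simeq \calO^{\el} \times_{\calP^{\el}} \calQ^{\el}. \]
Since $f^{\el} \colon \calO^{\el} \to \calP^{\el}$ is an equivalence by hypothesis, its base change $\overline{f}^{\el} \colon (f^{*}\calQ)^{\el} \to \calQ^{\el}$ is an equivalence as well. As any equivalence of \icats{} induces equivalences on all coslices, it follows that $\overline{f}$ is iso-Segal, and in particular a strong Segal morphism; equivalently one can combine \cref{rmk:pattern-iso-Segal}, which identifies the coslice functors $(f^{*}\calQ)^{\el}_{(Q,O)/} \to \calO^{\el}_{O/}$ and $\calQ^{\el}_{Q/} \to \calP^{\el}_{f(O)/}$ as equivalences, with the assumption that $f$ is strong Segal.

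It remains to check condition (2), that $\overline{f}$ induces an equivalence $((f^{*}\calQ)^{\act}_{/(Q,O)})^{\simeq} \to (\calQ^{\act}_{/Q})^{\simeq}$ for every object $(Q,O)$ of $f^{*}\calQ$, where $\pi(Q) \simeq f(O)$. Here I would use that both slicing and the groupoid-core functor $(\blank)^{\simeq}$ preserve pullbacks. Slicing the active identification over $(Q,O)$ gives
\[ (f^{*}\calQ)^{\act}_{/(Q,O)} \simeq \calO^{\act}_{/O} \times_{\calP^{\act}_{/f(O)}} \calQ^{\act}_{/Q}, \]
and passing to cores turns this into $(\calO^{\act}_{/O})^{\simeq} \times_{(\calP^{\act}_{/f(O)})^{\simeq}} (\calQ^{\act}_{/Q})^{\simeq}$; under this identification the functor induced by $\overline{f}$ is the projection to the last factor, i.e.\ the base change of $(\calO^{\act}_{/O})^{\simeq} \to (\calP^{\act}_{/f(O)})^{\simeq}$. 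As the latter is an equivalence by the corresponding hypothesis on $f$, so is its base change, which is condition (2). With all hypotheses verified, \cref{propn:Segmndcomp} yields the desired equivalence $\overline{f}^{*} \colon \Seg_{\calQ}(\mathcal{S}) \isoto \Seg_{f^{*}\calQ}(\mathcal{S})$.

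The main obstacle I anticipate is the two pullback identifications of the inert and active subcategories of $f^{*}\calQ$: these are where the fibrous pattern structures genuinely interact with the fiber product, and they require care with the description of cocartesian morphisms in $\calQ \times_{\calP} \calO$ together with the fact that $\calQ^{\xint} \to \calP^{\xint}$ is a cocartesian fibration. Once these are in place, the remaining verifications are formal, reducing each hypothesis for $\overline{f}$ to the corresponding hypothesis for $f$ by base change, since slices and cores preserve the relevant limits.
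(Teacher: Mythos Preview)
Your proposal is correct and follows essentially the same approach as the paper: both arguments verify the hypotheses of \cref{propn:Segmndcomp} by reducing to the corresponding hypotheses on $f$ via base change, using \cref{rmk:pattern-iso-Segal} for the strong Segal property and a pullback description of the active slices for condition~(2) (the paper invokes \cite{HTT}*{Lemma 5.4.5.4} for this, which is the formal input behind your ``slicing preserves pullbacks'' step). One small caveat: your first argument for iso-Segal (``any equivalence induces equivalences on all coslices'') does not literally apply, since you need $(f^{*}\calQ)^{\el}_{X/} \to \calQ^{\el}_{\overline{f}(X)/}$ for \emph{arbitrary} $X$ rather than coslices inside the elementary subcategory itself---but your alternative via \cref{rmk:pattern-iso-Segal} is exactly the paper's argument and suffices.
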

\begin{proof}
    Denote by $\pi' \colon \calQ':= f^{\ast} \calQ \to \calO$ the projection map.
    Since $\calQ$ is fibrous and $f$ is strong Segal, it follows from \cref{lem:pullback-of-fibrous} that $\calQ'$ is also fibrous.
    By \cref{rmk:pattern-iso-Segal} we have $\calQ^{\el}_{Q/} \simeq \calP^{\el}_{\pi(Q)/}$ and similarly for $\calQ'$ and $\calO$.
    The map $(\calQ')^{\el}_{Q/} \to \calQ^{\el}_{\overline{f}(Q)/}$ thus identifies with 
    $\calO^{\el}_{\pi'(Q)/} \to \calP^{\el}_{f(\pi'(Q))/}$ which is
    coinitial by the assumption that $f$ is strong Segal.
    We conclude that $\overline{f}$ is strong Segal.
    We proceed by verifying the conditions.
    Condition $(1)$ of \cref{propn:Segmndcomp} is visibly stable under basechange so it remains to check $(2)$.
    Observe that for every object $Q \in \calQ'$ that lies over $O \in \calO$
    we have by \cite[Lemma 5.4.5.4]{HTT} a pullback square of slice \icats{} 
    \[\begin{tikzcd}
	{\calQ'_{/Q}} & \calQ_{/\overline{f}(Q)} \\
	{\calO_{/O}} & \calP_{/f(O)}.
	\arrow[from=1-1, to=1-2]
	\arrow[from=1-2, to=2-2]
	\arrow[from=1-1, to=2-1]
	\arrow[from=2-1, to=2-2]
	\arrow["\lrcorner"{anchor=center, pos=0.125}, draw=none, from=1-1, to=2-2]
    \end{tikzcd}\]
    By assumption the bottom map induces an equivalence on the underlying spaces of active maps and since the square is cartesian the same holds for the top map.
\end{proof}

\subsection{Segal envelopes}\label{sec:segenv}

In this section we will specialize our results from \cref{sec:factenv}
to fibrous $\mathcal{O}$-patterns over an algebraic pattern
$\mathcal{O}$. Recall that we have shown that from the inert--active factorization system on $\mathcal{O}$ we obtain an adjunction
\[ (\blank) \times_{\mathcal{O}} \Aract(\mathcal{O}) \colon
  \CatIOintcoc \rightleftarrows
(\Cat_{\infty/\mathcal{O}}^{\txt{cocart}})_{/\Aract(\mathcal{O})}, \]
where the right adjoint is given by pulling back along the map $\mathcal{O} \to \Aract(\mathcal{O})$ given by the degeneracy $[1] \to [0]$. This can equivalently be interpreted as a 
``straightening--unstraightening'' adjunction
\[
    \StOint \colon
    \CatIOintcoc \rightleftarrows
    \Fun(\calO, \CatI)_{/\calA_\calO}
    :\! \UnOint
\]
in which the left adjoint is fully faithful 
with image the $\calA_\calO$-equifibered functors.

We can immediately identify the image of the full subcategory $\Fbrs(\mathcal{O})$
under this fully faithful functor:
\begin{propn}\label{propn:general-fully-faithful-envelope}
  For any  algebraic pattern $\calO$, the fully faithful functor
  $\StOint$ identifies $\Fbrs(\mathcal{O})$ with the full subcategory
  of $\Fun(\calO,\CatI)_{/\calA_{\calO}}$ spanned by the equifibered
  maps that are also relative Segal objects. In other words, the
  functor $\StOint$ restricts to a fully faithful functor
    \[ \sliceEnv{\calO} := \StOint|_{\Fbrs(\mathcal{O})} \colon \Fbrs(\mathcal{O}) \hookrightarrow
    \Seg^{/\mathcal{A}_{\mathcal{O}}}_{\mathcal{O}}(\CatI) \]
  with image the equifibered objects. Moreover, for any strong Segal morphism $f \colon \mathcal{O} \to \mathcal{P}$, we have a commutative square
  \begin{equation}
    \label{eq:segenvsq}
    \begin{tikzcd}
      \Fbrs(\mathcal{P}) \arrow[hook]{d}{\sliceEnv{\mathcal{P}}} \arrow{r}{f^{*}} & 
      \Fbrs(\mathcal{O}) \arrow[hook]{d}{\sliceEnv{\mathcal{O}}} \\
      \Seg^{/\mathcal{A}_{\mathcal{P}}}_{\mathcal{P}}(\CatI) \arrow{r}{f^{\ostar}} & 
      \Seg^{/\mathcal{A}_{\mathcal{O}}}_{\mathcal{O}}(\CatI)
    \end{tikzcd}
  \end{equation}
  where the functor $f^{\ostar}$ is given by the composite
  \[ \Seg^{/\mathcal{A}_{\mathcal{P}}}_{\mathcal{P}}(\CatI) \xto{f^{*}} \Seg^{/f^{*}\mathcal{A}_{\mathcal{P}}}_{\mathcal{O}}(\CatI) \to \Seg^{/\mathcal{A}_{\mathcal{O}}}_{\mathcal{O}}(\CatI)\]
  of restriction along $f$ and pullback along the natural map $\Afun{\mathcal{O}} \to f^{*}\Afun{\mathcal{P}} $ (\cf{} \cref{rmk:strong Seg pres rel} and \cref{rel Segal pullback}).
\end{propn}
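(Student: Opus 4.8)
The plan is to derive the whole statement formally from \cref{thm:partial-SU} together with the identification of fibrous patterns as relative Segal objects.

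For the first assertion, I would apply \cref{thm:partial-SU} to $\calO$ equipped with its inert--active factorization system, so that $\calO^{\xint}$ is the left class, $\calO^{\act}$ the right class, $\Aract(\calO)=\Ar_{R}(\calO)$, and $\calA_{\calO}$ plays the role of $\calR$. This produces the fully faithful functor $\StOint \colon \CatILcoc{\calO} \hookrightarrow \Fun(\calO,\CatI)_{/\calA_{\calO}}$ whose essential image is the collection of equifibered transformations $F \to \calA_{\calO}$. Now $\Fbrs(\calO)$ is by definition the full subcategory of $\CatILcoc{\calO}=\CatIOintcoc$ on the fibrous patterns, and by \cref{obs:fbrsrelseg} an object $\calP$ of this ambient category is fibrous exactly when $\StOint(\calP)$ is a relative Segal $\calO$-object. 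Restricting the fully faithful $\StOint$ to this full subcategory therefore gives a fully faithful functor $\sliceEnv{\calO}$ that factors through $\Seg^{/\calA_{\calO}}_{\calO}(\CatI)$, with essential image the intersection of the equifibered transformations with the relative Segal objects, i.e.\ the equifibered objects of $\Seg^{/\calA_{\calO}}_{\calO}(\CatI)$.

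For the commuting square, I would obtain it by restricting a larger square between the ambient categories, whose unstraightened form is exactly \cref{eq:fostarEsq} of \cref{obs:basechange-envelope} (taking the source pattern there to be $\calO$ and the target to be $\calP$). That square commutes because the Beck--Chevalley map $\rmE_{\calO}f^{*} \to f^{\ostar}\rmE_{\calP}$ is an equivalence there, both sides computing $\calE \times_{\calP} \Aract(\calO)$ at a $\calP$-fibrous pattern $\calE$. Using that $\StOint = \St{\calO}{}\circ\rmE_{\calO}$ and $\St{\calP}{\xint} = \St{\calP}{}\circ\rmE_{\calP}$ by \cref{defn:partial-s/u}, I would postcompose \cref{eq:fostarEsq} with the straightening equivalences $\St{\calO}{}$ and $\St{\calP}{}$ to obtain a commuting square
\[
  \begin{tikzcd}
    \CatILcoc{\calP} \arrow{d}{\St{\calP}{\xint}} \arrow{r}{f^{*}} & \CatILcoc{\calO} \arrow{d}{\StOint} \\
    \Fun(\calP,\CatI)_{/\calA_{\calP}} \arrow{r} & \Fun(\calO,\CatI)_{/\calA_{\calO}}.
  \end{tikzcd}
\]
It then remains to identify the bottom arrow with the $f^{\ostar}$ of the statement: under straightening the pullback of cocartesian fibrations along $f$ becomes restriction along $f$, while the subsequent pullback along $q\colon \Aract(\calO)\to f^{*}\Aract(\calP)$ becomes base change along the induced natural map $\calA_{\calO}\to f^{*}\calA_{\calP}$, which is precisely the composite of restriction and pullback described in the proposition.

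Finally I would restrict the displayed square to the relevant full subcategories. The vertical functors restrict to $\sliceEnv{\calP}$ and $\sliceEnv{\calO}$ by the first part; the top functor restricts to $f^{*}\colon \Fbrs(\calP)\to\Fbrs(\calO)$ by \cref{lem:pullback-of-fibrous} (which uses that $f$ is a strong Segal morphism); and the bottom functor restricts to a functor $\Seg^{/\calA_{\calP}}_{\calP}(\CatI)\to\Seg^{/\calA_{\calO}}_{\calO}(\CatI)$ because restriction along a strong Segal morphism preserves relative Segal objects (\cref{rmk:strong Seg pres rel}) and pullback preserves them (\cref{rel Segal pullback}). Since a commuting square restricts to a commuting square on full subcategories, this yields \cref{eq:segenvsq}. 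The one genuine point of care --- the main obstacle --- is the bookkeeping in matching the straightened base-change functor with the explicit ``restrict, then pull back'' description, and in particular fixing the direction $\calA_{\calO}\to f^{*}\calA_{\calP}$ of the comparison map; everything else is formal given \cref{thm:partial-SU}, \cref{obs:fbrsrelseg}, and \cref{obs:basechange-envelope}.
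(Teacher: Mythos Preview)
Your proposal is correct and follows essentially the same approach as the paper: invoke \cref{obs:fbrsrelseg} (together with \cref{thm:partial-SU}) for the first assertion, and obtain \cref{eq:segenvsq} by restricting the square \cref{eq:fostarEsq} from \cref{obs:basechange-envelope} to the relevant full subcategories. The paper's proof is a terse two sentences citing exactly these ingredients; you have simply spelled out the identifications (in particular the straightened description of $f^{\ostar}$ and the direction $\calA_{\calO}\to f^{*}\calA_{\calP}$) in more detail, which is fine.
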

\begin{proof}
  From \cref{obs:fbrsrelseg} we know that an object $\mathcal{P}$ of
  $\CatIOintcoc$ is a fibrous $\mathcal{O}$-pattern \IFF{}
  $\StOint(\mathcal{P})$ is a relative Segal $\mathcal{O}$-object in
  $\CatI$. The commutative square \cref{eq:segenvsq} likewise follows
  by restricting the square \cref{eq:fostarEsq} in
  \cref{obs:basechange-envelope} to full subcategories.
\end{proof}

From this observation we can deduce some pleasant properties of the \icats{} of fibrous patterns:

\begin{cor}\label{cor:fibrouspresentable}
  For any algebraic pattern $\mathcal{O}$, the \icat{}
  $\Fbrs(\mathcal{O})$ is presentable, and fits in a cartesian square
  of fully faithful right adjoints
  \[
    \begin{tikzcd}
      \Fbrs(\mathcal{O}) \ar[r, hook, "{\sliceEnv{\calO}}"] \ar[d, hook] &
      \Seg^{/\Afun{\mathcal{O}}}_{\mathcal{O}}(\CatI)  \ar[d, hook] \\
      \CatIOintcoc \ar[r, hook, "\StOint"] &
      \Fun(\calO^\op, \CatI)_{/\calA_\calO}.
    \end{tikzcd}
  \]
\end{cor}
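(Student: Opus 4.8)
The plan is to obtain the corollary formally from three results already in hand. Set $\mathcal{D} := \Fun(\calO,\CatI)_{/\calA_{\calO}}$ and let $\mathcal{A}, \mathcal{B} \subseteq \mathcal{D}$ be the full subcategories of equifibered functors and of relative Segal objects, respectively. Specializing \cref{thm:partial-SU} and \cref{lem:Env-admits-left-adjoint} to $\calO$ with its inert--active factorization system, the functor $\StOint$ is fully faithful with essential image $\mathcal{A}$, and $\mathcal{A}$ is an accessible (reflective) localization of the presentable \icat{} $\mathcal{D}$; dually \cref{lem:relative-Segal-strongly-reflective} shows $\mathcal{B}$ is an accessible localization of $\mathcal{D}$. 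In particular the bottom and right functors of the square are fully faithful right adjoints, and their sources $\CatIOintcoc \simeq \mathcal{A}$ and $\Seg^{/\Afun{\calO}}_{\calO}(\CatI) = \mathcal{B}$ are presentable.

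I would first verify that the square is cartesian. All four functors are fully faithful, and the square commutes because $\sliceEnv{\calO} = \StOint|_{\Fbrs(\calO)}$ by construction. The essential image of the bottom map is $\mathcal{A}$, that of the right map is $\mathcal{B}$, and by \cref{propn:general-fully-faithful-envelope} the essential image of the composite $\Fbrs(\calO) \to \mathcal{D}$ is exactly $\mathcal{A} \cap \mathcal{B}$. Since a pullback of fully faithful inclusions into $\mathcal{D}$ is the full subcategory on the intersection of their essential images, the comparison functor $\Fbrs(\calO) \to \CatIOintcoc \times_{\mathcal{D}} \Seg^{/\Afun{\calO}}_{\calO}(\CatI)$ is an equivalence, establishing cartesianness.

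I would then produce the remaining left adjoints together with presentability. By the proofs of \cref{lem:Env-admits-left-adjoint} and \cref{lem:relative-Segal-strongly-reflective} (via \cref{obs:rightortholoc}), $\mathcal{A}$ and $\mathcal{B}$ are the subcategories of objects local with respect to explicit sets $S_{\mathcal{A}}$ and $S_{\mathcal{B}}$; hence $\mathcal{A} \cap \mathcal{B}$ is the localization at $S_{\mathcal{A}} \cup S_{\mathcal{B}}$, again an accessible localization of $\mathcal{D}$ and in particular presentable. Transporting along the equivalence $\StOint$ shows $\Fbrs(\calO)$ is presentable. Writing $L \colon \mathcal{D} \to \mathcal{A} \cap \mathcal{B}$ for the associated localization functor, restricting $L$ along the inclusions $\mathcal{A} \hookrightarrow \mathcal{D}$ and $\mathcal{B} \hookrightarrow \mathcal{D}$ yields left adjoints to the inclusions $\mathcal{A} \cap \mathcal{B} \hookrightarrow \mathcal{A}$ and $\mathcal{A} \cap \mathcal{B} \hookrightarrow \mathcal{B}$, which under the identifications above are precisely the left vertical map $\Fbrs(\calO) \hookrightarrow \CatIOintcoc$ and the top map $\sliceEnv{\calO}$. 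Thus all four functors are fully faithful right adjoints.

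The new mathematical content lives entirely in the cited results, so the argument is essentially formal. The single point that needs care is the claim that the localization of $\mathcal{D}$ onto $\mathcal{A} \cap \mathcal{B}$ restricts to localizations of $\mathcal{A}$ and of $\mathcal{B}$: for $X \in \mathcal{A}$ and $Y \in \mathcal{A} \cap \mathcal{B}$ one computes $\Map_{\mathcal{A}}(LX,Y) \simeq \Map_{\mathcal{D}}(LX,Y) \simeq \Map_{\mathcal{D}}(X,Y) \simeq \Map_{\mathcal{A}}(X,Y)$ using fullness of $\mathcal{A}$ and the adjunction $L \dashv (\mathcal{A}\cap\mathcal{B}\hookrightarrow\mathcal{D})$, and symmetrically for $\mathcal{B}$; this exhibits $L|_{\mathcal{A}}$ and $L|_{\mathcal{B}}$ as the required left adjoints.
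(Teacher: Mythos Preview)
Your proposal is correct and follows essentially the same route as the paper: both establish the cartesian square from \cref{propn:general-fully-faithful-envelope} and then use that the bottom and right maps are accessible localizations (via \cref{lem:Env-admits-left-adjoint} and \cref{lem:relative-Segal-strongly-reflective}). The only difference is in the final step: the paper invokes \cite{HTT}*{Theorem 5.5.3.18} (pullbacks in $\Pr^{R}$ are computed in $\CatI$) to conclude the remaining corners lie in $\Pr^{R}$, whereas you argue directly that the intersection of two accessible localizations at sets $S_{\mathcal{A}}$ and $S_{\mathcal{B}}$ is the accessible localization at $S_{\mathcal{A}} \cup S_{\mathcal{B}}$ and then restrict the localization functor by hand --- a slightly more elementary and self-contained endgame that avoids the external citation.
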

\begin{proof}
  We know from \cref{propn:general-fully-faithful-envelope} that we
  have the given cartesian square of fully faithful functors; it remains to show that this is a square in $\Pr^R$.
  For the bottom horizontal and right vertical functor we have shown
  this in  \cref{lem:Env-admits-left-adjoint} and \cref{lem:relative-Segal-strongly-reflective},
  respectively.
  It now follows that the rest of the diagram also lies in $\Pr^R$, since
  the diagram is cartesian and by \cite[Theorem 5.5.3.18]{HTT} 
  $\Pr^R$ admits pullbacks and the inclusion $\Pr^R \subset \CatI$ preserves them.
\end{proof}

\begin{cor}\ 
  \begin{enumerate}[(1)]
  \item For any algebraic pattern $\mathcal{O}$, 
  the following functors admit left adjoints:
  \[
  \Fbrs(\mathcal{O}) \hookrightarrow \CatIOintcoc \to \CatIsl{\calO}.
  \]
  \item For any strong Segal morphism $f \colon \mathcal{O} \to
    \mathcal{P}$, the functor $f^{*} \colon \Fbrs(\mathcal{P}) \to
    \Fbrs(\mathcal{O})$ admits a left adjoint.
  \end{enumerate}
\end{cor}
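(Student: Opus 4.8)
The plan is to derive both parts from the presentability and adjunction results already in hand, invoking the adjoint functor theorem only where needed. For part (1) I would treat the two inclusions separately. The inclusion $\CatIOintcoc \hookrightarrow \CatIsl{\calO}$ is the forgetful functor $\CatILcoc{\calO} \to \CatIsl{\calO}$ associated to the inert--active factorization system on $\calO$ (with the inert maps as the left class), and this is shown to admit a left adjoint in \cref{obs:Lcoclim}. The inclusion $\Fbrs(\mathcal{O}) \hookrightarrow \CatIOintcoc$ is the left-hand vertical map of the cartesian square of \cref{cor:fibrouspresentable}; since that square lies in $\Pr^{R}$, this map is a right adjoint between presentable \icats{}, and so admits a left adjoint. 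A left adjoint to the composite inclusion is then obtained by composing the two left adjoints.

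For part (2) I would apply the adjoint functor theorem (\cite{HTT}*{Corollary 5.5.2.9}) to the functor $f^{*} \colon \Fbrs(\mathcal{P}) \to \Fbrs(\mathcal{O})$, which is well-defined by \cref{lem:pullback-of-fibrous}. Both \icats{} are presentable by \cref{cor:fibrouspresentable}, so it suffices to verify that $f^{*}$ preserves limits and is accessible. Limit preservation follows from \cref{lem:fbrslim}: limits in $\Fbrs(\mathcal{P})$ and $\Fbrs(\mathcal{O})$ are computed in $\CatI$ through the (fully faithful, limit-reflecting) forgetful functors, while on underlying \icats{} the functor $f^{*}$ is the pullback $(\blank) \times_{\mathcal{P}} \mathcal{O}$, which is a right adjoint on slices (its left adjoint being post-composition with $f$) and hence preserves all limits.

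For accessibility I would fix a regular cardinal $\kappa$ large enough that $\mathcal{O}^{\el}_{O/}$ and $\mathcal{P}^{\el}_{P/}$ are $\kappa$-small for every $O \in \mathcal{O}$ and $P \in \mathcal{P}$. Then \cref{lem:fbrslim} guarantees that $\kappa$-filtered colimits in both $\Fbrs$-categories are again computed in $\CatI$, and since $\kappa$-filtered colimits commute with the finite limit $(\blank) \times_{\mathcal{P}} \mathcal{O}$ in $\CatI$, the functor $f^{*}$ preserves $\kappa$-filtered colimits. The adjoint functor theorem then produces the desired left adjoint.

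The main obstacle is the accessibility step: one has to arrange a single cardinal $\kappa$ that simultaneously controls the filtered colimits in $\Fbrs(\mathcal{O})$ and $\Fbrs(\mathcal{P})$, and then check that these colimits, once computed in $\CatI$, are genuinely interchanged by the pullback $f^{*}$. This rests on the interchange of $\kappa$-filtered colimits with finite limits in $\CatI$ together with the colimit computation of \cref{lem:fbrslim}; with these in place the verification is formal, and everything else reduces to bookkeeping with the fully faithful forgetful functors.
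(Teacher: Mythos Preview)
Your proposal is correct and follows essentially the same approach as the paper: part (1) via the presentability results of \cref{cor:fibrouspresentable} and \cref{obs:Lcoclim}, and part (2) via the adjoint functor theorem using \cref{lem:fbrslim} to compute limits and $\kappa$-filtered colimits in $\CatI$ where pullback along $f$ preserves them. The paper's argument is terser but structurally identical.
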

\begin{proof}
    The first claim was shown in \cref{cor:fibrouspresentable} and \cref{obs:Lcoclim}.
    In particular limits and $\kappa$-filtered colimits in $\Fbrs(\calO)$ for appropriate $\kappa$ are computed in $\CatIsl{\calO}$. 
    This implies that $f^* \colon \Fbrs(\calP) \to \Fbrs(\calP)$ preserves limits and $\kappa$-filtered colimits,
    since we know pullback along $f$ preserves limits and filtered colimits as a functor $\CatIsl{\calP} \to \CatIsl{\calO}$.
    Hence the claim follows from the adjoint functor theorem.
\end{proof}

Note that in \cref{propn:general-fully-faithful-envelope} we only
showed that the left adjoint $\StOint$ restricts to a functor from
fibrous patterns to relative Segal objects --- in general the right
adjoint $\UnOint$ does not necessarily
take relative Segal $\mathcal{O}$-\icats{} over $\Afun{\mathcal{O}}$
to fibrous $\mathcal{O}$-patterns. 
However, this is the case if $\calO$ is sound; to see this, we first need a technical lemma:
\begin{lemma}\label{lem:relative-verySegal-iff-unstraighting-is-relative-fibrous}
    Let $\calO$ be a sound algebraic pattern and let $\gamma \colon X \to Y$
    be a morphism in $\Fun(\calO,\CatI)$, with $\Gamma \colon
    \mathcal{X} \to \mathcal{Y}$ denoting its unstraightening. Then the following are equivalent:
    \begin{enumerate}
        \item $\gamma \colon X \to Y$ is a relative Segal object.
        \item $\StOint(\Gamma) \colon \StOint(\mathcal{X}) \to \StOint(\mathcal{Y})$ is a
          relative Segal object, \ie{} the commutative square
    \[\begin{tikzcd}
    		\mathcal{X} \times_\calO \calO^\act_{/O} \ar[d] \ar[r] &
    		\lim_{E \in \calO^{\el}_{O/}} \mathcal{X} \times_\calO \calO^\act_{/E} \ar[d] \\
    		\mathcal{Y} \times_\calO \calO^\act_{/O} \ar[r] & 
    		\lim_{E \in \calO^{\el}_{O/}} \mathcal{Y} \times_\calO \calO^\act_{/E} 
    \end{tikzcd}\]
  is cartesian for all $O \in \mathcal{O}$.
    \end{enumerate}
\end{lemma}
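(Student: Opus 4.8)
The plan is to prove both implications by reducing the two squares to the double-limit comparison of \cref{lem:sound-stronger-Segal}, which is precisely where soundness of $\calO$ is used. Throughout, recall that $\mathcal{X} \to \calO$ and $\mathcal{Y}\to\calO$ are the cocartesian fibrations with fibres $\mathcal{X}_O \simeq X(O)$ and $\mathcal{Y}_O \simeq Y(O)$, and that $\StOint(\mathcal{X})(O) = \mathcal{X}\times_\calO\calO^\act_{/O}$ is the cocartesian fibration over $\calO^\act_{/O}$ whose fibre over an active map $\omega\colon O'\actto O$ is $X(O')$ (by \cref{prefbrsdef}); pushforward along an inert $\alpha\colon O\intto E$ sends $(\omega\colon O'\actto O)$ to $(\omega_{\alpha!}O'\actto E)$, with underlying transport $X(O')\to X(\omega_{\alpha!}O')$ the cocartesian lift of the inert map $\omega_\alpha$ of \cref{omega_alpha!}. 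Both squares in (1) and (2) live in $\CatI$, and I will use repeatedly that such a square is cartesian \IFF{} the induced square of maximal subgroupoids is cartesian and every induced square of mapping spaces is cartesian, exactly as in the proof of \cref{propn:fibrous-axioms}.

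The implication (2)$\Rightarrow$(1) needs no soundness. The identity $\id_O$ is an object of $\calO^\act_{/O}$, and I would take the fibre of the square in (2) over $\id_O$ on the left and over its image in $\lim_{E}\calO^\act_{/E}$ on the right. The fibre of $\mathcal{X}\times_\calO\calO^\act_{/O}$ over $\id_O$ is $X(O)$, while the fibre of $\lim_E \mathcal{X}\times_\calO\calO^\act_{/E}$ over the image of $\id_O$ is $\lim_E X(E)$, since $(\id_O)_{\alpha!}O = E$; the same holds for $\mathcal{Y}$. Thus this fibre square is precisely the square in (1), and since taking the fibre over a fixed object is a pullback, it preserves cartesian squares, so (1) follows at the level of \icats{}.

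For (1)$\Rightarrow$(2) I treat the subgroupoid square and the mapping-space squares separately, checking each fibrewise. The square in (2) is compatible with the square of base \icats{} $\calO^\act_{/O}\to\lim_E\calO^\act_{/E}$ carrying identity vertical maps; since the latter is trivially cartesian I may check cartesianness of (2) fibrewise over the relevant \emph{space}, namely $(\calO^\act_{/O})^\simeq$ for the subgroupoid square (as $(\blank)^\simeq$ is a right adjoint and preserves the limits involved). The fibre over an active $\omega\colon O'\actto O$ is the square
\[
\begin{tikzcd}
X(O')^{\simeq} \arrow{r} \arrow{d} & \lim_{\alpha\colon O \intto E} X(\omega_{\alpha!}O')^{\simeq} \arrow{d} \\
Y(O')^{\simeq} \arrow{r} & \lim_{\alpha\colon O \intto E} Y(\omega_{\alpha!}O')^{\simeq},
\end{tikzcd}
\]
which is exactly the conclusion of \cref{lem:sound-stronger-Segal} applied to $\omega$ with $F = X^\simeq$ and $G = Y^\simeq$ restricted to $\calO^\xint_{O'/}$. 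The hypotheses of that lemma are the subgroupoid parts of the squares in (1) at the objects of $\calO^\xint_{O'/}$, which hold by assumption; letting $O'$ and $\omega$ vary proves the subgroupoid square of (2) cartesian.

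The mapping-space squares are the crux, and the only place soundness really does work. Fixing objects $(x_0,\phi_0\colon O_0\actto O)$ and $(x_1,\phi_1\colon O_1\actto O)$ and using the description of mapping spaces in $\mathcal{X}\times_\calO\calO^\act_{/O}$ as in \cref{propn:fibrous-axioms}, I would check the mapping-space square fibrewise over a point $f\colon O_0\actto O_1$ of the space $\Map_{\calO^\act_{/O}}(\phi_0,\phi_1)$. The resulting fibre square has top-left corner $\Map_{\mathcal{X}}^f(x_0,x_1)$ and top-right corner $\lim_{\alpha\colon O\intto E}\Map_{\mathcal{X}}^{(\phi_1)_\alpha f}(x_0,(\phi_1)_{\alpha!}x_1)$, where cocartesianness of the lifts $x_0\to(\phi_0)_{\alpha!}x_0$ has been used to rewrite the source of each factor as $x_0$ (this is the identification defining the horizontal maps of condition \ref{wsf2} in \cref{propn:fibrous-axioms}). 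Invoking the cocartesian identification $\Map_{\mathcal{X}}^{u}(x_0,x_1)\simeq\Map_{X(O_1)}(u_!x_0,x_1)$ then turns every entry into a fibre mapping space, identifying the square with the conclusion of \cref{lem:sound-stronger-Segal} for the active morphism $\omega=\phi_1\colon O_1\actto O$ and the functors $F(g\colon O_1\intto O_1') = \Map_{X(O_1')}(g_!f_!x_0,\,g_!x_1)$ and $G(g) = \Map_{Y(O_1')}(\gamma g_!f_!x_0,\,\gamma g_!x_1)$ on $\calO^\xint_{O_1/}$. The hypotheses of the lemma are then exactly the mapping-space parts of the squares in (1) at the objects of $\calO^\xint_{O_1/}$, which hold by (1). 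Combining the subgroupoid and mapping-space squares shows (2) is cartesian. The main obstacle is precisely this last reduction: one must arrange the bookkeeping of the inert pushforwards $(\phi_i)_{\alpha!}$ so that condition (1) supplies exactly the hypotheses of \cref{lem:sound-stronger-Segal} while condition (2) becomes its conclusion.
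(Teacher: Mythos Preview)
Your argument is correct, but it takes a longer route than the paper's. The key simplification you missed is that the entire square in (2) sits over the square
\[
\begin{tikzcd}
\calO^{\act}_{/O} \arrow{r} \arrow[equals]{d} & \lim_{E \in \calO^{\el}_{O/}} \calO^{\act}_{/E} \arrow[equals]{d} \\
\calO^{\act}_{/O} \arrow{r} & \lim_{E \in \calO^{\el}_{O/}} \calO^{\act}_{/E}
\end{tikzcd}
\]
with all four vertical maps being \emph{cocartesian fibrations} and all maps in the top square preserving cocartesian morphisms. This means the square in (2) is cartesian \IFF{} the induced square of \emph{fibres} (as \icats{}, not just \igpds{}) over each $\omega \colon O' \actto O$ is cartesian. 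That fibre square is exactly
\[
\begin{tikzcd}
X(O') \arrow{r} \arrow{d} & \lim_{\alpha \in \calO^{\el}_{O/}} X(\omega_{\alpha!}O') \arrow{d} \\
Y(O') \arrow{r} & \lim_{\alpha \in \calO^{\el}_{O/}} Y(\omega_{\alpha!}O'),
\end{tikzcd}
\]
and this is the conclusion of \cref{lem:sound-stronger-Segal} applied once with $F=X$ and $G=Y$ as $\CatI$-valued functors on $\calO^{\xint}_{O'/}$. Taking $\omega = \id_O$ gives (2)$\Rightarrow$(1), and soundness gives (1)$\Rightarrow$(2), with no need to split into \igpd{} and mapping-space pieces. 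Your decomposition mimics the proof of \cref{propn:fibrous-axioms}, but that decomposition was only needed there because $\calP \to \calO$ is merely inert-cocartesian; here $\mathcal{X},\mathcal{Y} \to \calO$ are genuine cocartesian fibrations, so the fibrewise check works at the level of \icats{} directly.
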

\begin{proof}
    For $O \in \mathcal{O}$, we consider the following commutative diagram:
    \[\begin{tikzcd}[column sep=-1em]
    		\mathcal{X} \times_\calO \calO^\act_{/O} \ar[rrr] \ar[ddr]
                \ar[drr]  & & & 
    		\lim_{E \in \calO^{\el}_{O/}} \mathcal{X} \times_\calO
                \calO^\act_{/E} \ar[drr] \ar[ddr] \\
    		& &  \mathcal{Y} \times_\calO \calO^\act_{/O} \ar[rrr,
                crossing over]
                \ar[dl] &
                & &  
    		\lim_{E \in \calO^{\el}_{O/}} \mathcal{Y} \times_\calO
                \calO^\act_{/E}  \ar[dl] \\
    		& \calO^\act_{/O} \ar[rrr] & & &
    		\lim_{E \in \calO^{\el}_{O/}} \calO^\act_{/E}. 
    \end{tikzcd}\]
  Here all four functors to the bottom row are cocartesian fibrations,
  and the morphisms in the top square preserve cocartesian morphisms.
  We therefore see that condition (2), which asks for the top square
  to be cartesian, is equivalent to all squares of fibers over
  $\omega \colon O' \actto O$ in $\mathcal{O}^{\act}_{/O}$ being
  cartesian. 
  The relevant square of fibers is
    \[\begin{tikzcd}
	X(O') & \lim_{(\alpha\colon O \intto{} E) \in \calO^{\el}_{O/}} X(\omega_{\alpha!}O') \\
	Y(O') & \lim_{(\alpha\colon O \intto{} E) \in \calO^{\el}_{O/}} Y(\omega_{\alpha!}O').
	\arrow[from=1-2, to=2-2]
	\arrow[from=1-1, to=1-2]
	\arrow[from=1-1, to=2-1]
	\arrow[from=2-1, to=2-2]
      \end{tikzcd}\] Considering the special case $\omega = \id_O$ we
    see that (2) implies (1), while to see that the converse holds
    when $\mathcal{O}$ is sound we apply
    \cref{lem:sound-stronger-Segal} with $F=X$ and $G=Y$.
\end{proof}

\begin{propn}\label{prop:right-adjoint-to-envelope}
    If the pattern $\calO$ is sound, then the adjunction of \cref{defn:partial-s/u}
    restricts to an adjunction
    \[ \sliceEnv{\calO} \colon \Fbrs(\mathcal{O}) \adj
      \Seg^{/\mathcal{A}_{\mathcal{O}}}_{\mathcal{O}}(\CatI) :\!  \UnOint. \]
    Moreover, if $f \colon \mathcal{O} \to \mathcal{P}$ is a strong Segal morphism between sound patterns, then in addition to the square \cref{eq:segenvsq} we also have a commutative square
    \begin{equation}
      \label{eq:segradjsq}
    \begin{tikzcd}
      \Seg^{/\mathcal{A}_{\mathcal{P}}}_{\mathcal{P}}(\CatI) \arrow{r}{f^{\ostar}} \arrow{d}[swap]{\Un{\calP}{\xint}}& \Seg^{/\mathcal{A}_{\mathcal{O}}}_{\mathcal{O}}(\CatI) \arrow{d}{\Un{\calO}{\xint}} \\
      \Fbrs(\mathcal{P})  \arrow{r}{f^{*}} & \Fbrs(\mathcal{O}).
    \end{tikzcd}      
    \end{equation}
\end{propn}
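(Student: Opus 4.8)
The plan is to deduce everything from the adjunction $\StOint \dashv \UnOint$ of \cref{defn:partial-s/u}, specialized to $\calB = \calO$ with its inert--active factorization system (so that $\calR = \calA_{\calO}$), together with the full faithfulness results already in hand. To see that this adjunction restricts to the full subcategories $\Fbrs(\calO) \subseteq \CatIOintcoc$ and $\Seg^{/\calA_{\calO}}_{\calO}(\CatI) \subseteq \Fun(\calO,\CatI)_{/\calA_{\calO}}$, it suffices to check that each of the two adjoints preserves the relevant subcategory: full faithfulness of the inclusions then upgrades this to an adjunction $\sliceEnv{\calO} \dashv \UnOint$ with $\sliceEnv{\calO} = \StOint|_{\Fbrs(\calO)}$. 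That $\StOint$ carries $\Fbrs(\calO)$ into relative Segal objects is exactly \cref{propn:general-fully-faithful-envelope} and needs no hypothesis on $\calO$; the whole role of soundness is to handle the reverse inclusion.

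So the essential step is to show that for a relative Segal object $\alpha \colon F \to \calA_{\calO}$ the object $\UnOint(\alpha)$ is a fibrous $\calO$-pattern. Write $\mathcal{X} := \Un{\calO}{}(F)$ and $\Gamma := \Un{\calO}{}(\alpha) \colon \mathcal{X} \to \Aract(\calO)$ for the unstraightening, so that by construction $\UnOint(\alpha) = \rmQ(\Gamma)$ is the pullback of $\Gamma$ along the identity section $i \colon \calO \to \Aract(\calO)$. This is a pullback in $\CatIOintcoc$ (pullbacks there are computed in $\CatI$), and since $\StOint = \St{\calO}{} \circ \rmE$ with $\St{\calO}{}$ preserving all limits and $\rmE$ preserving weakly contractible limits (as in the proof of \cref{propn:image-of-Env}), $\StOint$ preserves it. Using $\StOint(\calO) \simeq \calA_{\calO}$ this gives
\[ \StOint(\UnOint(\alpha)) \;\simeq\; \StOint(\mathcal{X}) \times_{\StOint(\Aract(\calO))} \calA_{\calO}, \]
with projection to $\calA_{\calO}$, in which the other leg is $\StOint(\Gamma)$ and the section is $\StOint(i)$. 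Now soundness enters through \cref{lem:relative-verySegal-iff-unstraighting-is-relative-fibrous}: applied to $\gamma = \alpha$, it shows that $\StOint(\Gamma)$ is a relative Segal object, whence by \cref{rel Segal pullback} its pullback $\StOint(\UnOint(\alpha)) \to \calA_{\calO}$ is relative Segal as well. By \cref{obs:fbrsrelseg} this says precisely that $\UnOint(\alpha)$ is fibrous, which is the inclusion we needed.

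For the commuting square \cref{eq:segradjsq} I would reduce to the unsliced base-change statements of \cref{obs:basechange-envelope}. Commutativity of the entire diagram \cref{eq:ostarradjsq} yields $f^{*} \circ \rmQ_{\calP} \simeq \rmQ_{\calO} \circ f^{\ostar}$ at the level of cocartesian fibrations, while straightening--unstraightening intertwines the Segal-level $f^{\ostar}$ (restriction along $f$ followed by pullback along $f^{*}\calA_{\calP} \to \calA_{\calO}$) with the fibre-product functor $f^{\ostar}$ on cocartesian fibrations, i.e.\ $\Un{\calO}{} \circ f^{\ostar} \simeq f^{\ostar} \circ \Un{\calP}{}$ (routine pullback pasting, cf.\ \cref{eq:fostarEsq}). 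Combining these with $\UnOint = \rmQ_{\calO} \circ \Un{\calO}{}$, $\Un{\calP}{\xint} = \rmQ_{\calP} \circ \Un{\calP}{}$, and $f^{*}$ on $\CatIsl{\calP}^{\intcoc}$ being ordinary pullback gives
\[ \Un{\calO}{\xint} \circ f^{\ostar} \;\simeq\; \rmQ_{\calO} \circ \Un{\calO}{} \circ f^{\ostar} \;\simeq\; \rmQ_{\calO} \circ f^{\ostar} \circ \Un{\calP}{} \;\simeq\; f^{*} \circ \rmQ_{\calP} \circ \Un{\calP}{} \;\simeq\; f^{*} \circ \Un{\calP}{\xint}. \]
All four functors in \cref{eq:segradjsq} are well defined under the hypotheses --- $f^{\ostar}$ preserves relative Segal objects by \cref{rmk:strong Seg pres rel} and \cref{rel Segal pullback} since $f$ is strong Segal, $f^{*}$ preserves fibrous patterns by \cref{lem:pullback-of-fibrous}, and both functors $\Un{}{\xint}$ preserve the respective subcategories by the first part (using soundness of $\calO$ and $\calP$) --- so the displayed equivalence restricts to the asserted square, which is the mate of \cref{eq:segenvsq} on right adjoints.

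The entire mathematical difficulty is concentrated in \cref{lem:relative-verySegal-iff-unstraighting-is-relative-fibrous}: the one genuinely new input is that over a \emph{sound} pattern a relative Segal natural transformation has relative-Segal image under $\StOint$, and this is exactly what forces the right adjoint $\UnOint$ to take values among fibrous patterns. Everything else is formal bookkeeping with adjunctions, pullbacks, and the straightening equivalence.
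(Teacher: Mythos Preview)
Your proof is correct and follows essentially the same approach as the paper's: both arguments reduce the claim that $\UnOint$ lands in fibrous patterns to showing that $\StOint \circ \UnOint$ preserves relative Segal objects, both apply $\StOint$ to the defining pullback for $\UnOint$ and invoke \cref{lem:relative-verySegal-iff-unstraighting-is-relative-fibrous} on the right vertical map, and both derive the commutative square \cref{eq:segradjsq} by restricting the diagram \cref{eq:ostarradjsq} to full subcategories. Your write-up is somewhat more explicit about why $\StOint$ preserves the pullback (factoring it as $\St{\calO}{} \circ \rmE$ and noting $\rmE$ preserves weakly contractible limits) and about the chain of identifications giving $\UnOint \circ f^{\ostar} \simeq f^{*} \circ \Un{\calP}{\xint}$, but the underlying structure is identical.
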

\begin{proof}
  We need to show that
  $\Un{\calO}{\xint} \colon \Fun(\calO,\CatI)_{/\calA_\calO} \to
  \CatIOintcoc$ sends $\calA_\calO$-relative Segal objects to fibrous
  $\calO$-patterns. Since we know an object of $\CatIOintcoc$ is
  fibrous \IFF{} its image under $\StOint$ is a relative Segal object,
  it suffices to show that $\StOint \circ \Un{\calO}{\xint}$ preserves
  relative Segal objects. 

  Let $X \to \calA_{\calO}$ be a relative Segal object; then
  $\StOint ( \Un{\calO}{\xint}(X))$ fits into a cartesian square
    \[
        \begin{tikzcd}
            \StOint ( \Un{\calO}{\xint}(X)) \ar[r] \ar[d] & 
            \St{\calO}{\xint} ( \mathrm{Un}_{\calO}(X)) \ar[d] \\
            \calA_{\calO} \ar[r] &
            \St{\calO}{\xint} (\mathrm{Un}_{\calO}(\calA_{\calO}))
        \end{tikzcd}
    \]
    obtained from applying $\St{\calO}{\xint}$ to the cartesian square
    defining $\Un{\calO}{\xint}(X)$. Since relative Segal objects are
    stable under base change by \ref{rel Segal pullback}, it suffices
    to show the right vertical map is a relative Segal object, which
    follows from
    \cref{lem:relative-verySegal-iff-unstraighting-is-relative-fibrous}. 
    The commutative square \cref{eq:segradjsq} follows by restricting the
    square \cref{eq:ostarradjsq} in \cref{obs:basechange-envelope} to
    full subcategories.
\end{proof}

For soundly extendable patterns $\calO$ we can furthermore think of
this adjunction as being induced by one between fibrous patterns and
Segal $\mathcal{O}$-\icats{}:
\begin{thm}\label{thm:envelope-for-soundly-extendable}
  Let $\mathcal{O}$ be a soundly extendable pattern. 
  Then there is an adjunction
  \[ \Env_{\mathcal{O}} \colon \Fbrs(\mathcal{O}) \rightleftarrows
    \Seg_{\mathcal{O}}(\CatI), \]
  where 
  $\Env_{\mathcal{O}}(\mathcal{P})(X) := \calP \times_{\calO}
  \calO^\act_{/X}$
  and the right adjoint is given by unstraightening.
  This induces an adjunction
  \[ 
    \sliceEnv{\calO} \colon
    \Fbrs(\mathcal{O}) \rightleftarrows
    \Seg_{\mathcal{O}}(\CatI)_{/\calA_{\calO}} 
    \]
  where the left adjoint is fully faithful and the image consists of
  the Segal $\mathcal{O}$-\icats{} that are equifibered over
  $\Afun{\mathcal{O}}$.
\end{thm}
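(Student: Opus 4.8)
The plan is to derive both adjunctions from the sliced envelope adjunction already available for sound patterns, using extendability only to replace the relative Segal category by an honest slice.

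First, since $\calO$ is soundly extendable it is in particular sound, so \cref{prop:right-adjoint-to-envelope} provides the adjunction
\[ \sliceEnv{\calO} \colon \Fbrs(\calO) \adj \Seg^{/\calA_{\calO}}_{\calO}(\CatI) :\! \UnOint, \]
and \cref{propn:general-fully-faithful-envelope} shows that $\sliceEnv{\calO}$ is fully faithful with image the equifibered relative Segal objects. The new input is extendability: by \cref{defn:soundly-extendable} the functor $\calA_{\calO}$ is itself a Segal $\calO$-\icat{}, so \cref{rmk:cancellation-for-relative-Segal-objects} identifies
\[ \Seg^{/\calA_{\calO}}_{\calO}(\CatI) = \Seg_{\calO}(\CatI)_{/\calA_{\calO}} \]
as full subcategories of $\Fun(\calO,\CatI)_{/\calA_{\calO}}$. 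Under this identification the displayed adjunction is precisely the sliced adjunction asserted in part (2), and both the full faithfulness of $\sliceEnv{\calO}$ and the characterization of its image as the equifibered Segal $\calO$-\icats{} are inherited verbatim from \cref{propn:general-fully-faithful-envelope}. This proves the second assertion.

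For part (1), I would compose the sliced adjunction with the forgetful--cofree adjunction on the slice. Since the Segal condition is closed under limits, $\Seg_{\calO}(\CatI)$ is closed under products in $\Fun(\calO,\CatI)$ (computed pointwise; see \cref{lem:relative-Segal-strongly-reflective} for the reflective inclusion), and the forgetful functor $U \colon \Seg_{\calO}(\CatI)_{/\calA_{\calO}} \to \Seg_{\calO}(\CatI)$ is left adjoint to $Y \mapsto (Y \times \calA_{\calO} \to \calA_{\calO})$. Setting $\Env_{\calO} := U \circ \sliceEnv{\calO}$ thus exhibits $\Env_{\calO}$ as a composite of left adjoints, hence a left adjoint, with right adjoint $Y \mapsto \UnOint(Y \times \calA_{\calO})$. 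Forgetting the structure map to $\calA_{\calO}$ leaves the underlying functor unchanged, so $\Env_{\calO}(\calP)$ is $X \mapsto \calP \times_{\calO} \calO^{\act}_{/X}$, as claimed.

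It then remains to identify this right adjoint with unstraightening, and this is the only genuinely non-formal step. Unwinding \cref{defn:partial-s/u}, $\UnOint(Y \times \calA_{\calO} \to \calA_{\calO})$ is the pullback of $\Un{\calO}{}(Y \times \calA_{\calO})$ along the identities-section $i \colon \calO \to \AractO$. As unstraightening is an equivalence it preserves products over $\calO$, giving $\Un{\calO}{}(Y \times \calA_{\calO}) \simeq \Un{\calO}{}(Y) \times_{\calO} \AractO$ with its projection to $\AractO$; pulling this projection back along $i$ cancels the $\AractO$-factor and returns $\Un{\calO}{}(Y)$. Hence the right adjoint is ordinary unstraightening, and it lands in $\Fbrs(\calO)$ by \cref{cor:cocartesian-fibrous}. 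The main obstacle is exactly this pullback-cancellation identification; everything preceding it is formal manipulation of the adjunctions already in hand.
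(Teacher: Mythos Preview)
Your argument is correct, and for the sliced statement it matches the paper exactly: use soundness to get the adjunction of \cref{prop:right-adjoint-to-envelope}, then use extendability and \cref{rmk:cancellation-for-relative-Segal-objects} to identify $\Seg^{/\calA_{\calO}}_{\calO}(\CatI)$ with the honest slice $\Seg_{\calO}(\CatI)_{/\calA_{\calO}}$.

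For the un-sliced adjunction you take a genuinely different route. The paper works directly with the adjunction
\[ (\blank)\times_{\calO}\Aract(\calO) \colon \CatIOintcoc \rightleftarrows \CatIOcoc \simeq \Fun(\calO,\CatI) \]
of \cref{cor:factadjn} (whose right adjoint is already the forgetful functor, i.e.\ unstraightening) and simply checks that it restricts to the full subcategories $\Fbrs(\calO)$ and $\Seg_{\calO}(\CatI)$: the left adjoint lands in Segal objects by \cref{propn:general-fully-faithful-envelope} plus extendability, and the right adjoint lands in fibrous patterns by \cref{cor:cocartesian-fibrous}. No further identification is needed, because the right adjoint is unstraightening on the nose.

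Your approach instead manufactures the un-sliced adjunction from the sliced one by post-composing with the forgetful/cofree adjunction $U \dashv (\blank \times \calA_{\calO})$ on the slice, and then has to verify that $\UnOint(Y \times \calA_{\calO})$ recovers $\Un{\calO}{}(Y)$. That verification is fine (your pullback-cancellation argument is correct), but it is extra work compared to the paper's one-line restriction argument. The advantage of the paper's route is economy; the advantage of yours is that it makes explicit how the un-sliced adjunction is induced from the sliced one via \cref{rmk:sliceadj}-style reasoning, which is conceptually pleasant even if longer.
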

\begin{proof}
  It remains to show that the adjunction
  \[ (\blank) \times_{\mathcal{O}} \Aract(\mathcal{O}) \colon
    \CatIOintcoc \rightleftarrows \CatIOcoc \simeq \Fun(\mathcal{O}, \CatI) \]
  from \cref{cor:factadjn}  restricts to an adjunction between
  $\Fbrs(\mathcal{O})$ and $\Seg_{\mathcal{O}}(\CatI)$. Since
  $\Afun{\mathcal{O}}$ is a Segal $\mathcal{O}$-\icat{}, we have by
  \cref{rmk:cancellation-for-relative-Segal-objects} and
  \cref{propn:general-fully-faithful-envelope} that
  the left adjoint takes fibrous patterns to Segal
  $\mathcal{O}$-\icats{}. On the other hand, the right adjoint takes
  the latter to fibrous patterns by \cref{cor:cocartesian-fibrous}.
\end{proof}

\begin{remark}\label{rmk:replete}
    Note that in the context of \cref{thm:envelope-for-soundly-extendable} the right adjoint of $\Env_{\calO}$ is faithful and replete.
    It induces an equivalence between $\Seg_\calO(\CatI)$ and the subcategory of $\Fbrs(\calO)$
    whose objects are cocartesian fibrous patterns and whose morphisms preserve all cocartesian edges.
\end{remark}

\begin{remark}
  If $f \colon \mathcal{O} \to \mathcal{P}$ is a strong Segal morphism between soundly extendable patterns, then pullback/restriction along $f$ gives a commutative square
  \[
    \begin{tikzcd}
      \Seg_{\mathcal{P}}(\CatI) \arrow{r}{f^{*}} \arrow{d} & \Seg_{\mathcal{O}}(\CatI) \arrow{d} \\
    \Fbrs(\mathcal{P}) \arrow{r}{f^{*}} & \Fbrs(\mathcal{O}).      
    \end{tikzcd}
  \]
  Note, however, that the corresponding Beck--Chevalley transformation is usually not invertible, so we have to slice over $\Afun{\mathcal{P}}$ and $\Afun{\mathcal{O}}$ to get a commutative square of envelopes
    \begin{equation}
    \label{eq:extsegenvsq}
    \begin{tikzcd}
      \Fbrs(\mathcal{P}) \arrow{d}[swap]{\sliceEnv{\mathcal{P}}} \arrow{r}{f^{*}} & \Fbrs(\mathcal{O}) \arrow{d}{\sliceEnv{\mathcal{O}}} \\
      \Seg_{\mathcal{P}}(\CatI)_{/\mathcal{A}_{\mathcal{P}}} \arrow{r}{f^{\ostar}} & \Seg_{\mathcal{O}}(\CatI)_{/\mathcal{A}_{\mathcal{O}}}
    \end{tikzcd}
  \end{equation}
  as a special case of \cref{eq:segenvsq}.
\end{remark}

\subsection{Examples of Segal envelopes}\label{subsec:ex-of-envelopes}

\begin{ex}\label{ex:Env-for-xF}
  For the soundly extendable pattern $\xF_{*}$, we know that fibrous
  patterns are exactly $\infty$-operads, while
    Segal $\xF_*$-\icats{} are symmetric monoidal \icats{}; here
    $\calA_{\xF_{*}}$ is the symmetric monoidal category $\xF^\amalg$
    of finite sets under disjoint union.
    Hence \cref{thm:envelope-for-soundly-extendable} yields an adjunction
    \[ \Env_{\xF_*}^{/\xF^\amalg} \colon   
    \OpdI = \Fbrs(\xF_{*}) \rightleftarrows 
    \Seg_{\xF_{*}}(\CatI)_{/\calA_{\xF_{*}}} = \CMon(\CatI)_{/(\xF,\amalg)}.
    \]
    The left adjoint is fully faithful and a symmetric monoidal functor
    $\pi \colon (\calC,{\otimes}) \to (\xF,\amalg)$ lies in the essential image if and only if
    it is equifibered. 
    This means that the following square is cartesian for all maps
    $\omega \colon X \to Y$ in $\xF$:
    \[
    \begin{tikzcd}
      \calC^X \ar[r, "\omega_\otimes"] \ar[d, "\pi^X"'] & 
      \calC^Y \ar[d, "\pi^Y"] \\
      \xF^X \ar[r, "\omega_\amalg"] &
      \xF^Y.
    \end{tikzcd}
    \]
    Here the horizontal functors tensor over fibers of $\omega$.
    In fact, it follows by taking products and pasting pullback diagrams%
    \footnote{
        See \cref{lem:G-eqf} for an elaboration of this argument.
    }
    that it suffices to check the case of $\omega \colon  \{1,2\} \to \{1\}$. 
\end{ex}

\begin{observation}\label{obs:HKcomp}
  The essential image of the sliced envelope functor
  $\Env_{\xF_*}^{/\xF^\amalg} \colon \OpdI \hookrightarrow
  \CMon(\CatI)_{/(\xF,\amalg)} $ was first described in
  \cite{iopdprop}, but the characterization there looks at first
  glance quite different from ours. Let us therefore compare these
  two descriptions:

  For a symmetric monoidal functor
  $\pi \colon \calC \to \xF$, let us write $\calC_{(1)} \subset \calC$
  for the full subcategory of those $x \in \calC$ with $|\pi(x)| =
  1$. Then the characterization of \cite{iopdprop} is that the
  essential image consists of those $\pi$ that satisfy the following
  pair of conditions:
  \begin{enumerate}
  \item Every object $x \in \calC$ is equivalent to
    $x_1 \otimes \dots \otimes x_n$ for some $x_i \in \calC_{(1)}$.
  \item For every $n,m \ge 0$ and any two tuples $x_1,\dots,x_m \in \calC_{(1)}$ and $y_1, \dots, y_n \in \calC_{(1)}$, the canonical map
    \[
      \coprod_{\phi\colon m \rightarrow n } \prod_{i=1}^n\Map_{\calC}\left(\bigotimes_{j \in \phi^{-1}(i)} x_j, y_i\right) 
      \to
      \Map(\otimes_{j=1}^m x_j, \otimes_{i=1}^n y_i)
    \]
    is an equivalence.
  \end{enumerate}
  These conditions must be equivalent to our equifiberedness
  condition since they describe the same full subcategory. 
  To check this more explicitly, we consider the functor
    \[
      D_n\colon \calC^n \to \xF^n \times_{\xF} \calC,
    \]
    which is an equivalence for all $n$ if and only if
    $p\colon \calC \to \xF$ is equifibered.  The functor $D_n$ is
    essentially surjective if and only if for any $x \in \calC$ and a
    decomposition $\pi(x) = A_1 \amalg \dots \amalg A_n$ there is a
    decomposition $x = x_1 \otimes \dots \otimes x_n$ such that
    $\pi(x_i) \cong A_i$ compatibly with the decomposition.  By
    choosing the trivial decomposition with $|A_i| = 1$ this recovers
    condition (1).  Conversely, given condition (1) we can decompose
    $x$ as $\otimes_{a \in \pi(x)} y_a$ and then find the desired $x_i$
    as $x_i = \otimes_{a \in A_i} y_a$.
    
    To see that the full faithfulness of the $D_{n}$'s corresponds to
    condition (2), we first observe that in the presence of condition (1) we can replace
    condition (2) with the following:
    \begin{itemize}
        \item[(2$'$)] For every $n \ge 0$ and any two tuples $z_1,\dots,z_n \in \calC$ and $y_1, \dots, y_n \in \calC$, the canonical map
        \[
            \prod_{i=1}^n\Map_{\calC}(z_i, y_i) 
            \to
            \coprod_{(\phi_i\colon \pi(z_i) \rightarrow \pi(y_i))}
            \Map^{\coprod_{i=1}^n\phi_i}(\otimes_{i=1}^n z_i, \otimes_{i=1}^n y_i)
        \]
        is an equivalence.  Here we write $\Map_\calC^\phi(a,b)$ for
        the fiber of $\Map_\calC(a,b)$ over some
        $\phi\colon \pi(a) \to \pi(b)$.
    \end{itemize}
    To relate this to condition (2), first decompose $y_i$ using
    condition (1) and use $2$-out-of-$3$ to reduce to the case where
    $|\pi(y_i)|=1$. Then write
    $z_i = \otimes_{j \in \phi^{-1}(i)} x_j$ and argue as in
    \cite[Remark 2.4.8]{iopdprop}.
    
    Now we can observe that $D_n$ is fully faithful if and only if condition (2$'$) holds:
    indeed, the mapping space in $\xF^n \times_{\xF} \calC$ can be described as
    \begin{align*}
        \Map_{\xF^n \times_{\xF} \calC}&((x, \pi(x) = A_1 \amalg \dots \amalg A_n), (y, \pi(y) = B_1 \amalg \dots \amalg B_n))\\
        &\simeq
        \Map_{\xF^n}( (A_i), (B_i)) \times_{\Map_\xF(\pi(x), \pi(y))} \Map_\calC(x,y)\\
        &\simeq \coprod_{(\phi_i\colon A_i \to B_i)} \Map^{\coprod \phi_i}(x, y).
    \end{align*}
    Applying this to the images of $(x_1,\dots,x_n)$ and $(y_1,\dots,y_n)$ 
    under $\calC^n \to \xF^n \times_\xF \calC$ yields the desired form.
    
    It is interesting to note that while in condition (2) we need to quantify over all $n,m \ge 0$,
    in condition (2$'$) it suffices to consider only the case $n=2$ as all other cases can be obtained inductively.
    This works because the objects $z_i$ and $y_i$ in condition (2$'$) are themselves allowed to be composite.
\end{observation}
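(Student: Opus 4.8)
The plan is to establish the equivalence of the two characterizations by working through the family of comparison functors
\[ D_n \colon \calC^n \longrightarrow \xF^n \times_{\xF} \calC, \qquad (x_1,\dots,x_n) \longmapsto \bigl((\pi(x_i))_{i},\, x_1 \otimes \cdots \otimes x_n\bigr), \]
and matching the two defining properties of an equivalence against the Haugseng--Kock conditions. First I would observe that $\pi$ is equifibered exactly when each $D_n$ is an equivalence: the equifiberedness square of \cref{ex:Env-for-xF} for the fold map $\fset{n} \to \fset{1}$ is precisely the assertion that $D_n$ is an equivalence, and by the product-and-pasting reduction already noted in \cref{ex:Env-for-xF} the equifiberedness square for a general $\omega$ follows from these (in fact from $D_2$ alone). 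Since a functor of \icats{} is an equivalence \IFF{} it is essentially surjective and fully faithful, it then suffices to identify essential surjectivity of the $D_n$ with condition (1) and full faithfulness with condition (2).

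For essential surjectivity I would unwind that an object of $\xF^n \times_\xF \calC$ consists of a tuple $(A_1,\dots,A_n)$ of finite sets, an object $x \in \calC$, and an identification $\pi(x) \simeq A_1 \amalg \cdots \amalg A_n$, and that such an object lies in the image of $D_n$ precisely when $x$ admits a compatible tensor decomposition $x \simeq x_1 \otimes \cdots \otimes x_n$ with $\pi(x_i) \simeq A_i$. Specializing to the decomposition with all $|A_i| = 1$ recovers condition (1); conversely, given (1) one decomposes $x \simeq \bigotimes_{a \in \pi(x)} y_a$ and regroups, setting $x_i := \bigotimes_{a \in A_i} y_a$, to obtain essential surjectivity.

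For full faithfulness the key computation is that of the mapping spaces in the fiber product, which for tuples $\underline{x} = (x_i)_i$, $\underline{y} = (y_i)_i$ I would unwind as
\[ \Map_{\xF^n \times_\xF \calC}\bigl(D_n(\underline{x}), D_n(\underline{y})\bigr) \simeq \coprod_{(\phi_i \colon \pi(x_i) \to \pi(y_i))_i} \Map^{\coprod_i \phi_i}_{\calC}\Bigl(\textstyle\bigotimes_i x_i,\; \bigotimes_i y_i\Bigr). \]
Full faithfulness of $D_n$ is then exactly the reformulated condition (2$'$). The remaining step is to show that, in the presence of condition (1), the conditions (2) and (2$'$) are interchangeable: starting from (2$'$) one first decomposes each $y_i$ and uses $2$-out-of-$3$ to reduce to $|\pi(y_i)| = 1$, then writes each source as $z_i \simeq \bigotimes_{j \in \phi^{-1}(i)} x_j$ and argues as in \cite[Remark 2.4.8]{iopdprop} to land on the indexing of (2); the converse is analogous, and this is where one uses that in (2$'$) the objects are themselves allowed to be composite, so that only $n = 2$ needs to be treated and the general case follows by induction.

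The hard part is purely combinatorial bookkeeping: correctly matching the coproduct-over-maps indexing appearing in conditions (2) and (2$'$) with the decomposition of mapping spaces in $\xF^n \times_\xF \calC$, and tracking the compatibility data carried by the fiber products as one passes between the ``summed'' form (2) and the ``factorwise'' form (2$'$). It is worth stressing at the outset that the equivalence of the two characterizations holds \emph{a priori}, since both cut out the essential image of one and the same fully faithful functor $\Env_{\xF_*}^{/\xF^\amalg}$ --- ours by the preceding corollary and the Haugseng--Kock conditions by \cite{iopdprop} --- so the entire content of this comparison is to make that identification explicit.
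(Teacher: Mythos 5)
Your proposal is correct and follows essentially the same route as the paper's own argument: the same functors $D_n$ (with the same reduction to the fold-map square from \cref{ex:Env-for-xF}), essential surjectivity matched with condition (1) via the same regrouping $x_i = \bigotimes_{a \in A_i} y_a$, and full faithfulness matched with condition (2) through the same intermediate condition (2$'$) and the same computation of mapping spaces in $\xF^n \times_\xF \calC$. The only cosmetic difference is that you fold the paper's closing aside (that (2$'$) need only be checked at $n=2$ because its objects may be composite) into the (2)$\leftrightarrow$(2$'$) comparison, whereas the paper treats it as a separate remark; this does not affect correctness.
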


\begin{ex}\label{ex:Delta-flat-fibrous-envelope}
    For the soundly extendable pattern $\simp^{\op,\flat}$ fibrous patterns are non-symmetric $\infty$-operads,
    while Segal $\simp^{\op,\flat}$-\icats{} are monoidal \icats{}.
    We therefore denote
    $\OpdI^{\ns}:=\Fbrs(\simp^{\op,\flat})$ and $\Mon(\CatI):=\Seg_{\simp^{\op,\flat}}(\CatI)$.
    The Segal $\simp^{\op,\flat}$-category $\calA_{\simp^{\op,\flat}}$
    is equivalent to the category $\simp_+$ of finite (possibly empty) linearly ordered sets,
    with the monoidal structure given by concatenation. 
    The envelope functor $\Env^{/\simp_+}_{\simp^{\op,\flat}}$ can then be interpreted as a fully faithful embedding:
    \[\Env^{/\simp_+}_{\simp^{\op,\flat}} \colon \OpdI^{\ns} \hookrightarrow \Mon(\CatI)_{/\simp_+}\]
    Similarly to \cref{ex:Env-for-xF}
    we can describe the essential image as those monoidal functors 
    $\pi:\calV \to \simp_+$ 
    for which the following natural square is cartesian:
    \[\begin{tikzcd}
	{\calV \times \calV} & \calV \\
	{\simp_+ \times \simp_+} & {\simp_+}.
	\arrow["\otimes", from=2-1, to=2-2]
	\arrow["\pi", from=1-2, to=2-2]
	\arrow["\pi"', from=1-1, to=2-1]
	\arrow["\otimes", from=1-1, to=1-2]
    \end{tikzcd}\]
\end{ex}

\begin{ex}
  For the soundly extendable pattern $\simp^{\op,\natural}$, fibrous
  patterns are generalized non-symmetric \iopds{} as defined in
  \cite{enr}, while Segal $\simp^{\op,\natural}$-\icats{} are category
  objects in $\CatI$, i.e.\ double \icats{}.  We thus write
  $\Opd_{\infty}^{\gen,\ns}:=\Fbrs(\simp^{\op,\natural})$ and
  $\Dbl := \Seg_{\simp^{\op,\natural}}(\CatI)$. We may regard \itcats{} (in the form of complete 2-fold Segal spaces) as those double \icats{} $\mathcal{X}_{\bullet}$ such that $\mathcal{X}_{0}$ is an \igpd{}
  and which satisfy a completeness condition. In particular, the Segal
  $\simp^{\op,\natural}$-\icat{} $\calA_{\simp^{\op,\natural}} \simeq \calA_{\simp^{\op,\flat}}$
  may be thought of as the one-object \itcat{} $\mathfrak{B} \simp_+$ where the endomorphisms of the single object are $\simp_{+}$, with the monoidal structure corresponding to composition.
    The envelope functor $\Env^{/\mathfrak{B} \simp_+}_{\simp^{\op,\natural}}$
    can then be interpreted as giving fully faithful embedding:
    \[\Env^{/\mathfrak{B} \simp_+}_{\simp^{\op,\natural}} \colon \OpdI^{\gen,\ns} \hookrightarrow \catname{DblCat}_{\infty/\mathfrak{B}\simp_+}\]
    The essential image is characterized by a pullback square analogous to the one from \cref{ex:Delta-flat-fibrous-envelope}. Note that the morphisms in $\OpdI^{\gen,\ns}$ among the cocartesian fibrations that correspond to \itcats{} are precisely \emph{lax functors} as defined for instance in \cite{GaitsgoryRozenblyum1}, so we obtain a description of these in terms of $\catname{DblCat}_{\infty/\mathfrak{B}\simp_+}$. (More generally, we can also consider the envelope for $\simp^{n,\op,\natural}$, which was briefly discussed in \cite{nmorita}.)
\end{ex}

\begin{ex}
    Let $\calO \to \xF_\ast$ be an \iopd{}.
    Fibrous $\calO$-patterns are, by \cref{ex:fibrous-for-operads},
    exactly \iopds{} over $\calO$, while Segal $\calO$-\icats{} are precisely $\calO$-monoidal \icats{} which we denote by $\Mon_\calO(\CatI):=\Seg_{\calO}(\CatI)$. 
    By \cref{ex:operads-soundly-extendable}, $\calO$ is soundly extendable and our construction recovers the $\mathcal{O}$-monoidal envelope of \cite{HA}*{\S 2.2.4}. In particular, we see that this gives a fully faithful embedding
    \[ \Env^{/\calA_\calO}_{\calO} \colon \catname{Opd}_{\infty/\calO} \to \Mon_\calO(\CatI)_{/\calA_{\calO}}.\]
    In the case $\calO=\mathbb{E}_n$, the \icat{} $\calA_{\mathbb{E}_{n}}$ admits an alternative description as the $\mathbb{E}_n$-monoidal \icat{} of embedded $n$-disks in $\mathbb{R}^n$.
\end{ex}

\section{The comparison theorem}\label{sec:comp}
In \S\ref{subsec:comparison} we use the Segal envelopes to prove the comparison result, \cref{introthm:comp}. 
We then discuss the application of this to equivariant \iopds{},
\cref{introcor:Gopd}, in \S\ref{subsec:G-operads}.
Finally, we explain how to upgrade the envelope and comparison equivalences to
equivalences of \itcats{} in \S\ref{subsec:itcat}.

\subsection{Comparing fibrous patterns}
\label{subsec:comparison}

In this subsection we will use Segal envelopes to obtain a criterion
for a morphism of patterns $f \colon \mathcal{O}\to \mathcal{P}$
to induce via pullback an equivalence
\[ f^{*} \colon \Fbrs(\mathcal{P}) \isoto \Fbrs(\mathcal{O}) \]
between the corresponding \icats{} of fibrous patterns. We specialize
this to recover some comparison results from \cite{HA} without using
the technical results on approximations to \iopds{} from
\cite{HA}*{\S 2.3.3}. As new applications, we show that (symmetric) \iopds{} can also be described as fibrous patterns over $\Span(\xF)$, and that fibrous patterns over $\Span(\mathcal{S}_{m})$ and $\Span_{(m-1)\txt{-tr},\txt{all}}(\mathcal{S}_{m})$ are equivalent.

\begin{thm}\label{fbrseq}
  Suppose $\mathcal{O}$ is a pattern,
  $\mathcal{P}$ is a soundly extendable pattern,
  and $f \colon \mathcal{O} \to \mathcal{P}$ is a strong Segal
  morphism such that the following conditions hold:
  \begin{enumerate}[(i)]
  \item $f^{\el}\colon \mathcal{O}^{\el} \to \mathcal{P}^{\el}$ is an
    equivalence of \icats{},
  \item $(\calO^\act_{/X})^\simeq \to (\calP^\act_{/f(X)})^\simeq$ is an equivalence for all $X \in \calO$.
  \end{enumerate}
  Then pullback along $f$ gives an equivalence
  \[ f^{*} \colon \Fbrs(\mathcal{P}) \isoto \Fbrs(\mathcal{O}).\]
\end{thm}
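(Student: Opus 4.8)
The plan is to transport the desired equivalence across the fully faithful Segal envelopes of \cref{propn:general-fully-faithful-envelope} and thereby reduce it to the comparison theorem for Segal objects, \cref{propn:Segmndcomp}. First I would record that $f^{*}$ really does land in $\Fbrs(\mathcal{O})$: since $f$ is a strong Segal morphism, this is \cref{lem:pullback-of-fibrous}. Next I would invoke the commutative square \cref{eq:segenvsq} relating $f^{*}$ to the functor $f^{\ostar}$ on relative Segal objects, in which the two vertical maps $\sliceEnv{\mathcal{P}}$ and $\sliceEnv{\mathcal{O}}$ are fully faithful with essential image the equifibered relative Segal objects. Because both envelopes are fully faithful embeddings with these known images, it suffices to prove that $f^{\ostar}$ restricts to an equivalence between the full subcategories of equifibered objects over $\Afun{\mathcal{P}}$ and over $\Afun{\mathcal{O}}$.

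To analyze $f^{\ostar}$ I would split it, as in \cref{propn:general-fully-faithful-envelope}, into restriction along $f$ followed by the base change to $\Afun{\mathcal{O}}$ along the natural map $f^{*}\Afun{\mathcal{P}}\to\Afun{\mathcal{O}}$. For the restriction part, hypotheses (i) and (ii) together with strong Segality are exactly the hypotheses of \cref{propn:Segmndcomp}, so $f^{*}\colon\Seg_{\mathcal{P}}(\CatI)\isoto\Seg_{\mathcal{O}}(\CatI)$ is an equivalence (with inverse given by right Kan extension). Since $\mathcal{P}$ is soundly extendable, $\Afun{\mathcal{P}}$ is a Segal $\mathcal{P}$-object by \cref{defn:soundly-extendable}, hence so is $f^{*}\Afun{\mathcal{P}}$; slicing the above equivalence over $\Afun{\mathcal{P}}$ and using the cancellation property \cref{rmk:cancellation-for-relative-Segal-objects} yields an equivalence $f^{*}\colon\Seg^{/\Afun{\mathcal{P}}}_{\mathcal{P}}(\CatI)\isoto\Seg^{/f^{*}\Afun{\mathcal{P}}}_{\mathcal{O}}(\CatI)$, which manifestly preserves the equifibered condition because $f$ preserves active morphisms. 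For the base-change part I would first check that it preserves equifibered relative Segal objects, by pasting cartesian squares together with \cref{rel Segal pullback}, and then prove that it restricts to an equivalence on equifibered objects.

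This last claim is the heart of the matter, and the step I expect to be the main obstacle: base change along $f^{*}\Afun{\mathcal{P}}\to\Afun{\mathcal{O}}$, which by hypothesis (ii) is merely an equivalence on underlying $\infty$-groupoids, must nonetheless become an equivalence once restricted to equifibered objects. The subtlety is that $\Afun{\mathcal{O}}$ need not be a Segal object (as $\mathcal{O}$ is not assumed extendable), so this base change is certainly not an equivalence on \emph{all} relative Segal objects; it is the equifibered condition that rescues it. To prove the restricted statement I would use the identity-restriction description of the envelope's inverse from \cref{thm:partial-SU} and \cref{propn:image-of-Env}, where the inverse $\rmQ$ is restriction along the inclusion of identities $i$: an equifibered object is reconstructed by cocartesian generation from its fibers over the identities, and those fibers live over the underlying groupoid of active slices. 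Since the comparison map is compatible with the two identity inclusions and is an equivalence on exactly these underlying groupoids by (ii), the identity-fiber data over the two bases agree, so cocartesian generation over $\Afun{\mathcal{O}}$ and over $f^{*}\Afun{\mathcal{P}}$ produces objects that match under base change; this identifies the two categories of equifibered objects and provides the (otherwise missing) inverse, which on Segal objects is precisely the right Kan extension of \cref{propn:Segmndcomp}.

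Combining the three steps, $f^{\ostar}$ restricts to an equivalence on equifibered objects, and hence $f^{*}\colon\Fbrs(\mathcal{P})\to\Fbrs(\mathcal{O})$ is an equivalence. Finally I would note that the full faithfulness of $f^{*}$ can alternatively be obtained directly from \cref{propn:pullback-of-fibrous-along-morita-equivalence}, which compares Segal objects over the fibrous patterns themselves and thereby computes the relevant mapping spaces; this gives an independent confirmation of the full faithfulness half, while essential surjectivity is exactly what the base-change equivalence on equifibered objects supplies.
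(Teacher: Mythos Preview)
Your overall strategy---transport across the fully faithful envelopes of \cref{propn:general-fully-faithful-envelope} and reduce to \cref{propn:Segmndcomp}---is exactly the paper's approach, but the step you flag as ``the heart of the matter'' contains a genuine gap, and the paper resolves it differently (and more simply) than you propose.

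You treat the natural transformation $\Afun{\mathcal{O}} \to f^{*}\Afun{\mathcal{P}}$ as ``merely an equivalence on underlying $\infty$-groupoids'' and then attempt to work around this by appealing to the $\rmQ$/identity-restriction description of equifibered objects. The problem is that this characterization (\cref{thm:partial-SU}, \cref{propn:image-of-Env}) is specific to the cocartesian fibration $\Ar_{\act}(\mathcal{O}) \to \mathcal{O}$: it comes from the envelope adjunction $\rmE \dashv \rmQ$ attached to the factorization system on $\mathcal{O}$. There is no analogous description available for equifibered objects over the \emph{different} base $f^{*}\Afun{\mathcal{P}}$, so your ``cocartesian generation'' argument does not go through as stated. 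You also do not explain why the inverse of the sliced equivalence $f^{*}\colon \Seg^{/\Afun{\mathcal{P}}}_{\mathcal{P}}(\CatI) \isoto \Seg^{/f^{*}\Afun{\mathcal{P}}}_{\mathcal{O}}(\CatI)$ (namely right Kan extension) preserves equifiberedness.

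The paper's fix is \cref{lem:actcatcomp}: under hypotheses (i) and (ii) the map $\Afun{\mathcal{O}} \to f^{*}\Afun{\mathcal{P}}$ is in fact an \emph{equivalence} of functors $\mathcal{O} \to \CatI$, not just on underlying groupoids. (Full faithfulness at each $O$ is obtained by identifying mapping spaces in $\mathcal{O}^{\act}_{/O}$ as fibers of the postcomposition maps on $\Agpd{\mathcal{O}}$ and using (ii) again.) This makes your base-change step trivial, so $f^{\ostar}$ is an equivalence on the whole slice, and full faithfulness of $f^{*}$ on fibrous patterns follows immediately from the envelope square. For essential surjectivity one must still check that $f^{\ostar}$ \emph{reflects} equifiberedness; the paper does this via \cref{lem:equifibered-checked-on-elementary} (using soundness of $\mathcal{P}$ to reduce to active maps with elementary target) and then lifts such maps from $\mathcal{P}$ to $\mathcal{O}$ using (i) together with the equivalence $\Afun{\mathcal{O}} \simeq f^{*}\Afun{\mathcal{P}}$ just established. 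Your proposal does not supply this reflection argument either.
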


\begin{remark}
    If we also assume that $\Afun{\calO}^\simeq = (\calO_{/-}^\act)^\simeq$ is an $\calO$-Segal space, for example if $\calO$ is soundly extendable,
    then it suffices to check condition (ii) when $X$ is elementary.
\end{remark}

\begin{ex}
    Let $\calP$ be a soundly extendable pattern, and define $\calO \subset \calP$ as the full subpattern on the ``necessary objects'' in the sense of \cite[Definition 14.7]{patterns1}.
    This means that $\calO$ contains those $X \in \calP$ for which there exists an active morphism $X \actto E$ with $E$ elementary.
    Then \cref{fbrseq} applies to the full inclusion $\calO \subset \calP$ and hence restriction yields an equivalence
    $\Fbrs(\calP) \simeq \Fbrs(\calO)$.
\end{ex}

First we show that condition (ii) can always be strengthened as follows.
\begin{lemma}\label{lem:actcatcomp}
  In the situation of \cref{fbrseq} the induced natural transformation
  \[ 
        \alpha \colon \mathcal{A}_{\mathcal{O}} \to f^{*} \mathcal{A}_{\mathcal{P}}
  \] 
  of functors $\mathcal{O} \to \CatI$ is an equivalence.
  In particular $\calA_\calO$ is $\calO$-Segal.
\end{lemma}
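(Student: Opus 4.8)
The plan is to verify that $\alpha$ is an equivalence objectwise. Since $\alpha$ is a natural transformation of functors $\mathcal{O} \to \CatI$, it is an equivalence if and only if for every $X \in \mathcal{O}$ the component
\[ \alpha_X \colon \mathcal{O}^{\act}_{/X} \longrightarrow \mathcal{P}^{\act}_{/f(X)}, \]
the functor induced by applying $f$ to active morphisms over $X$, is an equivalence of \icats{}. The tool I would use is the criterion that a functor $F \colon \mathcal{C} \to \mathcal{D}$ of \icats{} is an equivalence if and only if both $F^{\simeq} \colon \mathcal{C}^{\simeq} \to \mathcal{D}^{\simeq}$ and $\Ar(F)^{\simeq} \colon \Ar(\mathcal{C})^{\simeq} \to \Ar(\mathcal{D})^{\simeq}$ are equivalences of \igpds{}; this holds because the complete Segal space of an \icat{} is determined, via the Segal condition, by its spaces of objects and of arrows. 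On objects the map $\alpha_X^{\simeq} \colon (\mathcal{O}^{\act}_{/X})^{\simeq} \to (\mathcal{P}^{\act}_{/f(X)})^{\simeq}$ is exactly the equivalence provided by condition (ii) of \cref{fbrseq}, which settles the object level of the criterion.

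For the arrow level I would exploit the naturality of target-evaluation $\ev_{1} \colon \Ar(\blank) \to (\blank)$ to form the commutative square
\[
\begin{tikzcd}
\Ar(\mathcal{O}^{\act}_{/X})^{\simeq} \arrow{r} \arrow{d}[swap]{\ev_{1}} & \Ar(\mathcal{P}^{\act}_{/f(X)})^{\simeq} \arrow{d}{\ev_{1}} \\
(\mathcal{O}^{\act}_{/X})^{\simeq} \arrow{r}{\sim} & (\mathcal{P}^{\act}_{/f(X)})^{\simeq},
\end{tikzcd}
\]
whose bottom map is an equivalence by (ii). It therefore suffices to show that the top map is an equivalence on the fibers over each object $\phi \colon W \actto X$ of $\mathcal{O}^{\act}_{/X}$. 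Since the core functor $(\blank)^{\simeq}$ is a right adjoint and hence preserves pullbacks, the fiber of $\ev_{1}$ over $\phi$ is the maximal \igpd{} of the iterated slice $(\mathcal{O}^{\act}_{/X})_{/\phi} \simeq \mathcal{O}^{\act}_{/W}$, and the corresponding fiber on the right is $(\mathcal{P}^{\act}_{/f(W)})^{\simeq}$. Under these identifications the induced map on fibers is precisely $\alpha_W^{\simeq}$, which is again an equivalence by condition (ii) applied to the object $W$. Hence $\Ar(\alpha_X)^{\simeq}$ is an equivalence, and so $\alpha_X$ is an equivalence for every $X$.

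Finally, the ``in particular'' clause follows formally: since $\mathcal{P}$ is soundly extendable, $\mathcal{A}_{\mathcal{P}}$ is a Segal $\mathcal{P}$-\icat{} by \cref{defn:soundly-extendable}, and since $f$ is a (strong) Segal morphism the restriction $f^{*}\mathcal{A}_{\mathcal{P}}$ is a Segal $\mathcal{O}$-\icat{}; the equivalence $\alpha \colon \mathcal{A}_{\mathcal{O}} \isoto f^{*}\mathcal{A}_{\mathcal{P}}$ then shows $\mathcal{A}_{\mathcal{O}}$ is $\mathcal{O}$-Segal. I expect the main obstacle to be purely bookkeeping: identifying the fiber of $\ev_{1}$ with the slice-of-a-slice $\mathcal{O}^{\act}_{/W}$ \emph{naturally in $f$}, so that the induced map on fibers is genuinely $\alpha_W$ rather than merely abstractly equivalent to it. Once that naturality is pinned down, every remaining input is a direct application of condition (ii) at the varying objects $X$ and $W$, and notably the argument uses neither soundness, extendability, nor condition (i) to prove that $\alpha$ itself is an equivalence.
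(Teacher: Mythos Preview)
Your proof is correct and follows essentially the same approach as the paper. Both arguments use condition (ii) to get the equivalence on underlying \igpds{}, then identify the remaining data (mapping spaces in the paper, the space of arrows in your version) as a fiber of a map between slice-groupoids already covered by (ii); the only cosmetic difference is that the paper phrases this as ``essentially surjective + fully faithful'' and computes $\Map_{\mathcal{O}^{\act}_{/X}}(\phi',\phi)$ as a fiber of the postcomposition map $(\phi\circ -)\colon (\mathcal{O}^{\act}_{/O})^{\simeq}\to(\mathcal{O}^{\act}_{/X})^{\simeq}$, whereas you use the equivalent ``$F^{\simeq}$ and $\Ar(F)^{\simeq}$'' criterion and compute the fiber of $\ev_{1}$ as the iterated slice $(\mathcal{O}^{\act}_{/X})_{/\phi}\simeq \mathcal{O}^{\act}_{/W}$.
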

\begin{proof}
  By assumption, the functor $\Afun{\calO}(X) \to \Afun{\calP}(f(X))$ is an equivalence on underlying \igpds{},
  so it remains to show that it is fully faithful. 
  To see this, observe that given active maps 
  $\phi \colon X \actto Y$ and $\phi' \colon X' \actto Y$, 
  the mapping space
  $\Map_{\mathcal{O}^{\act}_{/Y}}(\phi', \phi)$ is the fiber at
  $\phi'$ of the map 
  $(\phi \circ -): \calA_{\calO}^{\simeq}(X) \to \calA_{\calO}^{\simeq}(Y)$.
  This map fits into a square
  \[
    \begin{tikzcd}
      \Agpd{\mathcal{O}}(X) \arrow{r}{\sim} \arrow{d} &
      \Agpd{\mathcal{P}}(fX) \arrow{d} \\
      \Agpd{\calO}(Y) \arrow{r}{\sim} & \Agpd{\mathcal{P}}(fY)
    \end{tikzcd}
  \]
  where the horizontal maps are equivalences.
  Then we also have equivalences on fibers, which gives the desired full faithfulness.
  Finally we note that $\calA_\calO \simeq f^* \calA_\calP$ implies that $\calA_\calO$ is Segal
  since $\calA_\calP$ was assumed to be Segal and $f^*$ preserves Segal objects.
\end{proof}

The following lemma tells us that for sound patterns it suffices to check $\Ar_\act(\calO)$-equifiberedness on active morphisms that end in elementary objects.
\begin{lemma}\label{lem:equifibered-checked-on-elementary}
    Let $\calO$ be a sound pattern and let $(\eta \colon F \Rightarrow G)$ be a relative Segal object over $\calO$ in a sufficiently complete $\infty$-category $\calC$.
    Suppose that the naturality squares
    \[
        \begin{tikzcd}
            F(X) \ar[r, "{F(\omega)}"] \ar[d, "\eta_X"] & F(Y) \ar[d, "\eta_Y"] \\
            G(X) \ar[r, "{G(\omega)}"] & G(Y)
        \end{tikzcd}
    \]
    are cartesian for active morphisms $\omega\colon X \actto Y$ where $Y$ is elementary.
    Then they are also cartesian for arbitrary $Y$, \ie{} $\eta$ is $\Ar_\act(\calO)$-equifibered.
\end{lemma}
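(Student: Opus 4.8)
The plan is to factor the naturality square of a general active morphism $\omega\colon X \actto Y$ through the Segal limits over $\calO^{\el}_{Y/}$ and then peel it off using the pullback pasting law, thereby reducing to the elementary‑target case. Fix such an $\omega$. Since $\eta$ is a relative Segal object, \cref{defn:relative-Segal} tells us that the square
\[
\begin{tikzcd}
F(Y) \ar[r] \ar[d, "\eta_Y"'] & \lim_{E \in \calO^{\el}_{Y/}} F(E) \ar[d] \\
G(Y) \ar[r] & \lim_{E \in \calO^{\el}_{Y/}} G(E)
\end{tikzcd}
\]
is cartesian; call it $(C)$. This is the square I will eventually cancel.

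First I would assemble a cartesian square linking $F(X)$ to the same Segal limits. For each elementary $\alpha\colon Y \intto E$, \cref{omega_alpha!} provides the inert--active factorization $\alpha\circ\omega \simeq (\alpha\omega)^{\act}\circ\omega_\alpha$, with $\omega_\alpha\colon X \intto \omega_{\alpha!}X$ inert and $(\alpha\omega)^{\act}\colon \omega_{\alpha!}X \actto E$ active with \emph{elementary} target. The hypothesis therefore applies to each $(\alpha\omega)^{\act}$, making every such naturality square cartesian; these squares are natural in $\alpha \in \calO^{\el}_{Y/}$, so taking the limit over $\calO^{\el}_{Y/}$ (and using that limits of cartesian squares are cartesian) yields a cartesian square $(B)$ with corners $\lim_\alpha F(\omega_{\alpha!}X)$, $\lim_E F(E)$, $\lim_\alpha G(\omega_{\alpha!}X)$, $\lim_E G(E)$. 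Next I would invoke \cref{lem:sound-stronger-Segal}, applied to the restrictions of $F$ and $G$ to $\calO^{\xint}_{X/}$ together with $\eta$: its hypothesis, that the relative Segal square be cartesian at every object under $X$, holds precisely because $\eta$ is a relative Segal object over all of $\calO$. The conclusion, for our $\omega$, is exactly a cartesian square $(A)$ whose top map is $F(X) \to \lim_\alpha F(\omega_{\alpha!}X)$ induced by the inert transports $\omega_\alpha$, and likewise for $G$. This is the one step where soundness of $\calO$ is genuinely used.

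Finally I would paste. Pasting $(A)$ and $(B)$ horizontally produces a cartesian square $(AB)$ from $F(X),G(X)$ to the Segal limits whose top composite sends, at each $E$, the object $F(X)$ to $F(E)$ via $F((\alpha\omega)^{\act})\circ F(\omega_\alpha) \simeq F(\alpha\circ\omega)$ --- the very same map obtained by composing $F(\omega)\colon F(X)\to F(Y)$ with the Segal projection in $(C)$, and symmetrically for $G$. Hence $(AB)$ is identified with the horizontal pasting of the desired naturality square $(D)$ of $\omega$ followed by $(C)$. Since both the outer pasted square $(AB)$ and the right-hand square $(C)$ are cartesian, the pullback pasting law forces the left-hand square $(D)$ to be cartesian, which is the assertion that $\eta$ is $\Ar_{\act}(\calO)$-equifibered.

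The main obstacle I expect is bookkeeping rather than conceptual: verifying that the top (and bottom) composite of the pasted square $(A)+(B)$ really agrees, \emph{compatibly in} $\alpha \in \calO^{\el}_{Y/}$, with the map that factors through $F(Y)$ (resp. $G(Y)$). This rests on the coherence $(\alpha\omega)^{\act}\circ\omega_\alpha \simeq \alpha\circ\omega$ of the inert--active factorization, upgraded to a natural equivalence over $\calO^{\el}_{Y/}$ so that it survives passage to the limit. Once this identification of composites is in place, the remainder is a routine application of the pasting law.
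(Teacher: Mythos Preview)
Your proposal is correct and follows essentially the same argument as the paper's proof. The paper organizes the squares $(A)$, $(B)$, $(C)$, $(D)$ as the left, back, right, and front faces of a commutative cube and then applies the same pasting-law reasoning you describe; your ``bookkeeping'' concern about identifying the composite $(A)+(B)$ with $(D)+(C)$ is precisely the commutativity of the top and bottom faces of that cube.
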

\begin{proof}
    For an arbitrary active morphism $\omega \colon X \actto Y$ consider the commutative cube
    \[
        \begin{tikzcd}
            & \lim_{\alpha \colon  Y \intto E \in \calO^\el_{Y/}} F(\omega_{\alpha!} X) \ar[rr, "\lim F(\omega_\alpha)"] \ar[dd, "\lim \eta_{\omega_{\alpha!}X}", near end] &&
            \lim_{\alpha \colon  Y \intto E \in \calO^\el_{Y/}} F(E) \ar[dd, "\lim \eta_E"]  \\
            F(X) \ar[rr, "{F(\omega)}", near end, crossing over] \ar[dd, "\eta_X"] \ar[ru] && 
            F(Y) \ar[ru] & \\
            & \lim_{\alpha \colon  Y \intto E \in \calO^\el_{Y/}} G(\omega_{\alpha!} X) \ar[rr, "\lim G(\omega_\alpha)", near start] &&
            \lim_{\alpha \colon  Y \intto E \in \calO^\el_{Y/}} G(E) \\
            G(X) \ar[rr, "{G(\omega)}"] \ar[ru] && 
            G(Y). \ar[ru]
            \ar[from=2-3, to=4-3, "\eta_Y", near end, crossing over] 
        \end{tikzcd}
    \]
    The back square is cartesian as it is a limit over squares that we have assumed to be cartesian. 
    (Note that $\omega_\alpha \colon  \omega_{\alpha!}X \actto E$ is an active morphism with elementary target.) 
    The right face is cartesian because $\eta$ is a relative Segal object, and so is the left face from this and \cref{lem:sound-stronger-Segal}.
    Therefore the front face is cartesian by the pullback pasting lemma.
\end{proof}

\begin{proof}[Proof of \cref{fbrseq}]
  It follows from \cref{propn:Segmndcomp} that the functor
  \[f^{*} \colon \Seg_{\mathcal{P}}(\CatI) \to
  \Seg_{\mathcal{O}}(\CatI)\] is an equivalence. 
  From \cref{lem:actcatcomp} we have $\Afun{\calO} \simeq f^*\Afun{\calP}$ and that $\Afun{\calO}$ is Segal.
  Hence the induced functor
  \[f^{\ostar} \colon
    \Seg_{\mathcal{P}}(\CatI)_{/\mathcal{A}_{\mathcal{P}}} \to
    \Seg_{\mathcal{O}}(\CatI)_{/\mathcal{A}_{\mathcal{O}}}\]
  is also an equivalence. 
  This means in the commutative square
  \[
    \begin{tikzcd}
      \Fbrs(\mathcal{P}) \arrow{r}{f^{*}}
      \arrow[hook]{d}{\sliceEnv{\mathcal{P}}} & 
      \Fbrs(\mathcal{O}) \arrow[hook]{d}{\sliceEnv{\mathcal{O}}} \\
      \Seg_{\mathcal{P}}(\CatI)_{/\mathcal{A}_{\mathcal{P}}} \arrow{r}{f^{\ostar}}
      & \Seg_{\mathcal{O}}(\CatI)_{/\mathcal{A}_{\mathcal{O}}} 
    \end{tikzcd}
  \]
  from \cref{propn:general-fully-faithful-envelope}, the bottom
  horizontal functor $f^{\ostar}$ is an equivalence, while the
  vertical functors are fully faithful. It follows that the top
  horizontal functor $f^{*}$ is also fully faithful. To prove that it
  is also essentially surjective, it suffices to show that an object
  of $\Seg_{\mathcal{P}}(\CatI)_{/\Afun{\mathcal{P}}}$ is in
  the image of $\sliceEnv{\mathcal{P}}$ if its image under the
  equivalence $f^{\ostar}$ is in the image of
  $\sliceEnv{\mathcal{O}}$.

  Suppose we are given some $(\eta \colon F \Rightarrow \Afun{\calP}) \in \Seg_\calP(\CatI)_{/\Afun{\calP}}$ such that $f^\ostar F \Rightarrow \Afun{\calO}$ is equifibered.
  Equivalently, $\eta_{\circ f} \colon (F \circ f) \Rightarrow (\Afun{\calP} \circ f)$ is equifibered.
  By \cref{lem:equifibered-checked-on-elementary} it suffices to check that the naturality squares are cartesian for active morphisms $\omega \colon  X \actto E \in \calP$ ending in an elementary.
  Since $f \colon \calO^\el \to \calP^\el$ is an equivalence,
  we may write $E \simeq f(E')$ for $E' \in \calO$.
  Moreover, since $f \colon \calO_{/E'}^\act \to \calP_{/f(E')}^\act$ is an equivalence,
  we can find $\rho \colon  Y \actto E' \in \calO$ such that $f(\rho) \simeq \omega$ as objects of $\Ar_\act(\calP)$.
  Now it follows that the naturality square of $\eta$ at $\omega$ is cartesian since we assumed that the naturality square of $\eta_{\circ f}$ at $\rho$ is equifibered.
  This shows that $\eta$ is $\Ar_\act(\calP)$-equifibered,
  and hence that $f^* \colon \Fbrs(\calP) \to \Fbrs(\calO)$ is essentially surjective.
\end{proof}

As a variant of \cref{fbrseq}, we get a useful criterion for
identifying the effect of the pushforward functor
$f_{!} \colon \Fbrs(\mathcal{O}) \to \Fbrs(\mathcal{P})$ for a map of
patterns $f \colon \mathcal{O} \to \mathcal{P}$:
\begin{cor}
  Suppose we have a commutative diagram of patterns
  \[
    \begin{tikzcd}
      \mathcal{Q} \arrow{r}{g}  \arrow{d}{p} & \mathcal{R} \arrow{d}{q} \\
      \mathcal{O} \arrow{r}{f} & \mathcal{P}
    \end{tikzcd}
  \]
  such that
  \begin{enumerate}[(i)]
  \item $\mathcal{O}$ is sound and $\mathcal{Q}$ is a fibrous $\mathcal{O}$-pattern,
  \item $\mathcal{P}$ is soundly extendable and $\mathcal{R}$ is a fibrous $\mathcal{P}$-pattern,
  \item $f$ is a strong Segal morphism,
  \item $g$ satisfies the assumptions of \cref{fbrseq}.
  \end{enumerate}
  Then the induced map of fibrous $\mathcal{O}$-patterns
  $\mathcal{Q} \to f^{*}\mathcal{R}$ is adjoint to an equivalence
  $f_{!}\mathcal{Q} \isoto \mathcal{R}$.
\end{cor}
\begin{proof}
  For a fibrous $\mathcal{P}$-pattern $\mathcal{T}$, we have natural equivalences
  \[
    \begin{split}
      \Map_{\Fbrs(\mathcal{O})}(\mathcal{Q}, f^{*}\mathcal{T}) & \simeq \Map_{\Fbrs(\mathcal{O})_{/\mathcal{Q}}}(\mathcal{Q}, p^{*}f^{*}\mathcal{T}) \\
                                                               & \simeq \Map_{\Fbrs(\mathcal{Q})}(g^{*}\mathcal{R}, g^{*}q^{*}\mathcal{T}) \\
                                                               & \simeq \Map_{\Fbrs(\mathcal{R})}(\mathcal{R}, q^{*}\mathcal{T}) \\
       & \simeq \Map_{\Fbrs(\mathcal{P})}(\mathcal{R}, \mathcal{T}),
    \end{split}
  \]
  where we have used \cref{fbrseq} and \cref{cor:compeqWSF}.
\end{proof}

\begin{cor}\label{cor:symmetrization}
  Suppose $\mathcal{O}$ is a sound pattern, $q \colon \mathcal{P} \to \xF_{*}$ is a symmetric \iopd{}, and $f \colon \mathcal{O} \to \mathcal{P}$ is a strong Segal morphism that satisfies the assumptions of \cref{fbrseq}. Then $f$ exhibits $\mathcal{P}$ as the \emph{symmetrization} of $\mathcal{O}$, in the sense that the induced map $(qf)_{!} \mathcal{O} \to \mathcal{P}$ is an equivalence. \qed
\end{cor}

\begin{ex}\label{ex:nonsymmcomp}
  Let $\catname{Ass}$ be the (symmetric) associative \iopd{} as defined in \cite{HA}*{Definition 4.1.1.1.}, and let
  $\mathrm{Cut} \colon \Dop \to \catname{Ass}$ denote the functor defined in \cite{HA}*{Construction 4.1.2.9.}. Then
  pullback along $\mathrm{Cut}$ gives an equivalence
  \[ \Fbrs(\simp^{\op,\flat}) \isofrom \Fbrs(\catname{Ass}) \isoto
    \Fbrs(\xF_{*})_{/\catname{Ass}} \]
  between non-symmetric \iopds{} and symmetric \iopds{} over
  $\catname{Ass}$, where the second equivalence is that of
  \cref{cor:compeqWSF}. In other words, non-symmetric \iopds{} are
  equivalent to symmetric \iopds{} over the associative \iopd{}. Moreover, $\catname{Ass}$ is the symmetrization of $\simp^{\op,\flat}$.
\end{ex}

The equivalence of \cref{ex:nonsymmcomp} is also proved by Lurie as
\cite{HA}*{Theorem 4.1.3.14}, which is a special case of \cite{HA}*{Theorem 2.3.3.26}. 
This more general statement can also be proved by our
methods; to see this, we first need to recall some definitions:

\begin{defn}
  Let $\pi \colon \mathcal{O} \to \xF_{*}$ be an \iopd{}.  We say a
  functor $f \colon \mathcal{C} \to \mathcal{O}$ is an
  \emph{approximation} if the following conditions hold:
  \begin{enumerate}[(1)]
  \item For $C \in \mathcal{C}$ over $\angled{n}$ in $\xF_{*}$, there
    exists for $i = 1,\ldots,n$ a locally cocartesian morphism
    $\rho_{i}^{C} \colon C \to
    C_{i}$ in $\mathcal{C}$ over $\rho_{i} \colon \angled{n} \to
    \angled{1}$. Moreover, the image of $\rho_{i}^{C}$ in
    $\mathcal{O}$ is inert. 
  \item $\mathcal{C}$ has all $f$-cartesian lifts of active morphisms
    in $\mathcal{O}$.
  \end{enumerate}
  Following \cite{HinichYoneda}, we say that $f$ is a \emph{strong
    approximation} if we additionally have:
  \begin{itemize}
  \item[(3)] The functor $\mathcal{C}_{\angled{1}} \to
    \mathcal{O}_{\angled{1}}$ is an equivalence. 
  \end{itemize}
\end{defn}

\begin{remark}
  Suppose $\mathcal{O}$ is an \iopd{} and
  $f \colon \mathcal{C} \to \mathcal{O}$ is an approximation. We say a
  morphism in $\mathcal{C}$ is \emph{inert} if its image in
  $\mathcal{O}$ is inert, and \emph{active} if it is $f$-cartesian and
  its image in $\mathcal{O}$ is active. Then the inert and active
  morphisms in $\mathcal{C}$ give a factorization system. We think of
  $\mathcal{C}$ as an algebraic pattern using this factorization
  system, with the elementary objects being those that map to
  $\angled{1}$ in $\xF_{*}$; then $f$ is a morphism of algebraic
  patterns.
\end{remark}

\begin{propn}\label{propn:approx}
  Suppose $\mathcal{O}$ is an \iopd{} and
  $f \colon \mathcal{C} \to \mathcal{O}$ is a strong
  approximation. Then:
  \begin{enumerate}[(i)]
  \item\label{item:eleq1} $\mathcal{C}^{\el} \to \mathcal{O}^{\el}$ is an equivalence.    
  \item\label{item:segcoinit} $\mathcal{C}^{\el}_{C/}
    \to \mathcal{O}^{\el}_{f(C)/}$ is an equivalence for all $C \in
    \mathcal{C}$, i.e.\ $f$ is an iso-Segal morphism.
  \item\label{item:acteq} $\mathcal{C}^{\act}_{/C} \to \mathcal{O}^{\act}_{/f(C)}$ is an
    equivalence for all $C \in \mathcal{C}$.
  \end{enumerate}
\end{propn}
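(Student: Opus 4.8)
The statements concern the pattern structure on $\mathcal{C}$ in which the elementary objects are those lying over $\langle 1\rangle$, the inert morphisms are those with inert (hence $f$-cocartesian) image in $\mathcal{O}$, and the active morphisms are the $f$-cartesian lifts of active morphisms. I would prove the three claims in order, since \ref{item:eleq1} feeds into \ref{item:segcoinit}.

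\textbf{Part \ref{item:eleq1}.} The plan is to observe that both $\mathcal{C}^{\el}$ and $\mathcal{O}^{\el}$ are $\infty$-groupoids. Since the only inert endomorphism of $\langle 1\rangle$ in $\xF_{*}$ is the identity, an inert morphism between elementary objects of $\mathcal{O}$ lies over $\id_{\langle 1\rangle}$ and is thus a cocartesian lift of an identity, i.e.\ an equivalence; hence $\mathcal{O}^{\el}\simeq \mathcal{O}_{\langle 1\rangle}^{\simeq}$. The same computation applies to $\mathcal{C}$ once we know $f$ is conservative on $\mathcal{C}_{\langle 1\rangle}$, which is precisely condition (3); so $\mathcal{C}^{\el}\simeq \mathcal{C}_{\langle 1\rangle}^{\simeq}$, and \ref{item:eleq1} becomes the statement that $f\colon \mathcal{C}_{\langle 1\rangle}\to \mathcal{O}_{\langle 1\rangle}$ is an equivalence, which is condition (3).

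\textbf{Part \ref{item:segcoinit}.} Both $\mathcal{C}^{\el}_{C/}$ and $\mathcal{O}^{\el}_{f(C)/}$ are $\infty$-groupoids (their morphisms are inert maps between elementaries, hence equivalences by the previous paragraph), and for $C$ over $\langle n\rangle$ they map compatibly to $(\xF_{*})^{\el}_{\langle n\rangle/}\simeq \{1,\dots,n\}$. For the operad $\mathcal{O}$ the right-hand functor is an equivalence, since inert lifts of each $\rho_{i}$ are cocartesian and therefore essentially unique. It then suffices to show $\mathcal{C}^{\el}_{C/}\to\{1,\dots,n\}$ has contractible fibres. The fibre over $i$ is the groupoid of $\mathcal{C}$-inert maps $C\to E$ lying over $\rho_{i}$; using the locally cocartesian lift $\rho_{i}^{C}\colon C\to C_{i}$ from condition (1), every map over $\rho_{i}$ is canonically $g\circ \rho_{i}^{C}$ for a unique $g\colon C_{i}\to E$ over $\id_{\langle 1\rangle}$. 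The crux is to identify which such composites are $\mathcal{C}$-inert, i.e.\ have cocartesian image: since $f(\rho_{i}^{C})$ is cocartesian (its image is inert, by condition (1)), cancellation for cocartesian morphisms shows $f(g)\circ f(\rho_{i}^{C})$ is cocartesian iff $f(g)$ is, and $f(g)$ lies over $\id_{\langle 1\rangle}$ so is cocartesian iff it is an equivalence iff $g$ is an equivalence (conservativity from (3)). Hence the fibre is the groupoid of objects equivalent to $C_{i}$, which is contractible, and \ref{item:segcoinit} follows.

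\textbf{Part \ref{item:acteq}.} Here the active morphisms of $\mathcal{C}$ are by definition the $f$-cartesian lifts of active morphisms, and condition (2) guarantees these exist and assemble (via cartesian transport) into a functor $\mathcal{O}^{\act}_{/f(C)}\to \mathcal{C}^{\act}_{/C}$. I would check this is inverse to the projection $p\colon \mathcal{C}^{\act}_{/C}\to\mathcal{O}^{\act}_{/f(C)}$: one composite is the identity because the cartesian lift of $\phi$ maps to $\phi$, and the other is the identity because an active morphism of $\mathcal{C}$, being $f$-cartesian, is its own cartesian lift. At the level of mapping spaces this is the defining pullback property of $f$-cartesian morphisms: for the cartesian $\psi\colon D\to C$ and any active $\psi'\colon D'\to C$ one has $\Map_{\mathcal{C}_{/C}}(\psi',\psi)\simeq \Map_{\mathcal{O}_{/f(C)}}(f\psi',f\psi)$, and the active condition cuts out matching subspaces on the two sides (the top edge of a triangle over the cartesian $\psi$ is automatically cartesian by cancellation, so the only remaining constraint is that its image be active).

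The main obstacle is \ref{item:segcoinit}, and specifically the correct identification of the inert morphisms of $\mathcal{C}$ over $\rho_{i}$: this is where the interplay between condition (1) (that $f(\rho_{i}^{C})$ is cocartesian) and condition (3) (conservativity on fibres over $\langle 1\rangle$) is essential, mediated by cancellation for cocartesian morphisms. Parts \ref{item:eleq1} and \ref{item:acteq} are comparatively formal consequences of condition (3) and of the cartesian-fibration yoga packaged in condition (2).
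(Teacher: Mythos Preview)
Your proof is correct and follows essentially the same approach as the paper. The only noteworthy difference is in \ref{item:acteq}: where you construct an inverse by hand and verify full faithfulness via the cartesian property and cancellation, the paper observes more directly that $\mathcal{C}^{\act} \to \mathcal{O}^{\act}$ is the underlying right fibration of the cartesian fibration $\mathcal{C} \times_{\mathcal{O}} \mathcal{O}^{\act} \to \mathcal{O}^{\act}$ and then cites the general fact (Kerodon, Tag \texttt{00TE}) that a right fibration induces equivalences on slices. Your argument is the same content unpacked; the paper's version is just more compact.
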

\begin{proof}
  For \ref{item:eleq1}, observe that from the equivalence
  $\mathcal{C}_{\angled{1}} \isoto \mathcal{O}_{\angled{1}}$ it
  follows that a morphism in $\mathcal{C}$ over $\angled{1}$ is inert
  \IFF{} it is an equivalence (since the equivalences are precisely
  the inert morphisms in $\mathcal{O}_{\angled{1}}$). Hence
  $\mathcal{C}^{\el} = \mathcal{C}_{\angled{1}}^{\simeq}$, so the
  functor $\mathcal{C}^{\el} \to \mathcal{O}^{\el}$ is just the
  underlying morphism of \igpds{} of the functor between fibers over
  $\angled{1}$ that is an equivalence by assumption.

  To show \ref{item:segcoinit}, we first observe that 
  $\mathcal{C}^{\el}_{C/}$ is an \igpd{}, since morphisms are given by inert maps over $\angled{1}$ and these are invertible. Moreover, if $C$ lies over $\angled{n}$ then the fiber
  of $\mathcal{C}^{\el}_{C/}$ over $\rho_{i}$ is contractible, since there by assumption exists a locally cocartesian morphism over $\rho_{i}$ --- this is then initial in the \icat{} $(\mathcal{C}_{C/})_{\rho_{i}}$ and so in particular has no automorphisms.

  We thus have a commutative triangle
  \[
    \begin{tikzcd}
      \mathcal{C}^{\el}_{C/} \arrow{rr} \arrow{dr}[swap]{\sim} & & \mathcal{O}^{\el}_{f(C)/} \arrow{dl}{\sim} \\
       & (\xF_{*}^{\el})_{\angled{n}}
    \end{tikzcd}
  \]
  where both maps to $(\xF_{*}^{\el})_{\angled{n}}$ are equivalences, hence so is the top horizontal map.

  To prove \ref{item:acteq}, observe that by assumption
  $\mathcal{C}^{\act} \to \mathcal{O}^{\act}$ is the underlying right
  fibration of the cartesian fibration $\mathcal{C}
  \times_{\mathcal{O}} \mathcal{O}^{\act} \to
  \mathcal{O}^{\act}$. This gives the required equivalence of slices
  by \cite{Kerodon}*{Tag \texttt{00TE}}.
\end{proof}

\begin{cor}\label{strong-approx-comp}
  Suppose $f \colon \mathcal{C} \to \mathcal{O}$ is a strong
  approximation to an \iopd{} $q \colon \mathcal{O} \to \xF_{*}$. 
  \begin{enumerate}
  \item If $\mathcal{X}$ is an \icat{} with finite products, then restriction along
    $f$ gives an equivalence
    \[ f^{*} \colon \Seg_{\mathcal{O}}(\mathcal{X}) \isoto
      \Seg_{\mathcal{C}}(\mathcal{X}).\]
  \item Pullback along $f$ gives an equivalence
    \[ f^{*} \colon \Fbrs(\mathcal{O}) \isoto \Fbrs(\mathcal{C}).\]
  \item The map $f$ exhibits $\mathcal{O}$ as the symmetrization of $\mathcal{C}$, \ie{} $(qf)_{!}\mathcal{C} \simeq \mathcal{O}$.
  \end{enumerate}
\end{cor}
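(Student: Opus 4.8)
The plan is to deduce both statements directly from \cref{propn:approx}, which packages exactly the structural facts about a strong approximation $f \colon \mathcal{C} \to \mathcal{O}$ that are needed to feed into our general comparison results. The first observation I would record is that $f$ is a strong Segal morphism (indeed iso-Segal): by part (ii) of \cref{propn:approx} the functor $\mathcal{C}^{\el}_{C/} \to \mathcal{O}^{\el}_{f(C)/}$ is an equivalence for every $C$, hence in particular coinitial.

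For part (1) I would apply \cref{propn:Segmndcomp} to the strong Segal morphism $f$. Its hypothesis \ref{item:eleq} is precisely part (i) of \cref{propn:approx}, and its hypothesis \ref{item:acteq1} follows from part (iii) of \cref{propn:approx}, which gives an equivalence of the full active slices $\mathcal{C}^{\act}_{/C} \to \mathcal{O}^{\act}_{/f(C)}$ and hence of their underlying \igpds{}. The one point deserving a remark is that \cref{propn:Segmndcomp} is stated for a \emph{complete} target, whereas here we assume only that $\mathcal{X}$ has finite products. This is harmless: the elementary under-categories $\mathcal{O}^{\el}_{X/}$ and $\mathcal{C}^{\el}_{C/}$ are finite discrete (being equivalent to $(\xF_{*}^{\el})_{\angled{n}}$ via the morphisms to $\xF_{*}$), so $\mathcal{O}$-completeness and $\mathcal{C}$-completeness both reduce to the existence of finite products, and the right Kan extension furnishing the inverse $f_{*}$ is computed by finite products over these slices. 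If one prefers to avoid reworking the proof of \cref{propn:Segmndcomp}, an alternative is to embed $\mathcal{X}$ fully faithfully into its free completion under finite limits (which preserves finite products), apply the complete case there, and check that the equivalence restricts to $\mathcal{X}$-valued Segal objects.

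For part (2) I would apply \cref{fbrseq} with $\mathcal{C}$ playing the role of the source pattern and $\mathcal{O}$ the role of the soundly extendable target pattern. Here $\mathcal{O}$ is soundly extendable by \cref{ex:operads-soundly-extendable}, since it is an \iopd{}; the morphism $f$ is strong Segal by the opening observation; and conditions (i) and (ii) of \cref{fbrseq} are exactly parts (i) and (iii) of \cref{propn:approx}. The conclusion of \cref{fbrseq} then reads $f^{*}\colon \Fbrs(\mathcal{O}) \isoto \Fbrs(\mathcal{C})$, which is the claim. The entire mathematical content sits in \cref{propn:approx}; once that is in hand, the corollary is a matter of correctly matching the source/target conventions of \cref{propn:Segmndcomp} and \cref{fbrseq}, and the only genuinely delicate step is the reduction of the completeness hypothesis in part (1) to finite products, which I would dispatch as above.
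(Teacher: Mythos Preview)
Your proposal is correct and follows the same approach as the paper, which simply says to combine \cref{propn:approx} with \cref{propn:Segmndcomp} and \cref{fbrseq}. You have supplied the hypothesis-matching that the paper leaves implicit, and your handling of the finite-products-versus-completeness issue in part~(1) is a point the paper passes over in silence.
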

\begin{proof}
  Combine \cref{propn:approx} with \cref{propn:Segmndcomp},
  \cref{fbrseq}, and \cref{cor:symmetrization}.
\end{proof}

\begin{remark}
  Lurie's proof of \cite{HA}*{Theorem 2.3.3.26} uses envelopes for
  approximations to \iopds{}, just as our proof of \cref{fbrseq}, and 
  we do not claim that our proof is different in any essential
  way.
\end{remark}

We end this section with a couple of examples that do not follow from
\cref{strong-approx-comp} or \cite{HA}*{Theorem 2.3.3.26}. These
involve patterns defined using spans, so we start with a general
observation about comparisons of these:
\begin{observation}\label{obs:comparison-for-span}
  Consider two adequate triples $(\frX,\frXb,\frXf)$ and
  $(\frY,\frY^b,\frY^f)$ and a functor $F \colon \frX \to \frY$ that
  preserves the two subcategories and also preserves pullbacks of backwards
  maps along forwards maps.  Suppose further that we have full
  subcategories $\frX_0 \subset \frX$ and $\frY_0 \subset \frY$
  such that $F(\frX_0) \subset \frY_0$. 
  Then $F$ induces a morphism
  of patterns:
    \[
        F \colon \Spanbf(\frX; \frX_0) \to \Spanbf(\frY; \frY_0).
    \]
    We may apply \cref{fbrseq} to this if the following conditions hold:
    \begin{enumerate}[(1)]
        \item $\Spanbf(\frY; \frY_0)$ is soundly extendable. (See \cref{prop:Span-sound}.)
        \item\label{it:fibrous-span-cond2} For all $x \in \frX$, the map 
        $\frXb_0 \times_{\frXb} \frXb_{/x} \to \frY^b_0 \times_{\frY^b} \frY^b_{/F(x)}$ is cofinal.
        \item $F \colon \frX_0^b \to \frY_0^b$ is an equivalence of $\infty$-categories.
        \item $F \colon \frXf_{/x} \to \frY^f_{/F(x)}$ induces an  equivalence on maximal subgroupoids for all $x \in \frX$.
    \end{enumerate}
    Note that point \ref{it:fibrous-span-cond2} ensures that $F$ is a strong Segal morphism
    since $\Spanbf(\frX; \frX_0)^\xint \simeq (\frX^b)^\op$ with the elemetaries being $(\frXb_0)^\op$.
\end{observation}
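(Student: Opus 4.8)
The plan is to verify that the three hypotheses of \cref{fbrseq} hold for the morphism $f := F \colon \calO \to \calP$, where I abbreviate $\calO := \Spanbf(\frX;\frX_0)$ and $\calP := \Spanbf(\frY;\frY_0)$. First I would confirm that $F$ genuinely induces such a morphism of patterns. Since $F$ preserves the backwards and forwards subcategories and the pullbacks of backwards maps along forwards maps, it is compatible with the span composition law (which is defined precisely by such pullbacks) and hence extends to a functor $\Spanbf(\frX) \to \Spanbf(\frY)$; this functor carries backwards maps to backwards maps and forwards maps to forwards maps, so it preserves the inert--active factorization system, and the assumption $F(\frX_0) \subseteq \frY_0$ guarantees it preserves elementary objects. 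Thus $F$ is a morphism of patterns, and it remains only to translate conditions (1)--(4) into the requirements of \cref{fbrseq}: that the target is soundly extendable, that $f$ is a strong Segal morphism, and that the equivalences (i)--(ii) hold.

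The key step is to unwind the standard identifications of the inert and active data of a span pattern. The inert subcategory is $\calO^\xint \simeq (\frXb)^\op$, so that $\calO^\el \simeq (\frXb_0)^\op$ and, by \cref{rem:Segal-condition-span-patterns}, $\calO^\el_{X/} \simeq (\frXb_{0/x})^\op$ with $\frXb_{0/x} = \frXb_0 \times_{\frXb} \frXb_{/x}$; dually the active slice is $\calO^\act_{/X} \simeq \frXf_{/x}$, and likewise for $\calP$. With these in hand the translation is direct. Condition (1) is exactly the assumption that $\calP$ is soundly extendable (checkable via \cref{prop:Span-sound}). Condition (3), that $F \colon \frXb_0 \to \frY^b_0$ is an equivalence, becomes, after passing to opposites, the statement that $F^\el \colon \calO^\el \to \calP^\el$ is an equivalence, i.e.\ hypothesis (i). Condition (4), that $F \colon \frXf_{/x} \to \frY^f_{/F(x)}$ induces an equivalence on maximal subgroupoids, becomes $(\calO^\act_{/X})^\simeq \isoto (\calP^\act_{/F(X)})^\simeq$, i.e.\ hypothesis (ii). Finally, since $F^\el_{X/} \colon \calO^\el_{X/} \to \calP^\el_{F(X)/}$ is the opposite of the map $\frXb_0 \times_{\frXb} \frXb_{/x} \to \frY^b_0 \times_{\frY^b} \frY^b_{/F(x)}$, and a functor is coinitial if and only if its opposite is cofinal, condition (2) says precisely that $F^\el_{X/}$ is coinitial for every $X$, which is the definition of $F$ being a strong Segal morphism. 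Applying \cref{fbrseq} then yields the desired equivalence $F^* \colon \Fbrs(\calP) \isoto \Fbrs(\calO)$.

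The only genuinely delicate point, and the step I would be most careful about, is the bookkeeping of opposite categories. The inert direction of a span pattern reverses arrows, so that cofinality in $\frX$ corresponds to coinitiality in $\calO$, whereas the active direction does not; one must check that the functors $F^\el$, $F^\el_{X/}$, and $F \colon \calO^\act_{/X} \to \calP^\act_{/F(X)}$ produced abstractly by $F$ on the pattern side really are identified, under the equivalences above, with the concrete maps appearing in conditions (2)--(4). This is routine once the identifications of \cref{rem:Segal-condition-span-patterns} and of the active slices are made precise, but it is exactly where a sign-type error would most easily slip in.
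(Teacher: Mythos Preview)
Your proposal is correct and follows exactly the approach the paper intends: this statement is recorded only as an \emph{observation} in the paper, with no formal proof beyond the parenthetical remark that condition~(2) yields the strong Segal property via $\Spanbf(\frX;\frX_0)^{\xint}\simeq(\frXb)^{\op}$. Your writeup spells out in full the dictionary between conditions (1)--(4) and the hypotheses of \cref{fbrseq}, including the cofinal/coinitial swap under passing to opposites, which is precisely the routine unpacking the paper leaves to the reader.
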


\begin{cor}\label{cor:SpanF}
  Pullback along the inclusion $\mathfrak{i} \colon \xF_{*} \simeq \Span_{\txt{inj},\txt{all}}(\xF) \to \Span(\xF)$ 
  gives an equivalence
  \[ \mathfrak{i}^{*} \colon \Fbrs(\Span(\xF)) \isoto \Fbrs(\xF_{*}) \simeq \OpdI.\]
\end{cor}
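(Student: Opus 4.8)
The plan is to deduce this from the general span comparison of \cref{obs:comparison-for-span}, which itself packages \cref{fbrseq}. I would apply that observation to the identity functor $F = \id_{\xF} \colon \xF \to \xF$, regarded as a map of adequate triples from $(\xF, \xF^{\txt{inj}}, \xF)$ to $(\xF, \xF, \xF)$, with elementary objects $\frX_{0} = \frY_{0} = \{\bfone\}$ on both sides; thus $\frX = \frY = \xF$, the backward classes are $\frXb = \xF^{\txt{inj}}$ and $\frY^{b} = \xF$, and the forward classes are $\frXf = \frY^{f} = \xF$. Since $\id_{\xF}$ preserves both subcategories and all pullbacks, and $\Span_{\txt{inj},\txt{all}}(\xF) \simeq \xF_{*}$ as patterns by \cref{rmk:F*asspan}, this exhibits $\mathfrak{i}$ as exactly the induced morphism of span patterns, so that $\mathfrak{i}^{*}$ is the associated pullback; the identification of the codomain, $\Fbrs(\xF_{*}) \simeq \OpdI$, is \cref{exs:fibrous-patterns-generalize-operads}. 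It then suffices to verify the four hypotheses (1)--(4) of \cref{obs:comparison-for-span}.

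Three of these are immediate. Hypothesis (1), that $\Span(\xF) = \Span_{\txt{all},\txt{all}}(\xF; \{\bfone\})$ is soundly extendable, is exactly \cref{ex:spansoundext}. For (3), the functor on backward elementaries compares the full subcategory on one-element sets inside $\xF^{\txt{inj}}$ with that inside $\xF$; a one-element set has only its identity self-map in either case, so both categories are equivalent to the point and $F$ restricts to an equivalence. For (4) we have $\frXf_{/x} = \xF_{/x} = \frY^{f}_{/x}$ with $F = \id$, so the comparison functor is literally the identity, in particular an equivalence on maximal subgroupoids.

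The only hypothesis that calls for any computation is (2), the cofinality of $\frXb_{0} \times_{\frXb} \frXb_{/x} \to \frY^{b}_{0} \times_{\frY^{b}} \frY^{b}_{/F(x)}$ for each $x \in \xF$, and this is the single place where the change of backward class from injections to all maps could in principle enter. I would unwind both sides explicitly: on the left, $\frXb = \xF^{\txt{inj}}$ and an object is a one-element set together with an injection into $x$, i.e.\ an element of $x$, with no nonidentity morphisms over $x$, so the source is the discrete set underlying $x$; on the right, $\frY^{b} = \xF$, and allowing arbitrary rather than injective maps out of a singleton yields the same discrete set $x$. Under these identifications the comparison is the identity on $x$, hence cofinal. (Via \cref{rem:Segal-condition-span-patterns} this is just the statement that the elementary under-categories of $\xF_{*}$ and $\Span(\xF)$ at a given object agree.) With (1)--(4) in hand, \cref{obs:comparison-for-span}, and hence \cref{fbrseq}, yields that $\mathfrak{i}^{*}$ is an equivalence. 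I do not expect a genuine obstacle here: all the real content resides in \cref{fbrseq} and in the soundly-extendable verification \cref{ex:spansoundext}, and the remaining work is the bookkeeping of condition (2) above.
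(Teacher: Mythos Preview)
Your proposal is correct and follows essentially the same approach as the paper: both apply \cref{obs:comparison-for-span} to the inclusion $\mathfrak{i}$ and verify conditions (1)--(4), with (1) being \cref{ex:spansoundext}, (2) reducing to the observation that every map out of a one-element set is injective so the comparison is an equivalence, (3) and (4) being immediate since the elementary and forward parts coincide.
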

\begin{proof}
    We check the conditions of \cref{fbrseq} in the form stated in \cref{obs:comparison-for-span}:
    \begin{enumerate}
        \item The pattern is soundly extendable by \cref{ex:spansoundext}.
        \item For $A \in \xF$ the relevant functor is the restriction of $\xF_{/A}^{\txt{inj}} \to \xF_{/A}$ to elementaries.
        But every map out of a one-point set is injective, so this is an equivalence.
        \item Similarly, the functor on backwards morphisms $\xF^{\txt{inj}} \to \xF$ restricts to an equivalence on elementaries.
        \item Both categories have the same forward morphisms.
        \qedhere
    \end{enumerate}
\end{proof}

More generally, we have:
\begin{cor}\label{cor:SpanSm}
  Pullback along the inclusion 
  $\mathfrak{i}_{m} \colon \Span_{(m-1)\txt{-tr,all}}(\mathcal{S}_{m}) \rightarrow \Span(\mathcal{S}_{m})$
  induces an equivalence
  \[ \mathfrak{i}_{m}^{*} \colon \Fbrs(\Span(\mathcal{S}_{m})) \to
    \Fbrs(\Span_{(m-1)\txt{-tr},\txt{all}}(\mathcal{S}_{m})).\]
\end{cor}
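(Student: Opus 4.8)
The plan is to deduce this from \cref{fbrseq} by verifying the four conditions of \cref{obs:comparison-for-span}, in direct analogy with the proof of \cref{cor:SpanF}. Both patterns in question (defined in \cref{ex:m-commutative}) have underlying \icat{} $\mathcal{S}_{m}$, and the functor inducing $\mathfrak{i}_{m}$ is the identity $F = \id \colon \mathcal{S}_{m} \to \mathcal{S}_{m}$: the source adequate triple is $(\mathcal{S}_{m}, \mathcal{S}_{m}^{(m-1)\txt{-tr}}, \mathcal{S}_{m})$, whose backward maps are the $(m-1)$-truncated maps, while the target is $(\mathcal{S}_{m}, \mathcal{S}_{m}, \mathcal{S}_{m})$, and in both cases the unique elementary object is the point $\ast$. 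Since $F=\id$ preserves both classes of maps, preserves all pullbacks, and sends $\ast$ to $\ast$, it does induce the morphism of patterns $\mathfrak{i}_{m}$.

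First I would record condition (1): the target pattern $\Span(\mathcal{S}_{m})$ is soundly extendable by \cref{ex:SpanSm-soundly-extendable}; note that we never need soundness of the source. Conditions (3) and (4) are then immediate. The elementary backward subcategories $\frX_0^b$ and $\frY_0^b$ are both the contractible $\{\ast\}$, so (3) holds; and since $F=\id$ and both forward classes equal all of $\mathcal{S}_{m}$, the forward slices $\frXf_{/x} \to \frY^f_{/F(x)}$ are literally the identity of $(\mathcal{S}_{m})_{/x}$, giving (4).

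The crux is condition (2), and this is exactly where the hypothesis $n = m-1$ enters. For $x \in \mathcal{S}_{m}$ I must show the comparison of elementary backward slices $\frXb_{0/x} \to \frY^b_{0/x}$ is cofinal. The target slice $\frY^b_{0/x}$ is the space of all maps $\ast \to x$, i.e.\ the underlying space $x$ of points. The source slice $\frXb_{0/x}$ is the subspace of those points $\ast \to x$ that are $(m-1)$-truncated maps. The key observation is that \emph{every} point $\ast \to x$ is automatically $(m-1)$-truncated: its homotopy fibers are the path spaces of $x$, and these are $(m-1)$-truncated precisely because $x$ is $m$-truncated. Hence $\frXb_{0/x} \to \frY^b_{0/x}$ is an equivalence, in particular cofinal, so condition (2) holds and, by the note in \cref{obs:comparison-for-span}, $\mathfrak{i}_{m}$ is a strong Segal morphism. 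With all four conditions verified, \cref{fbrseq} applies and delivers the equivalence $\mathfrak{i}_{m}^{*} \colon \Fbrs(\Span(\mathcal{S}_{m})) \isoto \Fbrs(\Span_{(m-1)\txt{-tr},\txt{all}}(\mathcal{S}_{m}))$.

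I expect no serious obstacle; the only real content is the truncation bookkeeping in condition (2), which reduces to the elementary fact that maps out of a point into an $m$-truncated space are $(m-1)$-truncated. The one subtlety worth double-checking is the variance: \cref{obs:comparison-for-span} demands cofinality of $\frXb_{0/x} \to \frY^b_{0/x}$, which corresponds to coinitiality of the induced map on elementary inert slices $(\frXb_{0/x})^{\op}$; since our map is an equivalence, both variances hold automatically and this causes no trouble.
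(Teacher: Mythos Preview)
Your proposal is correct and follows essentially the same approach as the paper: both apply \cref{fbrseq}, and your condition (2) argument—that every point $\ast \to x$ into an $m$-truncated space is automatically $(m-1)$-truncated—is exactly what underlies the paper's bare assertion that $\mathfrak{i}_m$ is an iso-Segal morphism. The only difference is packaging: you route through \cref{obs:comparison-for-span} (as in \cref{cor:SpanF}), whereas the paper invokes \cref{fbrseq} directly, but the content is identical and your version spells out the truncation bookkeeping that the paper leaves implicit.
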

\begin{proof}
    We can apply \cref{fbrseq}:
    The target pattern $\Span(\mathcal{S}_{m}))$ is soundly extendable by \cref{ex:SpanSm-soundly-extendable}
    and in this example we also note that $\mathfrak{i}_m$ is an iso-Segal morphism.
    Condition (i) of \cref{fbrseq} holds because in both cases the elementary \icat{} is the terminal \icat{}.
    Condition (ii) holds because both span \icats{} have the same forward morphisms.
\end{proof}


\subsection{\texorpdfstring{$G$-equivariant $\infty$-operads}{G-equivariant infinity-operads}}
\label{subsec:G-operads}

In this section we apply the theory of fibrous patterns and envelopes
in the setting of $G$-equivariant \iopds{} developed in \cite{NS}.
While their paper works in the generality of $T$-parametrized
\iopds{}, we will restrict to the special case of the orbit category
$T = \Orb_G$ for simplicity.  Our main result is that the
$G$-$\infty$-operads of \cite{NS} are equivalent to fibrous
$\Span(\xF_G)$-patterns; we will also show that the sliced envelope
for $G$-$\infty$-operads is fully faithful and characterize the image, giving a third description
of these objects.

First, we recall some constructions in equivariant higher algebra,
which were pioneered in \cite{BarwickMackey} and further developed in \cite{NardinThesis} and \cite{NS}.
Fix a finite group $G$ throughout.
\begin{defn}
    Let $\xF_G$ be the category of finite $G$-sets,
    $\xF_{G,*}$ the category of finite pointed $G$-sets,
    and  $\Orb_G \subset \xF_G$ the full subcategory of $G$-orbits.
\end{defn}

\begin{defn}
    A \emph{$G$-$\infty$-category} is a functor $\Orb_G^\op \to \CatI$ and
    a \emph{$G$-symmetric monoidal $\infty$-category} is a $\Span(\xF_G)$-Segal object in $\CatI$.
    We write 
    \[
        \CatG := \Fun(\Orb_G^\op, \CatI)
        \qquad \text{ and } \qquad
        \CatG^\otimes := \Seg_{\Span(\xF_G)}(\CatI)
    \]
    and define the forgetful functor 
    $\CatG^{\otimes} \to \CatG$
    by restricting to the elementaries $\Orb_G^\op \to \Span(\xF_G)$.
\end{defn}

\begin{notation}
    For a $G$-$\infty$-category $\calC \colon \Orb_G^\op \to \CatI$ 
    we denote its value at $G/H$ by $\calC^H$ an refer to it 
    as the $H$-fixed point category of $\calC$.
    There are restriction maps $\calC^H \to \calC^K$ for $K \subset H \subset G$.
    Given a $G$-symmetric monoidal $\infty$-category $\calD \colon \Span(\xF_G) \to \CatI$ we further have tensor products
    $\otimes \colon  \calD^H \times \calD^H \to \calD^H$
    and so-called norm maps 
    $\mathrm{Nm}_K^H \colon  \calD^K \to \calD^H$ 
    for all $K \subset H \subset G$ coming from the span $(G/K \xfrom{=} G/K \to G/H)$.
\end{notation}

\begin{ex}\label{ex:ulF_G}
    Since $\Span(\xF_G)$ is an extendable pattern (\cref{ex:spanFGrestrsound})
    $\calA_{\Span(\xF_G)}$ is a Segal object in $\CatI$.
    We denote this $G$-symmetric monoidal $\infty$-category by
    \[
        \ulF_G := \calA_{\Span(\xF_G)}(-) = \Span(\xF_G)_{/-}^{\act} \colon 
        \Span(\xF_G) \to \CatI.
    \]
    The $H$-fixed point category is the category of finite $H$-sets:
    \[
        (\ulF_G)^H = \Span(\xF_G)_{/(G/H)}^\act \simeq (\xF_G)_{/(G/H)} \simeq \xF_H.
    \]
    The restriction maps are given by restriction, the tensor product by disjoint union, and the norm maps are
    $(- \times H)_{/K} \colon  \xF_K \to \xF_H$.
    In summary, $\ulF_G$ is $\xF_G$ with its natural structure as a $G$-symmetric monoidal $\infty$-category.
\end{ex}

Below we will see that fibrous $\Span(\xF_G)$-patterns model $G$-$\infty$-operads.
We now explain how $\mathcal{N}_\infty$-operads fit into this framework:
\begin{ex}\label{ex:SpanfG-is-fibrous}
    Let $\xF_G^f \subset \xF_G$ be a wide subcategory closed under base-change and disjoint union.
    Then the inclusion functor 
    $\Span_{\txt{all},f}(\xF_G) \to \Span(\xF_G)$
    defines a fibrous $\Span(\xF_G)$-pattern.
    To see that it has cocartesian lifts for inerts, note that any functor of the form
    $\Span_{b,f}(\calC) \to \Span_{b,\txt{all}}(\calC)$
    has cocartesian lifts for backwards maps.
    For the second condition we need to show that 
    \[
        (\xF_G^f)_{/A} \to \lim_{U \in (\Orb_G)_{/A}} (\xF_G^f)_{/U}
    \]
    is an equivalence. 
    The limit may be rewritten as a product over the set of orbits of $A$ and then the equivalence follows because $\xF_G^f$ is closed under base-change and disjoint union.
    
    Categories $\xF_G^f$ that in addition to the above also contain all fold maps $\nabla: G/H \amalg G/H \to G/H$, are in bijection with the indexing systems of \cite{blumberg-hill}, see \cite[Remark 2.4.12]{NS}.
    Under the equivalence $\Fbrs(\Span(\xF_G)) \simeq \OpdG$ proved below
    the fibrous $\Span(\xF_G)$-patters described above
    are the ``commutative $G$-$\infty$-operads'' from \cite[Definition 2.4.10]{NS},
    which correspond to the $\mathcal{N}_\infty$-operads of \cite{blumberg-hill} by \cite[Remark 2.4.12]{NS}.
\end{ex}

We now quickly recall the necessary notation from \cite{NS} to state their definition of $G$-\iopds{}, but we refer the reader there for details.
\begin{defn}
    Define $\ulF_G^\vee \subset \Ar(\xF_G)$ as the full subcategory of those morphisms $(f \colon U \to V)$ where $V$ is an orbit:
    $\ulF_G^\vee := \Ar(\xF_G) \times_{\xF_G} \Orb_G$.
    We say that a morphism $f \to g$ given by
    \[
        \begin{tikzcd}
            U \ar[r, "h"] \ar[d, "f"] & 
            X \ar[d, "g"] \\
            V \ar[r, "k"] &
            Y
        \end{tikzcd}
    \]
    \begin{itemize}
        \item lies in $(\ulF_G^\vee)^{\txt{si}}$ if it is a summand inclusion, i.e.~$U \to X \times_Y V$ is injective,
        \item lies in $(\ulF_G^\vee)^{\txt{tdeg}}$ if it is target degenerate, i.e.~$k \colon V \to Y$ is an equivalence.
    \end{itemize}
\end{defn}

\begin{defn}
    Define $\ulF_{G,*}$ as the algebraic pattern
    \[
        \ulF_{G,*} := \Spansitdeg(\ulF_G^\vee; \Orb_G),
      \]
      where the elementary objects are those in the essential image of
      the identity inclusion
      $\Orb_G \to \Ar(\Orb_G) \subset \ulF_G^\vee$.  
\end{defn}

\begin{remark}
  The functor $\ev_1 \colon \ulF_G^\vee \to \Orb_{G}$ induces a cocartesian fibration 
    \[
        \ulF_{G,*} = \Spansitdeg(\ulF_G^\vee)
        \xrightarrow{\ev_1}
        \Span_{\txt{all},\txt{iso}}(\Orb_G) \simeq \Orb_G^\op.
    \]
  Straightening this yields a $G$-$\infty$-category whose $H$-fixed point category is $(\ulF_{G,*})^H \simeq \xF_{H,*}$, similarly to \cref{ex:ulF_G}.
\end{remark}

\begin{observation}\label{obs:elemenatry-slice-of-ulF}
    For $(U \to V) \in \ulF_{G,*}$ the category of elementaries under $(U \to V)$ is equivalent to the opposite of the category of orbits over $U$ (as in \cref{rem:Segal-condition-span-patterns}):
    \[
        (\ulF_{G,*})^\el_{(U \to V)/} 
        \simeq (\Orb_G \times_{(\ulF_G^\vee)} (\ulF_G^{v, si})_{/(U \to V)})^\op
        \simeq (\Orb_G \times_{\xF_G} (\xF_G)_{/U})^\op.
    \]
    Here we used that any morphism $(Q \xto{=} Q) \to (U \to V)$ (where $Q$ is an orbit) is automatically in $(\ulF_G^\vee)^{\txt{si}}$ since $Q \to Q \times_V U$ is injective.
    Now consider the full subcategory on those $(Q \to U)$ that are injective. 
    This subcategory is equivalent to the discrete set of orbits $U/G$ and moreover the inclusion of the subcategory is a left adjoint:
    \[
        U/G \hookrightarrow 
        (\Orb_G \times_{\xF_G} (\xF_G)_{/U})^\op \simeq
        (\ulF_{G,*})^\el_{(U \to V)/}, 
    \]
    with right adjoint given by sending $(f \colon Q \to U)$ to $(f(Q) \hookrightarrow U)$.
    In particular, the inclusion of $U/G$ is a coinitial functor.
    This means that for any kind of (weak) Segal condition over $\ulF_{G,*}$ the limit involved can be rewritten as a product indexed by the finite set $U/G$.
\end{observation}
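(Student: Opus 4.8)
The plan is to unwind the definition of the span pattern $\ulF_{G,*} = \Spansitdeg(\ulF_G^v; \Orb_G)$ and then exhibit the relevant elementary slice as the opposite of a very concrete $1$-category. First I would invoke \cref{rem:Segal-condition-span-patterns}, which for a span pattern $\Spanbf(\frX;\frX_0)$ identifies the elementary slice under an object $x$ with $(\frXb_{0/x})^{\op}$. Applied with $\frX = \ulF_G^v$, with the backwards maps being the $\txt{si}$-morphisms, and with the elementaries the orbits $\Orb_G \subset \ulF_G^v$ included via identity arrows, this yields at once the first displayed equivalence
\[ (\ulF_{G,*})^\el_{(U\to V)/} \simeq \bigl(\Orb_G \times_{\ulF_G^v} (\ulF_G^{v,\txt{si}})_{/(U\to V)}\bigr)^{\op}. \]

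Next I would establish the second equivalence by spelling out what a morphism out of an elementary object is. An object of the fibre product is an orbit $Q$ together with a morphism $\id_Q \to (f\colon U \to V)$ in $\ulF_G^v$, i.e.\ a commuting square whose left vertical is $\id_Q$; such a square amounts to a single map $h\colon Q \to U$ (the bottom map being forced to equal $f\circ h$). The key point is that the $\txt{si}$-condition, namely injectivity of $Q \to U\times_V Q$, is automatic, since this map is a section of the projection $U\times_V Q \to Q$ and hence a monomorphism. Thus the $\txt{si}$-restriction imposes nothing, and on morphisms one checks likewise that a map of elementaries over $(U\to V)$ is simply a map $a\colon Q \to Q'$ of orbits with $h'a = h$. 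This produces the identification with $\mathcal{E} := \Orb_G \times_{\xF_G} (\xF_G)_{/U}$, whence $(\ulF_{G,*})^\el_{(U\to V)/} \simeq \mathcal{E}^{\op}$. Everything here lives in the $1$-category $\xF_G$, so these are honest equivalences of $1$-categories with no higher coherence to track.

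The heart of the argument is the adjunction. I would identify the full subcategory $\mathcal{E}^{\txt{inj}} \subset \mathcal{E}$ on the injective maps $(Q \hookrightarrow U)$ with the discrete set $U/G$ of orbits of $U$: an injective $G$-map from an orbit picks out one orbit of $U$, and between two such objects there is a unique isomorphism when the images agree and no morphism otherwise, so each component is contractible and the components are indexed by $U/G$. The corestriction-to-image functor $r\colon \mathcal{E} \to \mathcal{E}^{\txt{inj}}$, sending $(Q,h)$ to $(h(Q) \hookrightarrow U)$, is left adjoint to the inclusion: its unit at $(Q,h)$ is the surjection $Q \to h(Q)$, and the bijection $\Map_\mathcal{E}((Q,h), (P\hookrightarrow U)) \simeq \Map_{\mathcal{E}^{\txt{inj}}}(h(Q), P)$ holds because both sides are a point when $h(Q) = P$ and empty otherwise, orbits of $U$ being pairwise disjoint. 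Passing to opposites turns $r \dashv \iota$ into $\iota^{\op} \dashv r^{\op}$, so the inclusion $U/G \simeq (\mathcal{E}^{\txt{inj}})^{\op} \hookrightarrow \mathcal{E}^{\op} \simeq (\ulF_{G,*})^\el_{(U\to V)/}$ is a left adjoint with right adjoint $(Q,f)\mapsto (f(Q)\hookrightarrow U)$, exactly as asserted.

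Finally, since a functor admitting a right adjoint is coinitial — the comma category computing coinitiality acquires a terminal object from the counit, as in the dual of the criterion used in \cref{lem:O-el-beta} — the inclusion of $U/G$ is coinitial, and therefore any (weak) Segal limit over $(\ulF_{G,*})^\el_{(U\to V)/}$ may be computed as a product indexed by the finite set $U/G$. I expect the main obstacle to be purely bookkeeping: keeping the several opposites straight and stating the adjunction on the correct side, since it is precisely the passage to opposites that converts the image functor (a left adjoint on $\mathcal{E}$) into a right adjoint on the elementary slice, which is what makes the inclusion \emph{coinitial} rather than cofinal.
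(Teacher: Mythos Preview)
Your proposal is correct and follows essentially the same route as the paper: invoke \cref{rem:Segal-condition-span-patterns} for the first identification, observe that the $\txt{si}$-condition is automatic for morphisms out of an identity arrow on an orbit, and then set up the image/inclusion adjunction on $\mathcal{E}$ before passing to opposites. Your write-up is in fact more detailed than the paper's terse justification, particularly in spelling out why $Q \to U \times_V Q$ is injective (via the section argument) and in verifying the adjunction bijection explicitly.
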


\begin{cor}\label{cor:ulFG-sound}
    The pattern $\ulF_{G,*}$ is sound.%
    \footnote{
        In fact this pattern is soundly extendable. 
        This follows because the functor $\ulF_{G,*} \to \Span(\xF_G)$ discussed in \cref{prop:GOpd=Span}
        is iso-Segal and induces an equivalence on forward maps.
        However, the extendability of $\ulF_{G,*}$ will not be needed here.
    }
\end{cor}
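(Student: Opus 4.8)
The plan is to verify the criterion from \cref{lem:O-el-beta}.(2), namely that for every active morphism $\omega \colon (U \to V) \actto (U' \to V')$ in $\ulF_{G,*}$ and every elementary $\beta \colon (U \to V) \intto E'$, the \icat{} $\ulF_{G,*}^\el_\beta(\omega)$ is weakly contractible. The decisive structural input is \cref{obs:elemenatry-slice-of-ulF}, which tells us that each elementary slice $(\ulF_{G,*})^\el_{(U \to V)/}$ admits a coinitial inclusion of the discrete set $U/G$ of orbits of $U$; this collapses all the Segal limits over $\ulF_{G,*}$ into finite products. I expect this reduction to the discrete case to be the crux: once the elementary slices are replaced by finite sets, soundness should become a purely combinatorial statement about $G$-sets and injective maps.

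Concretely, I would first use the coinitial inclusion $U/G \hookrightarrow (\ulF_{G,*})^\el_{(U\to V)/}$ of \cref{obs:elemenatry-slice-of-ulF} to identify the elementaries under the source of $\omega$ with the finite discrete set of orbits in $U$. The elementary $\beta$ then corresponds to a choice of orbit $Q \subseteq U$ (equivalently an element of $U/G$). Next I would unwind $\ulF_{G,*}^\el_\beta(\omega) = \ulF_{G,*}^\el_{(U'\to V')/} \times_{(\ulF_{G,*})^\xint_{(U\to V)/}} ((\ulF_{G,*})^\xint_{(U\to V)/})_{/\beta}$, using that the inert undercategories are (up to taking opposites) slice categories of $G$-sets, so that the mapping spaces appearing are again discrete. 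The factorization functor $\omega_{(-)}$ sends an orbit of $U'$ to the appropriate orbit of $U$ obtained by the inert--active factorization of the composite, which at the level of orbits is just the map $U/G \to U'/G$ induced by $\omega$ on orbit sets; I would make this identification explicit.

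With these identifications in hand, the key step is to compute, via \cref{lem:O-el-beta}.(3), the colimit
\[
  \colim_{\alpha \in (\ulF_{G,*}^\el_{(U'\to V')/})^\op} \Map_{(\ulF_{G,*})^\xint_{(U\to V)/}}(\omega_\alpha, \beta),
\]
which after the discretization becomes a colimit over the finite set $U'/G$ of discrete mapping spaces. I would show that for the fixed orbit $Q \subseteq U$ corresponding to $\beta$, exactly one orbit $Q' \subseteq U'$ (namely the one through which $Q$ factors under the inert--active factorization, i.e.\ the image of $Q$ in $U'/G$) contributes a single point, while all other terms are empty, so that the colimit is contractible. This is the same pattern as the computation for $\xF_*$ in \cref{ex:xF-is-sound} and for $\xF_*^\natural$ in \cref{ex:Fptnatural}, where soundness reduces to checking that a fiber of a map of finite sets is a point; here the orbit decomposition of $G$-sets plays the role of the underlying sets.

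The main obstacle I anticipate is bookkeeping the interaction between the span structure and the ``vertical'' arrow structure of $\ulF_G^v = \Ar(\xF_G) \times_{\xF_G} \Orb_G$: an object of $\ulF_{G,*}$ is an arrow $U \to V$ with $V$ an orbit, and the inert maps are the $(\ulF_G^v)^{\txt{si}}$ spans whose backwards leg is injective on the total spaces, so I must be careful that the injectivity condition interacts correctly with passing to orbits. I would handle this by exploiting \cref{obs:elemenatry-slice-of-ulF} again, which records precisely that any map from an orbit is automatically in $(\ulF_G^v)^{\txt{si}}$, so the only data that survives in the relevant mapping spaces is the orbit-level combinatorics. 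Once that is pinned down, contractibility of the colimit follows formally, establishing soundness of $\ulF_{G,*}$.
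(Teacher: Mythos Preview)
Your approach is different from the paper's, and more labor-intensive: the paper simply applies the general span-pattern criterion \cref{prop:Span-sound}. Since $\ulF_{G,*} = \Spansitdeg(\ulF_G^v;\Orb_G)$, it suffices to verify that (a) the backward class $(\ulF_G^v)^{\txt{si}}$ satisfies left cancellation, so that $(\ulF_G^{v,\txt{si}})_{/y} \hookrightarrow (\ulF_G^v)_{/y}$ is fully faithful, and (b) the inclusion $\frXb_{0/(U\to V)} \hookrightarrow \frX_{0/(U\to V)}$ is cofinal --- in fact it is an equivalence, by exactly the argument recorded in \cref{obs:elemenatry-slice-of-ulF} (any map from $(Q=Q)$ with $Q$ an orbit to $(U \to V)$ is automatically in $(\ulF_G^v)^{\txt{si}}$). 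The combinatorics you propose to carry out by hand is already packaged once and for all in the proof of \cref{prop:Span-sound}; what the paper buys by going through that proposition is that the pullback--adjunction manipulation (which is the substantive step) does not have to be redone.

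Your proposal also contains two errors in the setup, though the final computation can be repaired. First, the coinitiality of $U/G \hookrightarrow (\ulF_{G,*})^\el_{(U\to V)/}$ does \emph{not} let you assume the universally quantified $\beta$ is an orbit inclusion: (co)finality only lets you restrict the \emph{index} $\alpha$ of the colimit in \cref{lem:O-el-beta}(3), which you also (correctly) do. A general $\beta$ corresponds to an arbitrary map $Q \to U$ from an orbit, not an inclusion, and you must treat that case. Second, your description of $\omega_{(-)}$ is wrong: for $\alpha$ corresponding to an orbit inclusion $Q' \hookrightarrow U'$, the inert $\omega_\alpha$ has target $(\omega^{-1}(Q') \to Q')$, and $\omega^{-1}(Q')$ is typically a disjoint union of several orbits of $U$, not a single one --- so $\omega_{(-)}$ is not ``the map $U/G \to U'/G$'' in any sense. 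With these corrections the mapping space $\Map_{\calO^\xint_{X/}}(\omega_\alpha,\beta)$ is a point when the image of $Q \to U$ lies in $\omega^{-1}(Q')$ (equivalently, when $Q'$ is the image orbit of the composite $Q \to U \to U'$) and empty otherwise, so the colimit over the discrete set $U'/G$ is indeed contractible.
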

\begin{proof}
    We check the conditions of \cref{prop:Span-sound}.
    First we show that the backwards maps satisfy cancellation.
    Consider two morphisms in $\ulF_G^\vee$:
    \[
        \begin{tikzcd}
            A \ar[r, "a"] \ar[d, "e"] & 
            U \ar[r, "h"] \ar[d, "f"] & 
            X \ar[d, "g"] \\
            B \ar[r, "b"] &
            V \ar[r, "k"] &
            Y
        \end{tikzcd}
    \]
    such that $A \to B \times_Y X$ is injective.
    We can write this map as a composite $A \to B \times_V U \to B \times_Y X$,
    the first map of which then has to be injective.
    In other words $(a,b):e \to f$ is in $\ulF_G^{v, \rm si}$ as claimed.
    
    We also need to show that the inclusion $\frX^b_{0/y} \hookrightarrow \frX_{0/y}$ is cofinal.
    In the case at hand this inclusion is
    $\Orb_G \times_{\ulF_G^\vee} (\ulF_G^{v,\txt{si}})_{/(U \to V)} \to 
    \Orb_G \times_{\ulF_G^\vee} (\ulF_G^{v})_{/(U \to V)} $,
    which is an equivalence by the argument from \cref{obs:elemenatry-slice-of-ulF}.
\end{proof}

\begin{defn}[\cite{NS}]
    A \emph{$G$-$\infty$-operad} is a weak Segal fibration over $\ulF_{G,*}$ in the sense of \cite[Definition 9.6]{patterns1}, see also \cref{propn:fibrous=WSF}.
    Let $\OpdG$ denote the full subcategory of $\Cat_{\infty/\ulF_{G,*}}^{\intcoc}$
    on the $G$-$\infty$-operads.
\end{defn}

\begin{observation}
    This agrees with the definition of \cite{NS}.
    First we note that given $p \colon \calP \to \ulF_{G,*}$ with cocartesian lifts for inerts, the composite $\mathrm{ev}_1 \circ p \colon \calP \to \Orb_G^\op$ exhibits $\calP$ as a cocartesian fibration over $\Orb_G^\op$, \ie{} an $\Orb_G$-\icat{}, and $p$ as an $\Orb_G$-functor.
    This holds because the inert morphisms in $\ulF_{G,*}$ contain all the cocartesian lifts of $\mathrm{ev}_1\colon \ulF_{G,*} \to \Orb_G^\op$.
    We hence have an identification:
    \[
        \Cat_{\infty/\ulF_{G,*}}^{\intcoc} =
        (\CatG)_{/\ulF_{G,*}}^{\intcoc}.
    \]
    It remains to see that their conditions (2) and (3) exactly amount to the weak Segal conditions (2) and (3) in \cite[Definition 9.6]{patterns1}.
    Indeed, this follows by inspection using 
    \cref{obs:elemenatry-slice-of-ulF} and \cite[Remark 9.7]{patterns1}.
\end{observation}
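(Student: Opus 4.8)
The plan is to verify the two claims in turn: first the identification of ambient \icats{}, and then the matching of the defining conditions. For the identification, I would begin with a functor $p \colon \calP \to \ulF_{G,*}$ admitting cocartesian lifts of inert morphisms and show that $\mathrm{ev}_1 \circ p \colon \calP \to \Orb_G^\op$ is a cocartesian fibration. The key point is that the $\mathrm{ev}_1$-cocartesian morphisms of $\ulF_{G,*}$ all lie among its inert morphisms (the backwards maps, whose target component is an isomorphism of orbits). Since $p$ lifts inert morphisms cocartesianly, the usual argument with composites and $2$-out-of-$3$ for cocartesian morphisms (\cite[Proposition 2.4.1.3]{HTT}) shows $\mathrm{ev}_1 \circ p$ is cocartesian, and that a morphism over $\ulF_{G,*}$ preserving inert-cocartesian lifts is precisely an $\Orb_G$-functor over $\ulF_{G,*}$. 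This produces the identification $\Cat_{\infty/\ulF_{G,*}}^{\intcoc} = (\CatG)_{/\ulF_{G,*}}^{\intcoc}$.

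For the second claim, I would first write out both sets of conditions concretely. The weak Segal fibration conditions of \cite[Definition 9.6]{patterns1} take the explicit form recalled in \cref{propn:fibrous=WSF}, namely a condition on the fibers over each object and a condition on mapping spaces, both phrased as limits over the elementary slices $(\ulF_{G,*})^\el_{(U \to V)/}$. The Nardin--Shah conditions are instead phrased via fixed-point categories and finite products indexed by the orbits of the source. The bridge between the two is \cref{obs:elemenatry-slice-of-ulF}, which exhibits the discrete set of orbits $U/G$ as a coinitial subcategory of $(\ulF_{G,*})^\el_{(U \to V)/}$. Coinitiality lets me replace every limit over the elementary slice by the corresponding product indexed by $U/G$, which is exactly the indexing used in \cite{NS}.

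With this reduction in hand, the remaining work is bookkeeping. The fiberwise condition becomes the assertion that the fiber of $\calP$ over $(U \to V)$ splits as the product of its fibers over the orbits of $U$, and the mapping-space condition matches the Nardin--Shah condition on spaces of multimorphisms after the same product decomposition; here \cite[Remark 9.7]{patterns1} supplies the reformulation of the weak Segal conditions needed to line the two up. The main obstacle is organizational rather than conceptual: one must unwind the Nardin--Shah conventions for $\ulF_G^v$ and the classes $\txt{si}$ and $\txt{tdeg}$, and check that replacing the elementary slice by $U/G$ transports their conditions verbatim onto the weak Segal conditions. Once the elementary slices are identified via \cref{obs:elemenatry-slice-of-ulF}, the comparison is a term-by-term inspection.
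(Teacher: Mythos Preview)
Your proposal is correct and follows essentially the same route as the paper: the observation itself contains its justification, and you have simply expanded on it. Both the paper and your proposal use the fact that the $\mathrm{ev}_1$-cocartesian morphisms are inert to obtain the identification of ambient \icats{}, and both invoke \cref{obs:elemenatry-slice-of-ulF} together with \cite[Remark 9.7]{patterns1} to replace the limits over elementary slices by finite products indexed by $U/G$, after which the matching of conditions is a direct inspection.
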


\begin{cor}\label{cor:OpdG=Fbrs}
    We have $\OpdG = \Fbrs(\ulF_{G,*})$.
\end{cor}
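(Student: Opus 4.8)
The plan is to deduce the equality directly from the general identification of fibrous patterns with weak Segal fibrations over sound patterns, combined with the soundness of $\ulF_{G,*}$. First I would observe that both sides are, by definition, full subcategories of the same ambient \icat{} $\Cat_{\infty/\ulF_{G,*}}^{\intcoc}$: the \icat{} $\OpdG$ is the full subcategory on the $G$-$\infty$-operads, while $\Fbrs(\ulF_{G,*})$ is the full subcategory on the fibrous $\ulF_{G,*}$-patterns. Consequently it suffices to check that these two full subcategories have the same objects, i.e.\ that a functor $\pi \colon \calP \to \ulF_{G,*}$ (with cocartesian lifts of inerts) is a $G$-$\infty$-operad if and only if it is a fibrous $\ulF_{G,*}$-pattern.

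The key input is \cref{propn:fibrous=WSF}: over a \emph{sound} pattern $\calO$, a functor $\pi \colon \calP \to \calO$ is a fibrous $\calO$-pattern precisely when it is a weak Segal $\calO$-fibration in the sense of \cite[Definition 9.6]{patterns1}. I would apply this with $\calO = \ulF_{G,*}$, which is sound by \cref{cor:ulFG-sound}. This immediately identifies the fibrous $\ulF_{G,*}$-patterns with the weak Segal fibrations over $\ulF_{G,*}$. Since a $G$-$\infty$-operad is by definition a weak Segal fibration over $\ulF_{G,*}$, the two classes of objects coincide, and hence $\OpdG = \Fbrs(\ulF_{G,*})$ as full subcategories of $\Cat_{\infty/\ulF_{G,*}}^{\intcoc}$.

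There is no serious obstacle here; the work has all been done upstream. The only point requiring a little care is the verification — already carried out in the observation preceding the statement — that the definition of $\OpdG$ in \cite{NS} agrees on the nose with ``weak Segal fibration over $\ulF_{G,*}$'': one matches the ambient categories via $\Cat_{\infty/\ulF_{G,*}}^{\intcoc} = (\CatG)_{/\ulF_{G,*}}^{\intcoc}$ and checks, using \cref{obs:elemenatry-slice-of-ulF} to rewrite the relevant elementary limits as products over $U/G$, that the Nardin--Shah conditions (2)--(3) are exactly the weak Segal conditions (2)--(3) of \cite[Definition 9.6]{patterns1}. Granting that identification, the corollary is a formal consequence of \cref{cor:ulFG-sound} and \cref{propn:fibrous=WSF}.
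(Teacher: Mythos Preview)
Your proposal is correct and follows exactly the same approach as the paper: invoke \cref{cor:ulFG-sound} for soundness of $\ulF_{G,*}$, then apply \cref{propn:fibrous=WSF} to identify fibrous $\ulF_{G,*}$-patterns with weak Segal fibrations, which are by definition the $G$-$\infty$-operads. The paper's proof is just a terser version of what you wrote.
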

\begin{proof}
    The pattern $\ulF_{G,*}$ is sound by \cref{cor:ulFG-sound}
    and hence weak Segal fibrations and fibrous patterns are the same by \cref{propn:fibrous=WSF}.
\end{proof}

\begin{propn}\label{prop:GOpd=Span}
    Restriction along the morphism of patterns
    $\ulF_{G,*} \xrightarrow{s} \Span(\xF_G)$
    induced by the functor $\ulF_{G}^{v} \to \xF_{G}$ given by evaluation at $0$
    yields an equivalence
    \[
        s^{*} \colon \Fbrs(\Span(\xF_G)) \xrightarrow{\simeq}
        \Fbrs(\ulF_{G,*}) = \OpdG. 
    \]
\end{propn}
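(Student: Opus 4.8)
The plan is to realize $s$ as a morphism of span patterns and apply the comparison criterion \cref{obs:comparison-for-span}, which packages the hypotheses of \cref{fbrseq} for such morphisms; combined with the identification $\Fbrs(\ulF_{G,*}) = \OpdG$ from \cref{cor:OpdG=Fbrs}, this yields the claim. Concretely, I would take the adequate triples $(\ulF_G^v, (\ulF_G^v)^{\txt{si}}, (\ulF_G^v)^{\txt{tdeg}})$ with $\frX_0 = \Orb_G$ (the identity arrows $Q \xto{=} Q$) and $(\xF_G, \xF_G, \xF_G)$ with $\frY_0 = \Orb_G$, together with $F = \ev_0 \colon \ulF_G^v \to \xF_G$, $(U \to V) \mapsto U$; by definition $s$ is the morphism of patterns induced by $F$.

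First I would check that $F$ is a morphism of span patterns: it visibly carries the classes $(\ulF_G^v)^{\txt{si}}$ and $(\ulF_G^v)^{\txt{tdeg}}$ into $\xF_G$ and sends $\frX_0$ into $\frY_0$. For the pullback condition of \cref{constr:spancats}, note that pulling back an si-arrow along a tdeg-arrow in $\ulF_G^v$ leaves the (orbit) target unchanged up to equivalence and is computed levelwise in $\Ar(\xF_G)$; since $\ev_0$ preserves levelwise pullbacks, $F$ carries such a pullback to the corresponding pullback in $\xF_G$.

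Next I would verify the four hypotheses of \cref{obs:comparison-for-span}. Condition (1), that $\Span(\xF_G)$ (the case $\xF_G^f = \xF_G$) is soundly extendable, is \cref{ex:spanFGrestrsound}. Condition (2) asks that the comparison of backward-elementary slices over $x = (U \to V)$ be cofinal; in fact it is the equivalence $\Orb_G \times_{\ulF_G^v} ((\ulF_G^v)^{\txt{si}})_{/x} \isoto (\Orb_G)_{/U}$ established in \cref{obs:elemenatry-slice-of-ulF}, using that every arrow $Q \xto{=} Q \to (U \to V)$ out of an orbit is automatically source-injective. For condition (3), between two elementary objects $Q \xto{=} Q$ and $Q' \xto{=} Q'$ the canonical comparison map $U \to X \times_Y V$ defining source-injectivity is naturally the identity, so the si-constraint is vacuous; hence $(\ulF_G^v)^{\txt{si}}_0$ is the identity image of $\Orb_G$ and $\ev_0$ identifies it with $\Orb_G = \frY_0^b$. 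Finally, for condition (4), a tdeg-arrow over $x = (U \to V)$ is precisely the datum of a map $W \to U$, so $(\ulF_G^v)^{\txt{tdeg}}_{/x} \simeq (\xF_G)_{/U} = \frY^f_{/F(x)}$ via $\ev_0$, which is in particular an equivalence on maximal subgroupoids.

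With all four conditions in hand, \cref{obs:comparison-for-span} permits the application of \cref{fbrseq}, giving the equivalence $s^* \colon \Fbrs(\Span(\xF_G)) \isoto \Fbrs(\ulF_{G,*})$; rewriting the target via \cref{cor:OpdG=Fbrs} completes the proof. The only genuinely delicate point is condition (3): one must observe that the source-injectivity requirement on the inert maps of $\ulF_{G,*}$ collapses to no condition at all between elementary objects, so that $\frX_0^b$ acquires every map of orbits rather than only the isomorphisms. Were this to fail, $F^{\el}$ would not be essentially surjective and full, and the whole comparison would break down; all remaining checks are bookkeeping resting on the slice computation of \cref{obs:elemenatry-slice-of-ulF}.
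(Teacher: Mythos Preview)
Your proposal is correct and follows essentially the same route as the paper's proof: both verify the four conditions of \cref{obs:comparison-for-span} for the morphism $\ev_0$ of adequate triples, invoking \cref{obs:elemenatry-slice-of-ulF} for condition (2) and the trivial identification of forward slices for condition (4). Your justification for condition (3) is in fact more explicit than the paper's (which simply says the functor ``induces the identity on $\Orb_G$''), and your citation of \cref{ex:spanFGrestrsound} for condition (1) is the appropriate one for $\Span(\xF_G)$.
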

\begin{proof}
    We need to show that the morphism of patterns
\[
        s \colon \ulF_{G,*} = \Spansitdeg(\ulF_G^\vee; \Orb_G) 
        \to \Span(\xF_G; \Orb_G)
\]        
    satisfies the conditions of \cref{fbrseq}.
    Since this comes from a morphism of adequate triples, we can use the formulation in \cref{obs:comparison-for-span}.
    We check each of the conditions there in turn:
    \begin{enumerate}
        \item It was checked in \cref{ex:spansoundext} that $\Span(\xF_G)$ is soundly extendable.
        \item We need to show that 
        \[
            (\Orb_G \times_{(\ulF_G^\vee)} (\ulF_G^{v, \txt{si}})_{/(U \to V)})^\op
            \to
            (\Orb_G \times_{\xF_G} (\ulF_G)_{/U})^\op
        \]
        is cofinal. 
        But we have already noted in \cref{obs:elemenatry-slice-of-ulF} that it is an equivalence.
        \item This holds since the functor induces the identity on $\Orb_G$.
        \item For all $U \in \xF_G$ the functor
        \[
            (\ulF_G^{v,\txt{tdeg}})_{/(U\to V)} \to (\xF_G)_{/U}
        \]
        is an equivalence by inspection of the definition of $(\ulF_G^{v})^{\txt{tdeg}}$.
        \qedhere
    \end{enumerate}
\end{proof}

As a consequence we obtain a fully faithful envelope into the \icat{} of $G$-symmetric monoidal \icats{} over $\ulF_G$ and a characterization of the image.
\begin{cor}
    There is an adjunction
    \[
        \Env_G \colon \OpdG \adj \CatG^\otimes :\! \mathrm{forget}
    \]
    where the left adjoint may be lifted to a fully faithful functor
    \[
        \Env_G\colon \OpdG \hookrightarrow (\CatG^\otimes)_{/\ulF_G}.
    \]
    This functor has both adjoints and its essential image consists
    of those $G$-symmetric monoidal functors $p\colon\calC \to \ulF_G$
    that are $\Ar_\act(\Span(\xF_G))$-equifibered.
\end{cor}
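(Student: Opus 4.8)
The plan is to transport the general envelope theory for the soundly extendable pattern $\Span(\xF_G)$ across the equivalence of \cref{prop:GOpd=Span}. First recall that $\Span(\xF_G)$ is soundly extendable by \cref{ex:spansoundext}, that $\CatG^\otimes = \Seg_{\Span(\xF_G)}(\CatI)$ by definition, and that $\calA_{\Span(\xF_G)} \simeq \ulF_G$ by \cref{ex:ulF_G}. Applying \cref{thm:envelope-for-soundly-extendable} to $\calO = \Span(\xF_G)$ therefore yields an adjunction
\[ \Env_{\Span(\xF_G)} \colon \Fbrs(\Span(\xF_G)) \rightleftarrows \CatG^\otimes \]
with right adjoint unstraightening, together with its sliced lift
\[ \sliceEnv{\Span(\xF_G)} \colon \Fbrs(\Span(\xF_G)) \hookrightarrow (\CatG^\otimes)_{/\ulF_G}, \]
which is fully faithful with essential image the $\Aract(\Span(\xF_G))$-equifibered objects.

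Next I would precompose with the inverse of the equivalence $s^{*} \colon \Fbrs(\Span(\xF_G)) \isoto \OpdG$ of \cref{prop:GOpd=Span}, setting $\Env_G := \Env_{\Span(\xF_G)} \circ (s^{*})^{-1}$. Since $s^{*}$ is an equivalence, $\Env_G$ is again a left adjoint, with right adjoint $s^{*} \circ \Un{\Span(\xF_G)}{\xint}$; unwinding the definitions, this right adjoint is exactly the functor $\CatG^\otimes \to \OpdG$ sending a $G$-symmetric monoidal \icat{} to its underlying $G$-$\infty$-operad, i.e. the claimed forgetful functor. Transporting the sliced lift across the same equivalence produces the fully faithful functor $\Env_G \colon \OpdG \hookrightarrow (\CatG^\otimes)_{/\ulF_G}$, whose essential image is the image of $\sliceEnv{\Span(\xF_G)}$, namely the $\Aract(\Span(\xF_G))$-equifibered $G$-symmetric monoidal functors $\calC \to \ulF_G$. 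This gives the adjunction, the fully faithful lift, and the description of the image in one stroke.

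It then remains to verify that the sliced functor admits adjoints on both sides. By \cref{thm:envelope-for-soundly-extendable} the functor $\sliceEnv{\Span(\xF_G)}$ is a \emph{left} adjoint, hence has a right adjoint; on the other hand \cref{cor:fibrouspresentable} exhibits it as a morphism in $\Pr^R$, that is, as a \emph{right} adjoint, hence it also has a left adjoint. Here one uses that for the soundly extendable pattern $\Span(\xF_G)$ the relative Segal objects over $\calA_{\Span(\xF_G)}$ coincide with $\Seg_{\Span(\xF_G)}(\CatI)_{/\ulF_G}$ (via \cref{rmk:cancellation-for-relative-Segal-objects}), so that the two appearances of $\sliceEnv{\Span(\xF_G)}$ really refer to the same functor. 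Consequently $\sliceEnv{\Span(\xF_G)}$ has both adjoints, and since $\Env_G$ differs from it only by the equivalence $s^{*}$, so does $\Env_G$.

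The main point to take care of is precisely this bookkeeping in the last paragraph: the argument hinges on checking that the abstract identifications $\CatG^\otimes = \Seg_{\Span(\xF_G)}(\CatI)$ and $\ulF_G \simeq \calA_{\Span(\xF_G)}$ are compatible with the equivalence $s^{*}$, and that the "left adjoint" statement of \cref{thm:envelope-for-soundly-extendable} and the "right adjoint" statement of \cref{cor:fibrouspresentable} genuinely concern the same sliced envelope functor. Everything else is a formal transport across an equivalence, and no new input specific to $G$-operads (such as conditions on the norm maps) is needed, since the image is recorded only through the abstract equifiberedness condition.
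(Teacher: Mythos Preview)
Your proof is correct and follows essentially the same route as the paper's: transport the general envelope theorem for the soundly extendable pattern $\Span(\xF_G)$ across the equivalence $\OpdG \simeq \Fbrs(\Span(\xF_G))$ of \cref{prop:GOpd=Span}, identifying $\calA_{\Span(\xF_G)}$ with $\ulF_G$ via \cref{ex:ulF_G}. Your treatment is in fact more explicit than the paper's two-line proof on the ``both adjoints'' claim, correctly invoking \cref{cor:fibrouspresentable} together with \cref{rmk:cancellation-for-relative-Segal-objects} to see that $\sliceEnv{\Span(\xF_G)}$ is simultaneously a left and a right adjoint.
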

\begin{proof}
    Using that $\OpdG \simeq \Fbrs(\Span(\xF_G))$ by \cref{prop:GOpd=Span}, this is an instance of \cref{thm:envelope-for-soundly-extendable}.
    Note that the envelope of the terminal $G$-$\infty$-operad is 
    $\Env_{\Span(\xF_G)}(*) = \calA_{\Span(\xF_G)} = \ulF_G$ 
    by \cref{ex:ulF_G}.
\end{proof}

We elaborate further on the characterization of the image:
\begin{lemma}\label{lem:G-eqf}
    A $G$-symmetric monoidal functor $F\colon\calC \to \calD$ is $\Ar_\act(\Span(\xF_G))$-equifibered
    if and only if 
    \[
    \begin{tikzcd}
        \calC^H \times \calC^H \ar[r, "\otimes"] \ar[d] &
        \calC^H \ar[d] \\
        \calD^H \times \calD^H \ar[r, "\otimes"] &
        \calD^H
    \end{tikzcd}
        \qquad \text{and} \qquad
    \begin{tikzcd}
        \calC^K \ar[r, "\mathrm{Nm}_K^H"] \ar[d] &
        \calC^H \ar[d] \\
        \calD^K \ar[r, "\mathrm{Nm}_K^H"] &
        \calD^H
    \end{tikzcd}
    \] 
    are pullback squares of $\infty$-categories for all subgroups $K \subset H \subset G$.
\end{lemma}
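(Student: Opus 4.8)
The plan is to obtain the two displayed squares as instances of the equifiberedness condition, and conversely to show that every active naturality square of $F$ is assembled from these two by taking products and pasting cartesian squares. Throughout I use that, for an orbit $G/H$, the Segal condition gives $\calC(G/H)\simeq\calC^H$ and $\calC(G/H\amalg G/H)\simeq\calC^H\times\calC^H$, and similarly for $\calD$.

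For the ``only if'' direction, recall that the tensor product and the norm map are by definition the images under $\calC$ (resp.\ $\calD$) of the active morphisms of $\Span(\xF_G)$ given by the fold map $\nabla\colon G/H\amalg G/H\to G/H$ and the projection $G/K\to G/H$; under the Segal identifications these recover $\otimes$ and $\mathrm{Nm}_K^H$. Hence the two squares in the statement are precisely the naturality squares of $F$ on these active morphisms, and so they are cartesian as soon as $F$ is $\Ar_\act(\Span(\xF_G))$-equifibered.

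For the converse I would first reduce to active morphisms with elementary target. Since $\calD$ is a Segal object, \cref{rmk:cancellation-for-relative-Segal-objects} shows that $F$ is a relative Segal $\Span(\xF_G)$-object, and $\Span(\xF_G)$ is sound by \cref{ex:spansoundext}; hence \cref{lem:equifibered-checked-on-elementary} reduces the problem to showing that the naturality square of $F$ is cartesian for every active morphism $\omega\colon A\actto G/H$ with orbit target. Using the equivalence $(\xF_G)_{/(G/H)}\simeq\xF_H$, I regard $A$ as a finite $H$-set and write its $H$-orbit decomposition $A\simeq\coprod_{i=1}^n G/K_i$ with $K_i\subseteq H$. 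The map $\omega$ then factors in $\Span(\xF_G)$ as $\coprod_{i=1}^n G/K_i\xrightarrow{p}\coprod_{i=1}^n G/H\xrightarrow{\nabla}G/H$, where $p$ is the disjoint union of the projections and $\nabla$ is the fold; by the Segal condition $\calC(p)\simeq\prod_i\mathrm{Nm}_{K_i}^H$ while $\calC(\nabla)$ is the $n$-fold tensor (and similarly for $\calD$). By the pasting lemma it suffices to check that the naturality squares of $p$ and of $\nabla$ are cartesian. The square of $p$ is the product over $i$ of the norm squares, which are cartesian by hypothesis, and a product of cartesian squares is cartesian. Factoring $\nabla$ into $n-1$ successive binary folds, each naturality square is a product of one tensor square with identity squares, hence cartesian, and pasting these gives that the square of $\nabla$ is cartesian.

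It remains to treat the degenerate case $n=0$, i.e.\ the active map $\emptyset\to G/H$, whose naturality square is the unit square $\ast\to\calC^H$ over $\ast\to\calD^H$. I would handle this exactly as in the non-equivariant reduction sketched in \cref{ex:Env-for-xF}: writing $u$ for the unit, the unit axiom $\otimes\circ(u\times\mathrm{id})\simeq\mathrm{id}$ lets one paste the unit square against the (cartesian) tensor square to realise the cartesian identity square, and then cancel. The points requiring the most care are the compatibility of the factorization $\omega\simeq\nabla\circ p$ with the Segal identifications of $\calC$, $\calD$ and $F$ --- so that $\calC(p)$ and $\calC(\nabla)$ really are the product of norms and the iterated tensor, compatibly with $F$ --- and the cancellation step, where one uses that a functor $g$ is a retract of $g\times\mathrm{id}_Z$ whenever $Z$ has a point, so that cancelling an identity factor from a cartesian product square is legitimate. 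These are precisely the ``taking products and pasting pullback diagrams'' manipulations promised in \cref{ex:Env-for-xF}.
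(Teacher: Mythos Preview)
Your argument is correct and reaches the same conclusion as the paper, but the packaging differs in one notable respect. The paper avoids invoking soundness and \cref{lem:equifibered-checked-on-elementary} altogether: instead it considers the wide subcategory $\calK\subset\xF_G$ of morphisms on which $F$ is cartesian, observes that $\calK$ is closed under composition, right-cancellation, and disjoint union (the latter because $\calC$, $\calD$ send coproducts to products), and then argues that any morphism in $\xF_G$ is generated from $\nabla\colon G/H\amalg G/H\to G/H$ and $G/K\to G/H$ under these operations. This is a slightly more direct reduction to orbit targets than your detour through the relative-Segal/soundness machinery, though both yield exactly the same factorization $\omega\simeq\nabla\circ p$ and the same pasting argument thereafter. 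Your treatment has the advantage of spelling out the degenerate $n=0$ case (the unit square), which the paper leaves implicit in the phrase ``using this one can see''; your retract/cancellation argument there is valid and is indeed what is needed.
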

\begin{proof}
    $F$ induces a natural transformation of functors $\xF_G \to \Cat$, defined by restricting to forwards maps in $\Span(\xF_G)$.
    Let $\calK \subset \xF_G$ denote the maximal subcategory such that the restriction of $F$ to $\calK$ is a cartesian natural transformation.
    Then $F$ is $\Ar_\act(\Span(\xF_G))$-equifibered if and only if $\calK = \xF_G$.
    Note that $\calK$ is closed under composition and right-cancellation, since pullback squares are, and contains all equivalences.
    Moreover, $\calK$ is closed under disjoint union since both functors $\calC, \calD \colon \xF_G \to \Cat$ send disjoint unions to products.
    Using this one can see that to show $\calK = \xF_G$,
    it suffices to check that $\calK$ contains the morphisms 
    \[
        \nabla \colon  G/H \amalg G/H \to G/H, 
        \qquad \text{and} \qquad
        G/K \to G/H
    \]
    for all subgroups $K \subset H \subset G$.
    This is exactly the condition stated in the lemma.
\end{proof}

\begin{remark}
    One might hope that $G$-$\infty$-operads are also equivalent to fibrous $\xF_{G,*}$-patterns, in analogy with what we showed in \cref{cor:SpanF} for $G=\{e\}$, but this is false for non-trivial groups.
    Note that the orbit functor $(-)_G \colon  \xF_{G,*} \to \xF_*$ exhibits $\xF_{G,*}$ as a fibrous $\xF_*$-pattern, i.e.\ an $\infty$-operad in the sense of Lurie.
    Therefore there is an equivalence $\Fbrs(\xF_{G,*}) \simeq (\OpdI)_{/\xF_{G,*}}$.
    We refer to this as the \icat{} of \emph{naive $G$-$\infty$-operads}.
    There is an inclusion of patterns $\xF_{G,*} \to \Span(\xF_G)$ similar to the one used in \cref{cor:SpanF}, and this is a strong Segal morphism by an argument as in \cref{obs:elemenatry-slice-of-ulF}.
    Therefore there is a restriction functor:
    \[
        \OpdG \simeq \Fbrs(\Span(\xF_G)) \longrightarrow \Fbrs(\xF_{G,*}) \simeq (\OpdI)_{/\xF_{G,*}}
    \]
    which forgets from (genuine) $G$-$\infty$-operads to naive $G$-$\infty$-operads.
    However, we cannot apply the comparison theorem \ref{fbrseq} since 
    $(\Orb_G^\op)^\simeq \simeq \xF_{G,*}^\el \to \Span(\xF_G)^\el \simeq \Orb_G^\op$
    is not an equivalence.
\end{remark}

\subsection{Upgrading to \texorpdfstring{$(\infty,2)$}{(infinity,2)}-categories}\label{subsec:itcat}
In this subsection we will upgrade our main results from \icats{} to
\itcats{}: we will see that the comparison equivalence of
\cref{fbrseq} is an equivalence of \itcats{} and the fully faithful
envelope functor of \cref{propn:general-fully-faithful-envelope} is a
fully faithful functor of \itcats{}. More precisely, we will show that
these functors are compatible with natural $\CatI$-module structures
on the \icats{} involved. It then follows from results of
Hinich~\cite{HinichYoneda} and Heine~\cite{HeineEnr} that these
\icats{} can be upgraded to \itcats{} and the functors to functors of
\itcats{}. We will not comment further on this, however, as our primary
interest is in showing that our equivalences are compatible with the
natural \icats{} of maps, which is an immediate consequence of
compatibility with the $\CatI$-module structures.  We begin by
defining such module structures on the \icats{} and functors we
studied in \S\ref{sec:factenv}:

\begin{construction}\label{obs:ar0catmod}
  Let $\mathcal{B}$ be an \icat{} equipped with a wide subcategory
  $\mathcal{B}_{0}$. The forgetful functor $\CatIsl{\mathcal{B}} \to
  \CatI$ has a right adjoint, taking $\mathcal{C} \in \CatI$ to the
  projection $\mathcal{C} \times \mathcal{B} \to \mathcal{B}$; this
  factors through the subcategory $\CatIzcoc{\mathcal{B}}$ and thus
  gives symmetric monoidal functors
  \[ \CatI \to \CatIzcoc{\mathcal{B}} \to \CatIsl{\mathcal{B}} \]
  with respect to the cartesian products. It follows that 
  both $\CatIsl{\mathcal{B}}$ and
  $\CatIzcoc{\mathcal{B}}$ are $\CatI$-modules, with the tensoring in
  both cases simply given by cartesian product, \ie{}
  \[ (\mathcal{C}, \mathcal{E} \to \mathcal{B}) \quad \mapsto \quad
    \mathcal{E} \times \mathcal{C} \to \mathcal{B}, \] and that the
  forgetful functor $\CatIzcoc{\mathcal{B}} \to \CatIsl{\mathcal{B}}$
  is a $\CatI$-module functor.  Moreover, both $\CatI$-module
  structures are adjoint to an enrichment in $\CatI$, given
  respectively by $\Funzcoc{\mathcal{B}}(\blank,\blank)$ and
  $\Fun_{/\mathcal{B}}(\blank,\blank)$.
  Similarly, if $(\mathcal{B},\mathcal{B}_{L},\mathcal{B}_{R})$ is an
  \icat{} equipped with a factorization system, then the \icats{}
  $\CatIBL$ and $\CatIcoc{\mathcal{B}}$ are $\CatI$-modules, with the
  tensoring given by the cartesian product, and the forgetful functor
  $\CatIcoc{\mathcal{B}} \to \CatIBL$ is a $\CatI$-module functor; it
  is easy to see that this $\CatI$-module structure on
  $\CatIcoc{\mathcal{B}}$ corresponds under the equivalence with
  $\Fun(\mathcal{B},\CatI)$ to that given by taking products with
  constant functors.
\end{construction}

\begin{propn}\label{obs:B0coccotensor}
  \
  \begin{enumerate}[(i)]
  \item\label{it:Bcotens} For any \icat{} $\mathcal{B}$, the tensoring of
    $\CatIsl{\mathcal{B}}$ over $\CatI$ from \cref{obs:ar0catmod} is
    adjoint to a cotensoring, with the cotensor of
  $\mathcal{C} \in \CatI$ and $\mathcal{E} \to \mathcal{B}$ given by
  the pullback
  \[ \mathcal{E}^{\mathcal{C}}_{/\mathcal{B}} :=
    \Fun(\mathcal{C},\mathcal{E})
    \times_{\Fun(\mathcal{C},\mathcal{B})} \mathcal{B}\]
  along the constant diagram functor $\mathcal{B} \to
  \Fun(\mathcal{C},\mathcal{B})$.
\item\label{it:B0cotens} If $\mathcal{B}_{0}$ is a wide subcategory of $\mathcal{B}$,
  then $\CatIzcoc{\mathcal{B}}$ is also cotensored over $\CatI$, with
  the cotensor of $\mathcal{C} \in \CatI$ and $\mathcal{E} \to
  \mathcal{B}$ again given by
  $\mathcal{E}^{\mathcal{C}}_{/\mathcal{B}}$. In particular, the
  forgetful functor $\CatIzcoc{\mathcal{B}} \to \CatIsl{\mathcal{B}}$
  preserves the cotensoring.
  \end{enumerate}
\end{propn}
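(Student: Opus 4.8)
The plan is to prove (i) by directly exhibiting the natural equivalence of mapping spaces that characterises the cotensor, and then to prove (ii) by checking that the same object lies in $\CatIzcoc{\mathcal{B}}$ and that the equivalence from (i) restricts to the subcategories of cocartesian-preserving maps.

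For (i), fix $(\mathcal{D} \to \mathcal{B}) \in \CatIsl{\mathcal{B}}$ with structure map $p_{\mathcal{D}}$. By the defining pullback of $\mathcal{E}^{\mathcal{C}}_{/\mathcal{B}}$, a map $\mathcal{D} \to \mathcal{E}^{\mathcal{C}}_{/\mathcal{B}}$ over $\mathcal{B}$ is the same as a functor $\phi \colon \mathcal{D} \to \Fun(\mathcal{C},\mathcal{E})$ whose composite to $\Fun(\mathcal{C},\mathcal{B})$ is the constant diagram on $p_{\mathcal{D}}$. Under the adjunction $(\blank)\times\mathcal{C} \dashv \Fun(\mathcal{C},\blank)$ such $\phi$ correspond to functors $\hat\phi \colon \mathcal{D}\times\mathcal{C} \to \mathcal{E}$, and the constancy condition translates exactly into $p\circ\hat\phi \simeq p_{\mathcal{D}}\circ\pr$, i.e.\ into $\hat\phi$ being a map over $\mathcal{B}$ out of the tensor $\mathcal{D}\times\mathcal{C}$ (whose structure map is $p_{\mathcal{D}}\circ\pr$ by \cref{obs:ar0catmod}). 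This yields a natural equivalence $\Map_{\CatIsl{\mathcal{B}}}(\mathcal{D},\mathcal{E}^{\mathcal{C}}_{/\mathcal{B}}) \simeq \Map_{\CatIsl{\mathcal{B}}}(\mathcal{D}\times\mathcal{C},\mathcal{E})$, which exhibits $\mathcal{E}^{\mathcal{C}}_{/\mathcal{B}}$ as the right adjoint to $(\blank)\times\mathcal{C}$, as required.

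For (ii), I would first unwind the pullback to see that an object of $\mathcal{E}^{\mathcal{C}}_{/\mathcal{B}}$ lying over $b\in\mathcal{B}$ is precisely a functor $\mathcal{C}\to\mathcal{E}_{b}$. Given $\beta\colon b\to b'$ in $\mathcal{B}_{0}$ and such an $F\colon\mathcal{C}\to\mathcal{E}_{b}$, the cocartesian transport $\beta_{!}\colon\mathcal{E}_{b}\to\mathcal{E}_{b'}$ (available since $\mathcal{E}\in\CatIzcoc{\mathcal{B}}$) produces $\beta_{!}\circ F$ together with an objectwise-cocartesian natural transformation $F\to\beta_{!}\circ F$ over $\mathrm{const}_{\beta}$, hence a morphism in $\mathcal{E}^{\mathcal{C}}_{/\mathcal{B}}$ over $\beta$. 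Using the mapping space criterion for cocartesian morphisms, together with the computation of mapping spaces in the defining pullback, one checks that this morphism is cocartesian and, more generally, that a morphism of $\mathcal{E}^{\mathcal{C}}_{/\mathcal{B}}$ over $\mathcal{B}_{0}$ is cocartesian if and only if it is objectwise cocartesian in $\mathcal{E}$; this is the standard pointwise-cocartesian argument, here localised to the subcategory $\mathcal{B}_{0}$. In particular $\mathcal{E}^{\mathcal{C}}_{/\mathcal{B}}\in\CatIzcoc{\mathcal{B}}$, and forgetting its cocartesian structure returns the cotensor of (i).

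It then remains to see that the equivalence of (i) restricts to $\CatIzcoc{\mathcal{B}}$. Here I would record that, for $\mathcal{D}\in\CatIzcoc{\mathcal{B}}$, the cocartesian morphisms of the tensor $\mathcal{D}\times\mathcal{C}$ over $\mathcal{B}_{0}$ are exactly those of the form (cocartesian in $\mathcal{D}$, equivalence in $\mathcal{C}$), again by the mapping space criterion. Consequently $\hat\phi$ preserves cocartesian morphisms over $\mathcal{B}_{0}$ if and only if for every cocartesian $d\to d'$ in $\mathcal{D}$ and every $c\in\mathcal{C}$ the image of $(d,c)\to(d',c)$ is cocartesian, which by the objectwise description of cocartesian morphisms in $\mathcal{E}^{\mathcal{C}}_{/\mathcal{B}}$ is precisely the condition that $\phi$ preserves cocartesian morphisms. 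Thus the equivalence of (i) carries $\Map_{\CatIzcoc{\mathcal{B}}}(\mathcal{D},\mathcal{E}^{\mathcal{C}}_{/\mathcal{B}})$ onto $\Map_{\CatIzcoc{\mathcal{B}}}(\mathcal{D}\times\mathcal{C},\mathcal{E})$, proving that $\mathcal{E}^{\mathcal{C}}_{/\mathcal{B}}$ is the cotensor in $\CatIzcoc{\mathcal{B}}$ and that the forgetful functor preserves it. The main obstacle is this middle step of pinning down that cocartesian morphisms in both the cotensor and the tensor are detected objectwise, since we only have cocartesian lifts over $\mathcal{B}_{0}$ rather than a genuine cocartesian fibration; everything else is formal adjunction bookkeeping.
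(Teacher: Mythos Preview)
Your proof is correct and follows essentially the same approach as the paper's. For (i) both arguments unwind the defining pullback via the adjunction $(\blank)\times\mathcal{C} \dashv \Fun(\mathcal{C},\blank)$; for (ii) both identify cocartesian morphisms in $\mathcal{E}^{\mathcal{C}}_{/\mathcal{B}}$ as the objectwise-cocartesian ones and then observe that under the equivalence of (i) this matches preservation of cocartesian morphisms by $\hat\phi$. The only difference is cosmetic: the paper dispatches the ``main obstacle'' you flag---that cocartesian lifts over $\mathcal{B}_{0}$ exist in the cotensor and are detected objectwise---by citing \cite{HTT}*{Proposition 3.1.2.3}, whereas you work it out by hand via the mapping-space criterion, and the paper leaves your description of cocartesian morphisms in the tensor $\mathcal{D}\times\mathcal{C}$ implicit.
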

\begin{proof}
  Part \ref{it:Bcotens} follows from the natural equivalences
  \[ \Map_{\CatIsl{\mathcal{B}}}(\mathcal{C} \times \mathcal{F},
    \mathcal{E}) \simeq \left\{
      \begin{tikzcd}
        \mathcal{C} \times \mathcal{F} \arrow{r} \arrow{d} &
        \mathcal{E} \arrow{d} \\
        \mathcal{C} \times \mathcal{B} \arrow{r}{\txt{proj}} & \mathcal{B}
      \end{tikzcd}
    \right\} \simeq \left\{
      \begin{tikzcd}
        \mathcal{F} \arrow{r} \arrow{d} & \Fun(\mathcal{C},
        \mathcal{E}) \arrow{d} \\
        \mathcal{B} \arrow{r}{\txt{const}} & \Fun(\mathcal{C},
        \mathcal{B})
      \end{tikzcd}
    \right\}
    \simeq \Map_{\CatIsl{\mathcal{B}}}(\mathcal{F}, \mathcal{E}^{\mathcal{C}}_{/\mathcal{B}}).
  \]
  To prove \ref{it:B0cotens}, we observe that if
  $\mathcal{E} \to \mathcal{B}$ is in $\CatIzcoc{\mathcal{B}}$, then
  so is $\mathcal{E}^{\mathcal{C}}_{/\mathcal{B}}$ by
  \cite{HTT}*{Proposition 3.1.2.3}, and a morphism
  $[1]\to \mathcal{E}^{\mathcal{C}}_{/\mathcal{B}}$ is cocartesian
  \IFF{} the corresponding map $[1]\times \mathcal{C} \to \mathcal{E}$
  has cocartesian components at every $c \in \mathcal{C}$. Thus a
  morphism $\mathcal{F} \to \mathcal{E}^{\mathcal{C}}_{/\mathcal{B}}$
  over $\mathcal{B}$ preserves cocartesian morphisms over
  $\mathcal{B}_{0}$ \IFF{} the corresponding map
  $\mathcal{F} \times \mathcal{C} \to \mathcal{E}$ preserves
  cocartesian morphisms over $\mathcal{B}_{0}$, so that the previous
  equivalence of mapping spaces restricts on subspaces to an
  equivalence
  \[ \Map_{\CatIzcoc{\mathcal{B}}}(\mathcal{C} \times \mathcal{F},
    \mathcal{E})
    \simeq \Map_{\CatIzcoc{\mathcal{B}}}(\mathcal{F}, \mathcal{E}^{\mathcal{C}}_{/\mathcal{B}}),
  \]
  as required.
\end{proof}

\begin{observation}\label{obs:factcotensor}
  If $(\mathcal{B},\mathcal{B}_{L},\mathcal{B}_{R})$ is an \icat{}
  equipped with a factorization system, then the \icats{} $\CatIBL$
  and $\CatIcoc{\mathcal{B}}$ are similarly cotensored over $\CatI$,
  with the same cotensors as in \cref{obs:B0coccotensor}, and the
  forgetful functor $\CatIcoc{\mathcal{B}} \to \CatIBL$ preserves the
  cotensoring.
\end{observation}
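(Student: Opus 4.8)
The plan is to recognize \cref{obs:factcotensor} as two instances of \cref{obs:B0coccotensor}(ii). The key point is that in a factorization system $(\mathcal{B}_{L},\mathcal{B}_{R})$ the left class $\mathcal{B}_{L}$ is a wide subcategory of $\mathcal{B}$: it contains all equivalences and is closed under composition. Likewise $\mathcal{B}$ is trivially a wide subcategory of itself. Under these identifications we have $\CatIBL = \CatIsl{\mathcal{B}}^{\mathcal{B}_{L}\dcoc} = \CatIzcoc{\mathcal{B}}$ for the wide subcategory $\mathcal{B}_{0}=\mathcal{B}_{L}$, while $\CatIcoc{\mathcal{B}} = \CatIzcoc{\mathcal{B}}$ for $\mathcal{B}_{0}=\mathcal{B}$, a cocartesian fibration being precisely a functor admitting all cocartesian lifts with morphisms preserving them.

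Granting these identifications, I would simply invoke \cref{obs:B0coccotensor}(ii) twice. This yields the cotensoring of both $\CatIBL$ and $\CatIcoc{\mathcal{B}}$ over $\CatI$ with cotensor $\mathcal{E}^{\mathcal{C}}_{/\mathcal{B}}$, the verification that this object again lies in the relevant subcategory (via \cite{HTT}*{Proposition 3.1.2.3}, exactly as before), and the fact that the forgetful functors down to $\CatIsl{\mathcal{B}}$ preserve the cotensoring. The only input specific to the present setting — that $\mathcal{E}^{\mathcal{C}}_{/\mathcal{B}}$ retains $\mathcal{B}_{L}$-cocartesian lifts, and that a morphism into it is $\mathcal{B}_{L}$-cocartesian \IFF{} its components over every $c \in \mathcal{C}$ are cocartesian — is verbatim the argument in the proof of \cref{obs:B0coccotensor}(ii) with $\mathcal{B}_{0}$ taken to be $\mathcal{B}_{L}$.

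For the remaining claim, that the forgetful functor $\CatIcoc{\mathcal{B}} \to \CatIBL$ preserves the cotensoring, I would argue as follows. This functor is a $\CatI$-module functor by \cref{obs:ar0catmod}, and both cotensors are computed by the identical formula $\mathcal{E}^{\mathcal{C}}_{/\mathcal{B}}$, which leaves the underlying object over $\mathcal{B}$ unchanged. Concretely, the composite of this forgetful functor with $\CatIBL \to \CatIsl{\mathcal{B}}$ equals the forgetful functor $\CatIcoc{\mathcal{B}} \to \CatIsl{\mathcal{B}}$; both forgetful functors to $\CatIsl{\mathcal{B}}$ preserve cotensors, and the $\CatIBL$-cotensor is detected by its image in $\CatIsl{\mathcal{B}}$ (because $\mathcal{B}_{L}$-cocartesian edges are characterized componentwise). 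Hence the forgetful functor sends the cocartesian-fibration cotensor to the $\mathcal{B}_{L}$-cocartesian cotensor.

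I expect essentially no obstacle here: the entire content is bookkeeping of which wide subcategory one feeds into \cref{obs:B0coccotensor}. The one place demanding a moment's care is confirming that the forgetful functor genuinely preserves cotensors and not merely the module (tensor) structure — a module functor need not preserve cotensors in general — but this is immediate from the shared explicit cotensor formula together with the conservativity of the forgetful functors to $\CatIsl{\mathcal{B}}$.
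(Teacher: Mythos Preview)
Your proposal is correct and matches the paper's intent: the paper states this as a bare \emph{Observation} with no proof, and the implicit content is exactly what you spell out --- apply \cref{obs:B0coccotensor}(ii) with $\mathcal{B}_{0}=\mathcal{B}_{L}$ and $\mathcal{B}_{0}=\mathcal{B}$, noting that the cotensor formula $\mathcal{E}^{\mathcal{C}}_{/\mathcal{B}}$ is identical in both cases so the forgetful functor trivially preserves it.
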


\begin{propn}\label{propn:envcatmod1}
  \
  \begin{enumerate}[(i)]
  \item\label{it:envcat0} Let $\mathcal{B}$ be an \icat{} with a wide subcategory
    $\mathcal{B}_{0}$. Then the left adjoint
    \[ (\blank)\times_{\mathcal{B}}\Ar_{0}(\mathcal{B}) \colon
      \CatIsl{\mathcal{B}} \to \CatIzcoc{\mathcal{B}}\]
    of the forgetful functor from \cref{cor:freecocadj} is a
    $\CatI$-module functor, with the adjunction being an adjunction of $\CatI$-modules.
  \item\label{it:envcatL} If $(\mathcal{B},\mathcal{B}_{L},\mathcal{B}_{R})$ is an
    \icat{} equipped with a factorization system, then the left
    adjoint 
    \[ (\blank)\times_{\mathcal{B}}\ArRB \colon \CatILcoc{\mathcal{B}}
      \to \CatIcoc{\mathcal{B}}\] of the forgetful functor from
    \cref{cor:factadjn} is a $\CatI$-module functor, with the
    adjunction being an adjunction of $\CatI$-modules.
  \end{enumerate}
\end{propn}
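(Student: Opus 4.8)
The plan is to deduce both statements from the functor-\icat{}-level equivalences already established in \cref{propn:freefibdesc} and \cref{propn:envfuneq}, by reading them as equivalences of $\CatI$-enriched hom-objects. Recall from \cref{obs:ar0catmod} that all four $\CatI$-module structures in question have tensoring given by the cartesian product, and that they are adjoint to enrichments of $\CatIsl{\mathcal{B}}$, $\CatIzcoc{\mathcal{B}}$ (resp.\ $\CatIBL$, $\CatIcoc{\mathcal{B}}$) in $\CatI$ with hom-objects $\Fun_{/\mathcal{B}}(\blank,\blank)$, $\Funzcoc{\mathcal{B}}(\blank,\blank)$ (resp.\ $\Fun^{L\txt{-cocart}}_{/\mathcal{B}}(\blank,\blank)$, $\Fun^{\txt{cocart}}_{/\mathcal{B}}(\blank,\blank)$). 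Under the dictionary between $\CatI$-modules and $\CatI$-enrichments, a module functor is the same datum as an enriched functor and an adjunction of $\CatI$-modules is the same as a $\CatI$-enriched adjunction; the latter is witnessed by a natural equivalence of enriched hom-objects $\underline{\Map}(L\mathcal{E},\mathcal{F}) \simeq \underline{\Map}(\mathcal{E}, R\mathcal{F})$ compatible with the enriched composition.

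For \ref{it:envcat0}, the forgetful functor $R$ is already a $\CatI$-module functor by \cref{obs:ar0catmod}, so it remains to exhibit the adjunction $L \dashv R$ of \cref{cor:freecocadj} as enriched. Since $R\mathcal{F} = \mathcal{F}$ on underlying objects, the required equivalence of enriched homs reads
\[ \Funzcoc{\mathcal{B}}\bigl(\mathcal{E} \times_{\mathcal{B}} \Ar_0(\mathcal{B}), \mathcal{F}\bigr) \isoto \Fun_{/\mathcal{B}}(\mathcal{E}, \mathcal{F}), \]
which is precisely the equivalence of \icats{} proved in \cref{propn:freefibdesc} (not merely on underlying spaces). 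This equivalence is natural in both variables and, being implemented by restriction along the section $i_{\mathcal{E}}$ with explicit inverse $\pi_{\mathcal{F}}\circ(\blank \times_{\mathcal{B}} \Ar_0(\mathcal{B}))$, is compatible with composition; it therefore upgrades the adjunction to one of $\CatI$-modules, so that in particular $L = (\blank)\times_{\mathcal{B}}\Ar_0(\mathcal{B})$ is a $\CatI$-module functor. Equivalently, one may observe that the oplax module structure on $L$ obtained as the mate of the strong structure on $R$ has comparison maps
\[ L(\mathcal{C}\otimes\mathcal{E}) = (\mathcal{C}\times\mathcal{E})\times_{\mathcal{B}}\Ar_0(\mathcal{B}) \longrightarrow \mathcal{C}\times\bigl(\mathcal{E}\times_{\mathcal{B}}\Ar_0(\mathcal{B})\bigr) = \mathcal{C}\otimes L(\mathcal{E}), \]
which are equivalences because $\mathcal{C}$ carries the constant structure map to $\mathcal{B}$.

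Part \ref{it:envcatL} is entirely parallel: by \cref{obs:ar0catmod} the forgetful functor $\CatIcoc{\mathcal{B}}\to\CatIBL$ is a $\CatI$-module functor, and the enriched-hom equivalence now required,
\[ \Fun^{\txt{cocart}}_{/\mathcal{B}}\bigl(\mathcal{E}\times_{\mathcal{B}}\ArRB, \mathcal{F}\bigr) \isoto \Fun^{L\txt{-cocart}}_{/\mathcal{B}}(\mathcal{E},\mathcal{F}), \]
is supplied by \cref{propn:envfuneq}; the cotensorings agree with those of part \ref{it:envcat0} by \cref{obs:factcotensor}, so the same argument identifies $\rmE \dashv \txt{forget}$ as an adjunction of $\CatI$-modules.

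The main obstacle is the coherence: promoting the objectwise equivalences $L(\mathcal{C}\times\mathcal{E})\simeq\mathcal{C}\times L(\mathcal{E})$ to a genuine module-functor structure, and the mapping-space adjunction to an adjunction of modules. This is exactly what is resolved by working with the enriched hom-objects rather than their underlying spaces: because \cref{propn:freefibdesc} and \cref{propn:envfuneq} are equivalences of functor \icats{} --- and hence of enriched hom-objects --- natural in both arguments and compatible with composition, the coherence is packaged automatically, and no further bookkeeping of higher homotopies is needed. The one point that still requires a small argument is verifying this compatibility with enriched composition, which follows by tracing through the explicit inverse from \cref{propn:freefibdesc}.
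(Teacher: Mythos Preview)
Your ``equivalently'' route---passing to the oplax module structure on $L$ as the mate of the strong structure on the forgetful functor and then checking the comparison maps
\[
(\mathcal{C}\times\mathcal{E})\times_{\mathcal{B}}\Ar_0(\mathcal{B}) \to \mathcal{C}\times\bigl(\mathcal{E}\times_{\mathcal{B}}\Ar_0(\mathcal{B})\bigr)
\]
are equivalences---is exactly the paper's argument; the paper cites \cite{paradj}*{Theorem 3.4.7} for the existence of the coherent oplax structure on the left adjoint and then makes the same elementary check. The paper then records in a subsequent remark that the equivalences of \cref{propn:freefibdesc} and \cref{propn:envfuneq} are precisely the induced equivalences of enriched hom-objects.

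Your primary route reverses this logical order: you start from the functor-\icat{} equivalences and try to read off the module adjunction. This is a reasonable perspective, but the coherence step you yourself flag---compatibility with enriched composition---is genuinely the content here, and ``tracing through the explicit inverse'' does not obviously discharge it at the $\infty$-categorical level. The mate approach packages exactly this coherence for free via the cited theorem, which is why the paper runs the implication in that direction; your ordering trades one external citation for a loose end you do not fully close.
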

\begin{proof}
  The forgetful functor $\CatIzcoc{\mathcal{B}} \to
  \CatIsl{\mathcal{B}}$ is a $\CatI$-module functor by
  \cref{obs:ar0catmod}. By
  \cite{paradj}*{Theorem 3.4.7}, the left adjoint
  then has a canonical oplax $\CatI$-module structure, given for
  $\mathcal{C} \in \CatI$ and $\mathcal{E} \to \mathcal{B}$ in
  $\CatIzcoc{\mathcal{B}}$ by the
  natural map
  \[ (\mathcal{C} \times \mathcal{B}) \times_{\mathcal{B}}
    \Ar_{0}(\mathcal{B}) \to \mathcal{C} \times (\mathcal{B}
    \times_{\mathcal{B}} \Ar_{0}(\mathcal{B}));\]
  this is clearly an equivalence, so the adjunction of
  \cref{cor:freecocadj} lifts to an adjunction of
  $\CatI$-modules. This proves \ref{it:envcat0}, and the proof of
  \ref{it:envcatL} is the same.
\end{proof}

\begin{remark}
  The $\CatI$-module structures on $\CatIzcoc{\mathcal{B}}$ and
  $\CatIsl{\mathcal{B}}$ are
  adjoint to enrichments in $\CatI$, given respectively by
  $\Funzcoc{\mathcal{B}}(\blank,\blank)$ and
  $\Fun_{/\mathcal{B}}(\blank,\blank)$; the equivalence of
  \cref{propn:freefibdesc} is then precisely that induced by the
  $\CatI$-module adjunction from \cref{propn:envcatmod1}. Similarly,
  if $(\mathcal{B},\mathcal{B}_{L},\mathcal{B}_{R})$ is an \icat{}
  equipped with a factorization system, then the equivalence of
  \cref{propn:envfuneq} is also induced by the $\CatI$-module
  adjunction above.
\end{remark}

\begin{lemma}\label{lem:f*cotens}
  \
  \begin{enumerate}[(i)]
  \item\label{it:pbcatmod} For any functor of \icats{} $f \colon \mathcal{A} \to
    \mathcal{B}$ the functor $f^{*} \colon \CatIsl{\mathcal{B}} \to
    \CatIsl{\mathcal{A}}$ given by pullback along $f$ is a
    $\CatI$-module functor and also preserves the cotensoring with $\CatI$.
  \item\label{it:pb0catmod} Suppose $\mathcal{A}$ and $\mathcal{B}$ are \icats{} equipped
    with wide subcategories $\mathcal{A}_{0}$ and $\mathcal{B}_{0}$,
    respectively, and that $f \colon \mathcal{A} \to \mathcal{B}$ is a
    functor that takes $\mathcal{A}_{0}$ into $\mathcal{B}_{0}$. Then
    the functor
    $f^{*} \colon \CatIzcoc{\mathcal{B}} \to \CatIzcoc{\mathcal{A}}$
    given by pullback along $f$ is a $\CatI$-module functor and also
    preserves the cotensoring with $\CatI$.
  \end{enumerate}
\end{lemma}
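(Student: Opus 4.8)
The plan is to reduce both parts to two facts: that pullback along $f$ preserves all limits, and that the module and cotensor structures in question are assembled entirely out of fibre products and functor categories, with both of which $f^{*}$ manifestly commutes. For part (i), recall from \cref{obs:ar0catmod} that the $\CatI$-module structure on $\CatIsl{\mathcal{B}}$ is the one induced from the cartesian symmetric monoidal structure — whose monoidal product is the fibre product over $\mathcal{B}$ — along the symmetric monoidal functor $R_{\mathcal{B}}\colon \CatI \to \CatIsl{\mathcal{B}}$ sending $\mathcal{C}$ to $(\mathcal{C}\times\mathcal{B}\to\mathcal{B})$. Now $f^{*}$ is right adjoint to postcomposition with $f$, hence preserves all limits and in particular finite products; since a finite-product-preserving functor between cartesian symmetric monoidal \icats{} is canonically strong symmetric monoidal, $f^{*}$ acquires such a structure. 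Moreover there is a natural equivalence $f^{*}R_{\mathcal{B}} \simeq R_{\mathcal{A}}$, since $f^{*}(\mathcal{C}\times\mathcal{B}) = (\mathcal{C}\times\mathcal{B})\times_{\mathcal{B}}\mathcal{A} \simeq \mathcal{C}\times\mathcal{A}$ naturally in $\mathcal{C}$ and compatibly with the monoidal structures. By the general principle that a symmetric monoidal functor intertwining the two module-defining symmetric monoidal functors out of $\CatI$ is canonically a functor of $\CatI$-modules, this exhibits $f^{*}$ as a $\CatI$-module functor. I expect the one genuine subtlety of the lemma to lie exactly here: producing the module-functor datum as \emph{coherent structure} rather than a mere pointwise equivalence — which is precisely what the symmetric-monoidal principle handles.

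For the cotensoring in part (i), I would simply compute both sides using the explicit description $\mathcal{E}^{\mathcal{C}}_{/\mathcal{B}} = \Fun(\mathcal{C},\mathcal{E})\times_{\Fun(\mathcal{C},\mathcal{B})}\mathcal{B}$ from \cref{obs:B0coccotensor}\ref{it:Bcotens}. Since $\Fun(\mathcal{C},\blank)$ preserves limits and $f^{*}$ is a pullback, the pasting lemma for pullback squares identifies both $f^{*}(\mathcal{E}^{\mathcal{C}}_{/\mathcal{B}})$ and $(f^{*}\mathcal{E})^{\mathcal{C}}_{/\mathcal{A}}$ with the single iterated fibre product $\Fun(\mathcal{C},\mathcal{E})\times_{\Fun(\mathcal{C},\mathcal{B})}\mathcal{A}$, where the map $\mathcal{A}\to\Fun(\mathcal{C},\mathcal{B})$ is the constant-diagram functor precomposed with $f$. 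This equivalence is natural in $\mathcal{C}$ and $\mathcal{E}$, so $f^{*}$ preserves the cotensoring.

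Finally, for part (ii), \cref{obs:freesubcatftr} already tells us that pullback along $f$ restricts to a functor $\CatIzcoc{\mathcal{B}} \to \CatIzcoc{\mathcal{A}}$. The $\CatI$-module structure and the cotensoring on $\CatIzcoc{\mathcal{A}}$ and $\CatIzcoc{\mathcal{B}}$ are given by the very same cartesian products and the very same objects $\mathcal{E}^{\mathcal{C}}_{/\mathcal{B}}$ as on $\CatIsl{\mathcal{A}}$ and $\CatIsl{\mathcal{B}}$ (\cref{obs:ar0catmod} and \cref{obs:B0coccotensor}\ref{it:B0cotens}), and the forgetful functor $\CatIzcoc{\mathcal{A}} \to \CatIsl{\mathcal{A}}$ is a conservative $\CatI$-module functor that detects pullbacks (as noted just after \cref{cor:freecocadj}) and preserves cotensors. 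Hence the strong symmetric monoidal structure, the equivalence $f^{*}R_{\mathcal{B}}\simeq R_{\mathcal{A}}$, and the cotensor comparison established in (i) all take place among objects that already lie in the cocartesian subcategories; since an equivalence of \icats{} over the base automatically preserves cocartesian lifts, these comparisons are equivalences in $\CatIzcoc{\mathcal{A}}$ as well. This gives both the module-functor and the cotensor-preservation claims of (ii), and no step beyond the coherence issue flagged in the first paragraph should present any difficulty.
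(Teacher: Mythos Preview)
Your proof is correct and follows essentially the same approach as the paper. For the module-functor claim the paper draws the same commutative triangle of finite-product-preserving (hence cartesian symmetric monoidal) functors $\CatI \to \CatIsl{\mathcal{B}} \xrightarrow{f^{*}} \CatIsl{\mathcal{A}}$ that you describe; for the cotensoring the paper packages your pasting computation into a commutative cube whose front, back, and right faces are the three cartesian squares you use, concluding that the left face $(f^{*}\mathcal{E})^{\mathcal{C}}_{/\mathcal{A}} \simeq f^{*}(\mathcal{E}^{\mathcal{C}}_{/\mathcal{B}})$ is cartesian; and for part~(ii) the paper simply says ``the proof is the same,'' which amounts to your observation that everything already takes place in the cocartesian subcategories.
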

\begin{proof}
  We prove \ref{it:pbcatmod}; the proof of \ref{it:pb0catmod} is the
  same. The functor $f^{*}$ fits in a commutative triangle
  \[
    \begin{tikzcd}
      {} & \Cat \arrow{dl} \arrow{dr} \\
      \CatIsl{\mathcal{B}} \arrow{rr}{f^{*}} & & \CatIsl{\mathcal{A}}
    \end{tikzcd}
  \]
  where all three functors preserve finite products, and so are
  symmetric monoidal with respect to the cartesian products. Hence
  $f^{*} \colon \CatIsl{\mathcal{B}} \to \CatIsl{\mathcal{A}}$ is a
  $\CatI$-module functor. To see that $f^{*}$ also preserves the
  cotensoring, observe that for $\mathcal{E} \to \mathcal{B}$ in
  $\CatIzcoc{\mathcal{B}}$ or $\CatIsl{\mathcal{B}}$ and $\mathcal{C}
    \in \CatI$ we have a natural commutative cube
  \[
    \begin{tikzcd}[row sep=tiny, column sep=tiny]
      (f^{*}\mathcal{E})_{/\mathcal{A}}^{\mathcal{C}} \arrow{rr} \arrow{dd}
      \arrow{dr} & & \Fun(\mathcal{C}, f^{*}\mathcal{E}) \arrow{dr} \arrow{dd} \\
       & \mathcal{E}_{/\mathcal{B}}^{\mathcal{C}} \arrow[crossing over]{rr}& &
       \Fun(\mathcal{C}, \mathcal{E}) \arrow{dd} \\
       \mathcal{A} \arrow{rr} \arrow{dr} & & \Fun(\mathcal{C},
       \mathcal{A}) \arrow{dr} \\
        & \mathcal{B} \arrow[leftarrow,crossing over]{uu} \arrow{rr}& & \Fun(\mathcal{C}, \mathcal{B})
    \end{tikzcd}
  \]
  where the front, back and right faces are cartesian. The left
  vertical square is therefore also cartesian, giving an equivalence
  \[ (f^{*}\mathcal{E})^{\mathcal{C}}_{/\mathcal{A}} \isoto
    f^{*}(\mathcal{E}^{\mathcal{C}}_{/\mathcal{B}}),\]
  as required.
\end{proof}

\begin{observation}\label{obs:freeLitftr}
  For $f \colon \mathcal{A} \to \mathcal{B}$ a functor that preserves
  wide subcategories $\mathcal{A}_{0}$ and $\mathcal{B}_{0}$, we have
  a commutative diagram
  \[
    \begin{tikzcd}
      \CatI \arrow{dr} \arrow[bend left=10]{drr} \arrow[bend right=10]{ddr} \\
      & \CatIzcoc{\mathcal{B}} \arrow{d}\arrow{r}{f^{*}} &
      \CatIzcoc{\mathcal{A}} \arrow{d} \\
      & \CatIsl{\mathcal{B}} \arrow{r}{f^{*}} & \CatIsl{\mathcal{A}},
    \end{tikzcd}
  \]
  of symmetric monoidal functors (with the cartesian monoidal
  structures). It follows that the commutative square on the bottom
  right (as in \cref{obs:freesubcatftr}) is a square of
  $\CatI$-modules. Similarly, if $f$ is
  compatible with
  factorization systems $(\mathcal{A},
  \mathcal{A}_{L},\mathcal{A}_{R})$ and
  $(\mathcal{B},\mathcal{B}_{L},\mathcal{B}_{R})$, then the
  commutative square
    \[
    \begin{tikzcd}
       \CatIcoc{\mathcal{B}} \arrow{d}\arrow{r}{f^{*}} &
      \CatIcoc{\mathcal{A}} \arrow{d} \\
       \CatILcoc{\mathcal{B}} \arrow{r}{f^{*}} & \CatILcoc{\mathcal{A}},
    \end{tikzcd}
  \]
  is a square of $\CatI$-modules. It follows that for both squares the
  Beck--Chevalley map is a natural transformation of $\CatI$-modules.
\end{observation}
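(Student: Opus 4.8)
The plan is to deduce all four assertions from the single principle that the $\CatI$-module structures in play are those underlying commutative-algebra structures, so that everything is visible already at the level of symmetric monoidal functors. Recall from \cref{obs:ar0catmod} that each of $\CatIsl{\mathcal{B}}$, $\CatIzcoc{\mathcal{B}}$, $\CatIcoc{\mathcal{B}}$ and $\CatILcoc{\mathcal{B}}$ carries its cartesian symmetric monoidal structure, and that the structure functors $\CatI \to \CatIzcoc{\mathcal{B}} \to \CatIsl{\mathcal{B}}$ (and their factorization-system analogues) are symmetric monoidal. Each such category is thus an object of $\mathrm{CAlg}(\CATI)_{\CatI/}$, i.e. a symmetric monoidal $\infty$-category equipped with a symmetric monoidal functor from $\CatI$, and the associated $\CatI$-module — obtained by restricting the self-action along this functor — sends $\mathcal{C}$ to $(\mathcal{C}\times\mathcal{B})\times_{\mathcal{B}}(-)\simeq \mathcal{C}\times(-)$, which is exactly the tensoring recorded in \cref{obs:ar0catmod}. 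The first input I would isolate is the standard fact that the forgetful functor $\mathrm{CAlg}(\CATI)_{\CatI/}\to \mathrm{Mod}_{\CatI}(\CATI)$ is functorial; applying it to a commutative square in $\mathrm{CAlg}(\CATI)_{\CatI/}$ yields a commutative square of $\CatI$-module functors.

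Next I would verify that the squares of the statement are squares in $\mathrm{CAlg}(\CATI)_{\CatI/}$. The vertical forgetful functors are symmetric monoidal by \cref{obs:ar0catmod}, and $f^{*}$ preserves finite products, hence is symmetric monoidal for the cartesian structures (as in the triangle of product-preserving functors appearing in the proof of \cref{lem:f*cotens}). The triangle in the statement commutes as a diagram of symmetric monoidal functors because $f^{*}(\mathcal{C}\times\mathcal{B})\simeq \mathcal{C}\times\mathcal{A}$ naturally in $\mathcal{C}$, identifying the two composites $\CatI\to\CatIzcoc{\mathcal{A}}$; consequently the filling equivalence of \cref{obs:freesubcatftr} upgrades to an equivalence of symmetric monoidal functors under $\CatI$. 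Thus all four corners and all four functors, together with the filling equivalence, constitute a commutative square in $\mathrm{CAlg}(\CATI)_{\CatI/}$, and the forgetful functor of the previous paragraph exhibits the bottom-right square — and, verbatim, the factorization-system square — as a square of $\CatI$-modules. (This simultaneously re-proves \cref{lem:f*cotens} and the module-functoriality of the forgetful functors.)

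For the Beck--Chevalley claim I would pass to the vertical left adjoints. By \cref{propn:envcatmod1} the free functors $(-)\times_{\mathcal{B}}\Ar_{0}(\mathcal{B})$ and $\rmE = (-)\times_{\mathcal{B}}\ArRB$ are $\CatI$-module functors, and their adjunctions against the forgetful functors are adjunctions of $\CatI$-modules, so their units and counits are module natural transformations. The Beck--Chevalley transformations of \cref{obs:freesubcatftr} (and its factorization-system analogue) are the mates of the module equivalences filling the respective squares, assembled by whiskering one module unit and one module counit by the module functors $f^{*}$ and the free functors. A composite of module natural transformations and whiskerings along module functors is again a module natural transformation, so each mate is a transformation of $\CatI$-module functors. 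Invariantly, the squares are commutative squares in the $(\infty,2)$-category $\mathrm{Mod}_{\CatI}(\CATI)$ whose vertical $1$-morphisms admit left adjoints internal to that $(\infty,2)$-category, and the Beck--Chevalley mate of such a square is automatically a $2$-morphism there.

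The one genuinely nonformal point — and the step I expect to be the main obstacle — is justifying that the mate construction may be performed inside $\mathrm{Mod}_{\CatI}(\CATI)$, equivalently that forming left adjoints of module functors is compatible with whiskering. This is precisely what \cite{paradj}*{Theorem 3.4.7}, already invoked in \cref{propn:envcatmod1}, supplies: it produces the (here strong) module structure on the left adjoint for which the adjunction unit and counit are module transformations. Everything else is bookkeeping of symmetric monoidal coherences, handled uniformly by the passage through $\mathrm{CAlg}(\CATI)_{\CatI/}$.
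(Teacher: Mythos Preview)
Your proposal is correct and follows the same line of reasoning the paper has in mind; indeed, the paper records this as an \emph{observation} with no separate proof, the point being exactly that all functors in sight preserve finite products and hence are symmetric monoidal under $\CatI$, so the squares lift to $\CatI$-modules and the mate (Beck--Chevalley) construction stays inside $\mathrm{Mod}_{\CatI}$. You have simply made explicit the passage through $\mathrm{CAlg}(\CATI)_{\CatI/}$ and the appeal to \cref{propn:envcatmod1} for the module structure on the left adjoints, which is precisely the implicit content of the observation.
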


\begin{propn}\label{propn:E-Q-adjunction-Cat-compatible}
  Let $(\mathcal{B},\mathcal{B}_{L},\mathcal{B}_{R})$ be a
  factorization system. Then there is a natural $\CatI$-module structure on the \icat{}
  $(\CatIcoc{\mathcal{B}})_{/\ArRB}$, with the
  tensoring given by cartesian products, and the
  adjunction
  \[ \rmE \colon \CatILcoc{\mathcal{B}} \rightleftarrows
    (\CatIcoc{\mathcal{B}})_{/\ArRB} :\! \rmQ \]
  is compatible with the $\CatI$-module structures. Moreover, $(\CatIcoc{\mathcal{B}})_{/\ArRB}$ is also cotensored over $\CatI$, with the cotensor of $\mathcal{C} \in \CatI$ and $\mathcal{E} \to \ArRB$ in
  $(\CatIcoc{\mathcal{B}})_{/\ArRB}$ being $\mathcal{E}^{\mathcal{C}}_{/\ArRB}$.
\end{propn}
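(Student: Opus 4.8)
The plan is to deduce all three assertions by transporting the $\CatI$-module structures of \cref{obs:ar0catmod,obs:B0coccotensor,propn:envcatmod1} along the slice constructions of \cref{rmk:sliceadj,propn:lifting-Env}, so that no computation is needed beyond identifying the relevant formulas.

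For the $\CatI$-module structure I would first observe that $\CatIcoc{\mathcal{B}}$ has finite products, computed as fibre products over its terminal object $(\mathcal{B}\xto{=}\mathcal{B})$, so that $(\CatIcoc{\mathcal{B}})_{/\ArRB}$ is cartesian monoidal with unit $\id_{\ArRB}$. I would exhibit the module structure through the symmetric monoidal functor (for the cartesian products)
\[ j \colon \CatI \to (\CatIcoc{\mathcal{B}})_{/\ArRB}, \qquad \mathcal{C} \mapsto (\mathcal{C} \times \ArRB \to \ArRB), \]
constructed as the composite of the symmetric monoidal functor $\CatI \to \CatIcoc{\mathcal{B}}$, $\mathcal{C} \mapsto \mathcal{C}\times\mathcal{B}$, of \cref{obs:ar0catmod} with the functor $\mathcal{F} \mapsto (\mathcal{F}\times_{\mathcal{B}}\ArRB \to \ArRB)$, which is the product-preserving (hence symmetric monoidal) right adjoint to the forgetful functor $(\CatIcoc{\mathcal{B}})_{/\ArRB} \to \CatIcoc{\mathcal{B}}$. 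Unwinding the definitions, the induced tensoring is $\mathcal{C} \otimes (\mathcal{E}\to\ArRB) \simeq j(\mathcal{C}) \times_{\ArRB} \mathcal{E} \simeq (\mathcal{C}\times\mathcal{E} \to \ArRB)$, i.e. the cartesian product, as claimed.

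For the adjunction I would use that, by \cref{propn:lifting-Env}, the pair $\rmE \dashv \rmQ$ is precisely what the slice construction of \cref{rmk:sliceadj} produces from the module adjunction $\rmE_{0} := (\blank)\times_{\mathcal{B}}\ArRB \dashv \mathrm{forget}$ of \cref{propn:envcatmod1}\ref{it:envcatL} at the terminal object $\mathcal{B} \in \CatIBL$. Following the strategy of \cref{propn:envcatmod1} itself, I would first check that the right adjoint $\rmQ$ is a $\CatI$-module functor: it factors as the forgetful functor $(\CatIcoc{\mathcal{B}})_{/\ArRB} \to \CatIsl{\ArRB}$, which sends $j(\mathcal{C})$ to $(\mathcal{C}\times\ArRB\to\ArRB)$ and so intertwines the canonical symmetric monoidal functors into the two slices, followed by the pullback $i^{*} \colon \CatIsl{\ArRB} \to \CatIsl{\mathcal{B}}$ along $i \colon \mathcal{B}\to\ArRB$, which is a module functor by \cref{lem:f*cotens}\ref{it:pbcatmod} and which lands in $\CatIBL$. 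Then \cite{paradj}*{Theorem 3.4.7} equips $\rmE$ with an oplax $\CatI$-module structure whose structure maps
\[ \rmE(\mathcal{C}\otimes\mathcal{E}) \simeq (\mathcal{C}\times\mathcal{E})\times_{\mathcal{B}}\ArRB \isoto \mathcal{C}\times(\mathcal{E}\times_{\mathcal{B}}\ArRB) \simeq \mathcal{C}\otimes\rmE(\mathcal{E}) \]
are manifestly equivalences; hence $\rmE$ is a strong module functor and $\rmE\dashv\rmQ$ is an adjunction of $\CatI$-modules. Finally, for the cotensoring I would argue exactly as in \cref{obs:B0coccotensor}\ref{it:B0cotens} and \cref{obs:factcotensor}, with $\mathcal{B}$ replaced by $\ArRB$: the object $\mathcal{E}^{\mathcal{C}}_{/\ArRB} = \Fun(\mathcal{C},\mathcal{E})\times_{\Fun(\mathcal{C},\ArRB)}\ArRB$ lies in $(\CatIcoc{\mathcal{B}})_{/\ArRB}$—it is a cocartesian fibration over $\mathcal{B}$ since it is a pullback, computed in $\CatIcoc{\mathcal{B}}$, of the cotensors over $\mathcal{B}$ appearing in \cref{obs:factcotensor} (using \cite{HTT}*{Proposition 3.1.2.3} and that the forgetful functor detects such pullbacks), and its projection to $\ArRB$ preserves cocartesian morphisms—and the mapping-space computation of \cref{obs:B0coccotensor} identifies morphisms $\mathcal{F}\to\mathcal{E}^{\mathcal{C}}_{/\ArRB}$ over $\ArRB$ with cocartesian-preserving morphisms $\mathcal{C}\times\mathcal{F}\to\mathcal{E}$ over $\ArRB$, i.e. with morphisms $\mathcal{C}\otimes\mathcal{F}\to\mathcal{E}$ in the slice, exhibiting $\mathcal{E}^{\mathcal{C}}_{/\ArRB}$ as the cotensor.

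I expect the only genuinely delicate step to be the coherence in the third paragraph, namely promoting the evident object-level equivalences to an actual adjunction of $\CatI$-modules. As in the proof of \cref{propn:envcatmod1}, this rests on the formalism of \cite{paradj}*{Theorem 3.4.7} and on the verification that $\rmQ$ is a module functor, for which the factorization through $\CatIsl{\ArRB}$ and \cref{lem:f*cotens} are the key inputs; everything else is a direct transcription of the cartesian-product and cotensor computations already carried out for $\CatIcoc{\mathcal{B}}$.
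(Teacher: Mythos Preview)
Your proposal is correct and follows essentially the same approach as the paper: both obtain the $\CatI$-module structure from a symmetric monoidal functor $\CatI \to (\CatIcoc{\mathcal{B}})_{/\ArRB}$ built from right adjoints, verify that $\rmQ$ is a module functor and then upgrade $\rmE$ via the oplax-to-strong argument of \cite{paradj}*{Theorem 3.4.7}, and identify the cotensor $\mathcal{E}^{\mathcal{C}}_{/\ArRB}$ inside the slice. The only notable packaging difference is that the paper organizes the module compatibility of $\rmQ$ into a single commutative diagram of right adjoints and handles the cotensor via an explicit cube (describing cocartesian morphisms in $\mathcal{E}^{\mathcal{C}}_{/\ArRB}$ directly using \cite{HTT}*{Proposition 3.1.2.1}), whereas you factor $\rmQ$ through $\CatIsl{\ArRB}$ and invoke \cref{lem:f*cotens}, and for the cotensor use the pullback identification $\mathcal{E}^{\mathcal{C}}_{/\ArRB} \simeq \mathcal{E}^{\mathcal{C}}_{/\mathcal{B}} \times_{\ArRB^{\mathcal{C}}_{/\mathcal{B}}} \ArRB$ in $\CatIcoc{\mathcal{B}}$; these are equivalent unpackings of the same verification.
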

\begin{proof}
  The forgetful functor
  $(\CatIcoc{\mathcal{B}})_{/\ArRB} \to
  \CatIcoc{\mathcal{B}}$ has a right adjoint, which takes
  a cocartesian fibration $\mathcal{E} \to \mathcal{B}$ to the
  projection $\mathcal{E} \times_{\mathcal{B}} \ArRB
  \to \ArRB$. We thus have a commutative diagram
  \[
    \begin{tikzcd}
      {} & \CatIcoc{\mathcal{B}} \arrow{dd}{(\blank)
        \times_{\mathcal{B}} \ArRB} \arrow{dr}{\txt{forget}} 
       \\
      \CatI \arrow{ur}{(\blank) \times \mathcal{B}} 
      \arrow{dr}[swap]{(\blank) \times \ArRB} & & \CatIBL\\
       & (\CatIcoc{\mathcal{B}})_{/\ArRB} \arrow{ur}{\rmQ} 
    \end{tikzcd}
  \]
  of right adjoints, which are then symmetric monoidal functors with respect to cartesian
  products. This in particular shows that
  $(\CatIcoc{\mathcal{B}})_{/\ArRB}$ is a $\CatI$-module, with the
  tensoring given by taking cartesian products, and the functor $\rmQ$
  is compatible with the $\CatI$-module structures. As in
  \cref{obs:ar0catmod}, it follows that the left adjoint $\rmE$ is an
 oplax $\CatI$-module functor, and that the oplax structure maps are
 equivalences; thus we have a $\CatI$-module adjunction.

 To identify the cotensor, we first observe that
 $(\CatIcoc{\mathcal{B}})_{/\ArRB}$ can be described as a subcategory
 of $\Cat_{\infty/\ArRB}$; the $\CatI$-module structures on both are
 clearly compatible, and the latter has a cotensoring given by
 $(\mathcal{C},\mathcal{E} ) \mapsto
 \mathcal{E}^{\mathcal{C}}_{/\ArRB}$ by \cref{obs:B0coccotensor}. It
 thus suffices to show that $\mathcal{E}^{\mathcal{C}}_{/\ArRB}$ is an
 object of $(\CatIcoc{\mathcal{B}})_{/\ArRB}$, i.e. that the composite
 to $\mathcal{B}$ is a cocartesian fibration, that the morphism to
 $\ArRB$ preserves cocartesian morphisms over $\mathcal{B}$, and that a morphism
 $\mathcal{F} \to \mathcal{E}^{\mathcal{C}}_{/\ArRB}$ preserves cocartesian morphisms
 over $\mathcal{B}$ \IFF{} the adjoint map $\mathcal{F} \times
 \mathcal{C} \to \mathcal{E}$ does so. To see this, consider the
 commutative cube
 \begin{equation}
   \label{eq:slicecotenscube}
   \begin{tikzcd}[row sep=tiny,column sep=tiny]
     \mathcal{E}^{\mathcal{C}}_{/\ArRB} \arrow{rr} \arrow{dr} \arrow{dd} & &
     \mathcal{E}^{\mathcal{C}} \arrow{dr} \arrow{dd} \\
     & \ArRB \arrow[crossing over]{rr} & & \ArRB^{\mathcal{C}}
     \arrow{dd} \\
     \mathcal{B} \arrow{rr} \arrow[equals]{dr} & & \mathcal{B}^{\mathcal{C}}
     \arrow[equals]{dr} \\
     & \mathcal{B} \arrow[leftarrow,crossing over]{uu} \arrow{rr} & & \mathcal{B}^{\mathcal{C}}.
   \end{tikzcd}
 \end{equation}
 Here the top and bottom squares are cartesian, the vertical maps are
 cocartesian fibrations, and both maps to $\Ar_R(\calB)^\calC$
 preserve cocartesian morphisms. 
 It follows that
 $\mathcal{E}^{\mathcal{C}}_{/\ArRB} \to \mathcal{B}$ is a cocartesian
 fibration, and a morphism here is cocartesian \IFF{} its images in
 $\ArRB$ and $\mathcal{E}^{\mathcal{C}}$ are both
 cocartesian. Combining this with the description of cocartesian
 morphisms in $\mathcal{E}^{\mathcal{C}}$ from \cite{HTT}*{Proposition
 3.1.2.1} gives the required
 description of cocartesian morphisms in $\mathcal{E}^{\mathcal{C}}_{/\ArRB}$.
\end{proof}

\begin{observation}\label{obs:envffit}
   Let us write $\Fun_{/\ArRB}^{\mathcal{B}\dcoc}(\blank,\blank)$ for
 the enrichment adjoint to the $\CatI$-module structure on
 $(\CatIcoc{\mathcal{B}})_{/\ArRB}$; this satisfies
 \[ \Map_{\CatI}(\mathcal{C},
   \Fun_{/\ArRB}^{\mathcal{B}\dcoc}(\blank,\blank)) \simeq
   \Map_{(\CatIcoc{\mathcal{B}})_{/\ArRB}}(\mathcal{C} \times \blank,
   \blank);\]
identifying the right-hand side as a fiber product we see that for
$\alpha \colon \mathcal{E} \to \ArRB, \beta \colon \mathcal{F} \to
\ArRB$ we have a natural cartesian square
\[
  \begin{tikzcd}
    \Fun_{/\ArRB}^{\mathcal{B}\dcoc}((\mathcal{E},\alpha),(\mathcal{F},\beta))
    \arrow{r} \arrow{d} & \Fun^{\coc}_{/\mathcal{B}}(\mathcal{E},
    \mathcal{F}) \arrow{d} \\
    \{\alpha\} \arrow{r} & \Fun^{\coc}_{/\mathcal{B}}(\mathcal{E}, \ArRB).
  \end{tikzcd}
\]
Since the functor $\rmE$ is fully faithful and compatible with the
$\CatI$-module structures we conclude that it gives a natural
equivalence
\[ \Fun_{/\mathcal{B}}^{L\dcoc}(\blank,\blank) \isoto
  \Fun_{/\ArRB}^{\mathcal{B}\dcoc}(\rmE(\blank),\rmE(\blank)).
\]
\end{observation}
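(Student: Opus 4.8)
The plan is to deduce both assertions formally from the tensor--hom adjunction defining the enrichment $\Fun_{/\ArRB}^{\mathcal{B}\dcoc}(\blank,\blank)$. By \cref{propn:E-Q-adjunction-Cat-compatible} the $\CatI$-module structure on $(\CatIcoc{\mathcal{B}})_{/\ArRB}$ is given by cartesian products, so by definition this enrichment is characterised by
\[ \Map_{\CatI}\bigl(\mathcal{C}, \Fun_{/\ArRB}^{\mathcal{B}\dcoc}((\mathcal{E},\alpha),(\mathcal{F},\beta))\bigr) \simeq \Map_{(\CatIcoc{\mathcal{B}})_{/\ArRB}}(\mathcal{C} \times \mathcal{E}, \mathcal{F}), \]
naturally in $\mathcal{C} \in \CatI$, where on the right $\mathcal{C} \times \mathcal{E}$ is regarded over $\ArRB$ via $\alpha \circ \pr_{\mathcal{E}}$.

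For the cartesian square I would first write the slice mapping space as a fiber, using that mapping spaces in an over-category are computed as
\[ \Map_{(\CatIcoc{\mathcal{B}})_{/\ArRB}}(\mathcal{C} \times \mathcal{E}, \mathcal{F}) \simeq \Map_{\CatIcoc{\mathcal{B}}}(\mathcal{C} \times \mathcal{E}, \mathcal{F}) \times_{\Map_{\CatIcoc{\mathcal{B}}}(\mathcal{C} \times \mathcal{E}, \ArRB)} \{\alpha \circ \pr_{\mathcal{E}}\}. \]
Next I invoke the enrichment $\Fun^{\coc}_{/\mathcal{B}}(\blank,\blank)$ of $\CatIcoc{\mathcal{B}}$ adjoint to its cartesian $\CatI$-module structure (\cref{obs:ar0catmod}) to rewrite the two upper terms as $\Map_{\CatI}(\mathcal{C}, \Fun^{\coc}_{/\mathcal{B}}(\mathcal{E}, \mathcal{F}))$ and $\Map_{\CatI}(\mathcal{C}, \Fun^{\coc}_{/\mathcal{B}}(\mathcal{E}, \ArRB))$; under this identification the chosen basepoint $\alpha \circ \pr_{\mathcal{E}}$ corresponds to the constant map at $\alpha$. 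Since $\Map_{\CatI}(\mathcal{C}, \blank)$ preserves pullbacks, the whole right-hand side becomes $\Map_{\CatI}\bigl(\mathcal{C}, \Fun^{\coc}_{/\mathcal{B}}(\mathcal{E}, \mathcal{F}) \times_{\Fun^{\coc}_{/\mathcal{B}}(\mathcal{E}, \ArRB)} \{\alpha\}\bigr)$, and the Yoneda lemma then yields the asserted cartesian square. The only point requiring care is matching this basepoint across the tensor--hom adjunction, which is routine.

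For the second assertion I would apply the general principle that a fully faithful $\CatI$-module functor induces equivalences on the adjoint enrichments between objects in its image. Concretely, $\rmE$ is fully faithful by \cref{propn:lifting-Env} and a $\CatI$-module functor by \cref{propn:E-Q-adjunction-Cat-compatible}, so for $\mathcal{C} \in \CatI$ and objects $x,y$ of $\CatIBL$ there are natural equivalences
\[ \Map_{\CatI}\bigl(\mathcal{C}, \Fun_{/\ArRB}^{\mathcal{B}\dcoc}(\rmE x, \rmE y)\bigr) \simeq \Map(\mathcal{C} \times \rmE x, \rmE y) \simeq \Map(\rmE(\mathcal{C} \times x), \rmE y) \simeq \Map(\mathcal{C} \times x, y) \simeq \Map_{\CatI}\bigl(\mathcal{C}, \Fun_{/\mathcal{B}}^{L\dcoc}(x,y)\bigr), \]
where the second equivalence is the module compatibility $\rmE(\mathcal{C} \times x) \simeq \mathcal{C} \times \rmE x$ and the third is full faithfulness of $\rmE$. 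Applying Yoneda once more gives the natural equivalence $\Fun_{/\mathcal{B}}^{L\dcoc}(\blank,\blank) \isoto \Fun_{/\ArRB}^{\mathcal{B}\dcoc}(\rmE(\blank),\rmE(\blank))$. I expect no genuine obstacle here, since the entire argument is a manipulation of adjunctions; the main thing to keep straight is that every enrichment in play is the one adjoint to the relevant cartesian $\CatI$-module structure, which is exactly what \cref{propn:E-Q-adjunction-Cat-compatible} supplies.
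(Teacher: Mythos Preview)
Your proposal is correct and follows exactly the line of reasoning the paper sketches within the observation itself: unwind the tensor--hom adjunction to identify the slice mapping space as a fiber product (hence the cartesian square via Yoneda), and then use that a fully faithful $\CatI$-module functor induces equivalences on the adjoint enrichments. The paper does not supply a separate proof beyond this, so you have simply spelled out the intended formal manipulation.
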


\begin{observation}\label{obs:f*factsyst}
  Suppose $f \colon \mathcal{A} \to \mathcal{B}$ is a functor
  compatible with specified factorization systems.
  Passing to vertical left adjoints in the commutative square \cref{obs:basechange-envelope}
  yields a Beck--Chevalley transformation
  \[ \rmE_{\mathcal{A}} f^{*} \to f^{\ostar}\rmE_{\mathcal{B}};\]
  Unwinding the definitions, this is given at $\mathcal{E} \to
  \mathcal{B}$ in $\CatIBL$  by the natural map
  \[ \left(\mathcal{E} \times_{\mathcal{B}} \mathcal{A}
    \right)\times_{\mathcal{A}} \Ar_{R}(\mathcal{A}) \to
    \left(\mathcal{E} \times_{\mathcal{B}} \ArRB\right) \times_{\ArRB}
    \Ar_{R}(\mathcal{A}), \] which is an equivalence. The functors and
  transformations here are also compatible with the $\CatI$-module
  structures, by the same argument as in \cref{obs:freeLitftr} , so
  for $\mathcal{E},\mathcal{F} \to \ArRB$ we have a natural
  commutative square in which the vertical maps are equivalences:
  \begin{equation}\label{eq:ftrsquare}
    \begin{tikzcd}
      \Fun_{/\mathcal{B}}^{L\dcoc}(\mathcal{E},\mathcal{F}) \arrow{r}
      \arrow{d}{\sim} &
      \Fun_{/\mathcal{A}}^{L\dcoc}(f^{*}\mathcal{E},f^{*}\mathcal{E})
      \arrow{d}{\sim} \\
      \Fun_{/\ArRB}^{\mathcal{B}\dcoc}(\rmE_{\mathcal{B}}\mathcal{E},
      \rmE_{\mathcal{B}}\mathcal{F}) \arrow{r} &
      \Fun_{/\Ar_{R}(\mathcal{A})}^{\mathcal{A}\dcoc}(\rmE_{\mathcal{A}}f^{*}\mathcal{E},
      \rmE_{\mathcal{A}}f^{*}\mathcal{F}).
    \end{tikzcd}    
    \end{equation}
\end{observation}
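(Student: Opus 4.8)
The plan is to deduce the statement from the $\CatI$-module structures already in place, in direct analogy with \cref{obs:freeLitftr}. The first assertion---that the Beck--Chevalley transformation $\rmE_{\mathcal{A}}f^{*} \to f^{\ostar}\rmE_{\mathcal{B}}$ is the displayed natural map and is an equivalence---is exactly what was recorded in \cref{obs:basechange-envelope}: passing to the vertical left adjoints in \cref{eq:ostarradjsq} produces the comparison
\[ \left(\mathcal{E} \times_{\mathcal{B}} \mathcal{A}\right)\times_{\mathcal{A}} \Ar_{R}(\mathcal{A}) \to \left(\mathcal{E} \times_{\mathcal{B}} \ArRB\right) \times_{\ArRB} \Ar_{R}(\mathcal{A}), \]
and both sides are canonically the fiber product $\mathcal{E} \times_{\mathcal{B}} \Ar_{R}(\mathcal{A})$. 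So for this part I would only cite \cref{obs:basechange-envelope}.

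The substance is to promote everything to $\CatI$-modules. First I would check that all four functors in the square
\[
\begin{tikzcd}
\CatILcoc{\mathcal{B}} \arrow{d}{\rmE_{\mathcal{B}}} \arrow{r}{f^{*}} & \CatILcoc{\mathcal{A}} \arrow{d}{\rmE_{\mathcal{A}}} \\
(\CatIcoc{\mathcal{B}})_{/\ArRB} \arrow{r}{f^{\ostar}} & (\CatIcoc{\mathcal{A}})_{/\Ar_{R}(\mathcal{A})}
\end{tikzcd}
\]
are $\CatI$-module functors for the cartesian tensorings. The functors $\rmE_{\mathcal{A}}$ and $\rmE_{\mathcal{B}}$ are module functors by \cref{propn:envcatmod1,propn:E-Q-adjunction-Cat-compatible}, while $f^{*}$ on $\CatILcoc{\blank}$ is one by \cref{lem:f*cotens}; since $f^{\ostar}$ is a composite of the pullback functors $f^{*}$ and $q^{*}$ (together with the identification $\Ar_{R}(\mathcal{A}) \simeq \rmE_{\mathcal{A}}(\mathcal{A})$), and pullback preserves cartesian products, it too is a module functor. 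As in \cref{obs:freeLitftr}, the associated square of right adjoints is then a square of symmetric monoidal functors for the cartesian products, so its mate---the Beck--Chevalley transformation above---is a natural transformation of $\CatI$-module functors; being an equivalence, it exhibits $\rmE_{\mathcal{A}}f^{*} \simeq f^{\ostar}\rmE_{\mathcal{B}}$ as $\CatI$-module functors.

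Finally I would pass to the adjoint enrichments. A $\CatI$-module functor induces maps on enriched hom-objects, so $f^{*}$ produces the top map $\Fun_{/\mathcal{B}}^{L\dcoc}(\mathcal{E},\mathcal{F}) \to \Fun_{/\mathcal{A}}^{L\dcoc}(f^{*}\mathcal{E},f^{*}\mathcal{F})$ and $f^{\ostar}$ the bottom map, while the vertical equivalences are those produced in \cref{obs:envffit}. The square \cref{eq:ftrsquare} commutes because both composites compute the map on enrichments induced by the single module functor $\rmE_{\mathcal{A}}f^{*}\simeq f^{\ostar}\rmE_{\mathcal{B}}$, using naturality of the identification in \cref{obs:envffit}. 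The hard part will be the coherence in this middle step: one must ensure that the mate of a square of module functors really is a module transformation, and that forming adjoint enrichments is functorial enough to yield a commuting (not merely homotopy-commuting) square. Invoking \cite{paradj}*{Theorem 3.4.7} for the canonical oplax module structures on the left adjoints---exactly as in \cref{propn:envcatmod1}---is what makes this precise.
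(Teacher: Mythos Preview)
Your proposal is correct and follows essentially the same approach as the paper: the observation is justified inline there by citing \cref{obs:basechange-envelope} for the Beck--Chevalley equivalence and invoking ``the same argument as in \cref{obs:freeLitftr}'' for the $\CatI$-module compatibility, which is precisely what you unpack in more detail. Your one slightly awkward phrase is ``the associated square of right adjoints is then a square of symmetric monoidal functors''---the point (as in \cref{obs:freeLitftr}) is rather that the original square of right adjoints sits under a common map from $\CatI$ by product-preserving functors, hence is a square of $\CatI$-modules, and the mate transformation inherits module compatibility; but this is a wording issue, not a gap.
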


After these preliminaries we are finally ready to consider fibrous
patterns and their envelopes. First, we want to show that the \icats{}
$\Fbrs(\mathcal{O})$ and
$\Seg^{/\mathcal{A}_{\mathcal{O}}}_{\mathcal{O}}(\CatI)$ have
$\CatI$-module structures inherited from those we have already
considered. This is slightly complicated by the fact that
$\Fbrs(\mathcal{O})$ may not be closed under tensors in
$\CatIOintcoc$, and similarly for the relative Segal objects. (For
example, for $\mathcal{O} \in \Fbrs(\xF_{*})$ and $\mathcal{C} \in
\CatI$, the \icat{} $\mathcal{C}
\times \mathcal{O}$ is not an object of $\Fbrs(\xF_{*})$ since its
fiber over $\angled{0}$ is $\mathcal{C}$, not $*$; on the other hand,
$\Fbrs(\xF_{*}^{\natural})$ \emph{is} closed under tensoring with
$\CatI$.) Luckily, cotensors are better behaved:

\begin{propn}\label{propn:cotensfbrs}
  Let $\mathcal{O}$ be an algebraic pattern.
  \begin{enumerate}[(i)]
  \item\label{it:fbrscotens} For $\mathcal{P} \in \Fbrs(\mathcal{O})$ and $\mathcal{C} \in
    \Cat$, the cotensor $\mathcal{P}^{\mathcal{C}}_{/\mathcal{O}}$ in
    $\CatIOintcoc$ is again fibrous.
  \item\label{it:relsegcotens} For $X \in
    \Seg^{/\mathcal{A}_{\mathcal{O}}}_{\mathcal{O}}(\CatI)$
    corresponding to $\mathcal{X} \in
    (\CatIcoc{\mathcal{O}})_{/\AractO}$ and $\mathcal{C} \in
    \Cat$, the cotensor $\mathcal{X}^{\mathcal{C}}_{/\AractO}$ in
    $(\CatIcoc{\mathcal{O}})_{/\AractO}$ again straightens to a
    relative Segal object.
  \end{enumerate}
\end{propn}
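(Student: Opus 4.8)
The plan is to establish part (ii) first and then deduce part (i) from it by transporting along the envelope functor $\rmE$.

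For part (ii), write $a := \Afun{\calO}$ for the straightening of $\AractO$ and let $\underline{\alpha}\colon X \Rightarrow a$ be the relative Segal object corresponding to $\mathcal{X} \to \AractO$. By \cref{propn:E-Q-adjunction-Cat-compatible} the relevant cotensor is $\mathcal{X}^{\calC}_{/\AractO} = \Fun(\calC,\mathcal{X}) \times_{\Fun(\calC,\AractO)} \AractO$. The first step is to identify its straightening over $\calO$. Since the $\CatI$-module structure on $\CatIcoc{\calO}$ corresponds under $\St{\calO}{}$ to the objectwise product with constant functors (\cref{obs:ar0catmod}), the cotensor in $\CatIcoc{\calO}$ straightens objectwise to $\Fun(\calC,-)$; computing the fibre over each $O \in \calO$ then shows that $\St{\calO}{}(\mathcal{X}^{\calC}_{/\AractO})$ is the natural transformation
\[ X^{(\calC)} := X^{\calC} \times_{a^{\calC}} a \longrightarrow a, \]
where $X^{\calC} := \Fun(\calC, X(-))$, $a^{\calC} := \Fun(\calC, a(-))$ and $a \to a^{\calC}$ is the pointwise constant-diagram transformation.

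The second step is to note that $X^{\calC} \to a^{\calC}$ is already a relative Segal object: its defining square at $O$ is obtained by applying $\Fun(\calC,-)$ to the relative Segal square of $\underline{\alpha}$ at $O$, and $\Fun(\calC,-)$ preserves all limits — in particular both the cartesianness of that square and the limits $\lim_{E \in \calO^{\el}_{O/}}$ occurring in it, using $\Fun(\calC, \lim_{E}(-)) \simeq \lim_{E} \Fun(\calC, -)$. Since relative Segal objects are stable under base change (\cref{rel Segal pullback}), pulling $X^{\calC} \to a^{\calC}$ back along $a \to a^{\calC}$ exhibits $X^{(\calC)} = X^{\calC} \times_{a^{\calC}} a \to a$ as a relative Segal object, which is what part (ii) asserts.

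For part (i), recall from \cref{obs:fbrsrelseg} that a functor $\calP \to \calO$ with cocartesian lifts of inert morphisms is fibrous exactly when $\StOint(\calP) = \St{\calO}{}(\rmE(\calP))$ is a relative Segal object. It therefore suffices to check that $\rmE$ carries the cotensor $\calP^{\calC}_{/\calO}$ (formed in $\CatIOintcoc$) to a cotensor of the shape treated in part (ii). Unwinding the iterated fibre products gives
\[ \rmE(\calP^{\calC}_{/\calO}) \simeq \Fun(\calC,\calP) \times_{\Fun(\calC,\calO)} \AractO \simeq \rmE(\calP)^{\calC}_{/\AractO}, \]
so $\rmE$ preserves these cotensors. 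As $\calP$ is fibrous, $\rmE(\calP) \in (\CatIcoc{\calO})_{/\AractO}$ straightens to a relative Segal object, so applying part (ii) with $\mathcal{X} = \rmE(\calP)$ shows that $\StOint(\calP^{\calC}_{/\calO})$ is a relative Segal object; hence $\calP^{\calC}_{/\calO}$ is fibrous. I expect the only delicate point to be the bookkeeping in the first step — pinning down the straightening of the slice-cotensor as $X^{\calC}\times_{a^{\calC}} a$; once this identification is in place, everything else is formal, resting only on the fact that cotensoring by $\calC$ is a limit-preserving operation.
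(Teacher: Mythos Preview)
Your proof is correct, and for part (ii) it is essentially the paper's argument in different clothing: the paper organises the same limit-preservation-plus-pullback reasoning as a cube extracted from \cref{eq:slicecotenscube}, while you phrase it as ``$\Fun(\calC,-)$ applied to a relative Segal map stays relative Segal'' followed by an appeal to \cref{rel Segal pullback}. These are the same computation.

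Where you genuinely diverge is part (i). The paper proves (i) independently by a parallel cube argument, identifying $\calP^{\calC}_{/\calO} \times_{\calO} \calO^{\act}_{/O}$ directly as $\Fun(\calC, \calP \times_{\calO} \calO^{\act}_{/O}) \times_{\Fun(\calC,\calO^{\act}_{/O})} \calO^{\act}_{/O}$ and checking the fibrous square by hand. You instead observe that $\rmE$ commutes with these cotensors --- your fibre-product manipulation $\rmE(\calP^{\calC}_{/\calO}) \simeq \rmE(\calP)^{\calC}_{/\AractO}$ is exactly right --- and then invoke part (ii) together with \cref{obs:fbrsrelseg}. Your route is a bit more economical (no duplicated cube), and it makes transparent that (i) is really a special case of (ii); the paper's route is more self-contained for the reader who only cares about fibrous patterns. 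The ``delicate bookkeeping'' you flag in step one is exactly what the paper's cube \cref{eq:slicecotenscube} records, so if you want to make that step airtight you can simply cite that cube after taking fibres over $O \in \calO$.
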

\begin{proof}
  To prove \ref{it:fbrscotens}, first observe that we can
  identify
  $\mathcal{P}^{\mathcal{C}}_{/\mathcal{O}} \times_{\mathcal{O}}
  \mathcal{O}^{\act}_{/O}$ as the fiber product $\Fun(\mathcal{C},\mathcal{P}
  \times_{\mathcal{O}} \mathcal{O}^{\act}_{/O})
  \times_{\Fun(\mathcal{C}, \mathcal{O}^{\act}_{/O})}
  \mathcal{O}^{\act}_{/O}$, so that we have a commutative cube
  \[
    \begin{tikzcd}[row sep=tiny, column sep=-1ex]
      \mathcal{P}^{\mathcal{C}}_{/\mathcal{O}} \times_{\mathcal{O}}
      \mathcal{O}^{\act}_{/O} \arrow{rr} \arrow{dr} \arrow{dd} & &
      \Fun(\mathcal{C}, \mathcal{P} \times_{\mathcal{O}}
      \mathcal{O}^{\act}_{/O}) \arrow{dd} \arrow{dr} \\
       & \lim_{E \in \mathcal{O}^{\el}_{O/}} \mathcal{P}^{\mathcal{C}}_{/\mathcal{O}} \times_{\mathcal{O}}
      \mathcal{O}^{\act}_{/E} \arrow[crossing over]{rr} & &   \Fun(\mathcal{C},  \lim_{E \in
        \mathcal{O}^{\el}_{O/}}\mathcal{P} \times_{\mathcal{O}}
      \mathcal{O}^{\act}_{/E})  \arrow{dd} \\
       \mathcal{O}^{\act}_{/O} \arrow{rr} \arrow{dr} & & \Fun(\mathcal{C},
      \mathcal{O}^{\act}_{/O}) \arrow{dr} \\
      & \lim_{E \in \mathcal{O}^{\el}_{O/}} \mathcal{O}^{\act}_{/E}
      \arrow{rr} \arrow[leftarrow,crossing over]{uu}& & \Fun(\mathcal{C}, \lim_{E \in \mathcal{O}^{\el}_{O/}} \mathcal{O}^{\act}_{/E}).
    \end{tikzcd}
  \]
  where the front and back faces are cartesian. Here the right vertical
  face is also cartesian since $\mathcal{P}$ is $\mathcal{O}$-fibrous.
  It then
  follows that the left vertical face is also cartesian, \ie{}
  $\mathcal{P}^{\mathcal{C}}_{/\mathcal{O}}$ is also
  $\mathcal{O}$-fibrous.

  For \ref{it:relsegcotens}, we extract the following commutative
  diagram from the cube
  \cref{eq:slicecotenscube} that describes
  $\mathcal{X}^{\mathcal{C}}_{/\AractO}$:
  \[
    \begin{tikzcd}[row sep=tiny, column sep=-1ex]
      (\mathcal{X}^{\mathcal{C}}_{/\AractO})_{O} \arrow{rr} \arrow{dr} \arrow{dd} & &
      (\mathcal{X}^{\mathcal{C}})_{O} \arrow{dd} \arrow{dr} \\
       & \lim_{E \in \mathcal{O}^{\el}_{O/}} (\mathcal{X}^{\mathcal{C}}_{/\AractO})_{E} \arrow[crossing over]{rr} & &    \lim_{E \in
        \mathcal{O}^{\el}_{O/}} (\mathcal{X}^{\mathcal{C}})_{E} \arrow{dd} \\
      \AractO_{O} \arrow{rr} \arrow{dr} & & (\AractO^{\mathcal{C}})_{O} \arrow{dr} \\
      & \lim_{E \in \mathcal{O}^{\el}_{O/}} \AractO_{E}
      \arrow{rr} \arrow[leftarrow,crossing over]{uu}& & \lim_{E \in \mathcal{O}^{\el}_{O/}} (\AractO^{\mathcal{C}})_{E}.
    \end{tikzcd}
  \]
  (Here we have also used $O$ for the constant functor $\mathcal{C} \to
  \mathcal{O}$ with this value.) The front and back vertical faces in
  this cube are cartesian by the definition of
  $\mathcal{X}^{\mathcal{C}}_{/\AractO}$, while the right vertical
  face is cartesian since $\mathcal{X}$ by assumption straightens to a
  relative Segal object (and we can identify
  $(\mathcal{X}^{\mathcal{C}}_{O}$ as $\Fun(\mathcal{C},
  \mathcal{X}_{O})$ etc.). Hence the left vertical face is also
  cartesian, and this is precisely the relative Segal condition for
  $\mathcal{X}^{\mathcal{C}}_{/\AractO}$.
\end{proof}

\begin{cor}\label{cor:localization-Cat-compatible}
  Let $\mathcal{O}$ be an algebraic pattern.
  \begin{enumerate}[(i)]
  \item The localization $\Lfbrs \colon \CatIOintcoc \to
    \Fbrs(\mathcal{O})$ is a localization of $\CatI$-modules.
  \item The localization $\Lrseg \colon
    (\CatIcoc{\mathcal{O}})_{/\AractO} \to
    \Seg^{/\mathcal{A}_{\mathcal{O}}}_{\mathcal{O}}(\CatI)$ is a
    localization of $\CatI$-modules.
  \end{enumerate}
\end{cor}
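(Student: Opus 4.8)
The plan is to deduce both statements formally from \cref{propn:cotensfbrs}, which already contains all of the geometric content: the assertion is that closure of the local objects under the cotensoring is exactly what is needed to promote an accessible localization to a localization of $\CatI$-modules. Concretely, I would first isolate the following general principle. Let $\mathcal{M}$ be a presentable $\CatI$-module that is both tensored and cotensored over $\CatI$, and let $L \colon \mathcal{M} \to \mathcal{M}$ be an accessible localization with fully faithful right adjoint $\iota \colon \mathcal{M}_{0} \hookrightarrow \mathcal{M}$ onto its full subcategory of local objects. If $\mathcal{M}_{0}$ is closed under the cotensoring $(\blank)^{v}$ for every $v \in \CatI$, then $L$ is a localization of $\CatI$-modules.

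To prove this principle I would argue via the tensor--cotensor adjunction. Write $S$ for the class of morphisms inverted by $L$, so that $f \in S$ \IFF{} $\Map_{\mathcal{M}}(f, z)$ is an equivalence for all $z \in \mathcal{M}_{0}$. For $v \in \CatI$ and $z \in \mathcal{M}_{0}$ the adjunction gives a natural equivalence
\[ \Map_{\mathcal{M}}(v \otimes f, z) \simeq \Map_{\mathcal{M}}(f, z^{v}), \]
and since $z^{v} \in \mathcal{M}_{0}$ by hypothesis, the right-hand side is an equivalence whenever $f \in S$. Hence $v \otimes (\blank)$ preserves $S$ for every $v$, and the module analogue of \cite{HA}*{Proposition 2.2.1.9} then yields a $\CatI$-module structure on $\mathcal{M}_{0}$ with tensoring $v \otimes_{0} x := L(v \otimes \iota x)$, for which $L$ is a $\CatI$-module functor and the adjunction $L \dashv \iota$ is one of $\CatI$-modules.

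It then remains to check the hypotheses in the two cases. For (i), the \icat{} $\CatIOintcoc$ is a presentable $\CatI$-module, tensored by cartesian products (\cref{obs:ar0catmod}) and cotensored by $\mathcal{E}^{\mathcal{C}}_{/\mathcal{O}}$ (\cref{obs:factcotensor}); the localization $\Lfbrs$ is accessible by \cref{cor:fibrouspresentable}, and $\Fbrs(\mathcal{O})$ is closed under this cotensoring by \cref{propn:cotensfbrs}\ref{it:fbrscotens}. For (ii), the \icat{} $(\CatIcoc{\mathcal{O}})_{/\AractO}$ is a presentable $\CatI$-module, tensored by cartesian products and cotensored by $\mathcal{E}^{\mathcal{C}}_{/\AractO}$ (\cref{propn:E-Q-adjunction-Cat-compatible}); under straightening it is identified with $\Fun(\mathcal{O},\CatI)_{/\Afun{\mathcal{O}}}$, so $\Lrseg$ is accessible by \cref{lem:relative-Segal-strongly-reflective}, and the relative Segal objects are closed under the cotensoring by \cref{propn:cotensfbrs}\ref{it:relsegcotens}. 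In both cases the general principle applies.

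The main point — and the only place where anything beyond formal adjunction calculus enters — is the closure of the local objects under cotensors supplied by \cref{propn:cotensfbrs}; there is thus no substantial obstacle at this stage. The only residual care required is to confirm that the module and cotensor structures on the two ambient \icats{} are precisely those produced in \cref{obs:ar0catmod}, \cref{obs:factcotensor}, and \cref{propn:E-Q-adjunction-Cat-compatible}, so that \cref{propn:cotensfbrs} is genuinely a statement about the same cotensoring to which the general principle is being applied.
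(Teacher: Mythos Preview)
Your proposal is correct and takes essentially the same approach as the paper: both arguments reduce to the closure of the local objects under the $\CatI$-cotensoring established in \cref{propn:cotensfbrs}, and both invoke \cite{HA}*{Proposition 2.2.1.9}. The only difference is cosmetic --- you first isolate the general principle ``closed under cotensors $\Rightarrow$ $S$ is stable under tensoring'' and then apply it, whereas the paper directly verifies the hypothesis of \cite{HA}*{Proposition 2.2.1.9} (that $\mathcal{C} \times \mathcal{E} \to \mathcal{C} \times \Lfbrs\mathcal{E}$ is an $\Lfbrs$-equivalence) by the same cotensor-adjunction trick.
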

\begin{proof}
  We prove the first claim; the proof of the second is the same --- in
  particular, both
  follow from \cite{HA}*{Proposition 2.2.1.9}. In order to apply this
  to $\Lfbrs$, we must verify the required hypothesis, which amounts
  to checking that for $\mathcal{C} \in \CatI$ and $\mathcal{E} \in
  \CatIOintcoc$, the canonical map $\mathcal{C} \times \mathcal{E} \to
  \mathcal{C} \times \Lfbrs(\mathcal{E})$ is taken to an equivalence
  by $\Lfbrs$. Equivalently, we must show that for $\mathcal{P} \in
  \Fbrs(\mathcal{O})$, the induced map
  \[ \Map_{\CatIOintcoc}(\mathcal{C} \times \Lfbrs(\mathcal{E}), \mathcal{P}) \to
    \Map_{\CatIOintcoc}(\mathcal{C} \times \mathcal{E}, \mathcal{P}) \]
  is an equivalence. Using the cotensoring, this is the same as the map
  \[ \Map_{\CatIOintcoc}(\Lfbrs(\mathcal{E}),
    \mathcal{P}^{\mathcal{C}}_{/\mathcal{O}}) \to
    \Map_{\CatIOintcoc}(\mathcal{E},
    \mathcal{P}^{\mathcal{C}}_{/\mathcal{O}})\]
  given by composition with the localization map $\mathcal{E} \to
  \Lfbrs(\mathcal{E})$. This map is indeed an equivalence, since
  $\mathcal{P}^{\mathcal{C}}_{/\mathcal{O}}$ is fibrous by \cref{propn:cotensfbrs}.
\end{proof}

\begin{cor}\label{cor:fbrsenvadjcatmod}
  Let $\mathcal{O}$ be a sound pattern. Then we have a commutative
  square
  \[
    \begin{tikzcd}
    \CatIOintcoc \arrow{r}{\Lfbrs} \arrow{d}{\rmE} &
    \Fbrs(\mathcal{O}) \arrow{d}{\sliceEnv{\mathcal{O}}} \\
    (\CatIcoc{\mathcal{O}})_{/\AractO} \arrow{r}{\Lrseg} &
    \Seg^{/\mathcal{A}_{\mathcal{O}}}_{\mathcal{O}}(\CatI)
    \end{tikzcd}
  \]
  of $\CatI$-module functors. Moreover, the adjunction
      \[ \sliceEnv{\calO} \colon \Fbrs(\mathcal{O}) \adj
      \Seg^{/\mathcal{A}_{\mathcal{O}}}_{\mathcal{O}}(\CatI) :\!  \UnOint\]
  of \cref{prop:right-adjoint-to-envelope} is an adjunction of
  $\CatI$-modules, with the right adjoint being a \emph{lax}
  $\CatI$-module functor.
\end{cor}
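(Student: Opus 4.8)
The plan is to deduce everything formally from the module-theoretic facts already established for the ambient categories, the key extra input being that $\Lfbrs$ and $\Lrseg$ are not merely localizations but localizations \emph{of $\CatI$-modules}. First I would record that three of the four functors in the square are already known to be strong $\CatI$-module functors: the left vertical $\rmE$ by \cref{propn:E-Q-adjunction-Cat-compatible} (the left adjoint in a module adjunction), and the two horizontal localizations $\Lfbrs$ and $\Lrseg$ by \cref{cor:localization-Cat-compatible} (a module localization being, by definition, a strong module functor whose fully faithful right adjoint is lax). Next I would check that the underlying square commutes. Since every functor in sight is a left adjoint, it suffices to compare right adjoints of the two composites: that of $\sliceEnv{\mathcal{O}}\circ\Lfbrs$ is the inclusion $\Fbrs(\mathcal{O})\hookrightarrow\CatIOintcoc$ followed by $\UnOint$, while that of $\Lrseg\circ\rmE$ is the inclusion of relative Segal objects into $(\CatIcoc{\mathcal{O}})_{/\AractO}$ followed by $\rmQ$. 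Unwinding the definition $\UnOint=\rmQ\circ\Un{\calO}{}$ from \cref{prop:right-adjoint-to-envelope} and using that $\Un{\calO}{}$ restricts to this inclusion on relative Segal objects, both right adjoints are identified with $\UnOint$, so the square commutes. (Equivalently: for fibrous $\mathcal{P}$ the object $\rmE(\mathcal{P})$ is already a relative Segal object by \cref{obs:fbrsrelseg}, so $\Lrseg$ fixes it and $\Lrseg\rmE$ restricted to fibrous patterns agrees with $\sliceEnv{\mathcal{O}}$ by \cref{propn:general-fully-faithful-envelope}.)

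The one remaining point for the first claim is that the right vertical functor $\sliceEnv{\mathcal{O}}$ is itself a strong module functor. Here I would use that $\sliceEnv{\mathcal{O}}\circ\Lfbrs=\Lrseg\circ\rmE$ is a composite of strong module functors, hence strong module, and then exploit that $\Lfbrs$ is a module localization. Concretely, the tensoring on $\Fbrs(\mathcal{O})$ inherited from \cref{cor:localization-Cat-compatible} is $\mathcal{C}\otimes\mathcal{P}\simeq\Lfbrs(\mathcal{C}\times\mathcal{P})$, so
\[
\sliceEnv{\mathcal{O}}(\mathcal{C}\otimes\mathcal{P})\simeq(\sliceEnv{\mathcal{O}}\circ\Lfbrs)(\mathcal{C}\times\mathcal{P})\simeq\mathcal{C}\otimes(\sliceEnv{\mathcal{O}}\circ\Lfbrs)(\mathcal{P})\simeq\mathcal{C}\otimes\sliceEnv{\mathcal{O}}(\mathcal{P}),
\]
where the middle equivalence uses that $\sliceEnv{\mathcal{O}}\circ\Lfbrs$ is strong module and the last uses $\Lfbrs\circ(\text{inclusion})\simeq\id$. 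This exhibits $\sliceEnv{\mathcal{O}}$ as compatible with tensors. With all four functors now strong module functors and the underlying square commuting, the square is a commutative square of $\CatI$-modules.

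For the second claim I would invoke the same transposition result used in the proof of \cref{propn:envcatmod1}, namely \cite{paradj}*{Theorem 3.4.7}: applied to the adjunction $\sliceEnv{\mathcal{O}}\dashv\UnOint$ of \cref{prop:right-adjoint-to-envelope}, in which the left adjoint $\sliceEnv{\mathcal{O}}$ is now known to be a strong $\CatI$-module functor, this endows the right adjoint $\UnOint$ with a canonical \emph{lax} $\CatI$-module structure and upgrades the adjunction to an adjunction of $\CatI$-modules. I expect the genuine obstacle to be exactly the middle step of the second paragraph — passing from ``$\sliceEnv{\mathcal{O}}\circ\Lfbrs$ preserves tensors'' to ``$\sliceEnv{\mathcal{O}}$ preserves tensors'' — since this is the only place where one must use that $\Lfbrs$ is a \emph{module} localization (so that the tensor on $\Fbrs(\mathcal{O})$ is computed as the reflection of the ambient product) rather than merely a strong module functor; once that is in hand the remaining assertions are either cited or purely formal consequences of doctrinal adjunction.
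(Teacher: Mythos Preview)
Your route is close to the paper's but there is a gap in the step you yourself flag as the crux. The paper does not first establish that $\sliceEnv{\mathcal{O}}$ is a strong module functor and then argue the square commutes; instead it applies the universal property of the $\CatI$-module localization $\Lfbrs$ directly to the module functor $\Lrseg\circ\rmE$. The check it performs is that $\Lrseg\circ\rmE$ inverts $\Lfbrs$-local equivalences, which it verifies by adjunction: for $\mathcal{X}$ relative Segal, $\rmQ\mathcal{X}$ is fibrous (this is exactly where soundness enters, via \cref{prop:right-adjoint-to-envelope}), so $\Map(-,\rmQ\mathcal{X})$ inverts $\Lfbrs$-equivalences. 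The universal property then produces a module functor $G$ together with a module equivalence $G\circ\Lfbrs\simeq\Lrseg\circ\rmE$, and $G$ is identified with $\sliceEnv{\mathcal{O}}$. This simultaneously gives the module structure on $\sliceEnv{\mathcal{O}}$ and the commutativity of the square \emph{as a square of module functors}.

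In your argument, the displayed chain
\[
\sliceEnv{\mathcal{O}}(\mathcal{C}\otimes\mathcal{P})\simeq(\sliceEnv{\mathcal{O}}\circ\Lfbrs)(\mathcal{C}\times\mathcal{P})\simeq\mathcal{C}\otimes(\sliceEnv{\mathcal{O}}\circ\Lfbrs)(\mathcal{P})\simeq\mathcal{C}\otimes\sliceEnv{\mathcal{O}}(\mathcal{P})
\]
is only a pointwise compatibility with tensors, not a coherent module structure; and the sentence ``with all four functors now strong module functors and the underlying square commuting, the square is a commutative square of $\CatI$-modules'' is not a valid inference in general --- one needs the commutativity isomorphism itself to be a module transformation. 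Your comparison of right adjoints does show that $\Lrseg\circ\rmE\simeq\sliceEnv{\mathcal{O}}\circ\Lfbrs$ as underlying functors (and hence that $\Lrseg\circ\rmE$ inverts $\Lfbrs$-equivalences), which is all the input the universal property needs; so the fix is simply to replace the pointwise display by an explicit appeal to that universal property, exactly as the paper does. With that adjustment your argument and the paper's coincide.
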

\begin{proof}
  Let us use the universal property of $\Fbrs(\mathcal{O})$ as a
  $\CatI$-module localization to verify that the composite
  \[ \CatIOintcoc \xto{\rmE} (\CatIcoc{\mathcal{O}})_{/\AractO} \xto{\Lrseg}
    \Seg^{/\mathcal{A}_{\mathcal{O}}}_{\mathcal{O}}(\CatI) \] factors
  through $\Lfbrs$, as a functor of
  $\CatI$-modules. Thus we need to verify that if a morphism
  $\mathcal{E} \to \mathcal{F}$ in $\CatIOintcoc$ is taken to an
  equivalence by $\Lfbrs$, then $\rmE\mathcal{E} \to \rmE\mathcal{F}$
  is taken to an equivalence by $\Lrseg$. The latter condition is
  equivalent to the induced morphism
  \[ \Map(\rmE\mathcal{F}, \mathcal{X}) \to \Map(\rmE\mathcal{E},\mathcal{X})\]
  being an equivalence provided $\mathcal{X}$ is the unstraightening
  of an object in
  $\Seg^{/\mathcal{A}_{\mathcal{O}}}_{\mathcal{O}}(\CatI)$. By
  adjunction this holds \IFF{} the map
 \[ \Map(\mathcal{F}, \rmQ \mathcal{X}) \to
   \Map(\mathcal{E},\rmQ \mathcal{X}) \]
 is an equivalence for all such $\mathcal{X}$, but since $\mathcal{O}$
 is sound the object $\rmQ \mathcal{X}$ is fibrous, and hence this is
 indeed an equivalence as by assumption $\mathcal{E} \to \mathcal{F}$
 is taken to an equivalence by $\Lfbrs$. It follows that the right
 adjoint inherits a lax $\CatI$-module structure.
\end{proof}

\begin{remark}
  For any pattern $\mathcal{O}$ the Segal envelope
  \[ \sliceEnv{\mathcal{O}} \colon \Fbrs(\mathcal{O}) \to \Seg^{/\mathcal{A}_{\mathcal{O}}}_{\mathcal{O}}(\CatI)\]
  is a \emph{lax} $\CatI$-module functor, since it can be defined by restricting
  $\StOint$ to these full subcategories, the inclusions of which are lax $\CatI$-module functors. 
  This suffices to upgrade the envelope to a functor of \itcats{}, and 
  we can see that it is fully faithful since it is obtained by restricting the functor $\StOint \colon \CatIOintcoc \to \Fun(\mathcal{O},\CatI)_{/\calA_\calO}$, which is a fully faithful functor of \itcats{} by \cref{obs:envffit}.
\end{remark}

\begin{propn}\label{propn:pullback-fibrous-seg-Cat-compatible}
  Let $\mathcal{O}$ and $\mathcal{P}$ be algebraic patterns and $f \colon \mathcal{O} \to \mathcal{P}$ a strong Segal morphism.
  \begin{enumerate}[(i)]
  \item\label{it:f*fbrsmod} The functor $f^{*} \colon \Fbrs(\mathcal{P}) \to \Fbrs(\mathcal{O})$ is a lax $\CatI$-module functor and its left adjoint $f_{!}$ is a $\CatI$-module functor.
  \item\label{it:f*rsegmod} The functor $f^{\ostar} \colon \slSegC{\mathcal{P}} \to \slSegCO$ is a lax $\CatI$-module functor and its left adjoint $f_{!}$ is a $\CatI$-module functor.
  \end{enumerate}
\end{propn}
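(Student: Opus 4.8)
The plan is to deduce both statements from a single fact: that the right adjoints $f^{*}$ and $f^{\ostar}$ preserve the cotensoring by $\CatI$, after which the module structures are forced by doctrinal adjunction. Recall that by \cref{cor:localization-Cat-compatible} the $\CatI$-module structures on $\Fbrs(\mathcal{O})$ and $\slSegCO$ are those of the module localizations $\Lfbrs$ and $\Lrseg$, so their tensorings are the localized cartesian products, whereas by \cref{propn:cotensfbrs} their cotensorings are computed in the ambient modules $\CatIOintcoc$ and $(\CatIcoc{\mathcal{O}})_{/\AractO}$ (and agree with the inherited cotensors there). The existence of the left adjoint $f_{!}$ in (i) is already known (as established just after \cref{cor:fibrouspresentable}); for (ii) it follows from the presentability of the relative Segal categories (\cref{lem:relative-Segal-strongly-reflective}) together with the observation that $f^{\ostar}$, being built from restriction along $f$ and a base change, preserves limits and sufficiently filtered colimits, so the adjoint functor theorem applies.

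First I would show that $f^{*}\colon \Fbrs(\mathcal{P}) \to \Fbrs(\mathcal{O})$ preserves cotensors. By \cref{lem:pullback-of-fibrous} this $f^{*}$ is the restriction of the ambient pullback $f^{*}\colon \Cat_{\infty/\mathcal{P}}^{\intcoc} \to \CatIOintcoc$, which preserves the cotensoring by \cref{lem:f*cotens} (the wide-subcategory statement). Concretely, for $\mathcal{Q} \in \Fbrs(\mathcal{P})$ and $\mathcal{C} \in \CatI$ the cotensor $\mathcal{Q}^{\mathcal{C}}_{/\mathcal{P}}$ is computed ambiently (\cref{propn:cotensfbrs}), applying the ambient $f^{*}$ yields $(f^{*}\mathcal{Q})^{\mathcal{C}}_{/\mathcal{O}}$, and since $f^{*}\mathcal{Q}$ is again fibrous this ambient object is the cotensor of $f^{*}\mathcal{Q}$ in $\Fbrs(\mathcal{O})$, again by \cref{propn:cotensfbrs}. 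For part (ii) the same template applies to $f^{\ostar}$: restriction along $f$ and base change are each formed from $\Fun(\mathcal{C},\blank)$ and fiber products, which commute, so the ambient cotensor $\mathcal{E}^{\mathcal{C}}_{/\AractO}$ is preserved; moreover $f^{\ostar}$ carries relative Segal objects to relative Segal objects (via \cref{rmk:strong Seg pres rel} and \cref{rel Segal pullback}, using that $f$ is strong Segal), so it restricts to $\slSegC{\mathcal{P}} \to \slSegCO$ and preserves cotensors there by \cref{propn:cotensfbrs}.

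Next I would convert cotensor-preservation into the asserted module structures by a mate argument. Fix $\mathcal{C} \in \CatI$ and consider the square of right adjoints whose two vertical arrows are the cotensor endofunctors $(\blank)^{\mathcal{C}}$ and whose two horizontal arrows are $f^{*}$ (resp.\ $f^{\ostar}$); by the previous paragraph this square commutes. Passing to the vertical left adjoints $\mathcal{C}\otimes\blank$ and the horizontal left adjoint $f_{!}$, the Beck--Chevalley comparison $f_{!}(\mathcal{C}\otimes\blank) \to \mathcal{C}\otimes f_{!}(\blank)$ is then invertible, since for an adjoint pair of squares the mate on the left-adjoint side is an equivalence precisely when the right-adjoint square commutes. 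By \cite{paradj}*{Theorem 3.4.7} these invertible comparisons organize into a (strong) $\CatI$-module functor structure on $f_{!}$, and the conjugate structure on the right adjoint $f^{*}$ (resp.\ $f^{\ostar}$) is exactly the claimed \emph{lax} $\CatI$-module structure. This proves (i) and (ii) at once; it is the same doctrinal-adjunction packaging already used in \cref{propn:envcatmod1} and \cref{cor:fbrsenvadjcatmod}.

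The main obstacle I anticipate is bookkeeping rather than conceptual: one must verify that the base change defining $f^{\ostar}$ genuinely preserves the ambient cotensor formula $\mathcal{E}^{\mathcal{C}}_{/\AractO}$ and that the resulting object still lies in the relative-Segal subcategory, so that \cref{propn:cotensfbrs} can be applied on both sides. Once cotensor-preservation is secured the mate step is formal, and the only remaining care is in invoking \cite{paradj}*{Theorem 3.4.7} to upgrade the pointwise-invertible Beck--Chevalley maps to a coherent $\CatI$-module functor structure, together with checking that the left adjoint of $f^{\ostar}$ exists (via presentability and the adjoint functor theorem) so that the mate correspondence is available in case (ii).
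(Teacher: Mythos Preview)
Your computation that $f^{*}$ and $f^{\ostar}$ preserve cotensors is correct and is exactly the ingredient the paper uses. The gap is in how you obtain the \emph{coherent} module structures from this. Your invocation of \cite{paradj}*{Theorem 3.4.7} is not quite right: that theorem transfers a (lax) module structure on one adjoint to an (op)lax structure on the other; it does not manufacture a coherent $\CatI$-module structure out of pointwise Beck--Chevalley equivalences. For your mate argument to produce a coherent structure on $f_{!}$, the cotensor-preservation equivalences on $f^{*}$ would themselves have to be coherent with the $\CatI$-action, and that coherence is precisely the lax module structure on $f^{*}$ you are trying to deduce. So the argument as written is circular at that point.

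The paper closes this gap by reversing your order of operations. It first observes (as you also do, but only to extract cotensor preservation) that $f^{*}$ on $\Fbrs$ is the restriction to full subcategories of the ambient pullback $f^{*}\colon \Cat_{\infty/\mathcal{P}}^{\intcoc} \to \CatIOintcoc$, which is a \emph{strong} $\CatI$-module functor by \cref{obs:f*factsyst}. Since the inclusions $\Fbrs \hookrightarrow \Cat_{\infty/\blank}^{\intcoc}$ are lax module functors (their left adjoints $\Lfbrs$ being module functors by \cref{cor:localization-Cat-compatible}), the restricted $f^{*}$ inherits a lax $\CatI$-module structure directly. Only then does doctrinal adjunction give $f_{!}$ an oplax structure, and your cotensor-preservation argument shows this oplax structure is strong. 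The fix is thus to use the restriction observation for its module-functor content, not merely to read off cotensor preservation, and then run the doctrinal argument in the direction $f^{*} \leadsto f_{!}$ rather than the reverse.
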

\begin{proof}
  To prove \ref{it:f*fbrsmod}, we observe that $f^{*}$ is obtained by
  restricting
  $f^{*} \colon \CatIOintcoc \to \CatIsl{\mathcal{P}}^{\intcoc}$,
  which is a $\CatI$-module functor by \cref{obs:f*factsyst}, to full subcategories;
  it is therefore a lax $\CatI$-module functor. The
  left adjoint $f_{!}$ is then automatically an oplax $\CatI$-module
  functor, and the oplax structure map is an equivalence \IFF{} the
  right adjoint $f^{*}$ preserves $\CatI$-cotensors, which we know
  from \cref{lem:f*cotens} and \cref{propn:cotensfbrs}. The proof of
  \ref{it:f*rsegmod} is the same.
\end{proof}

\begin{remark}
  It follows that for $\mathcal{Q} \in \Fbrs(\mathcal{O})$ and
  $\mathcal{R} \in \Fbrs(\mathcal{P})$ we have a natural equivalence
  \[ \Fun^{\intcoc}_{/\mathcal{P}}(f_{!}\mathcal{Q}, \mathcal{R})
    \simeq \Fun^{\intcoc}_{/\mathcal{O}}(\mathcal{Q}, f^{*}\mathcal{R}).\]
\end{remark}

\begin{cor}
  Let $f \colon \mathcal{O} \to \mathcal{P}$ be a strong Segal
  morphism between soundly extendable patterns that satisfies the
  hypotheses of \cref{fbrseq}. Then pullback along $f$ gives an
  equivalence
  \[ f^{*} \colon \Fbrs(\mathcal{P}) \isoto \Fbrs(\mathcal{O}) \]
  of $\CatI$-modules. In particular, for any $\mathcal{Q},\mathcal{Q}'$ in
  $\Fbrs(\mathcal{P})$, the induced functor
  \[ \Fun_{/\mathcal{P}}^{\intcoc}(\mathcal{Q},\mathcal{Q}') \to
    \Fun_{/\mathcal{O}}^{\intcoc}(f^{*}\mathcal{Q},
    f^{*}\mathcal{Q}')\]
  is an equivalence. \qed
\end{cor}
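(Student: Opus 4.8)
The plan is to upgrade the underlying equivalence of \cref{fbrseq} to an equivalence of $\CatI$-modules, since the ``in particular'' clause is then a purely formal consequence of passing to the adjoint enrichments. By \cref{fbrseq} the functor $f^{*}\colon \Fbrs(\mathcal{P}) \to \Fbrs(\mathcal{O})$ is an equivalence of underlying \icats{}, and by part (i) of \cref{propn:pullback-fibrous-seg-Cat-compatible} it is a \emph{lax} $\CatI$-module functor whose left adjoint $f_{!}$ is a \emph{strong} $\CatI$-module functor. The only genuine content is therefore to promote this lax module equivalence to a strong one, after which compatibility with the mapping \icats{} is immediate.

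To carry this out I would argue through the left adjoint. Since $f^{*}$ is an equivalence, its left adjoint $f_{!}$ is simultaneously an inverse equivalence (the left adjoint of an equivalence is its inverse). As $f_{!}$ is a strong $\CatI$-module functor whose underlying functor is an equivalence, it is an equivalence of $\CatI$-modules, because being an equivalence of modules is detected on underlying \icats{}. The inverse of an equivalence of $\CatI$-modules is canonically again a $\CatI$-module functor, transported through the (invertible) structure maps of $f_{!}$; and on underlying functors this inverse is $f^{*}$. Hence $f^{*}\simeq f_{!}^{-1}$ acquires a strong $\CatI$-module structure refining its lax one, so $f^{*}$ is an equivalence of $\CatI$-modules. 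Equivalently, one can check the lax structure maps $\mathcal{C}\otimes f^{*}(\blank)\to f^{*}(\mathcal{C}\otimes\blank)$ are equivalences directly, using that $f_{!}$ is strong with $f_{!}f^{*}\simeq\id$.

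For the final clause I would use that, by the cotensors constructed in \cref{propn:cotensfbrs}, the $\CatI$-module structures on $\Fbrs(\mathcal{O})$ and $\Fbrs(\mathcal{P})$ are adjoint to the $\CatI$-enrichments $\Fun^{\intcoc}_{/\mathcal{O}}(\blank,\blank)$ and $\Fun^{\intcoc}_{/\mathcal{P}}(\blank,\blank)$ restricted from $\CatIOintcoc$, as in \cref{obs:ar0catmod}. An equivalence of $\CatI$-modules is automatically compatible with these adjoint enrichments: from the defining equivalence $\Map_{\CatI}(\mathcal{C},\Fun^{\intcoc}_{/\mathcal{P}}(\mathcal{Q},\mathcal{Q}'))\simeq \Map(\mathcal{C}\otimes\mathcal{Q},\mathcal{Q}')$ together with the module equivalence $f^{*}$ one obtains, by the Yoneda lemma, an equivalence $\Fun^{\intcoc}_{/\mathcal{P}}(\mathcal{Q},\mathcal{Q}')\isoto \Fun^{\intcoc}_{/\mathcal{O}}(f^{*}\mathcal{Q},f^{*}\mathcal{Q}')$ for all $\mathcal{Q},\mathcal{Q}'$. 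The main (and essentially only) obstacle is the lax-to-strong upgrade in the previous paragraph; once that is in place, the equivalence of mapping \icats{} follows formally from the module--enrichment adjunction.
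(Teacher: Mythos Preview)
Your proposal is correct and matches what the paper intends: the statement carries a \qed with no written proof, so the paper treats it as an immediate consequence of \cref{fbrseq} together with \cref{propn:pullback-fibrous-seg-Cat-compatible}, exactly along the lines you spell out. The route through the strong $\CatI$-module structure on $f_{!}$ (which becomes an inverse to $f^{*}$ once the latter is an equivalence) is the natural way to upgrade the lax structure on $f^{*}$ to a strong one, and the enrichment consequence then follows formally as you say.

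One minor note: in the displayed map of the statement the subscripts $/\mathcal{O}$ and $/\mathcal{P}$ appear to be swapped (since $\mathcal{Q},\mathcal{Q}' \in \Fbrs(\mathcal{P})$); your final paragraph writes the map in the correct direction $\Fun^{\intcoc}_{/\mathcal{P}}(\mathcal{Q},\mathcal{Q}') \isoto \Fun^{\intcoc}_{/\mathcal{O}}(f^{*}\mathcal{Q},f^{*}\mathcal{Q}')$, which is presumably what is meant.
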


\begin{bibdiv}
\begin{biblist}
   \bibselect{envelope_refs}
\end{biblist}
\end{bibdiv}

\end{document}